\documentclass[preprint,12pt]{elsarticle}
\usepackage[T1]{fontenc}
\usepackage{amsmath}
\usepackage{amsthm}
\usepackage{newtxtext}
\usepackage{newtxmath}
\usepackage{mathtools}
\usepackage{bm}
\usepackage{graphicx}
\usepackage{subcaption}
\usepackage{booktabs}
\usepackage{multirow}
\usepackage{algorithm}
\usepackage{algpseudocode}
\usepackage{xcolor}
\usepackage{xr-hyper}
\usepackage[colorlinks=true,linkcolor=blue!50!black,citecolor=blue!50!black,urlcolor=blue!50!black]{hyperref}
\usepackage{cleveref}
\usepackage{lineno}
\usepackage[normalem]{ulem}

\newcommand{\T}{\mathbb{T}}
\newcommand{\R}{\mathbb{R}}
\newcommand{\Z}{\mathbb{Z}}
\newcommand{\C}{\mathbb{C}}
\newcommand{\Ph}{\mathcal{P}_h}
\newcommand{\Ih}{\mathcal{I}_h}
\newcommand{\Dth}{\mathcal{D}_\theta}
\newcommand{\Ra}{\mathcal{R}_\alpha}
\newcommand{\Ev}{\mathcal{E}_{\Delta t}}
\newcommand{\Sh}{\mathcal{S}_h}
\newcommand{\dfh}{\delta_h}
\newcommand{\Dm}{\nabla^{-}}
\newcommand{\xf}[1]{x_{#1+1/2}}
\newcommand{\Li}{\operatorname{Li}}
\newcommand{\sgn}{\operatorname{sgn}}
\newcommand{\sinc}{\operatorname{sinc}}
\newcommand{\Real}{\operatorname{Re}}
\newcommand{\Imag}{\operatorname{Im}}
\newcommand{\tr}{\operatorname{tr}}
\newcommand{\Cov}{\operatorname{Cov}}
\newcommand{\Var}{\operatorname{Var}}
\newcommand{\EE}{\mathbb{E}}
\newcommand{\psia}{\psi_{a}}
\newcommand{\dd}{\,\mathrm{d}}
\newcommand{\errA}{\mathrm{err}_{W1}}
\newcommand{\errB}{\mathrm{err}_{W2}}

\theoremstyle{plain}
\newtheorem{theorem}{Theorem}[section]
\newtheorem{lemma}[theorem]{Lemma}
\newtheorem{proposition}[theorem]{Proposition}
\newtheorem{corollary}[theorem]{Corollary}
\newtheorem{assumption}[theorem]{Assumption}
\theoremstyle{definition}
\newtheorem{definition}[theorem]{Definition}
\theoremstyle{remark}
\newtheorem{remark}[theorem]{Remark}

\journal{Journal of Computational Physics}

\begin{document}

\begin{frontmatter}

\title{A hybrid finite-volume scheme with learned rational reconstructions and exact nonlocal fluxes for the space-fractional Burgers equation}

\author[osu]{Zeyuan Song\corref{cor1}}
\ead{taekwon.song@okstate.edu}
\author[osu]{Zheyu Jiang}
\ead{zjiang@okstate.edu}
\cortext[cor1]{Corresponding author.}
\affiliation[osu]{organization={School of Chemical Engineering, Oklahoma State University},
            city={Stillwater},
            postcode={74078},
            state={OK},
            country={USA}}

\begin{abstract}
On coarse grids, fronts of fractional conservation laws can be narrower than one cell, while the flux through each face depends on the whole domain.
For the forced space-fractional Burgers equation, we write the finite-volume defect exactly, with the Riesz flux acting as a nonlocal interface flux.
Our closure, NRC, is decoder-only, since the encoder is the exact cell-average projection and a small network only maps cell averages to a sub-cell reconstruction.
The decoder adds rational atoms with learned complex poles to a trigonometric histopolant, preserves every cell average, and has closed-form advective and Riesz fluxes.
We prove that the scheme conserves mass for every value of the weights and that the learned sub-grid flux has a dipole tail.
On 64 cells, NRC reaches a median test error of 0.0308\% over three seeds, against 0.431\% for WENO5-Z and 0.153\% for a black-box flux correction.
Ablations show that the exact global operator, the projection and the sub-grid Riesz flux carry most of this gain, and most sub-cell fronts carry a deep learned pole within a quarter cell.
{NRC keeps its advantage over WENO5-Z under shifts of the order and the forcing and, when trained on several diffusivities, under the tested diffusivity shifts by factors from 0.6 to 2.5 in the median over seeds.
From 32 to 128 cells, NRC stays an order of magnitude more accurate than WENO5-Z on the same grid.
The gain lies in accuracy per degree of freedom and in an interpretable closure.}
\end{abstract}

\begin{keyword}
Space-fractional Burgers equation \sep Finite-volume method \sep Learned closure \sep Rational approximation \sep Riesz flux \sep Discretization error
\end{keyword}

\end{frontmatter}

\section{Introduction}\label{sec:intro}

Conservation laws with nonlocal, fractional dissipation describe anomalous transport, for instance the heavy-tailed dispersion of solutes in porous media \cite{metzler2000random,benson2000application}.
The fractional Burgers equation $u_t+uu_x=-\nu(-\Delta)^{\alpha/2}u+f$ with $1<\alpha<2$ is the canonical nonlinear model of this class, since it couples nonlinear steepening with nonlocal dissipation \cite{biler1998fractal,droniou2003global}.
For these orders the unforced periodic problem has smooth solutions without shocks \cite{alibaud2007occurrence,kiselev2008blowup}, yet at small diffusivity its fronts can be narrower than one cell of a practical coarse grid.
Refinement is costly, because the fractional operator couples every cell to all others and its stiffness grows in the order of $h^{-\alpha}$ as the cell width $h$ decreases.

For the nonlocal operator, finite-difference quadratures and finite-element methods are well developed \cite{huang2014numerical,bonito2018numerical}, and convergent finite-volume schemes exist for fractal conservation laws \cite{droniou2010numerical}.
For the advective part, high-resolution finite-volume schemes such as MUSCL and WENO reconstruct sub-cell states from cell averages with limited or weighted stencils \cite{vanleer1979towards,jiang1996efficient,borges2008improved,shu2009high,leveque2002finite}.
Exponential integrators treat the stiff linear part exactly and leave only the nonlinear flux to explicit stages \cite{cox2002exponential,hochbruck2010exponential}.
These classical building blocks have well-understood properties, but their reconstructions were designed for local fluxes and are piecewise polynomials that jump at the cell faces.

A growing body of work replaces selected parts of such schemes by neural network components.
Bar-Sinai et al.~\cite{barsinai2019learning} learned solution-dependent stencil coefficients inside a conservative finite-volume update, an idea carried over to two-dimensional flows and scalar transport \cite{kochkov2021machine,zhuang2021learned}.
Neural closure models add a trained term to a coarse discretization \cite{sirignano2020dpm}, and training them through a differentiable solver on unrolled trajectories has been found to matter for accuracy and stability \cite{um2020solver,list2022learned,melchers2023comparison}.
Closures with memory \cite{gupta2023generalized}, learned simulators \cite{stachenfeld2022learned,brandstetter2022message}, troubled-cell indicators \cite{ray2018artificial} and learned propagators \cite{maddu2023stencil} broaden this picture, as reviewed for turbulence by Duraisamy et al.~\cite{duraisamy2019turbulence}.

In parallel, neural operators such as the Fourier neural operator and DeepONet learn maps between function spaces and serve as surrogates for parametric partial differential equations \cite{li2021fourier,lu2021learning,kovachki2023neural}.
Such surrogates, and black-box corrections of coarse schemes in general, inherit neither conservation nor the exact treatment of known linear operators unless these are built in, and their failures are hard to predict in numerical terms.
Weak baselines and selective reporting can make learned solvers appear more favorable than they are \cite{mcgreivy2024weak}, so we compare against strong classical schemes on the same grid.

Learning methods for fractional and nonlocal equations have mostly targeted the solution, the fractional order and coefficients, or nonlocal network layers \cite{pang2019fpinns,gulian2019machine}, and fractional operators have also served as closures for local equations \cite{akhavansafaei2021data}.
Meanwhile, learned closures for conservation laws with fractional fluxes remain poorly understood, and two features of the fractional flux set them apart.
First, the flux through a cell face is a singular integral over the entire domain, and its part beyond any fixed number of cells tends to the whole flux as the grid is refined.
A closure built on a local surrogate of the operator, or on a network with a fixed receptive field, therefore carries a model error of order one rather than a truncation error (Proposition~\ref{prop:nearfar}).
Second, a jump in the reconstructed state produces an interface flux that diverges like $|x-x_0|^{1-\alpha}$, so the piecewise polynomial reconstructions of MUSCL and WENO have unbounded fractional fluxes (Lemma~\ref{lem:jump}).
In practice the fractional term is therefore evaluated separately, for instance spectrally from a smooth global reconstruction, and the sub-cell structure of the fronts never enters the nonlocal flux.
As a result, it remains unclear what a learned closure should represent here, which parts of the discretization error can be computed exactly, and which must be learned.

Two classical theories suggest a principled approach to these questions.
Optimal prediction identifies the mean-square optimal closure with the conditional expectation of the unresolved dynamics given the resolved variables \cite{chorin2000optimal}, as in optimal large-eddy simulation \cite{langford1999optimal}.
For cell averages the closure must also absorb the error of the discrete operator, as discrete-level closures make explicit \cite{beck2019deep,agdestein2025discretize}, and approximate deconvolution reconstructs an unfiltered field before applying the exact flux \cite{stolz1999approximate}.
Rational approximation theory explains why poles are efficient degrees of freedom near singularities, since rational functions approximate $|x|$ and $x^\beta$ at root-exponential rates \cite{newman1964rational,stahl2003best}.
Poles clustered near a singularity achieve such rates constructively \cite{stenger1986explicit,trefethen2021exponential,gopal2019solving,herremans2023resolution}, and the AAA algorithm locates them adaptively from samples \cite{nakatsukasa2018aaa}.
Rational spectral methods have also been developed for the fractional Laplacian \cite{tang2020rational}.
The steepness of a smooth front is governed by complex singularities near the real axis, whose distance can be estimated from the decay of Fourier coefficients \cite{sulem1983tracing}.
A few poles placed near such singularities can therefore represent a front narrower than one cell, whereas a coarse trigonometric reconstruction cannot (Proposition~\ref{prop:gap}).

These developments motivate interpreting a learned closure not merely as a generic corrector, but \emph{as a data-adaptive reconstruction operator} that replaces the limited polynomial reconstruction of a high-resolution finite-volume scheme.
In this work, we develop and analyze such a closure, nonlocal reconstruction closure or NRC, for the forced space-fractional Burgers equation on a periodic domain.
With the encoder fixed to the exact cell-average projection $\Ph$, the closure is decoder-only and learns only a map $\Dth$ from cell averages to a sub-cell reconstruction.
The decoder adds to a trigonometric histopolant a sum of rational atoms, whose complex poles and amplitudes a small convolutional network predicts cell by cell.
{This network consists of four circular convolutions with a receptive field of 17 cells and a pointwise head that outputs two poles per cell, with about $2.5\times10^4$ parameters in total, and \Cref{sec:decoder} describes the structure of the decoder in detail.}
A projection removes the cell averages of these atoms, so that they only redistribute mass inside each cell.
{The network therefore learns only the shape of the solution inside each cell, and never the equation itself.
The advective and Riesz fluxes are evaluated exactly on the decoded state, and the only learned source of error is the reconstruction.
Because this state is continuous at the faces, every face value is single valued, so the scheme needs no Riemann solver and the Riesz flux stays bounded (Lemma~\ref{lem:jump}).} 
The stiff global part of the operator acts only on the histopolant and is treated exactly by an exponential integrator, so the network never supplies the far field.
Unlike learned stencil coefficients \cite{barsinai2019learning} or manifold decoders inside projection-based integrators \cite{lee2020model}, our decoder preserves cell averages exactly and feeds exact nonlocal fluxes.
Unlike our earlier finite-volume hybrids with learned encoders \cite{song2026mp,song2025physics}, the encoder here is fixed, so the coarse variables keep their meaning as cell averages.
{Our goal is a higher \emph{accuracy per degree of freedom}.
The degrees of freedom of a finite-volume scheme are its $N$ cell averages, and one scheme has a higher accuracy per degree of freedom than another if it reaches a lower error with the same number of cells or the same error with fewer cells.
We quantify it by the error at equal $N$ and by the factor by which a classical scheme must refine its grid to reach the error of NRC (\Cref{sec:grid}).
As for other learned discretizations, the decoder is trained for the grid on which it is used.}

On a 64-cell grid, the pooled median relative error of NRC on the in-distribution test split is about 14 times lower than that of a WENO5-Z scheme with the same exact fractional operator (Section~\ref{sec:accuracy}).
This comparison covers three training seeds, and the error of NRC does not grow when rollouts continue past the end of the training trajectories.
{The error of NRC is also 2.9 to 5 times lower than that of a black-box face-flux correction with the same backbone and flux target, the lower factor for a single seed trained as long as NRC, and NRC on 64 cells is more accurate than WENO5-Z on 128 cells (\Cref{sec:grid}).}

The main contributions of this work are summarized as follows:
\begin{itemize}
\item {\emph{An exact target for learned closures.}} We write the classical finite-volume balance as an exact defect identity with the Riesz flux as a time-averaged nonlocal interface flux (Section~\ref{sec:commutator}).
We specialize the optimal-prediction view to this defect and establish the two features of the fractional flux stated above (Proposition~\ref{prop:nearfar} and Lemma~\ref{lem:jump}).
\item {\emph{A decoder-only closure with structural guarantees.}} We propose NRC, whose cell-average preserving decoder makes the proposed reconstruct-evolve-average scheme conserve mass for every value of the network weights (Section~\ref{sec:decoder}){, while an exponential integrator treats the stiff global operator exactly}.
\item {\emph{Rational atoms with closed-form fluxes.}} We represent sub-cell fronts by rational atoms whose values, cell averages and Riesz fluxes have closed forms, and which no coarse trigonometric reconstruction resolves to better than $e^{-\pi\delta/h}$ for pole depth $\delta$ (Section~\ref{sec:atoms}).
Because the learned correction has zero cell averages, its Riesz flux decays like that of a dipole, which keeps the learned part local although the operator is not.
\item {\emph{An accurate and interpretable learned reconstruction.}} We verify these predictions on frozen cell averages and in long rollouts, against classical high-resolution schemes and learned flux corrections of comparable size trained on the same target (Sections~\ref{sec:apriori} and~\ref{sec:coupled}).
In ablations with three seeds and a shortened curriculum, a local surrogate of the global operator raises the median test error about 228 times (Section~\ref{sec:ablation}).
Removing the projection or the sub-grid Riesz flux raises it $9.9$ and $12.5$ times, respectively.
{For every seed and order, a deep learned pole lies within a quarter cell of 78 to 89\% of the sub-cell fronts (Section~\ref{sec:mechanism}).
NRC keeps its advantage over WENO5-Z under shifts of the fractional order and of the forcing and, when trained on several diffusivities, under the tested shifts of the diffusivity in the median over three seeds (Section~\ref{sec:ood}), as well as at equal resolution on 32 to 128 cells (Section~\ref{sec:grid}).}
\end{itemize}

\section{Theory and methods}\label{sec:theory}

\subsection{Problem formulation}\label{sec:problem}

We consider the space-fractional Burgers equation on the periodic domain $\T=\R/2\pi\Z$, written as a conservation law whose flux contains a nonlocal part,
\begin{equation}\label{eq:pde}
\partial_tu+\partial_xF(u)=f,\qquad F(u)=\tfrac12u^2-\nu\Ra u,\qquad x\in\T,\ t>0.
\end{equation}
Here $\alpha\in(1,2)$ is the fractional order, $\nu>0$ is the fractional diffusivity, and $f$ is a prescribed forcing with zero spatial mean.
The Riesz flux operator $\Ra$ is the Fourier multiplier with symbol $i\,\sgn(k)|k|^{\alpha-1}$, so that $\partial_x\Ra=-(-\Delta)^{\alpha/2}$ and
\begin{equation}\label{eq:pde-laplacian}
u_t+uu_x=-\nu(-\Delta)^{\alpha/2}u+f.
\end{equation}
The usual definitions of the fractional Laplacian differ on bounded domains~\cite{lischke2020what}, but on the torus the Fourier definition is unambiguous.
We study the orders $\alpha=1.2$, $1.5$ and $1.8$ with $\nu=0.3$, $0.12$ and $0.05$, respectively, and use other values only in the tests of \Cref{sec:ood}.

Following Bar-Sinai et al.~\cite{barsinai2019learning}, the forcing is a sum of twenty traveling sinusoids,
\begin{equation}\label{eq:forcing}
f(x,t)=\sum_{i=1}^{20}A_i\sin(\omega_it+l_ix+\vartheta_i).
\end{equation}
For every trajectory, $A_i$, $\omega_i$, $\vartheta_i$ and $l_i$ are sampled independently and uniformly from $[-0.5,0.5]$, $[-0.4,0.4]$, $[0,2\pi)$ and $\{3,4,5,6\}$, respectively.
Since every $l_i$ is a nonzero integer, $f$ has zero mean at all times, and its cell averages and their time integrals are known in closed form.
The initial data have Fourier coefficients $\hat u_0(k)=(\xi_k+i\xi_k')k^{-3/2}$ for $1\le k\le16$, with independent standard normal $\xi_k$ and $\xi_k'$, $\hat u_0(-k)=\overline{\hat u_0(k)}$ and zero otherwise.
Each sample is rescaled to the root-mean-square value $0.5$. 
{Since $\hat u_0(0)=0$ and $f$ has zero mean, the mass $\int_\T u\dd x$ is zero initially and conserved, so it vanishes for all times.} 
Without forcing, these solutions decay at least like $e^{-\nu t}$ in $L^2$, whereas the forcing sustains fronts that can be narrower than one coarse cell.

\begin{lemma}[Riesz flux operator]\label{lem:riesz}
Let $1<\alpha<2$, and define the kernel constants of the fractional Laplacian and of the Riesz flux operator by
\begin{equation}\label{eq:kappa}
\begin{gathered}
C_\alpha=\frac{\Gamma(1+\alpha)\sin(\pi\alpha/2)}{\pi},\\
\kappa_\alpha=\frac{C_\alpha}{\alpha}=\frac{1}{2\Gamma(1-\alpha)\cos(\pi\alpha/2)}=\frac{2^{\alpha-1}\Gamma(\frac{1+\alpha}{2})}{\sqrt{\pi}\,\Gamma(1-\frac{\alpha}{2})}>0.
\end{gathered}
\end{equation}
\begin{enumerate}
\item[(i)] If $u\in C^1(\T)$ and $\sum_k|k|^{\alpha-1}|\hat u_k|<\infty$, then for every $x\in\T$ the Riesz flux operator has the two kernel representations
\begin{equation}\label{eq:riesz-kernel}
\begin{aligned}
\Ra u(x)&=\kappa_\alpha\int_0^\infty\frac{u(x+z)-u(x-z)}{z^\alpha}\dd z\\
&=\kappa_\alpha\int_\R\sgn(y-x)\,\frac{u(y)-u(x)}{|y-x|^\alpha}\dd y,
\end{aligned}
\end{equation}
where $u$ is extended periodically and both integrals converge absolutely.
\item[(ii)] If $u\in C^{0,\eta}(\T)$ with $\eta>\alpha-1$, the integral over $z>0$ in Eq.~\eqref{eq:riesz-kernel} converges absolutely for every $x$ and defines a continuous function with Fourier coefficients $i\,\sgn(k)|k|^{\alpha-1}\hat u_k$. We use this integral as the definition of $\Ra u$ for such $u$.
\item[(iii)] The operator satisfies $\partial_x\Ra=-(-\Delta)^{\alpha/2}$, the conservation law of Eq.~\eqref{eq:pde} is therefore dissipative, and for every $u$ with $\sum_k|k|^\alpha|\hat u_k|<\infty$, in particular for $u\in H^s(\T)$ with $s>\alpha+\frac12$, the hypersingular representation
\[
-\nu(-\Delta)^{\alpha/2}u(x)=\nu C_\alpha\,\mathrm{P.V.}\!\int_\R\frac{u(y)-u(x)}{|x-y|^{1+\alpha}}\dd y
\]
holds with the same constant $C_\alpha${, where $\mathrm{P.V.}$ denotes the Cauchy principal value, $\mathrm{P.V.}\!\int_\R g(y)\dd y=\lim_{\varepsilon\to0^+}\int_{|y-x|>\varepsilon}g(y)\dd y$, which is needed because for $\alpha>1$ the integrand is in general not absolutely integrable near $y=x$}.
\item[(iv)] The operator maps $\cos$ to $-\sin$, and at the endpoint $\alpha=2$ the Riesz flux operator reduces to $\mathcal R_2=\partial_x$.
\end{enumerate}
\end{lemma}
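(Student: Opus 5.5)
The whole lemma reduces to one scalar computation: the action of the kernel on a single Fourier mode. For $u=e^{ikx}$ the first integral in Eq.~\eqref{eq:riesz-kernel} becomes
\[
\kappa_\alpha e^{ikx}\int_0^\infty\frac{e^{ikz}-e^{-ikz}}{z^\alpha}\dd z=2i\kappa_\alpha e^{ikx}\int_0^\infty\frac{\sin(kz)}{z^\alpha}\dd z .
\]
This integral converges for $1<\alpha<2$: near $0$ the integrand behaves like $z^{1-\alpha}$, and at infinity it is a conditionally convergent oscillatory integral. The classical formula
\[
\int_0^\infty z^{-\alpha}\sin z\dd z=\Gamma(1-\alpha)\sin\!\big(\tfrac{\pi(1-\alpha)}{2}\big)=\Gamma(1-\alpha)\cos(\pi\alpha/2),
\]
obtained by analytic continuation of the Mellin transform of $\sin$, or by contour rotation $z\mapsto it$, gives after rescaling the factor $\sgn(k)|k|^{\alpha-1}\Gamma(1-\alpha)\cos(\pi\alpha/2)$. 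Choosing $\kappa_\alpha=1/(2\Gamma(1-\alpha)\cos(\pi\alpha/2))$ therefore produces exactly the symbol $i\,\sgn(k)|k|^{\alpha-1}$. Positivity follows because $\Gamma(1-\alpha)<0$ and $\cos(\pi\alpha/2)<0$ on $(1,2)$. The other two expressions for $\kappa_\alpha$ follow from the reflection formula $\Gamma(1-\alpha)\Gamma(\alpha)=\pi/\sin(\pi\alpha)$ combined with $\sin(\pi\alpha)=2\sin(\pi\alpha/2)\cos(\pi\alpha/2)$, and then from the Legendre duplication formula applied to $\Gamma(1+\alpha)$ and the reflection formula for $\Gamma(\tfrac{\alpha}{2})\Gamma(1-\tfrac{\alpha}{2})$. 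These are routine identities.

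\textbf{Parts (i) and (ii).} The key step, and the main obstacle, is passing from single modes to general $u$. The mode-wise integral is only conditionally convergent at infinity, so Fubini cannot be applied to the Fourier series directly. I would split the integral at $z=1$ for the fixed function $u$.

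On $(0,1)$, the Hölder bound $|u(x+z)-u(x-z)|\le C z^\eta$ with $\eta>\alpha-1$ gives absolute convergence. On $(1,\infty)$, the integrand is bounded by $2\|u\|_\infty z^{-\alpha}$, which is integrable because $\alpha>1$. Hence the integral converges absolutely and depends continuously on $x$, by dominated convergence.

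To identify its Fourier coefficients, I would integrate against $e^{-ikx}$ over $\T$ and exchange the order of integration, which is justified by the absolute bound just obtained. This reduces the problem to the single-mode computation, where the $z$-integral now appears with a fixed $k$ and is evaluated by the formula above. That proves (ii).

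Part (i) follows from (ii), since $C^1\subset C^{0,\eta}$ for $\eta=1>\alpha-1$, and the stated summability makes the Fourier series of $\Ra u$ converge absolutely, so the pointwise identity holds. The second form of the kernel is obtained by substituting $y=x\pm z$ on the two halves of the integral.

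\textbf{Part (iii).} The identity $\partial_x\Ra=-(-\Delta)^{\alpha/2}$ holds at the level of symbols, since $ik\cdot i\,\sgn(k)|k|^{\alpha-1}=-|k|^\alpha$. Dissipativity then follows from Parseval:
\[
\int_\T u\,\partial_x\Ra u\dd x=-2\pi\sum_k|k|^\alpha|\hat u_k|^2\le0 .
\]
For the hypersingular form, I would differentiate the symmetric representation $\kappa_\alpha\int_0^\infty[u(x+z)-u(x-z)]z^{-\alpha}\dd z$ in $x$ under the integral sign; the summability assumption makes this legitimate. I would then integrate by parts in $z$, using
\[
u'(x+z)+u'(x-z)=\partial_z\big[u(x+z)+u(x-z)-2u(x)\big].
\]
The boundary terms vanish: at $z=0$ the bracket is $O(z^{1+\eta'})$ for some $\eta'>\alpha-1$, available because $u'$ is Hölder under the assumption, and at infinity they vanish because $u$ is bounded. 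This yields the constant $\alpha\kappa_\alpha=C_\alpha$ in front of $\int_0^\infty[u(x+z)+u(x-z)-2u(x)]z^{-1-\alpha}\dd z$, which is the principal-value integral over $\R$ written in symmetric form.

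\textbf{Part (iv).} Writing $\cos x=(e^{ix}+e^{-ix})/2$ and applying the symbol $\pm i$ gives $\Ra\cos=-\sin$. At $\alpha=2$ the symbol becomes $i\,\sgn(k)|k|=ik$, which is the symbol of $\partial_x$, so $\mathcal R_2=\partial_x$.
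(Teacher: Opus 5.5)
Your treatment of the constants, part (ii), the symbol identity and part (iv) matches the paper. Deducing (i) from (ii) is a valid alternative to the paper's route: a $C^1$ function is Lipschitz, so (ii) gives an absolutely convergent, continuous kernel integral with coefficients $i\,\sgn(k)|k|^{\alpha-1}\hat u_k$, and under $\sum_k|k|^{\alpha-1}|\hat u_k|<\infty$ this integral equals the absolutely convergent Fourier series at every point. However, your reason for avoiding the direct route is mistaken. For $\alpha>1$ the single-mode integral converges absolutely, since $\int_0^\infty|\sin(kz)|z^{-\alpha}\dd z=b_\alpha|k|^{\alpha-1}$ with $b_\alpha=\int_0^\infty s^{-\alpha}|\sin s|\dd s<\infty$; the tail is dominated by $z^{-\alpha}$. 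Fubini therefore applies termwise to the Fourier series as soon as $\sum_k|k|^{\alpha-1}|\hat u_k|<\infty$. That is how the paper proves (i), and it needs only summability, not H\"older regularity of $u$.

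This matters because the hypersingular part of (iii) has a genuine gap. You claim that $\sum_k|k|^\alpha|\hat u_k|<\infty$ makes $u'$ H\"older of some order $\eta'>\alpha-1$, but it only gives $u'\in C^{0,\alpha-1}$. For example, $u(x)=\sum_{n\ge1}n^{-2}2^{-n\alpha}\cos(2^nx)$ has $\sum_k|k|^\alpha|\hat u_k|=\sum_nn^{-2}<\infty$. Yet $B(z)=u(z)+u(-z)-2u(0)$ satisfies $|B(2^{-m})|\ge4\sin^2(\tfrac12)\,m^{-2}2^{-m\alpha}$, which rules out $B=O(z^{1+\eta'})$, and hence $u'\in C^{0,\eta'}$, for every $\eta'>\alpha-1$.

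With $u'\in C^{0,\alpha-1}$ alone you get $B(z)=O(z^\alpha)$, so the boundary term $B(z)z^{-\alpha}$ at $z=0$ is only bounded, not vanishing. Likewise the H\"older majorant of $|u'(x+z)-u'(x-z)|z^{-\alpha}$ is of order $z^{-1}$, which is not integrable, so it cannot justify differentiating under the integral sign either. Both steps must be done on the Fourier side:
\begin{itemize}
\item $|u'(x+z)-u'(x-z)|z^{-\alpha}\le2z^{-\alpha}\sum_k|k||\hat u_k||\sin kz|$ is an $x$-independent majorant whose integral is $2b_\alpha\sum_k|k|^\alpha|\hat u_k|$ by Tonelli.
\item $|B(z)|z^{-\alpha}\le\sum_k|\hat u_k|\,4\sin^2(kz/2)\,z^{-\alpha}$ has terms bounded by $2^{2-\alpha}|k|^\alpha$ that tend to zero, so dominated convergence gives $B(z)=o(z^\alpha)$.
\end{itemize}
Alternatively, follow the paper and integrate by parts only in the scalar identity $\int_0^\infty(1-\cos s)s^{-1-\alpha}\dd s=\frac1\alpha\int_0^\infty s^{-\alpha}\sin s\dd s=\frac{1}{2C_\alpha}$. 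Apply it mode by mode and sum using $\sum_k|\hat u_k|\int_0^\infty2(1-\cos kz)z^{-1-\alpha}\dd z=C_\alpha^{-1}\sum_k|k|^\alpha|\hat u_k|<\infty$. This also gives absolute convergence on $(0,\infty)$ and hence the principal value.

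As written, your argument covers only $u\in H^s(\T)$ with $s>\alpha+\frac12$, where Sobolev embedding does give $u'\in C^{0,\eta'}$ with $\eta'>\alpha-1$.

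Two smaller corrections:
\begin{itemize}
\item The identity you integrate by parts is $u'(x+z)-u'(x-z)=\partial_z[u(x+z)+u(x-z)-2u(x)]$, with a minus sign, not a plus.
\item Dissipativity of the conservation law also needs $\int_\T u\cdot uu_x\dd x=\int_\T(u^3/3)_x\dd x=0$, not only the sign of the fractional term.
\end{itemize}
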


The proof is given in \ref{app:proofs:riesz}, the hypersingular form in~(iii) is that of Kwa\'snicki~\cite{kwasnicki2017ten}, and the kernel form in~(i) is the periodic Riesz fractional gradient of Shieh and Spector~\cite{shieh2015new}.
The flux $-\nu\Ra u$ plays the role of the viscous flux $-\nu u_x$, but its kernel decays only like $z^{-\alpha}$, so the flux through a face depends on all of $\T$.

\begin{lemma}[A jump has an unbounded Riesz flux]\label{lem:jump}
Let $1<\alpha<2$, $x_0\in\R$, $c\in\R$ and $\varepsilon>0$.
Let $u$ be bounded on $\R$ with $u=c\,H_{x_0}+r$ on $(x_0-\varepsilon,x_0+\varepsilon)$, where $H_{x_0}=\mathbf{1}_{[x_0,\infty)}$ and $r$ is H\"older continuous of order $\eta>\alpha-1$ there with seminorm $[r]_\eta$.
We let $\Ra u$ be given by the integral over $z>0$ in Eq.~\eqref{eq:riesz-kernel}, which converges for every $x$ with $0<|x-x_0|<\varepsilon$.
Then the Riesz flux near the jump satisfies, for every $x$ with $0<|x-x_0|<\varepsilon/2$, the expansion
\begin{gather}
\Ra u(x)=\frac{c\,\kappa_\alpha}{\alpha-1}\,|x-x_0|^{1-\alpha}+E(x),\label{eq:jump}\\
\nonumber|E(x)|\le\kappa_\alpha\Big(\frac{2^\eta[r]_\eta(\varepsilon/2)^{1+\eta-\alpha}}{1+\eta-\alpha}+\frac{(2\|u\|_\infty+|c|)(\varepsilon/2)^{1-\alpha}}{\alpha-1}\Big).
\end{gather}
Two consequences for reconstructions on a coarse grid follow from this expansion.
\begin{enumerate}
\item[(i)] A periodic piecewise-constant reconstruction $\sum_jv_j\mathbf{1}_{C_j}$ has an unbounded Riesz flux near every face where $v_{j+1}\neq v_j$.
The same holds for any reconstruction with a jump at a face, such as the piecewise polynomials of MUSCL or WENO schemes, so pointwise interface fluxes require reconstructions that are continuous at the faces.
\item[(ii)] H\"older continuity of order $\eta>\alpha-1$ near the faces is sufficient for finite fluxes but not necessary.
Let $u$ be bounded and equal to the even cusp $|x-x_0|^\eta$ or to the odd cusp $\sgn(x-x_0)|x-x_0|^\eta$ on $(x_0-\varepsilon,x_0+\varepsilon)$, with $\eta>0$.
The even cusp has a finite flux at $x_0$ for every $\eta>0$, which vanishes when $u$ is even about $x_0$, while the odd cusp has a finite flux at $x_0$ if and only if $\eta>\alpha-1$.
\end{enumerate}
\end{lemma}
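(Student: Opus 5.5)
We will work directly from the one-sided kernel form $\Ra u(x)=\kappa_\alpha\int_0^\infty\frac{u(x+z)-u(x-z)}{z^\alpha}\dd z$ of Lemma~\ref{lem:riesz}, and fix $x$ with $d:=|x-x_0|\in(0,\varepsilon/2)$. By symmetry we treat $x>x_0$ in detail; the case $x<x_0$ follows by the reflection $z\mapsto$ and sign bookkeeping.

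\emph{Splitting.} We split the integral at $z=\varepsilon/2$. For $z\ge\varepsilon/2$ we only use $|u(x+z)-u(x-z)|\le2\|u\|_\infty$, which gives a contribution bounded by $\kappa_\alpha\,2\|u\|_\infty(\varepsilon/2)^{1-\alpha}/(\alpha-1)$. For $z<\varepsilon/2$ both points $x\pm z$ lie in $(x_0-\varepsilon,x_0+\varepsilon)$, since $d+z<\varepsilon$, so we may write $u=cH_{x_0}+r$ there. The $r$-part is controlled by $|r(x+z)-r(x-z)|\le[r]_\eta(2z)^\eta$, which gives $\kappa_\alpha 2^\eta[r]_\eta(\varepsilon/2)^{1+\eta-\alpha}/(1+\eta-\alpha)$; this is integrable precisely because $\eta>\alpha-1$.

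\emph{The Heaviside part.} For $x>x_0$, the difference $H(x+z)-H(x-z)$ equals $\mathbf 1_{\{z>d\}}$, so the truncated integral is $c\int_d^{\varepsilon/2}z^{-\alpha}\dd z=\frac{c}{\alpha-1}\big(d^{1-\alpha}-(\varepsilon/2)^{1-\alpha}\big)$. The leading term is exactly the one in \eqref{eq:jump}, and the remainder has modulus at most $|c|(\varepsilon/2)^{1-\alpha}/(\alpha-1)$. For $x<x_0$ the same integrand, together with the orientation of the kernel in Eq.~\eqref{eq:riesz-kernel}, produces $|x-x_0|^{1-\alpha}$ with the coefficient sign fixed by $H$ and the odd kernel. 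Collecting the three error pieces gives exactly the stated bound on $E$. Convergence of the integral for $0<d<\varepsilon$ follows from the same estimates: near $z=0$, $H$ is locally constant around $x$ and $r$ is H\"older, while the tail is bounded.

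\emph{Consequences.} For (i), the piecewise-constant reconstruction near a face $x_{j+1/2}$ has the form $cH+r$ with $c=v_{j+1}-v_j$ and $r$ locally constant, so $\eta$ may be taken arbitrary. Boundedness follows from periodicity, so $|\Ra u|\to\infty$ like $d^{1-\alpha}$. For piecewise polynomials, $r$ is smooth on each side once the jump is subtracted. For (ii), we compute at $x=x_0$. The even cusp gives $u(x_0+z)-u(x_0-z)=0$ for $z<\varepsilon$, leaving only a bounded tail, which vanishes if $u$ is globally even about $x_0$. The odd cusp gives $2z^\eta$, and $\int_0^{\varepsilon}z^{\eta-\alpha}\dd z$ is finite if and only if $\eta>\alpha-1$.

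The main obstacle is the careful sign and orientation bookkeeping for $x<x_0$, to confirm that the leading coefficient is as claimed on both sides, together with the domain check $d+z<\varepsilon$ that ensures the local representation holds on the whole truncated range.
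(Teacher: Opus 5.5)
Your proposal is correct and is essentially the paper's proof: you split at $z=\varepsilon/2$, bound the far part by $2\|u\|_\infty z^{-\alpha}$ and the H\"older part by $[r]_\eta(2z)^\eta z^{-\alpha}$, and integrate the jump part exactly; for (i) you take $c=v_{j+1}-v_j$ with $r$ locally constant, and for (ii) you evaluate the two cusps at $x_0$. The case $x<x_0$, which you flag as the main obstacle, needs no reflection or sign bookkeeping: for $z>0$ one has $H_{x_0}(x-z)=0$ and $H_{x_0}(x+z)=\mathbf{1}[z\ge|x-x_0|]$, so $H_{x_0}(x+z)-H_{x_0}(x-z)=\mathbf{1}[z>|x-x_0|]$ up to one point on both sides of $x_0$, which is how the paper obtains the same coefficient $c\kappa_\alpha/(\alpha-1)$ for both signs of $x-x_0$.
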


Lemma~\ref{lem:jump}, proved in \ref{app:proofs:jump}, thus excludes the reconstructions of high-resolution finite-volume schemes~\cite{harten1983high,vanleer1979towards,jiang1996efficient,borges2008improved}, which jump at the faces by design.
No Riemann solver can then select a finite face value between two infinite one-sided fluxes.
Our reconstructions are instead analytic on $\T$, a trigonometric histopolant enriched by rational functions with poles off the real axis (\Cref{sec:decoder} and~\ref{sec:atoms}).

The coarse grid has $N$ cells $C_j=[jh,(j+1)h)$ of width $h=2\pi/N$, with right faces $\xf{j}=(j+1)h$, centers $x_j=(j+\frac12)h$ and indices modulo $N$.
The coarse unknowns are the cell averages $v=\Ph u\in\R^N$, where the projection $\Ph$ acts as
\begin{equation}\label{eq:Ph}
(\Ph u)_j=\frac1h\int_{C_j}u\dd x,\qquad j=0,\dots,N-1.
\end{equation}
We use the discrete norm $\|v\|_h^2=h\sum_j|v_j|^2$, the discrete mass $m_h(v)=h\sum_jv_j$ and the backward difference $(\Dm g)_j=g_{j+1/2}-g_{j-1/2}$ of face quantities.
The coarse time step is $\Delta t$ with $t_n=n\Delta t$, and the experiments use $N=64$ and $\Delta t=0.02$ (\Cref{sec:setup}).

For the unforced problem, global smooth solutions are classical~\cite{droniou2003global,biler1998fractal}, shocks do not form~\cite{alibaud2007occurrence}, and solutions on the circle become smooth for $t>0$~\cite{kiselev2008blowup}. To our {knowledge}, no such result with forcing exists, so we make the following regularity assumption below.

\begin{assumption}[Regularity]\label{ass:regularity}
On each time window $[t_n,t_{n+1}]$, the solution of Eq.~\eqref{eq:pde} is unique and satisfies $u\in C^1([t_n,t_{n+1}],H^s(\T))$ for some $s>\alpha+\frac12$.
\end{assumption}

Under Assumption~\ref{ass:regularity}, $u$, $u_x$, $\Ra u$ and $\partial_x\Ra u$ are continuous in $(x,t)$, and Eq.~\eqref{eq:pde} holds pointwise.
We denote by $\Ev$ the exact solution map from $u(t_n)$ to $u(t_{n+1})$ and suppress its dependence on $t_n$ and the forcing.
A coarse scheme is a map $\Sh:\R^N\to\R^N$ that advances the cell averages by one step.
We call it conservative if it has the finite-volume form
\begin{equation}\label{eq:fvscheme}
\Sh(v)_j=v_j-\frac{\Delta t}{h}\Big(\hat F_{j+1/2}(v)-\hat F_{j-1/2}(v)\Big)+\int_{t_n}^{t_{n+1}}(\Ph f)_j\dd t,
\end{equation}
with numerical fluxes $\hat F_{j+1/2}:\R^N\to\R$ that may be nonlocal, aggregated over the stages of a Runge-Kutta method, or defined implicitly.

The one-step defect, or discretization commutator, measures how far the coarse scheme is from commuting with the projection,
\begin{equation}\label{eq:defect-def}
\dfh=\Ph\Ev-\Sh\Ph.
\end{equation}
Panel~(a) of \Cref{fig:schematic} shows the corresponding diagram, which commutes on a set $M$ of states exactly when $\dfh$ vanishes on $M$.
Since this fails here, we call the diagram closable on $M$ if some map $\widetilde{\mathcal S}_h:\R^N\to\R^N$ satisfies $\widetilde{\mathcal S}_h(\Ph u)=\Ph\Ev u$ for all $u\in M$, and $\mathcal M^\ast=\widetilde{\mathcal S}_h-\Sh$ is then the ideal additive closure of $\Sh$.

\begin{figure}[!tbp]
\centering
\includegraphics[width=\textwidth]{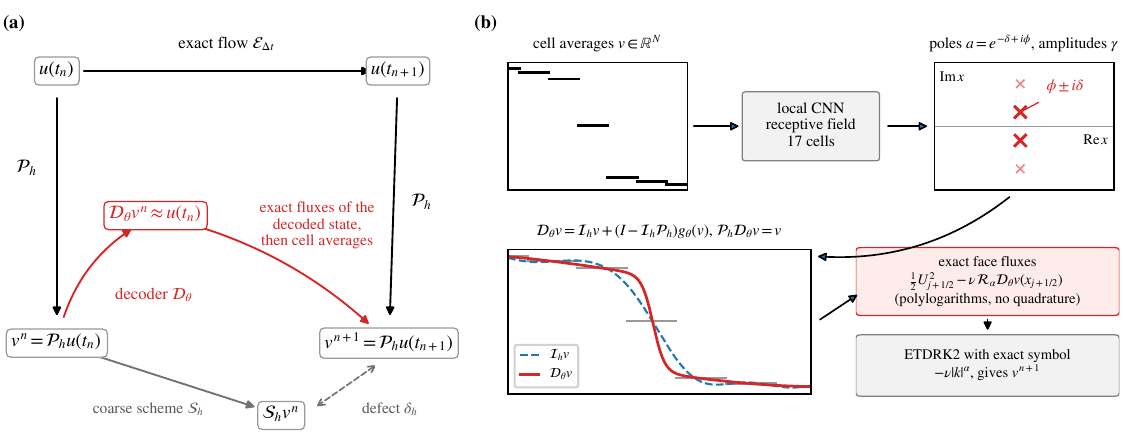}
\caption{The discretization commutator and the decoder-based closure, with panel~(b) computed on a synthetic front with 16 cells.
\textbf{(a)} The exact flow $\Ev$ acts on functions in the top row and the coarse scheme $\Sh$ on cell averages in the bottom row, and $\dfh$ is their mismatch.
The red path decodes the cell averages, evaluates the exact fluxes of the decoded state and averages them over the cells.
\textbf{(b)} A local CNN maps the cell averages to poles and amplitudes, and the decoded state $\Dth v$ (red) follows the front while $\Ih v$ (blue dashed) oscillates.
Exact face fluxes of $\Dth v$ and ETDRK2 \cite{cox2002exponential} with the exact symbol $-\nu|k|^\alpha$ then give $v^{n+1}$.}
\label{fig:schematic}
\end{figure}

\subsection{The discretization commutator and what a closure must learn}\label{sec:commutator}

We first give the structure of the defect $\dfh$, the exact target of any closure, from the finite-volume balance~\cite{leveque2002finite} with a nonlocal interface flux.

\begin{theorem}[Exact finite-volume defect identity]\label{thm:defect}
Let $u$ be a solution of Eq.~\eqref{eq:pde} that satisfies Assumption~\ref{ass:regularity}, and let $\Sh$ be a conservative coarse scheme of the form of Eq.~\eqref{eq:fvscheme} with arbitrary numerical fluxes.
Then the one-step defect is the discrete divergence of a face-flux error,
\begin{equation}\label{eq:defect}
\begin{gathered}
\dfh(u(t_n))_j=-\frac{\Delta t}{h}\Big(G_{j+1/2}-G_{j-1/2}\Big),\\
G_{j+1/2}=\frac{1}{\Delta t}\int_{t_n}^{t_{n+1}}F(u(t))(\xf{j})\dd t-\hat F_{j+1/2}(\Ph u(t_n)),
\end{gathered}
\end{equation}
where the nonlocal part $-\nu\Ra u(\xf{j})$ of the exact interface flux depends on the values of $u$ on all of $\T$.
The cell averages of the exact solution satisfy the semi-discrete identity
\begin{equation}\label{eq:semidiscrete}
\frac{\mathrm{d}}{\mathrm{d}t}(\Ph u)_j=-\frac1h\Big(F(u)(\xf{j},t)-F(u)(x_{j-1/2},t)\Big)+(\Ph f)_j.
\end{equation}
If $\Sh$ integrates the forcing only approximately from cell averages of $f$, for instance at stage times, then $\dfh$ contains in addition the forcing quadrature error.
This error has zero sum because $\sum_j(\Ph f)_j(t)=h^{-1}\int_\T f\dd x=0$ at every time, and it is therefore also a discrete divergence by \Cref{cor:flux}.
\end{theorem}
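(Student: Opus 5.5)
We first prove the semi-discrete identity~\eqref{eq:semidiscrete} and then integrate it over one time window and subtract the scheme~\eqref{eq:fvscheme}. By Assumption~\ref{ass:regularity}, $u\in C^1([t_n,t_{n+1}],H^s(\T))$ with $s>\alpha+\frac12$. Then $\sum_k|k|^\alpha|\hat u_k|<\infty$ by Cauchy--Schwarz, and Lemma~\ref{lem:riesz}(i)--(iii) gives three facts:
\begin{itemize}
\item $\Ra u$ is $C^1$ in $x$, with $\partial_x\Ra u=-(-\Delta)^{\alpha/2}u$;
\item $\Ra u$ is continuous in $(x,t)$;
\item Eq.~\eqref{eq:pde} holds pointwise as $\partial_tu+\partial_xF(u)=f$.
\end{itemize}
We integrate this over the cell $C_j=[x_{j-1/2},\xf{j}]$ and divide by $h$. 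Because $u_t$ is continuous in $(x,t)$ (the embedding $H^s\subset C^1$ holds uniformly in $t$), we may exchange $\frac{\mathrm d}{\mathrm dt}$ with $\int_{C_j}$. The fundamental theorem of calculus in $x$, applied to the $C^1$ function $F(u)(\cdot,t)$, then yields~\eqref{eq:semidiscrete}. The face values are single valued because $F(u)$ is continuous, unlike the situation of Lemma~\ref{lem:jump}. The nonlocal dependence of $-\nu\Ra u(\xf{j})$ on all of $\T$ is read off directly from the kernel representation~\eqref{eq:riesz-kernel}.

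Next we integrate~\eqref{eq:semidiscrete} in time from $t_n$ to $t_{n+1}$. The right-hand side is continuous in $t$, so this gives
\[
(\Ph\Ev u(t_n))_j=(\Ph u(t_n))_j-\frac{\Delta t}{h}\big(\bar F_{j+1/2}-\bar F_{j-1/2}\big)+\int_{t_n}^{t_{n+1}}(\Ph f)_j\dd t,
\]
where $\bar F_{j+1/2}=\frac1{\Delta t}\int_{t_n}^{t_{n+1}}F(u(t))(\xf{j})\dd t$. Uniqueness in the assumption ensures that $\Ev u(t_n)=u(t_{n+1})$ is well defined. We then subtract $\Sh(\Ph u(t_n))_j$ from~\eqref{eq:fvscheme}. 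The terms $v_j=(\Ph u(t_n))_j$ cancel, and so do the exact forcing integrals. What remains is $-\frac{\Delta t}{h}$ times the backward difference of $G_{j+1/2}=\bar F_{j+1/2}-\hat F_{j+1/2}(\Ph u(t_n))$, which is~\eqref{eq:defect}. Periodicity of indices makes $G_{-1/2}=G_{N-1/2}$ consistent.

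For the final remark we suppose $\Sh$ replaces $\int(\Ph f)_j\dd t$ by a quadrature $q_j$. The defect then gains the additive term $e_j=\int_{t_n}^{t_{n+1}}(\Ph f)_j\dd t-q_j$. The exact integral has zero sum over $j$ because $\sum_j(\Ph f)_j(t)=h^{-1}\int_\T f\dd x=0$. The quadrature has zero sum as well, since it is a linear combination of the values $(\Ph f)_j(\tau_s)$ at stage times $\tau_s$, and each of these sums to zero. Hence $\sum_je_j=0$. A zero-sum periodic vector is a discrete divergence: set $g_{j+1/2}=g_{-1/2}+\sum_{i\le j}e_i$, which is periodic exactly because the total sum vanishes. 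This is presumably the content of \Cref{cor:flux}, which we would invoke, or else reprove in one line as above. Absorbing $g$ into $G$ keeps the divergence form.

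The only delicate step is the pointwise validity and continuity of $\Ra u$ at the faces, which is needed to apply the fundamental theorem of calculus and to exchange the time derivative with the integral. This is exactly where Assumption~\ref{ass:regularity} and Lemma~\ref{lem:riesz}(iii) enter. Everything else is bookkeeping.
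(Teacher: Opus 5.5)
Your proof is correct and follows the paper's route. The paper integrates the pointwise law over the space--time cell $C_j\times[t_n,t_{n+1}]$ at once and reads off \eqref{eq:semidiscrete} from the same computation without the time integral, whereas you derive \eqref{eq:semidiscrete} first and then integrate in time before subtracting \eqref{eq:fvscheme} at $v=\Ph u(t_n)$, which is the same argument in the other order. Your treatment of the forcing-quadrature remark (zero sum, then a telescoped face potential that is periodic because the total vanishes) is a direct proof of Corollary~\ref{cor:flux}(i), which the paper simply invokes; like the paper, it tacitly assumes $j$-independent, or at least mean-preserving such as circulant, quadrature weights.
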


\Cref{thm:defect}, proved in \ref{app:proofs:defect}, places the closure problem at the cell faces, where the exact flux depends on $u$ on the whole torus while a coarse scheme sees only cell averages.
A related identity with a face-averaging filter and local fluxes underlies the large-eddy closures of Agdestein and Sanderse~\cite{agdestein2025discretize}.

\begin{corollary}[Flux form, mass and the size of the ideal closure]\label{cor:flux}
Let $\mathcal T_h=\dfh/\Delta t$ denote the truncation error per unit time, so that a scheme of order $q_t$ in time and $q$ in space has $\mathcal T_h=O(\Delta t^{q_t}+h^q)$ on smooth solutions.
\begin{enumerate}
\item[(i)] A vector $r\in\R^N$ can be written as $r=-\frac{\Delta t}{h}\Dm g$ if and only if $\sum_jr_j=0$, and then $g$ is unique up to an additive constant.
\item[(ii)] Every coarse scheme with $\sum_j\Sh(v)_j=\sum_jv_j+\sum_j\int_{t_n}^{t_{n+1}}(\Ph f)_j\dd t$ for all $v$ has the conservative form of Eq.~\eqref{eq:fvscheme}, with numerical fluxes that are unique up to a $v$-dependent constant.
In particular, \Cref{thm:defect} applies to the exponential Runge-Kutta step with the exact fractional symbol used in this work (\Cref{sec:training} and Proposition~\ref{prop:mass}).
\item[(iii)] The defect of an exact solution carries no mass, $m_h(\dfh(u))=0$, and every correction of the form $\mathcal M=-\frac{\Delta t}{h}\Dm g_\theta$ conserves $m_h$ for every value of the parameters $\theta$.
\item[(iv)] On a closable set, the ideal additive closure is exactly
\[
\mathcal M^\ast(\Ph u)=\dfh(u)=\Delta t\,\mathcal T_h(u)=-\frac{\Delta t}{h}\Dm G,
\]
with $G$ from Eq.~\eqref{eq:defect} for the numerical fluxes of item~(ii), which absorb the forcing quadrature error when $\Sh$ integrates the forcing only approximately.
The ideal closure therefore equals $\Delta t$ times the truncation error per unit time.
\item[(v)] If $t\mapsto F(u(t))(\xf{j})$ is twice continuously differentiable, then the face-flux error has the expansion
\begin{align*}
G_{j+1/2}&=\Big[F(u(t_n))(\xf{j})-\hat F_{j+1/2}(\Ph u(t_n))\Big]\\
&\quad+\frac{\Delta t}{2}\,\partial_tF(u)(\xf{j},t_n)+O(\Delta t^2).
\end{align*}
For a numerical flux evaluated at $t_n$ only, the bracket is the reconstruction and model error of the numerical flux and the second term is the time-averaging error.
\end{enumerate}
\end{corollary}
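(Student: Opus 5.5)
The five items are essentially linear algebra on the periodic difference operator together with \Cref{thm:defect}, so I will treat them in order, with (i) as the main tool.

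\emph{Step (i).} Summing $r_j=-\frac{\Delta t}{h}(g_{j+1/2}-g_{j-1/2})$ over $j$ telescopes to zero by periodicity, so zero sum is necessary. Conversely, if $\sum_j r_j=0$, I will define $g$ by the partial sums $g_{j+1/2}=g_{-1/2}-\frac{h}{\Delta t}\sum_{i=0}^{j}r_i$. The zero sum makes this consistent modulo $N$, since $g_{N-1/2}=g_{-1/2}$. Uniqueness up to a constant follows because the kernel of $\Dm$ on periodic face vectors consists of the constant vectors.

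\emph{Step (ii).} I will apply (i) to $r(v)=\Sh(v)-v-\int_{t_n}^{t_{n+1}}\Ph f\dd t$. The hypothesis says that $r(v)$ has zero sum for every $v$, so $r(v)=-\frac{\Delta t}{h}\Dm\hat F(v)$, with $\hat F(v)$ unique up to a constant depending on $v$. For the exponential Runge--Kutta step I only need that the linear multiplier $e^{-\nu|k|^\alpha\Delta t}$ and its $\varphi$-functions fix the $k=0$ mode, and that the advective flux enters in divergence form. Mass is then preserved up to the forcing integral, and I will cite Proposition~\ref{prop:mass} for the details.

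\emph{Step (iii).} The first claim follows from \Cref{thm:defect}, which writes $\dfh$ as $-\frac{\Delta t}{h}\Dm G$, plus a forcing quadrature error of zero sum. Hence $m_h(\dfh(u))=h\sum_j\dfh(u)_j=0$ by the telescoping argument of (i). The same telescoping shows that any correction of the form $-\frac{\Delta t}{h}\Dm g_\theta$ has zero sum, whatever the value of $\theta$. Adding it to $\Sh$ therefore leaves $m_h$ unchanged.

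\emph{Step (iv).} On a closable set, I will write $\widetilde{\mathcal S}_h(\Ph u)=\Ph\Ev u$. Then $\mathcal M^\ast(\Ph u)=\Ph\Ev u-\Sh\Ph u=\dfh(u)$ by the definition in Eq.~\eqref{eq:defect-def}. This equals $\Delta t\,\mathcal T_h$ by definition of $\mathcal T_h$, and the flux form is \Cref{thm:defect}. When the forcing is integrated only approximately, I will take the fluxes from (ii): the total residual then has zero sum, and (i) absorbs the quadrature error into $G$.

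\emph{Step (v).} I will Taylor-expand $\phi(t)=F(u(t))(\xf{j})$ about $t_n$ to get $\phi(t)=\phi(t_n)+(t-t_n)\phi'(t_n)+O((t-t_n)^2)$. Averaging over $[t_n,t_{n+1}]$ gives $\phi(t_n)+\frac{\Delta t}{2}\phi'(t_n)+O(\Delta t^2)$, where the remainder is bounded by $\|\phi''\|_\infty\Delta t^2/6$. Subtracting $\hat F_{j+1/2}$ then yields the stated expansion.

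The only step needing care is (ii) for the exponential integrator, namely checking that the scheme's sum identity holds exactly rather than only approximately. I expect this to be the main obstacle, but it reduces to the zero-mode invariance of the linear multiplier, which is why I defer it to Proposition~\ref{prop:mass}.
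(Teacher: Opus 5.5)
Your proposal is correct and matches the paper's proof item by item: telescoping and the kernel of $\Dm$ for (i), item (i) applied to $\Sh(v)-v-\int_{t_n}^{t_{n+1}}\Ph f\dd t$ for (ii), \Cref{thm:defect} with telescoping for (iii), the definition of closability for (iv), and a Taylor expansion of the time average for (v). The only differences are small. In (i) you build $g$ explicitly by partial sums, whereas the paper counts the dimension of the range of $\Dm$. In the ETDRK2 remark of (ii), Proposition~\ref{prop:mass} gives the trapezoidal rule for the forcing mass rather than the exact integral, and the two agree in the sum identity only because $\sum_j(\Ph f)_j(t)=0$ at every time.
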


Thus, the flux form and discrete mass conservation are equivalent on a periodic grid (\ref{app:proofs:flux}), so the flux form of a learned correction adds only the locality of $g_\theta$.
For a flux evaluated at $t_n$, item~(v) in \Cref{cor:flux} leaves the time-averaging error to the integrator and isolates the reconstruction error $F(u)(\xf{j})-F(\mathcal D\Ph u)(\xf{j})$ of a decoded state $\mathcal Dv$.
A multi-stage integrator approximates the time-averaging term to its own order.
If $\mathcal Dv$ has the correct cell averages, the reconstruction error is governed mainly by the decoding error near the faces, since the far-field flux of an error with zero cell averages cancels to leading order (\Cref{sec:atoms}).

Learned closures usually rely on local stencils or convolutional networks with a finite receptive field~\cite{barsinai2019learning,kochkov2021machine}, and the next proposition quantifies which part of the flux they cannot see.

\begin{proposition}[Near field, far field and local surrogates]\label{prop:nearfar}
Let $1<\alpha<2$, and let $C_\alpha$ and $\kappa_\alpha$ be the constants of Lemma~\ref{lem:riesz}.
\begin{enumerate}
\item[(i)] Let $u\in C^4(\T)$ be extended periodically to $\R$, and let the cell width $h$ tend to zero. Then
\begin{equation}\label{eq:farfield}
C_\alpha\int_{|z|>h}\frac{u(x)-u(x+z)}{|z|^{1+\alpha}}\dd z=(-\Delta)^{\alpha/2}u(x)+\frac{C_\alpha}{2-\alpha}\,u''(x)\,h^{2-\alpha}+O(h^{4-\alpha}),
\end{equation}
so the far field is the whole operator up to $O(h^{2-\alpha})$, and the near field vanishes only like $h^{2-\alpha}$.
\item[(ii)] Suppose that the advective part of the numerical flux is consistent of order $q$, and that its fractional part is consistent of order $q$ with a surrogate $-\nu\tilde{\mathcal R}u$ instead of $-\nu\Ra u$, for instance a local diffusion.
Consistency is meant in the flux-difference sense, in which the face errors at $t_n$ and their discrete divergences $\frac1h\Dm$ are both $O(h^q)$.
If, in addition, $(\Ra-\tilde{\mathcal R})u$ is continuously differentiable in $x$ and $\partial_tF(u)$ and $\partial_t\partial_xF(u)$ are bounded, then the face-flux error and the truncation error satisfy
\begin{gather*}
G_{j+1/2}=-\nu(\Ra-\tilde{\mathcal R})u(\xf{j})+O(h^q+\Delta t),\\
\mathcal T_h=\nu\Ph\big[\partial_x(\Ra-\tilde{\mathcal R})u\big]+O(h^q+\Delta t).
\end{gather*}
The leading term of $\mathcal T_h$ tends to $\nu\,\partial_x(\Ra-\tilde{\mathcal R})u$ as $h\to0$, which is generically nonzero, so this is an $O(1)$ model error and not a truncation error.
\item[(iii)] For $u\in C^1(\T)$, the part of $\Ra u(x)$ that comes from $|y-x|<\rho$ is at most $2\kappa_\alpha\|u'\|_\infty\rho^{2-\alpha}/(2-\alpha)$ in absolute value.
At a face $x$, the part of $\Ra u(x)$ from $|y-x|\ge Rh$ equals $\Ra u(x)-O((Rh)^{2-\alpha})$ and tends to $\Ra u(x)$ as $h\to0$ at fixed $R$.
It depends on the values of $u$ in the $R$ cells on each side of $x$ only through their periodic images, whose contribution is $O(\|u'\|_\infty(Rh)^3)$.
\end{enumerate}
\end{proposition}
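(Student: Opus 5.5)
The plan is to prove the three items separately. Items~(i) and~(iii) use the kernel representations of Lemma~\ref{lem:riesz}, and item~(ii) uses the defect identity of \Cref{thm:defect} together with \Cref{cor:flux}(v).

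\emph{Item~(i).} Lemma~\ref{lem:riesz}(iii) gives
\[
(-\Delta)^{\alpha/2}u(x)=C_\alpha\,\mathrm{P.V.}\!\int_\R\frac{u(x)-u(x+z)}{|z|^{1+\alpha}}\dd z,
\]
and this applies to $u\in C^4$, since then $\sum_k|k|^\alpha|\hat u_k|<\infty$. The far-field integral is therefore $(-\Delta)^{\alpha/2}u(x)$ minus the near-field part $C_\alpha\,\mathrm{P.V.}\!\int_{|z|<h}(u(x)-u(x+z))|z|^{-1-\alpha}\dd z$. On the symmetric interval $(-h,h)$ the odd Taylor terms cancel, so I will write $u(x)-u(x+z)=-u'(x)z-\tfrac12u''(x)z^2-\tfrac16u'''(x)z^3+R_4(z)$ with $|R_4|\le\|u^{(4)}\|_\infty z^4/24$. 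The near field then becomes
\[
-\tfrac12C_\alpha u''(x)\int_{|z|<h}|z|^{1-\alpha}\dd z+O\Big(\int_0^h z^{3-\alpha}\dd z\Big)=-\frac{C_\alpha}{2-\alpha}u''(x)h^{2-\alpha}+O(h^{4-\alpha}).
\]
Subtracting this from the whole operator gives Eq.~\eqref{eq:farfield}.

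\emph{Item~(ii).} I would apply \Cref{cor:flux}(v). The face flux error splits as $[F(u)-\hat F](\xf{j})+O(\Delta t)$, where the $O(\Delta t)$ term uses the bound on $\partial_tF$. Inside the bracket, the advective error is $O(h^q)$. The fractional error splits as $-\nu\Ra u+\nu\tilde{\mathcal R}u$, which is the exact model mismatch, plus the surrogate's own consistency error $O(h^q)$. Taking $-\frac1h\Dm$ and using the flux-difference sense of consistency then gives $\mathcal T_h=\dfh/\Delta t=-\frac1h\Dm G$. The discrete divergence of the mismatch is $\frac1h\int_{C_j}\partial_x(\ldots)\dd x=\Ph[\partial_x(\Ra-\tilde{\mathcal R})u]$ exactly. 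The time-averaging term is handled by the bound on $\partial_t\partial_xF$, since its divergence is $O(\Delta t)$ by the mean value theorem in $x$. As $h\to0$, cell averages of a continuous function converge to the function itself, which gives the $O(1)$ model-error claim.

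\emph{Item~(iii).} From the first form in Eq.~\eqref{eq:riesz-kernel}, the contribution of $z<\rho$ is bounded by $\kappa_\alpha\int_0^\rho 2\|u'\|_\infty z^{1-\alpha}\dd z$, because $|u(x+z)-u(x-z)|\le2\|u'\|_\infty z$. This equals the stated $2\kappa_\alpha\|u'\|_\infty\rho^{2-\alpha}/(2-\alpha)$. Setting $\rho=Rh$ gives the statement about the far part. For the periodic-image claim, the values of $u$ near $x$ reappear at $z\approx2\pi m\pm\zeta$ with $|\zeta|<Rh$, $m\ge1$. There the integrand is $(u(x+\zeta)-u(x-\zeta))\,z^{-\alpha}$ up to shifts, which is antisymmetric in $\zeta$. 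I would pair $\zeta$ with $-\zeta$ and expand the kernel $(2\pi m+\zeta)^{-\alpha}$ around $2\pi m$. The leading term integrates an odd function against an even weight, and I need to check that it cancels. The next term contributes $\int_{|\zeta|<Rh}O(\|u'\|_\infty|\zeta|)\cdot|\zeta|\,m^{-\alpha-1}\dd\zeta=O(\|u'\|_\infty(Rh)^3)$, and the sum over $m$ converges.

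The main obstacle is this last bookkeeping step. I have to identify precisely which part of the integral is attributed to the local cells through their images, and I have to confirm the cancellation of the zeroth-order kernel term, since otherwise only $(Rh)^2$ would follow. If that cancellation fails, I would rederive the bound by comparing against the same integral with $u$ replaced by its local linearization.
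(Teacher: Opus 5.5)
Your proposal is correct and follows the paper's proof essentially step by step: a symmetrized fourth-order Taylor expansion of the near field for (i); for (ii), the split of $G$ into the instantaneous face error plus a time-averaging error controlled by $\partial_tF$ and $\partial_t\partial_xF$, together with $\frac1h\Dm[g(x_{\cdot+1/2})]=\Ph(\partial_xg)$ (a direct mean-value argument rather than \Cref{cor:flux}(v), whose $C^2$-in-time hypothesis is not assumed here); and for (iii), the near-part bound followed by the periodic-image contributions. The cancellation you flag as the main obstacle is immediate, because $u(x+\zeta)-u(x-\zeta)$ is odd in $\zeta$ while the constant weight $(2\pi m)^{-\alpha}$ is even; the paper packages exactly this as $\kappa_\alpha\int_0^{Rh}D(s)\,(u(x+s)-u(x-s))\dd s$ with the odd image kernel $D$ from the proof of \Cref{lem:dipole} and $|D(s)|\le c|s|$ (for $Rh\le\pi$), which yields the $(Rh)^3$ bound.
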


The proof is given in \ref{app:proofs:nearfar}. By part~(i) the far field dominates the fractional flux when $\alpha$ is close to one.
Truncated fractional centered differences~\cite{ortigueira2006riesz,celik2012crank} are local surrogates in the sense of part~(ii), and by part~(iii) learned models with a finite receptive field miss the far part in the same way.
We therefore evaluate the Riesz flux of a global reconstruction exactly by the fast Fourier transform and let a local network with a receptive field of $17$ cells correct only the reconstruction (\Cref{sec:locality}).

To ask whether the cell averages determine the defect, we work in a separable Banach space $U$ of states that is continuously embedded in $L^2(\T)$, for instance $H^s(\T)$ with $s>\alpha+\frac12$, so that $\Ph$ is continuous on $U$.
For fixed $t_n$, forcing and coefficients, $\Ev$ is defined on the set $\mathrm{dom}\,\Ev\subset U$ where Assumption~\ref{ass:regularity} holds, which we assume to be Borel, with $\Ev$ Borel on it.
For a decoder $\mathcal D:\R^N\to U$ we write $\Phi_{\mathcal D}(v)=-\frac1h\Dm\big[F(\mathcal Dv)(x_{\cdot+1/2})\big]$ for the reconstruct-evolve-average right-hand side of \Cref{sec:decoder}.

\begin{proposition}[Closability and the $L^2$-optimal closure]\label{prop:closable}
Let $M\subset\mathrm{dom}\,\Ev$ be a set of states, and let $\Sh$ be a coarse scheme with one-step defect $\dfh$.
Items~(i) to~(iii) concern a fixed forcing and fixed coefficients, and items~(i) and~(ii) also concern a fixed time $t_n$, except for the composition of step maps at the end of item~(i).
\begin{enumerate}
\item[(i)] \emph{(Fiber condition)} The diagram is closable on $M$ if and only if $u,w\in M$ and $\Ph u=\Ph w$ imply $\Ph\Ev u=\Ph\Ev w$.
In that case $\widetilde{\mathcal S}_h$ is unique on $\Ph(M)$ and $\mathcal M^\ast(\Ph u)=\dfh(u)$.
If the step maps $\mathcal E^{(k)}$ from $t_k$ to $t_{k+1}$ all send $M$ into itself and satisfy the fiber condition on $M$, then their closing maps $\widetilde{\mathcal S}_h^{(k)}$ satisfy
\[
\widetilde{\mathcal S}_h^{(n-1)}\circ\cdots\circ\widetilde{\mathcal S}_h^{(0)}\circ\Ph=\Ph\circ\mathcal E^{(n-1)}\circ\cdots\circ\mathcal E^{(0)}\quad\text{on }M\text{ for every }n.
\]
\item[(ii)] \emph{(Ideal decoder)} If $\Ph$ is injective on $M$, the decoder $\mathcal D^\ast=(\Ph|_M)^{-1}$ gives $\widetilde{\mathcal S}_h=\Ph\Ev\mathcal D^\ast=\Sh+\dfh\circ\mathcal D^\ast$ on $\Ph(M)$.
If $M$ is compact in a topology in which $\Ph$ is continuous, then $\mathcal D^\ast$ is continuous.
If $M$ is bounded in $L^2$ with $\|u-w\|_{L^2}\le C|\Ph u-\Ph w|^\eta$ on $M$ for some $\eta\in(0,1]$, $\Ph\Ev$ is Lipschitz from $L^2$ and $\Sh$ is Lipschitz on $\Ph(M)$, then $\widetilde{\mathcal S}_h$ and $\mathcal M^\ast$ are $\eta$-H\"older on $\Ph(M)$.
Both maps then extend to $\eta$-H\"older maps on all of $\R^N$.
\item[(iii)] \emph{(Semi-discrete version)} A map $\Phi$ with $\frac{\mathrm{d}}{\mathrm{d}t}\Ph u=\Phi(\Ph u)+\Ph f(t)$ along all trajectories in $M$ with the same forcing exists if and only if $\Ph u=\Ph w$ implies $\Dm[F(u)(x_{\cdot+1/2})]=\Dm[F(w)(x_{\cdot+1/2})]$ for $u,w\in M$.
If $\Ph$ is injective on $M$, the reconstruct-evolve-average right-hand side $\Phi_{\mathcal D^\ast}$ is exact on $\Ph(M)$.
\item[(iv)] \emph{($L^2$-optimal closure)} Let parameters $\xi$ in a Polish space $\Xi$ collect $t_n$, the forcing sample and the coefficients $\alpha$ and $\nu$, on which $\Ev$ and $\Sh$ may depend.
Let $\mu$ be a Borel probability measure on $U\times\Xi$, for instance the occupation measure of the training trajectories, and let $V$ be a Borel function of $(u,\xi)$.
Examples of such coarse variables are $V=\Ph u$ and the network input $V=(\Ph u,\alpha,\nu)$.
Let $Y=Y(u,\xi)\in L^2(\mu,\R^d)$ be any Borel target, such as the defect $\dfh$, the exact face fluxes $(F(u)(\xf{j}))_j$ or the face values.
Let $m^\star$ be a Borel map with $m^\star(V)=\EE[Y\mid V]$, which exists by the Doob-Dynkin lemma.
Then, with $|\cdot|$ the Euclidean norm on $\R^d$ or the norm of any other inner product, every Borel map $m$ with $m(V)\in L^2$ satisfies the Pythagorean identity of conditional expectation,
\[
\EE|Y-m(V)|^2=\EE|Y-m^\star(V)|^2+\EE|m^\star(V)-m(V)|^2.
\]
\item[(v)] \emph{(Irreducible error)} The minimum of the mean-square error over all such $m$ is $J_{\mathrm{irr}}(Y)=\EE\,\tr\Cov(Y\mid V)$, with the trace taken in the same inner product.
It involves no network, and it does not change when a square-integrable Borel function of $V$ is added to $Y$, so for $Y=\dfh$ it is the same for all schemes that differ by such a function.
It vanishes if and only if $Y$ is almost surely a Borel function of $V$.
If $\mu$ is concentrated on one value of $\xi$, with $\mu(\mathrm{dom}\,\Ev)=1$, $V=\Ph u$, $Y=\dfh$ and a Borel scheme $\Sh$ with $\Sh(V)\in L^2$, this is equivalent to closability on a set of full $\mu$-measure.
In that case $\widetilde{\mathcal S}_h=\Sh+m^\star$ holds almost everywhere with respect to $\mu\circ\Ph^{-1}$.
\end{enumerate}
\end{proposition}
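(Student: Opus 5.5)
The proof handles each of the five items separately. All of them are elementary once the right definitions are written out.

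For item~(i), I would first prove necessity. If $\widetilde{\mathcal S}_h$ closes the diagram and $\Ph u=\Ph w$, then $\Ph\Ev u=\widetilde{\mathcal S}_h(\Ph u)=\widetilde{\mathcal S}_h(\Ph w)=\Ph\Ev w$. For sufficiency, the fiber condition makes $\widetilde{\mathcal S}_h(v):=\Ph\Ev u$ for any $u\in M$ with $\Ph u=v$ well defined on $\Ph(M)$. Off $\Ph(M)$ I set it equal to $\Sh$. Uniqueness on $\Ph(M)$ is forced by the defining relation, and $\mathcal M^\ast(\Ph u)=\Ph\Ev u-\Sh\Ph u=\dfh(u)$ by Eq.~\eqref{eq:defect-def}. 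The composition identity follows by induction on $n$: if it holds for $n$, then $\mathcal E^{(n-1)}\circ\cdots\circ\mathcal E^{(0)}u\in M$, and applying the closing relation for step $n$ gives it for $n+1$.

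For item~(ii), $\mathcal D^\ast$ is well defined by injectivity. Substituting $u=\mathcal D^\ast v$ into item~(i) gives $\widetilde{\mathcal S}_h(v)=\Ph\Ev\mathcal D^\ast v=\Sh v+\dfh(\mathcal D^\ast v)$. Continuity comes from the standard fact that a continuous bijection from a compact space onto a Hausdorff space ($\Ph(M)\subset\R^N$) is a homeomorphism. For the H\"older claim, I would estimate
\[
|\Ph\Ev u-\Ph\Ev w|\le L\|u-w\|_{L^2}\le LC|\Ph u-\Ph w|^\eta .
\]
Then $\mathcal M^\ast=\widetilde{\mathcal S}_h-\Sh$ is $\eta$-H\"older as well. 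Here a Lipschitz map restricted to the bounded set $\Ph(M)$ is $\eta$-H\"older, because $|a-b|\le\operatorname{diam}^{1-\eta}|a-b|^\eta$, and $L^2$-boundedness of $M$ makes $\Ph(M)$ bounded. The extension to $\R^N$ is done componentwise with the McShane formula $\tilde g(x)=\inf_{y}\big(g(y)+K|x-y|^\eta\big)$, which works because $|x-y|^\eta$ is a metric for $\eta\le1$.

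Item~(iii) mirrors item~(i), using the semi-discrete identity \eqref{eq:semidiscrete} of \Cref{thm:defect}, $\frac{\mathrm d}{\mathrm dt}\Ph u=-\frac1h\Dm[F(u)(x_{\cdot+1/2})]+\Ph f$. A $\Phi$ exists exactly when the flux-difference term is a function of $\Ph u$. One direction is immediate, and for the other I define $\Phi$ on the fibers. Exactness of $\Phi_{\mathcal D^\ast}$ follows because $\mathcal D^\ast\Ph u=u$. One subtlety needs care: the condition must be imposed at states on trajectories, so I read $M$ as the set of states visited along those trajectories.

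For items~(iv) and~(v), I use Doob--Dynkin to obtain a Borel $m^\star$. Expanding $|Y-m^\star+m^\star-m|^2$, the cross term vanishes by the tower property, since $m^\star(V)-m(V)$ is $\sigma(V)$-measurable and $\EE[Y-m^\star(V)\mid V]=0$. Integrability of the cross term comes from Cauchy--Schwarz in $L^2$. The minimum is therefore $\EE|Y-\EE[Y\mid V]|^2=\EE\,\tr\Cov(Y\mid V)$. Adding $g(V)$ to $Y$ shifts $m^\star$ by $g$ and leaves the residual unchanged. The minimum vanishes if and only if $Y=m^\star(V)$ almost surely, which is the "Borel function of $V$" statement.

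The main obstacle is the final equivalence with closability on a full-measure set. Given $\dfh=m^\star(\Ph u)$ a.s., I take the full-measure Borel set $M=\{u:\dfh(u)=m^\star(\Ph u)\}\cap\mathrm{dom}\,\Ev$. On $M$, $\Ph\Ev u=\Sh\Ph u+m^\star(\Ph u)$ depends only on $\Ph u$, so the fiber condition of item~(i) holds and $\widetilde{\mathcal S}_h=\Sh+m^\star$ there. Conversely, closability on a full-measure $M$ gives $\dfh=\mathcal M^\ast(\Ph u)$ on $M$. The delicate point in this direction is that $\mathcal M^\ast$ must be Borel. I would handle it by replacing $\mathcal M^\ast$ with $m^\star$: the two agree $\mu\circ\Ph^{-1}$-a.e. because $\dfh$ is then $\sigma(\Ph u)$-measurable modulo null sets. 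Establishing that measurability is the step I would check most carefully, via the completion of $\mu$ and Doob--Dynkin on the completed $\sigma$-algebra.
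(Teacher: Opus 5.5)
Your proofs of items (i) to (iv), of the Pythagorean, invariance and vanishing parts of item (v), and of the forward direction of the final equivalence (via $M_1=\{u\in\mathrm{dom}\,\Ev:\dfh(u)=m^\star(\Ph u)\}$) follow the paper's proof essentially step for step. The gap is the step you flag yourself, in the converse direction. You assert that $\dfh$ is $\sigma(\Ph u)$-measurable modulo null sets, but completion plus Doob--Dynkin on the completed $\sigma$-algebra does not give this. Doob--Dynkin presupposes that measurability, and completing $\sigma(\Ph u)$ only adds subsets of null sets.

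What you need is this. Let $\dfh(u)=g(\Ph u)$ on a full-measure Borel set $M_0$, where $g=\widetilde{\mathcal S}_h-\Sh$ is arbitrary and possibly non-Borel. Then for every Borel $B$ you must exhibit a Borel $A\subset\R^N$ such that $\{\dfh\in B\}$ and $\{\Ph u\in A\}$ differ by a $\mu$-null set. Two ideas are missing for this.

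First, fix $E=M_0\cap\dfh^{-1}(B)$. The closing relation makes $E$ saturated in $M_0$: if $u\in M_0$ and $\Ph u=\Ph u'$ with $u'\in E$, then $\dfh(u)=g(\Ph u)=\dfh(u')\in B$. Hence $M_0\cap\Ph^{-1}(\Ph(E))=E$.

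Second, $\Ph(E)$ need not be Borel, so you need a measurability argument for images. The paper uses tightness of $\mu$ on the Polish space $U$ to choose compact $K_n\subset E$ with $\mu(E\setminus K_n)<1/n$. It then sets $A=\bigcup_n\Ph(K_n)$, which is $\sigma$-compact and hence Borel, and gets $\bigcup_nK_n\subset M_0\cap\Ph^{-1}(A)\subset E$. Alternatively, you can use that $\Ph(E)$ is analytic and therefore universally measurable.

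The same issue returns in the final claim that every closing map equals $\Sh+m^\star$ almost everywhere with respect to $\mu\circ\Ph^{-1}$. Neither $\widetilde{\mathcal S}_h$ nor $\Ph(M_0\cap M_1)$ need be Borel, so almost everywhere has to be certified by a Borel set $A_2\subset\Ph(M_0\cap M_1)$ with $\mu(\Ph^{-1}(A_2))=1$. The paper again builds it from compact subsets of $M_0\cap M_1$. With these two ingredients your outline closes; without them the converse is only asserted.
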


The proof is given in \ref{app:proofs:closable}, and parts~(iv) and~(v) are the optimal prediction framework of Chorin, Hald and Kupferman~\cite{chorin2000optimal,chorin2002optimal}.
The ideal sub-grid model of large-eddy simulation is the same conditional average~\cite{langford1999optimal}, and Beck et al.~\cite{beck2019deep} studied closures whose target includes the discretization operator itself.
When the cell averages determine the state, part~(ii) shows that the exact coarse scheme reconstructs, evolves exactly and averages, as Godunov-type schemes do~\cite{leveque2002finite}, which motivates the decoder-only closure of \Cref{sec:decoder}.
For $Y=\dfh$, with $G$ shifted to zero mean over the faces, part~(iv) gives $m^\star(V)=-\frac{\Delta t}{h}\Dm\EE[G\mid V]$ in the flux form of \Cref{cor:flux}.

\begin{remark}[Dependence on the data distribution]\label{rem:distribution}
If the data are not closable almost surely, the optimal closure $m^\star$ depends on the measure $\mu$ of the initial data, forcing, coefficients and time windows.
A closure that attains the floor of Proposition~\ref{prop:closable} on one distribution is then biased on another.
Our $\mu$ contains one diffusivity per fractional order and the forcing wavenumbers 3 to 6 only, so a change of $\nu$ at fixed $\alpha$, or of the wavenumbers, moves the target itself and not only the network inputs.
A history of cell averages cannot raise the floor, by the tower property, but the closure studied here uses the current cell averages only.
\end{remark}

\subsection{A decoder-only closure}\label{sec:decoder}

Since the optimal closure of the flux defect depends on the sub-cell state (Theorem~\ref{thm:defect}, \Cref{prop:closable}), we learn only a decoder $\Dth$ from cell averages to functions on $\T$.
The encoder is the fixed projection $\Ph$, and $\Dth$ is tied to the grid, so we make no claim of discretization invariance.

\paragraph{The trigonometric histopolant}
For even $N$, let $\T_N$ be the span of the modes $e^{ikx}$ with $|k|\le N/2-1$ and of the Nyquist mode $\sin(Nx/2)$, which vanishes at every face.
The base decoder is the trigonometric histopolant $\Ih v$, the unique element of $\T_N$ whose exact cell averages are the data $v\in\R^N$.
With the discrete Fourier coefficients $\hat v_k=N^{-1}\sum_{j=0}^{N-1}v_je^{-ikjh}$ and $\mu_k=(e^{ikh}-1)/(ikh)$ of $\Ph$, the histopolant is
\begin{equation}\label{eq:histopolant}
\Ih v(x)=\sum_{|k|\le N/2-1}\frac{\hat v_k}{\mu_k}\,e^{ikx}+\frac{\pi}{2}\,\hat v_{N/2}\,\sin\Big(\frac N2x\Big).
\end{equation}
Since $\mu_k\ne0$ for $|k|\le N/2-1$ and the Nyquist mode has cell averages $\frac2\pi(-1)^j$, we have $\Ph\Ih=I$. 
For odd $N$ the Nyquist term is absent and $\T_N$ is spanned by the modes with $|k|\le(N-1)/2$.
The smooth $\Ih v$ avoids the obstruction of \Cref{lem:jump}, but unlike WENO, it is global and oscillates over several cells near a sub-cell front.

\paragraph{Decoders that preserve cell averages}
The next proposition characterizes all decoders with $\Ph\mathcal D=I$ and the Godunov-type scheme~\cite{leveque2002finite} in which they are used.

\begin{proposition}[Cell-average preserving decoders]\label{prop:decoders}
Let $\eta\in(\alpha-1,1]$, let $X=C^{0,\eta}(\T)$, and let $\Ih:\R^N\to X$ be any linear map with $\Ph\Ih=I$.
\begin{enumerate}
\item[(i)] For any map $g_\theta:\R^N\to X$, the decoder $\Dth v=\Ih v+(I-\Ih\Ph)g_\theta(v)$ satisfies $\Ph\Dth v=v$ for every $v\in\R^N$ and every value of $\theta$.
Conversely, every decoder $\mathcal D:\R^N\to X$ with $\Ph\mathcal D=I$ has this form, with the choice $g_\theta=\mathcal D$.
\item[(ii)] For every such decoder, the reconstruct-evolve-average semi-discretization, which we call the REA scheme,
\begin{equation}\label{eq:rea}
\frac{\mathrm{d}v}{\mathrm{d}t}=\Phi_{\mathcal D}(v)+\Ph f,\qquad \Phi_{\mathcal D}(v)=-\frac1h\Dm\big[F(\mathcal Dv)(x_{\cdot+1/2})\big],
\end{equation}
is well defined, and it is conservative in the sense that $h\sum_j\Phi_{\mathcal D}(v)_j=0$ for every $v$.
\item[(iii)] If $\Ih$ is the histopolant of Eq.~\eqref{eq:histopolant}, the fractional part of $\Phi_{\Ih}$ is the exact coarse fractional operator, in the sense that
\[
-\frac1h\Dm\big[-\nu\Ra\Ih v(x_{\cdot+1/2})\big]=\Ph\big[-\nu(-\Delta)^{\alpha/2}\Ih v\big]=\mathcal F_N^{-1}\big(-\nu|k|^\alpha\hat v_k\big),
\]
where $\mathcal F_N^{-1}$ denotes the inverse discrete Fourier transform over the wavenumbers $-N/2<k\le N/2$.
\end{enumerate}
\end{proposition}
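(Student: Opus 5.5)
Part~(i) is linear algebra. Since $\Ph\Ih=I$, we get $\Ph(I-\Ih\Ph)=\Ph-\Ph\Ih\Ph=\Ph-\Ph=0$ on $X$. Hence $\Ph\Dth v=\Ph\Ih v+0=v$ for every $v$ and every $\theta$. We also need $\Dth v\in X$; this holds because $\Ih$ maps into $X$, $g_\theta$ maps into $X$, and $\Ph$ is well defined on $X\subset L^\infty$. For the converse, take $\mathcal D$ with $\Ph\mathcal D=I$ and set $g_\theta=\mathcal D$. Then $\Ih v+\mathcal Dv-\Ih\Ph\mathcal Dv=\Ih v+\mathcal Dv-\Ih v=\mathcal Dv$.

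For part~(ii), first check that the face values of $F(\mathcal Dv)$ are defined. Write $w=\mathcal Dv\in C^{0,\eta}(\T)$ with $\eta>\alpha-1$. Then $\frac12w^2$ is continuous, so its value at each face is defined. By Lemma~\ref{lem:riesz}(ii), the integral over $z>0$ in Eq.~\eqref{eq:riesz-kernel} converges absolutely at every $x$ and defines a continuous function $\Ra w$. So $F(w)(x_{j+1/2})$ is a finite real number for each $j$, and $\Phi_{\mathcal D}(v)$ is well defined. Conservation follows by telescoping over the periodic indices: $h\sum_j\Phi_{\mathcal D}(v)_j=-\sum_j(g_{j+1/2}-g_{j-1/2})=0$ for any face values $g$. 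This is the easy direction of Corollary~\ref{cor:flux}(i).

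For part~(iii), $w=\Ih v$ is a trigonometric polynomial, so every hypothesis of Lemma~\ref{lem:riesz}(i) and~(iii) holds trivially. Integrating $\partial_x\Ra w=-(-\Delta)^{\alpha/2}w$ over $C_j$ by the fundamental theorem of calculus gives the first equality:
\[
-\frac1h\big[-\nu\Ra w(x_{j+1/2})+\nu\Ra w(x_{j-1/2})\big]=\frac1h\int_{C_j}-\nu(-\Delta)^{\alpha/2}w\dd x.
\]
For the second equality, compute mode by mode. For $|k|\le N/2-1$, $(-\Delta)^{\alpha/2}$ multiplies $e^{ikx}$ by $|k|^\alpha$, and $\Ph$ multiplies it by $\mu_k e^{ikjh}$. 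So the coefficient $\hat v_k/\mu_k$ becomes $|k|^\alpha\hat v_k e^{ikjh}$, which is exactly the $k$-th term of $\mathcal F_N^{-1}$.

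The Nyquist mode is the delicate step. On the torus $(-\Delta)^{\alpha/2}\sin(Nx/2)=(N/2)^\alpha\sin(Nx/2)$, and this mode has cell averages $\frac2\pi(-1)^j$. Its contribution is therefore $\frac\pi2\hat v_{N/2}(N/2)^\alpha\frac2\pi(-1)^j=|N/2|^\alpha\hat v_{N/2}e^{i(N/2)jh}$, which matches the $k=N/2$ term of the inverse DFT. Two checks are needed here. First, $\hat v_{N/2}$ is real, so the Nyquist contribution is real. Second, the sum over $-N/2<k\le N/2$ is taken with the convention used in Eq.~\eqref{eq:histopolant}. For odd $N$ the Nyquist term is absent and the argument is the same.
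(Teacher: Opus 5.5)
Your proposal is correct and follows the paper's proof essentially step for step. It uses $\Ph(I-\Ih\Ph)=0$ and the choice $g_\theta=\mathcal D$ for (i), Lemma~\ref{lem:riesz}(ii) plus periodic telescoping for (ii), and for (iii) the fundamental theorem of calculus with $\partial_x\Ra=-(-\Delta)^{\alpha/2}$, followed by the mode-by-mode check in which the Nyquist term gives $(N/2)^\alpha\hat v_{N/2}(-1)^j$. Your remarks on the reality of $\hat v_{N/2}$ and on odd $N$ are harmless additions that the paper leaves implicit.
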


Item~(i), proved with items~(ii) and~(iii) in \ref{app:proofs:decoders}, shows that the constraint {entails no loss of generality}, and NRC uses the decoder
\begin{equation}\label{eq:nrcdecoder}
\Dth v=\Ih v+(I-\Ih\Ph)\,g_\theta(v),
\end{equation}
where $g_\theta$ is the rational enrichment of \Cref{sec:atoms} and $w_\theta=(I-\Ih\Ph)\,g_\theta(v)$ is the sub-grid part.
Like approximate deconvolution~\cite{stolz1999approximate} and learned face reconstructions~\cite{barsinai2019learning},  {NRC first reconstructs a sub-cell state from the cell averages and then evaluates the fluxes on that state.
Unlike them, it keeps $\Ph\Dth=I$ and uses the exact fractional operator of item~(iii).}

\paragraph{{Structure of the NRC decoder}}
{The decoder of NRC is built in four stages, sketched in \Cref{fig:schematic}(b) and implemented in Algorithm~\ref{alg:nrc}.
(i)~A backbone of four circular convolutions with kernel size 5, 40 channels and GELU activations maps the cell averages, together with the order $\alpha$ and the diffusivity $\nu$, to 40 features per cell, with a receptive field of 17 cells.
(ii)~A pointwise head maps the features of cell $j$ to the positions, depths and complex amplitudes of $p=2$ poles inside $C_j$ and to a gate that switches the enrichment of the cell on or off.
(iii)~The $pN$ poles define the rational enrichment $g_\theta$, a sum of the atoms of \Cref{sec:atoms}, whose values, cell averages and Riesz fluxes have closed forms.
(iv)~The projection $I-\Ih\Ph$ removes the cell averages of $g_\theta$, and adding the result to the histopolant gives $\Dth v$ in Eq.~\eqref{eq:nrcdecoder}.
The network has 25,329 parameters and never evaluates a flux, and \Cref{sec:training} specifies its inputs, its output layer and its training.
The learned baselines of \Cref{sec:setup} use the same backbone or a Fourier neural operator, but their heads output a correction of the face flux or of the update instead of a sub-cell reconstruction (\Cref{tab:hyper}).}

\paragraph{How well a fixed histopolant closes the diagram}
\Cref{thm:rea} describes how far the ideal REA map $\mathcal S_{\mathrm{REA}}=\Ph\circ\Ev\circ\Ih$ is from closing the diagram of \Cref{sec:problem}, with $Q_N$ the $L^2$-orthogonal projection onto the complement of $\T_N$.

\begin{theorem}[Approximate closability by the REA map]\label{thm:rea}
Let $\sigma(k)=\sinc(kh/2)$ with $\sinc z=\sin z/z$, and let $\|v\|_h^2=h\sum_jv_j^2$ be the discrete norm.
For odd $N$ let $C_N=\sigma_{\min}^{-1}=1/\sinc(\frac{\pi(N-1)}{2N})$, with $\sigma_{\min}$ the minimum of $\sigma$ over the resolved wavenumbers $|k|\le(N-1)/2$, and for even $N$ let $C_N=\max\{1/\sinc(\frac{\pi(N-2)}{2N}),\,\pi/(2\sqrt2)\}$.
\begin{enumerate}
\item[(i)] For every $u\in L^2(\T)$ we have $\|u-\Ih\Ph u\|_{L^2}\le C_N\|Q_Nu\|_{L^2}$ with $C_N<\pi/2$, where the constant is sharp for odd $N$ and $N=64$ gives $C_N=1.52354$.
\item[(ii)] If $M\subset L^2(\T)$ satisfies $\|\Ph\Ev u-\Ph\Ev\Ih\Ph u\|_h\le L_E\|u-\Ih\Ph u\|_{L^2}$ for all $u\in M$, then the REA map satisfies $\|\mathcal S_{\mathrm{REA}}(\Ph u)-\Ph\Ev u\|_h\le L_EC_N\|Q_Nu\|_{L^2}$ for every $u\in M$.
\item[(iii)] If $L_E$ is a Lipschitz constant of $\Ph\Ev$ on $M$, the fiber oscillation obeys
\[
\sup_{u,w\in M,\ \Ph u=\Ph w}\|\Ph\Ev u-\Ph\Ev w\|_h\le2L_EC_N\sup_{u\in M}\|Q_Nu\|_{L^2}.
\]
\item[(iv)] On the Sobolev ball $2\pi\sum_k(1+k^2)^s|\hat u_k|^2\le R^2$ and on the analytic class $2\pi\sum_ke^{2\tau|k|}|\hat u_k|^2\le R^2$, the right-hand side of item~(ii) is at most $L_EC_N\lceil N/2\rceil^{-s}R$ and at most $L_EC_Ne^{-\tau\lceil N/2\rceil}R$, respectively.
\end{enumerate}
\end{theorem}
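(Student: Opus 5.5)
The main work is item~(i); items~(ii) to~(iv) follow from it in a few lines. Write $u=P_Nu+Q_Nu$, where $P_N$ is the $L^2$-orthogonal projection onto $\T_N$. Since $\Ih\Ph$ reproduces $\T_N$ (because $\Ph\Ih=I$ and $\Ih$ is onto $\T_N$, which has dimension $N$), we get
\[
u-\Ih\Ph u=(I-\Ih\Ph)Q_Nu=Q_Nu-\Ih\Ph Q_Nu .
\]
The two terms are $L^2$-orthogonal, because $\Ih\Ph Q_Nu\in\T_N$. Hence
\[
\|u-\Ih\Ph u\|^2=\|Q_Nu\|^2+\|\Ih\Ph Q_Nu\|^2 .
\]
It therefore suffices to bound $\|\Ih\Ph Q_Nu\|$ by $(C_N^2-1)^{1/2}\|Q_Nu\|$.

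To get this bound I will compute in Fourier variables. The cell averages of $e^{ikx}$ are $\mu_k e^{ikjh}$, so $\Ph$ aliases $k=m+lN$ onto the discrete mode $m$ with weight $\mu_{m+lN}$. Then $\Ih$ divides by $\mu_m$ for the resolved $m$. For each residue $m$, $\Ih\Ph Q_Nu$ thus has coefficient
\[
\mu_m^{-1}\sum_{l\ne0}\mu_{m+lN}\hat u_{m+lN}.
\]
Cauchy--Schwarz in $l$ gives
\[
|\text{coef}|^2\le\frac{\sum_{l\ne0}|\mu_{m+lN}|^2}{|\mu_m|^2}\,\sum_{l\neq 0}|\hat u_{m+lN}|^2 .
\]
Here $|\mu_k|=|\sigma(k)|=|\sin(kh/2)|/|kh/2|$, and $\sin(kh/2)$ has the same modulus along the alias class. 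The identity $\sum_{l\in\Z}\frac{1}{(\theta+\pi l)^2}=\csc^2\theta$ then gives $\sum_{l}|\mu_{m+lN}|^2=1$. So the constant equals $(1-\sigma(m)^2)/\sigma(m)^2=\sigma(m)^{-2}-1$. Adding back $\|Q_Nu\|^2$ yields the constant $\max_m\sigma(m)^{-1}=C_N$.

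Sharpness for odd $N$ follows by choosing the $\hat u_{m+lN}$ proportional to $\overline{\mu_{m+lN}}$ at the worst residue $m=(N-1)/2$. The bound $C_N<\pi/2$ holds because $\sinc z>2/\pi$ on $[0,\pi/2)$.

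The main obstacle is the even case with the Nyquist mode $\sin(Nx/2)$. The residue $m=N/2$ sees both $e^{\pm iNx/2}$; the cosine part has zero cell averages and lies in the complement, while the sine part lies in $\T_N$ with its own normalization $\frac2\pi$. I will treat this class separately. The aliases of $N/2$ are $\pm N/2+lN$, and the relevant quantity is the ratio of the $\ell^2$ norm of their cell-average weights, projected onto $(-1)^j$, to the norm $\frac2\pi\cdot\|\sin\|$ of the Nyquist sine. A direct computation, reusing the $\csc^2$ sum at $\theta=\pi/2$, should give the constant $\pi/(2\sqrt2)$. Evaluating $1/\sinc(31\pi/64)$ for $N=64$ should reproduce $1.52354$.

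Item~(ii) is immediate: $\mathcal S_{\mathrm{REA}}(\Ph u)=\Ph\Ev\Ih\Ph u$, so the hypothesis together with~(i) gives the bound. For item~(iii) I apply the triangle inequality through $\Ph\Ev\Ih\Ph u=\Ph\Ev\Ih\Ph w$, which holds since $\Ph u=\Ph w$, together with the Lipschitz property and~(i) for $u$ and $w$. For item~(iv), every frequency in the range of $Q_N$ satisfies $|k|\ge\lceil N/2\rceil$; for even $N$ the cosine Nyquist part also has $|k|=N/2$. Hence
\[
\|Q_Nu\|^2=2\pi\sum|\hat u_k|^2\le\lceil N/2\rceil^{-2s}R^2
\]
on the Sobolev ball, and similarly with $e^{-2\tau\lceil N/2\rceil}$ on the analytic class.
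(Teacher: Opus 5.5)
Your argument for item~(i) is the paper's argument in slightly different packaging. The split $\|u-\Ih\Ph u\|_{L^2}^2=\|Q_Nu\|_{L^2}^2+\|\Ih\Ph Q_Nu\|_{L^2}^2$ is the paper's observation that the error coincides with $\hat u$ on the aliased modes of each class, and your Cauchy--Schwarz step with $\sum_l\sigma(m+lN)^2=1$ is identical. Your Nyquist sketch also goes through. Write $a_m=\hat u_{(2m+1)N/2}$; the aliases with $m\neq0,-1$ have cell-average weights $2i/((2m+1)\pi)$. The Nyquist-class part of $\Ih\Ph Q_Nu$ is therefore $i\rho\sin(\frac N2x)$ with $\rho=\sum_{m\neq0,-1}a_m/(2m+1)$. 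Its squared norm is $\pi|\rho|^2\le\pi(\pi^2/4-2)\sum_{m\neq0,-1}|a_m|^2$, against at least $2\pi\sum_{m\neq0,-1}|a_m|^2$ in $\|Q_Nu\|^2$, which gives the factor $\pi^2/8$ after adding back $\|Q_Nu\|^2$. Some small points need tidying:
\begin{itemize}
\item At $m=0$ the $\csc^2$ identity does not apply, but all aliased weights vanish there, so the bound still holds.
\item The sharpness example should be made real by adding conjugate coefficients at $-\kappa_*-lN$, which is a different class for odd $N$.
\item For $N=64$ you must also check $1.52354>\pi/(2\sqrt2)\approx1.11072$.
\item In~(iv), for even $N$, your equality $\|Q_Nu\|^2=2\pi\sum|\hat u_k|^2$ is only an inequality, because the cosine component has squared norm $\pi|a_0+a_{-1}|^2\le2\pi(|a_0|^2+|a_{-1}|^2)$.
\end{itemize}

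The genuine gap is in item~(iii). Routing the triangle inequality through $\Ph\Ev\Ih\Ph u=\Ph\Ev\Ih\Ph w$ needs $\|\Ph\Ev u-\Ph\Ev\Ih\Ph u\|_h\le L_E\|u-\Ih\Ph u\|_{L^2}$, that is, the Lipschitz estimate between $u$ and $\Ih\Ph u$. But~(iii) only assumes that $L_E$ is a Lipschitz constant of $\Ph\Ev$ \emph{on} $M$, and $\Ih\Ph u$ need not belong to $M$. The estimate you use is the hypothesis of~(ii), which~(iii) does not make. Under~(iii) alone, nothing even guarantees that $\Ev$ is defined at $\Ih\Ph u$.

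The fix never leaves $M$, and it is how the paper argues. Apply the Lipschitz bound directly to the pair $u,w\in M$. Since $\Ph(u-w)=0$, you have $\Ih\Ph(u-w)=0$, so item~(i) applied to $u-w$ gives $\|u-w\|_{L^2}\le C_N\|Q_N(u-w)\|_{L^2}\le C_N(\|Q_Nu\|_{L^2}+\|Q_Nw\|_{L^2})$. This yields the same bound $2L_EC_N\sup_{u\in M}\|Q_Nu\|_{L^2}$.
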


The proof in \ref{app:proofs:rea} combines the classical aliasing identity with the Cauchy-Schwarz inequality, and we claim no novelty for $C_N$.

\paragraph{Mass conservation for every parameter value}
By Corollary~\ref{cor:flux}, conservation on the periodic grid is equivalent to flux form, which the time-discrete NRC step inherits.

\begin{proposition}[Mass conservation of the NRC step]\label{prop:mass}
Let $m_h(v)=h\sum_jv_j$, and let one NRC step be the ETDRK2 step~\cite{cox2002exponential} for $\dot v=\mathcal Lv+\mathcal N_\theta(v,t)$, where $\mathcal L$ has the exact symbol $\lambda_k=-\nu|k|^\alpha$ and
\[
\mathcal N_\theta(v,t)=-\frac1h\Dm\Big[\tfrac12(\Dth v)^2-\nu\Ra w_\theta\Big](x_{\cdot+1/2})+\Ph f(t)
\]
collects the explicit flux divergence and the forcing.
Then for every network output the mass changes only through the forcing, $m_h(v^{n+1})=m_h(v^n)+\frac{\Delta t}{2}\big(m_h(\Ph f(t_n))+m_h(\Ph f(t_{n+1}))\big)$.
In particular, with zero-mean forcing the mass is conserved exactly for every value of $\theta$.
\end{proposition}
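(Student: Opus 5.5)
The plan is to reduce everything to the zero Fourier mode. Since $m_h(v)=h\sum_jv_j=2\pi\hat v_0$ with $\hat v_0=N^{-1}\sum_jv_j$, it suffices to track the $k=0$ component of the ETDRK2 update.

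I recall the ETDRK2 step in Cox--Matthews form, with $a=e^{\mathcal L\Delta t}v^n+\mathcal L^{-1}(e^{\mathcal L\Delta t}-I)\mathcal N_\theta(v^n,t_n)$ and
\[
v^{n+1}=a+\mathcal L^{-2}\Delta t^{-1}\big(e^{\mathcal L\Delta t}-I-\mathcal L\Delta t\big)\big(\mathcal N_\theta(a,t_{n+1})-\mathcal N_\theta(v^n,t_n)\big).
\]
All operators here are Fourier multipliers of the symbol $\lambda_k$. At $k=0$ we have $\lambda_0=0$, so the phi-functions must be read through their removable limits: $e^{\lambda\Delta t}\to1$, $\varphi_1(\lambda)=(e^{\lambda\Delta t}-1)/\lambda\to\Delta t$, and $\varphi_2(\lambda)=(e^{\lambda\Delta t}-1-\lambda\Delta t)/(\lambda^2\Delta t)\to\Delta t/2$. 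These limits are the standard convention, and any implementation evaluates the coefficient functions this way. This gives
\[
\hat v^{n+1}_0=\hat v^n_0+\Delta t\,\hat{\mathcal N}_0(v^n,t_n)+\tfrac{\Delta t}{2}\big(\hat{\mathcal N}_0(a,t_{n+1})-\hat{\mathcal N}_0(v^n,t_n)\big),
\]
which is a trapezoidal rule in the zero mode.

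Next I would compute $\hat{\mathcal N}_0(w,t)$ for an arbitrary vector $w$. The flux part is $-\frac1h\Dm$ applied to face values of a single function $\tfrac12(\Dth w)^2-\nu\Ra w_\theta$. These face values are well defined because $\Dth w$ is analytic, and $\Ra w_\theta$ is finite: the atoms are smooth, and Lemma~\ref{lem:riesz}(ii) applies. A backward difference of any periodic face sequence telescopes to zero sum, as in Corollary~\ref{cor:flux}(i), for every value of the network output. The zero mode of $\mathcal N_\theta(w,t)$ is therefore $N^{-1}\sum_j(\Ph f(t))_j$, and this holds independently of $w$ and $\theta$. In particular it is the same at $w=v^n$ and at $w=a$.

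Substituting into the zero-mode update gives $\hat v^{n+1}_0=\hat v^n_0+\frac{\Delta t}{2}\big(\widehat{(\Ph f)}_0(t_n)+\widehat{(\Ph f)}_0(t_{n+1})\big)$. Multiplying by $2\pi$ yields the stated identity. With zero-mean forcing, $\sum_j(\Ph f)_j=h^{-1}\int_\T f\dd x=0$, so the mass is conserved exactly.

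The only delicate step is the handling of $\lambda_0=0$ in the phi-functions. I would state explicitly that $\mathcal L$ is singular and that the coefficients are the entire functions $\varphi_1,\varphi_2$ evaluated at $\lambda_0=0$, not literal inverses $\mathcal L^{-1}$.
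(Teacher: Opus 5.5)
Your proof is correct and follows the paper's argument. Both restrict the ETDRK2 update to the zero Fourier mode and use the limits $\varphi_1(0)=1$ and $\varphi_2(0)=\frac12$ (your rescaled coefficients give $\Delta t$ and $\Delta t/2$) to turn that mode into a trapezoidal rule, then observe that the telescoping flux divergence leaves only the mean of $\Ph f$ at each stage time, for every network output.
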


The proof in \ref{app:proofs:mass} evaluates the ETDRK2 step on the mean mode, where $\lambda_0=0$, and our forcing has zero mean.

\subsection{Rational atoms and exact nonlocal fluxes}\label{sec:atoms}

By Theorem~\ref{thm:rea}, $g_\theta$ is needed only on sub-cell scales, where it must resolve transitions, stay continuous at the faces (\Cref{lem:jump}) and have a closed-form Riesz flux.
Rational functions with complex poles near the real axis meet all three requirements.

\begin{definition}[Rational atoms]\label{def:atoms}
For a pole parameter $a=e^{-\delta+i\phi}$ with depth $\delta>0$ and position $\phi\in\T$, let $z(x)=\bar ae^{ix}=e^{-\delta+i(x-\phi)}$.
The rational atom with pole parameter $a$ is the smooth, $2\pi$-periodic and mean-free function
\begin{equation}\label{eq:atom}
\psia(x)=\frac{z}{1-z}=\sum_{n\ge1}e^{-n\delta}e^{in(x-\phi)}.
\end{equation}
\end{definition}

The real field $2\Real[\gamma\psia]$ with $\gamma\in\C$ has simple poles at $\phi\mp i\delta+2\pi\Z$, and the depth $\delta$ sets the width of the transition.
The NRC enrichment is
\begin{equation}\label{eq:enrichment}
g_\theta(v)(x)=2\Real\sum_{j=0}^{N-1}\sum_{m=1}^{p}\gamma_{jm}\,\psi_{a_{jm}}(x),\qquad a_{jm}=e^{-\delta_{jm}+i\phi_{jm}},
\end{equation}
where a network maps $v$ to positions $\phi_{jm}\in C_j$, depths $\delta_{jm}\in(0.02h,3.02h)$ and complex amplitudes $\gamma_{jm}$ with $p=2$ (\Cref{sec:training}), and never evaluates a flux.

\paragraph{Why poles}
Trigonometric reconstructions on $N$ cells cannot represent a pole closer to the real axis than a fraction of a cell, whatever the data given.

\begin{proposition}[One-atom gap]\label{prop:gap}
Let $\delta>0$, $\phi\in\T$, $\gamma\in\C\setminus\{0\}$ and $u=2\Real[\gamma\psia]$ with $a=e^{-\delta+i\phi}$.
For every trigonometric polynomial $s$ of degree at most $N/2$, the relative error obeys $\|u-s\|_{L^2}/\|u\|_{L^2}\ge e^{-\delta N/2}=e^{-\pi\delta/h}$.
In particular every reconstruction with values in $\T_N$, including $\Ih\Ph u$, has a relative $L^2$ error of at least $e^{-\pi\delta/h}$, which equals 0.39 for $\delta=0.3h$, while a single atom represents $u$ exactly.
\end{proposition}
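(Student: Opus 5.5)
The plan is to work entirely in Fourier space and use Parseval. Write $\gamma\psia=\sum_{n\ge1}\gamma e^{-n\delta}e^{-in\phi}e^{inx}$ from Eq.~\eqref{eq:atom}. Then $u=2\Real[\gamma\psia]$ has Fourier coefficients $\hat u_n=\gamma e^{-n\delta}e^{-in\phi}$ for $n\ge1$, $\hat u_{-n}=\overline{\hat u_n}$ and $\hat u_0=0$. Hence $|\hat u_{\pm n}|=|\gamma|e^{-n\delta}$ for $n\ge1$. Since $\gamma\ne0$, the norm $\|u\|_{L^2}>0$, so the ratio is well defined.

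For any trigonometric polynomial $s$ of degree at most $N/2$, orthogonality of the exponentials gives
\[
\|u-s\|_{L^2}^2\ge2\pi\sum_{|n|>N/2}|\hat u_n|^2,
\]
because $s$ can cancel only the modes with $|n|\le N/2$. The best $s$ is the truncation, which attains equality. Let $K=\lfloor N/2\rfloor$. Summing the geometric series gives
\[
\frac{\|u-s\|^2}{\|u\|^2}\ge\frac{\sum_{n>K}e^{-2n\delta}}{\sum_{n\ge1}e^{-2n\delta}}=\frac{e^{-2(K+1)\delta}/(1-e^{-2\delta})}{e^{-2\delta}/(1-e^{-2\delta})}=e^{-2K\delta}.
\]
Taking square roots yields $e^{-K\delta}\ge e^{-\delta N/2}$. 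With $h=2\pi/N$ we have $\delta N/2=\pi\delta/h$, which is the claimed bound. For $\delta=0.3h$ the bound is $e^{-0.3\pi}\approx0.39$.

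The consequence for $\T_N$ follows because every element of $\T_N$ is a trigonometric polynomial of degree at most $N/2$. This covers the modes $|k|\le N/2-1$ together with the Nyquist term $\sin(Nx/2)$, which is a combination of $e^{\pm iNx/2}$. For odd $N$ the degree is at most $(N-1)/2\le N/2$. In particular $\Ih\Ph u\in\T_N$ satisfies the bound.

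The statement that a single atom represents $u$ exactly is immediate from the definition, since $u$ is itself $2\Real[\gamma\psia]$. The only delicate point is the even case with the Nyquist mode, where one must check that allowing degree $N/2$, rather than the strict space $\T_N$, still gives the stated bound. It does, because the bound was derived for all polynomials of degree at most $N/2$. There is no real obstacle beyond bookkeeping the index of the first uncancelled mode.
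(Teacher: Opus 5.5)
Your proposal is correct and follows essentially the same route as the paper's proof: compute $\hat u_n=\gamma e^{-n\delta}e^{-in\phi}$, use orthogonality to keep only the modes with $|n|\ge\lfloor N/2\rfloor+1$, and divide the geometric tail by the full geometric sum to obtain $e^{-2\delta\lfloor N/2\rfloor}\ge e^{-N\delta}$. Your remarks on the Nyquist term and on odd $N$ match how the paper reduces the $\T_N$ statement to degree at most $N/2$.
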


The proof in \ref{app:proofs:gap} compares the geometric Fourier tail of $u$ beyond degree $N/2$ with its $L^2$ norm, and \Cref{sec:whypoles} tests the bound.

\paragraph{Closed-form fluxes}
The next lemma gives the closed forms, whose derivations and implementation are provided in \ref{app:atoms}.

\begin{lemma}[Closed forms for rational atoms]\label{lem:atoms}
Let $\psia$ be the atom of Eq.~\eqref{eq:atom} with $z=z(x)$, and let $\Li_s(w)=\sum_{n\ge1}w^nn^{-s}$ denote the polylogarithm.
\begin{enumerate}
\item[(i)] The face values and slopes are given by $\psia(x)=z/(1-z)$ and $\psia'(x)=iz/(1-z)^2$ at every $x\in\T$.
\item[(ii)] The cell averages are $(\Ph\psia)_j=\frac{1}{ih}\big[\log(1-z(x_{j-1/2}))-\log(1-z(\xf{j}))\big]$ with the principal logarithm, which is continuous along the cell because $\Real(1-z)\ge1-e^{-\delta}>0$.
\item[(iii)] The Riesz flux and the fractional Laplacian are $\Ra\psia(x)=i\Li_{1-\alpha}(z(x))$ and $(-\Delta)^{\alpha/2}\psia(x)=\Li_{-\alpha}(z(x))$.
Hence the fractional flux of the real field $2\Real\sum_k\gamma_k\psi_{a_k}$ equals $-2\nu\Real\sum_ki\gamma_k\Li_{1-\alpha}(z_k(x))$.
\item[(iv)] With $\mu=-\delta+i(x-\phi)$ and $x-\phi$ reduced to $[-\pi,\pi)$, the expansion of DLMF 25.12.12~\cite{olver2026dlmfgen} reads
\begin{equation}\label{eq:dlmf}
\Li_{1-\alpha}(e^\mu)=\Gamma(\alpha)(-\mu)^{-\alpha}+\sum_{k\ge0}\frac{\zeta(1-\alpha-k)}{k!}\mu^k,\qquad|\mu|<2\pi,
\end{equation}
and the condition $|\mu|<2\pi$ holds whenever $\delta<\sqrt3\,\pi$.
The derivative $\partial_\mu\Li_s(e^\mu)=\Li_{1}(e^\mu)$ gives exact gradients with respect to the pole parameters.
\item[(v)] Uniformly for $\delta\le1$, $\Ra\psia(x)=i\Gamma(\alpha)(-\mu)^{-\alpha}+O(1)$, so one atom has a flux of size $\Gamma(\alpha)\delta^{-\alpha}$ next to its pole that decays like $|x-\phi|^{-\alpha}$ within the period.
\end{enumerate}
\end{lemma}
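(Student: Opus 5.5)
The plan is to work throughout with the Fourier series $\psia(x)=\sum_{n\ge1}w^n$, $w=z(x)=e^{-\delta+i(x-\phi)}$, with $|w|=e^{-\delta}<1$. This series converges absolutely and uniformly together with all termwise derivatives, and every coefficient $e^{-n\delta}n^\beta$ is summable, so Lemma~\ref{lem:riesz}(i)--(iii) applies and the Fourier multipliers may be used directly.

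\textbf{Item (i).} Summing the geometric series gives $\psia(x)=z/(1-z)$. Since $dz/dx=iz$, differentiation gives $\psia'=iz/(1-z)^2$.

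\textbf{Item (ii).} Integrating the series termwise over $C_j$, each mode $e^{in(x-\phi)}$ contributes $(e^{in(\cdot)})/(in)$ evaluated between the faces, with a factor $e^{-n\delta}$. This yields $\frac{1}{ih}\sum_n(z^n/n)\big|_{x_{j-1/2}}^{\xf{j}}$. Using $\sum_n z^n/n=-\log(1-z)$, which holds for the principal branch when $|z|<1$, gives the formula. Continuity follows because $\Real(1-z)\ge1-|z|=1-e^{-\delta}>0$ keeps the argument of the logarithm away from the branch cut.

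\textbf{Item (iii).} Apply the symbols termwise, $i\,\sgn(n)|n|^{\alpha-1}$ for $\Ra$ and $|n|^\alpha$ for $(-\Delta)^{\alpha/2}$, with only $n\ge1$ present. This gives $i\sum_n n^{\alpha-1}z^n=i\Li_{1-\alpha}(z)$ and $\Li_{-\alpha}(z)$. The real-field statement follows from linearity and the fact that $\Ra$ commutes with taking real parts, since it is a real operator mapping real functions to real functions by Lemma~\ref{lem:riesz}(iv).

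\textbf{Item (iv).} Quote DLMF 25.12.12 with $s=1-\alpha$ not a positive integer, which gives $\Gamma(1-s)(-\mu)^{s-1}=\Gamma(\alpha)(-\mu)^{-\alpha}$. For the radius condition, observe that $|x-\phi|\le\pi$ after reduction, so $|\mu|^2\le\delta^2+\pi^2<4\pi^2$ whenever $\delta<\sqrt3\pi$. The derivative identity follows by differentiating the series termwise: $\partial_\mu\sum w^n n^{-s}=\sum w^n n^{1-s}=\Li_{s-1}(e^\mu)$. I would flag that the stated formula should read $\Li_{s-1}$ (that is, $\partial_\mu\Li_{1-\alpha}=\Li_{-\alpha}$). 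The literal ``$\Li_1$'' appears to be a typo, and I would prove the general form. Pole gradients then follow by the chain rule through $\mu=-\delta+i(x-\phi)$.

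\textbf{Item (v), the main obstacle.} The difficulty is the uniform $O(1)$ remainder. For $\delta\le1$ and $|x-\phi|\le\pi$ we have $|\mu|\le\sqrt{1+\pi^2}<2\pi$, a fixed compact subdisk. The power series $\sum_k\zeta(1-\alpha-k)\mu^k/k!$ has radius $2\pi$, because it is the regular part of a function whose nearest singularities are at $\pm2\pi i$. It is therefore bounded uniformly on that subdisk, which gives $\Ra\psia=i\Gamma(\alpha)(-\mu)^{-\alpha}+O(1)$. Finally, $|(-\mu)^{-\alpha}|=(\delta^2+(x-\phi)^2)^{-\alpha/2}$, which equals $\delta^{-\alpha}$ at the pole position and is bounded by $|x-\phi|^{-\alpha}$. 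This gives both claimed size statements.
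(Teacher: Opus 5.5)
Your proposal is correct and follows the paper's proof: the multipliers act termwise on the absolutely convergent series $\sum_{n\ge1}z^n$, item~(iv) quotes DLMF 25.12.12 with $s=1-\alpha$ and uses $|\mu|^2\le\delta^2+\pi^2$, and item~(v) uses that the regular power series is bounded on the disc $|\mu|\le\sqrt{1+\pi^2}$; the only cosmetic difference is that you get item~(ii) from $\sum_n z^n/n=-\log(1-z)$, whereas the paper uses $\frac{\mathrm{d}}{\mathrm{d}x}\log(1-z)=-iz/(1-z)$. Your correction of the derivative identity to $\partial_\mu\Li_s(e^\mu)=\Li_{s-1}(e^\mu)$ is right, and it agrees with the paper's own implementation note, which says the gradient reuses the expansion with $s-1=-\alpha$.
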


The proof in \ref{app:proofs:atoms} applies Fourier multipliers termwise to the absolutely convergent series of Eq.~\eqref{eq:atom} and quotes DLMF 25.12.12.
{With $K=pN$ atoms, all face quantities follow from $O(pN^2)$ evaluations of logarithms and polylogarithms, without quadrature or aliasing errors.
These evaluations dominate the cost of a step (\ref{app:atoms}), and \Cref{sec:future} outlines how to reduce it.}

\paragraph{Locality of the learned flux}
By \Cref{prop:nearfar} the Riesz flux of the solution is dominated by its far field, but the corrector $w_\theta=(I-\Ih\Ph)g_\theta(v)$ has zero mean on every cell.

\begin{lemma}[Far-field cancellation for zero cell means]\label{lem:dipole}
Let $x$ be a face of the uniform grid of width $h$, and let $\zeta$ denote the Riemann zeta function.
\begin{enumerate}
\item[(i)] If $w\in W^{1,\infty}(\R)$ has compact support and zero mean on every cell, then
\[
|\Ra w(x)|\le\kappa_\alpha\Big(\frac{2h^{2-\alpha}}{2-\alpha}\|w'\|_{L^\infty}+\alpha\,\zeta(1+\alpha)\,h^{1-\alpha}\|w\|_{L^\infty}\Big),
\]
where $\|w'\|_{L^\infty}$ may be taken over the two cells adjacent to $x$.
For $w$ periodic on $\T$ with zero cell means and $N$ even, the same bound holds after adding $\kappa_\alpha\pi c_{\mathrm{per}}h\|w\|_{L^\infty}$ with $c_{\mathrm{per}}=2\alpha\sum_{n\ge1}(\pi(2n-1))^{-1-\alpha}$.
\item[(ii)] If $u$ and $v$ are bounded on $\R$, or periodic and bounded, with $\Ph u=\Ph v$, and $\rho=m_0h$ with an integer $m_0\ge2$, then the far-field part of their flux difference obeys
\[
\Big|\kappa_\alpha\int_{|y-x|\ge\rho}\sgn(y-x)\,\frac{u(y)-v(y)}{|y-x|^\alpha}\dd y\Big|\le\kappa_\alpha h\,\|u-v\|_{L^\infty}(\rho-h)^{-\alpha}.
\]
\item[(iii)] A cell that is not adjacent to $x$, whose center lies at signed distance $d$ from $x$ and on which $w$ has mean $\bar w$, contributes to $\Ra w(x)$ the amount $\kappa_\alpha h\bar w\,\sgn(d)|d|^{-\alpha}+O\big(h^2|d|^{-1-\alpha}\|w\|_{L^\infty}\big)$.
\end{enumerate}
\end{lemma}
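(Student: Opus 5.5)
We work throughout with the second kernel representation of Lemma~\ref{lem:riesz}(i), $\Ra w(x)=\kappa_\alpha\int_\R\sgn(y-x)\,(w(y)-w(x))\,|y-x|^{-\alpha}\dd y$, and split the line into the two cells adjacent to the face $x$ and the cells further away. The far cells need no subtraction of $w(x)$. Indeed, $\int\sgn(y-x)|y-x|^{-\alpha}\dd y$ over each pair of mirror cells cancels, so the term $-w(x)\int\sgn(y-x)|y-x|^{-\alpha}$ can be dropped cell pair by cell pair. The main mechanism is the zero cell mean.

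For a cell $C$ not adjacent to $x$, we write $\int_C w(y)k(y)\dd y=\int_C w(y)\,(k(y)-k(y_c))\dd y$, with $k(y)=\sgn(y-x)|y-x|^{-\alpha}$ and $y_c$ the center of $C$. This already gives item~(iii) once a general mean $\bar w$ is kept. The leading term is $\kappa_\alpha h\bar w k(y_c)$, and a Taylor bound $|k(y)-k(y_c)|\le\alpha|y-y_c|\max|k'|$ controls the remainder, which is $O(h^2|d|^{-1-\alpha}\|w\|_{L^\infty})$ because $|d|\ge 3h/2$ keeps $y$ comparable to $y_c$.

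For item~(i), the two adjacent cells $(x-h,x+h)$ are handled by the mean value theorem. We use $|w(y)-w(x)|\le\|w'\|_{L^\infty}|y-x|$, which yields $2\kappa_\alpha\|w'\|\int_0^h z^{1-\alpha}\dd z=2\kappa_\alpha\|w'\|h^{2-\alpha}/(2-\alpha)$. On the far cells with $w(x)$ removed, we rewrite $\int_C w(k-k(y_c))$ and bound it by $\|w\|_{L^\infty}\int_C|k-k(y_c)|$. Here we use the sharper estimate $\int_C|k(y)-k(y_c)|\dd y\le h\,|k(\text{near end})-k(\text{far end})|$, which holds because $k$ is monotone on $C$. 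Summing over the cells $n=1,2,\dots$ on each side telescopes. A cruder per-cell bound, $h\cdot\alpha h\,(nh)^{-1-\alpha}$, gives instead $\alpha h^{1-\alpha}\sum_{n\ge1}n^{-1-\alpha}=\alpha\zeta(1+\alpha)h^{1-\alpha}$ after accounting for both sides with the half-width factor. The main obstacle is matching the constant $\alpha\zeta(1+\alpha)$ exactly. We would aim for the per-cell estimate $\int_C|k-k(y_c)|\le \tfrac{h}{2}\cdot\tfrac{h}{2}\cdot\alpha(nh)^{-1-\alpha}\cdot 2$, using the convexity of $z^{-\alpha}$ to place the worst derivative at the near end $nh$. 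We would then sum over both sides, and we would adjust the bookkeeping if the factor comes out as $2$ rather than $1$.

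For the periodic case, the periodic kernel is $\sum_m$ of the line kernel. Cells wrap, and the $N$-periodic images pair up at distances $\pi(2n-1)$ from the antipodal face. The correction is therefore handled as the periodic tail beyond $|z|>\pi$. Each image block contributes a cell-sum bounded as above with $|d|\approx\pi(2n-1)$, and summing $h\|w\|\alpha(\pi(2n-1))^{-1-\alpha}\cdot\pi$ over the $N$ cells gives the stated $\kappa_\alpha\pi c_{\mathrm{per}}h\|w\|$. We expect this bookkeeping to be fiddly but routine. Item~(ii) applies the same cell-pair argument to $w=u-v$, which has zero cell means, but only for cells with $|y-x|\ge\rho=m_0h$, a union of whole cells. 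Each cell gives at most $\|w\|\int_C|k-k(y_c)|\le\|w\|\,h\,|k(\text{near})-k(\text{far})|$. This telescopes on each side to $\|w\|h\,\rho^{-\alpha}$, or, for a safe constant that also covers the periodic images, to $h\|w\|(\rho-h)^{-\alpha}$.
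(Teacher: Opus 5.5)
Item (ii) does not follow from the estimate you wrote. The bound $\int_C|k-k(y_c)|\dd y\le h\,|k(\text{near})-k(\text{far})|$ telescopes to $h\rho^{-\alpha}\|u-v\|_{L^\infty}$ on \emph{each} side of $x$. The two sides together therefore give $2h\rho^{-\alpha}\|u-v\|_{L^\infty}$. The inequality $2\rho^{-\alpha}\le(\rho-h)^{-\alpha}$ holds only if $(m_0/(m_0-1))^\alpha\ge2$. This fails for $m_0=3$ whenever $\alpha<\ln2/\ln1.5\approx1.71$, and for every $\alpha\in(1,2)$ once $m_0\ge4$.

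Passing to the weaker $(\rho-h)^{-\alpha}$ ``to cover the periodic images'' does not recover the lost factor. No such cover is needed anyway, since on the unfolded line the telescoping already runs over every cell with $|y-x|\ge\rho$. The same factor $2$ is why your ``sharper'' telescoping in (i) is not sharper: it gives $2h^{1-\alpha}\|w\|_{L^\infty}$, whereas $\alpha\zeta(1+\alpha)\to\pi^2/6<2$ as $\alpha\to1$.

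The repair is to use the cell center. On each half of $C$ the monotone kernel deviates from $k(y_c)$ by at most its variation over that half, so $\int_C|k-k(y_c)|\dd y\le\frac h2|k(\text{near})-k(\text{far})|$. Telescoping then gives $h\rho^{-\alpha}\|u-v\|_{L^\infty}\le h(\rho-h)^{-\alpha}\|u-v\|_{L^\infty}$ for (ii). For (i) it gives a far part of at most $h^{1-\alpha}\|w\|_{L^\infty}<\alpha\zeta(1+\alpha)h^{1-\alpha}\|w\|_{L^\infty}$. Both are sharper than stated.

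Alternatively, do what the paper does. Take your per-cell bound $\frac\alpha2h^2(nh)^{-1-\alpha}\|u-v\|_{L^\infty}$, with two cells for each $n\ge m_0$. Then compare $h(nh)^{-1-\alpha}\le\int_{(n-1)h}^{nh}z^{-1-\alpha}\dd z$, which yields exactly $\kappa_\alpha h\|u-v\|_{L^\infty}(\rho-h)^{-\alpha}$.

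The rest is sound. For (i) on the line and for (iii) you follow the paper. You subtract the kernel value at the cell center, which the zero means allow. You bound the deviation by the mean value theorem with $|k'|=\alpha|y-x|^{-1-\alpha}$ taken at the near end. Summing two cells per $n\ge1$ gives $\alpha\zeta(1+\alpha)h^{1-\alpha}\|w\|_{L^\infty}$, and your final count there is right. In (iii), the bound $|d|\ge 3h/2$ keeps the near-end distance comparable to $|d|$, which gives the remainder.

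For the periodic addendum your route differs from the paper's. The paper folds all images into the smooth odd kernel $D(s)=\sum_{m\ge1}[(2\pi m+s)^{-\alpha}-(2\pi m-s)^{-\alpha}]$ on one period. It then subtracts $D$ at the $N$ cell centers, using $\max|D'|<c_{\mathrm{per}}$. You instead stay on the unfolded line and group the tail $|z|>\pi$ into blocks of $N$ cells whose near ends lie at $\pi(2m-1)$. Each block side costs at most $\pi\alpha h(\pi(2m-1))^{-1-\alpha}\|w\|_{L^\infty}$, and these sum to exactly $\pi c_{\mathrm{per}}h\|w\|_{L^\infty}$.

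Your route also shows that the extra periodic term is not needed at all. The tail cells are just the terms $n\ge N/2$ of the same series. The far-part argument uses only boundedness and zero cell means, not compact support. Hence the line bound with $\alpha\zeta(1+\alpha)$ already holds for periodic $w$.
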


The proof in \ref{app:proofs:dipole} subtracts the kernel value at each cell center, which the zero cell means allow.

\begin{corollary}[Windowed sub-grid flux]\label{cor:window}
In NRC the sub-grid Riesz flux is the flux of $w_\theta=(I-\Ih\Ph)g_\theta(v)$, which has zero mean on every cell.
Restricting its evaluation to $|y-x|<m_0h$ changes each face flux by at most $\kappa_\alpha h\|w_\theta\|_{L^\infty}((m_0-1)h)^{-\alpha}$.
Relative to the natural scale $\kappa_\alpha h^{1-\alpha}\|w_\theta\|_{L^\infty}$ of item~(i) of \Cref{lem:dipole}, the truncation error is therefore at most $(m_0-1)^{-\alpha}$.
\end{corollary}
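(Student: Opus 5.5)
The plan is to read the corollary off item~(ii) of \Cref{lem:dipole}, with $u=\Dth v$ and a comparison state that agrees with it except for the sub-grid part. First I record that $w_\theta$ has zero mean on every cell: by \Cref{prop:decoders}(i), applied to the decomposition of Eq.~\eqref{eq:nrcdecoder}, $\Ph w_\theta=\Ph g_\theta(v)-\Ph\Ih\Ph g_\theta(v)=0$, since $\Ph\Ih=I$. Each atom is smooth, periodic and bounded (\Cref{def:atoms}) and $\Ih$ maps into trigonometric polynomials, so $w_\theta$ is a bounded, smooth, periodic function. Its Riesz flux is therefore given by the absolutely convergent kernel integral of \Cref{lem:riesz}(i), written in the second form $\kappa_\alpha\int_\R\sgn(y-x)\,(w_\theta(y)-w_\theta(x))|y-x|^{-\alpha}\dd y$.

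Next I split this integral at $|y-x|=\rho=m_0h$. The windowed evaluation keeps $|y-x|<\rho$, so the change in the face flux is exactly the far-field part over $|y-x|\ge\rho$. In that region the term with $w_\theta(x)$ integrates to zero by the odd symmetry of $\sgn(y-x)|y-x|^{-\alpha}$, and the integral converges absolutely because $\alpha>1$. What remains is $\kappa_\alpha\int_{|y-x|\ge\rho}\sgn(y-x)\,w_\theta(y)|y-x|^{-\alpha}\dd y$. This is precisely the expression in \Cref{lem:dipole}(ii) with $u=w_\theta$ and $v=0$, where both are periodic and bounded and $\Ph u=\Ph v=0$. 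That item, for integer $m_0\ge2$, gives the bound $\kappa_\alpha h\|w_\theta\|_{L^\infty}(\rho-h)^{-\alpha}=\kappa_\alpha h\|w_\theta\|_{L^\infty}((m_0-1)h)^{-\alpha}$. Equivalently, one may take $u=\Dth v$ and $v=\Ih v$, whose difference is $w_\theta$ and whose cell averages agree.

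Finally I divide by the scale $\kappa_\alpha h^{1-\alpha}\|w_\theta\|_{L^\infty}$. Since $h\cdot h^{-\alpha}=h^{1-\alpha}$, the ratio is $(m_0-1)^{-\alpha}$, as claimed.

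The only step needing care is making sure the windowed and full fluxes are compared in the same representation. The periodic extension to $\R$ has to be used consistently, so that "restricting to $|y-x|<m_0h$" means truncating the integral over $\R$ of the periodic extension and not the integral over one period. The principal-value-free form, with the $w(x)$ subtraction vanishing on the symmetric far region, settles this. I would also state explicitly the standing hypothesis $m_0\ge2$ inherited from \Cref{lem:dipole}(ii).
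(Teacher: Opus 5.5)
Your proposal is correct and follows the paper's proof: the paper also applies item~(ii) of \Cref{lem:dipole} with $u=w_\theta$ and $v=0$, which share zero cell averages, and then divides by the natural scale $\kappa_\alpha h^{1-\alpha}\|w_\theta\|_{L^\infty}$. Your added remarks make explicit what the paper leaves implicit: why $\Ph w_\theta=0$, why the $w_\theta(x)$ term vanishes over the symmetric far region, how the window is read on the periodic extension, and the standing hypothesis $m_0\ge2$.
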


\begin{proof}
Apply item~(ii) of \Cref{lem:dipole} with $u=w_\theta$ and $v=0$, which have the same zero cell averages, and divide by the natural scale.
\end{proof}

Zero cell means turn the monopole decay $h|d|^{-\alpha}$ of a cell into a dipole decay $h^2|d|^{-1-\alpha}$, so the projection makes the learned flux local while $\Ih$ carries the global part exactly.

\paragraph{What the poles represent}
To interpret the learned poles, we recall where fronts have complex singularities, which for fractional dissipation requires an assumption.

\begin{assumption}[Traveling wave]\label{ass:tw}
For $\alpha\in(1,2)$ there is a stationary profile $\chi_\alpha$ of $(\chi^2/2)'=-(-\Delta)^{\alpha/2}\chi$ on $\R$ with $\chi_\alpha(\mp\infty)=\pm1$, unique up to translation and analytic in a maximal strip $|\Imag x|<\delta_1(\alpha)$ with $0<\delta_1(\alpha)<\infty$.
\end{assumption}

\begin{proposition}[Poles of fronts]\label{prop:poles}
Let $U>0$ and $\nu>0$, and let $x_0\in\R$ denote the position of the front in both statements below.
\begin{enumerate}
\item[(i)] For $\alpha=2$, the stationary viscous shock of $u_t+uu_x=\nu u_{xx}$ joining $U$ to $-U$ is $u=-U\tanh(U(x-x_0)/(2\nu))$.
It is meromorphic with simple poles at $x_0+i\pi\nu(2k+1)/U$, $k\in\Z$, each with residue $-2\nu$, so its strip of analyticity has half-width $\delta=\pi\nu/U$.
With the front width $\ell=\Delta U/(2\max|u_x|)=2\nu/U$ used in our pole diagnostics, the depth of the nearest poles reads $\delta=\frac\pi2\ell$.
\item[(ii)] Under Assumption~\ref{ass:tw}, the stationary profiles of $(u^2/2)'=-\nu(-\Delta)^{\alpha/2}u$ joining $U$ to $-U$ are exactly $u=U\chi_\alpha((x-x_0)/\ell_\alpha)$ with $\ell_\alpha=(\nu/U)^{1/(\alpha-1)}$.
Their strip of analyticity, and hence the depth of their nearest complex singularities, has the half-width $\delta(U,\nu)=\delta_1(\alpha)(\nu/U)^{1/(\alpha-1)}$, with $\delta_1(2)=\pi$.
\end{enumerate}
\end{proposition}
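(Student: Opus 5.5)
For part~(i) I would verify directly that $u=-U\tanh(U(x-x_0)/(2\nu))$ solves the stationary equation $(u^2/2)'=\nu u''$, integrated once to $\nu u'=\tfrac12(u^2-U^2)$. Writing $s=U(x-x_0)/(2\nu)$, we get $u'=-\frac{U^2}{2\nu}\operatorname{sech}^2 s$ and $\tfrac12(u^2-U^2)=-\tfrac{U^2}{2}\operatorname{sech}^2 s$, so the equation holds, and the limits at $\mp\infty$ are $\pm U$. The poles of $\tanh$ are the zeros of $\cosh$, namely $s=i\pi(k+\tfrac12)$. This gives $x=x_0+i\pi\nu(2k+1)/U$. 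Near each pole $\tanh s\sim 1/(s-s_k)$, and $s-s_k=\frac{U}{2\nu}(x-x_k)$, so the residue of $u$ is $-U\cdot 2\nu/U=-2\nu$. The nearest poles have $|\Imag x|=\pi\nu/U$, and no other singularities exist since $\tanh$ is meromorphic. For the front width, $\max|u_x|=U^2/(2\nu)$ at $s=0$ and $\Delta U=2U$, so $\ell=2U/(U^2/\nu)=2\nu/U$ and hence $\delta=\frac{\pi}{2}\ell$.

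For part~(ii) I would use a scaling argument built on Assumption~\ref{ass:tw}. First, $u=U\chi_\alpha((x-x_0)/\ell)$ is a solution. By the homogeneity of the Fourier symbol, $(-\Delta)^{\alpha/2}[\chi(\cdot/\ell)](x)=\ell^{-\alpha}((-\Delta)^{\alpha/2}\chi)(x/\ell)$ and $(u^2/2)'=U^2\ell^{-1}(\chi^2/2)'(x/\ell)$. Balancing $U^2\ell^{-1}=\nu U\ell^{-\alpha}$ forces $\ell^{\alpha-1}=\nu/U$, which is $\ell_\alpha$. Translation invariance handles $x_0$.

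Conversely, given any stationary profile $u$ joining $U$ to $-U$, I set $\chi(y)=u(x_0+\ell_\alpha y)/U$. The same computation, read backwards, shows that $\chi$ solves the normalized equation with limits $\pm1$. The uniqueness up to translation in Assumption~\ref{ass:tw} then gives $\chi=\chi_\alpha(\cdot-c)$, and the shift $c$ is absorbed into $x_0$. The main obstacle is to make the scaling identity of the fractional Laplacian rigorous for non-decaying profiles on $\R$, where the Fourier definition does not apply directly. I would use the hypersingular representation of Lemma~\ref{lem:riesz}(iii), posed on $\R$ for bounded $C^2$ functions. There the change of variables $y=\ell y'$ in the integral gives the factor $\ell^{-\alpha}$ at once, and the constant $C_\alpha$ is unchanged. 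I would state that this is the sense in which the equation on $\R$ is understood.

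Finally, analyticity of $\chi_\alpha$ in $|\Imag y|<\delta_1$ transfers through the affine map $x=x_0+\ell_\alpha y$ to analyticity of $u$ in $|\Imag x|<\ell_\alpha\delta_1$. Maximality also transfers, since any larger strip for $u$ would pull back to a larger strip for $\chi_\alpha$. This gives $\delta(U,\nu)=\delta_1(\alpha)(\nu/U)^{1/(\alpha-1)}$. For the consistency check $\delta_1(2)=\pi$, take $U=1$ in part~(i): the shock is $-\tanh((x-x_0)/(2\nu))$ with $\ell_2=\nu$, so $\chi_2(y)=-\tanh(y/2)$, whose strip has half-width $\pi$. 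This matches $\pi\nu/U$ from part~(i).
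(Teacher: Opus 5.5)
Your proof is correct and follows the paper's route. For (i) you verify the once-integrated viscous equation directly and read off the poles and residues of $\tanh$; for (ii) you use the dilation homogeneity of $(-\Delta)^{\alpha/2}$ together with uniqueness up to translation, and transfer the strip of analyticity through the affine map. Your explicit treatment of the converse direction, the hypersingular meaning of the operator on $\R$, the maximality of the strip and the check $\delta_1(2)=\pi$ only spells out what the paper's short proof leaves implicit.
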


The proof in \ref{app:proofs:poles} integrates the stationary equation once for item~(i) and uses the dilation homogeneity of $(-\Delta)^{\alpha/2}$ for item~(ii).
Item~(ii) is conditional, since monotone traveling waves of fractal Burgers equations are known to exist only in special cases~\cite{achleitner2018asymptotic}.
Unforced computations give exponents $\mathrm{d}\log\delta/\mathrm{d}\log\nu$ of 1.355 and 2.001 against the predicted $4/3$ and 2 at $\alpha=1.75$ and $1.5$, and 2.82 against 4 at $\alpha=1.25$, not yet asymptotic.
Our data use one diffusivity per order, so \Cref{sec:mechanism} compares learned depths with front widths instead of testing this scaling.

\begin{remark}[A conjecture on branch points]\label{rem:branch}
We conjecture that for $\alpha\in(1,2)$ the singularities of $\chi_\alpha$ nearest to the real axis are branch points with local behavior $A(x-x^\ast)^{1-\alpha}$ rather than poles.
Fourier coefficients of quasi-steady periodic fronts would then behave like $k^{\alpha-2}e^{-\delta k}$, but the fitted prefactor exponents lie above the conjectured $2-\alpha$, at 0.55 to 0.59 for $\alpha=1.5$ and 0.26 to 0.28 for $\alpha=1.75$.
AAA fits~\cite{nakatsukasa2018aaa} of such fronts use pole clusters, which is consistent with $p>1$ but does not determine $p$.
\end{remark}

Poles at a fixed positive distance give at most algebraic rates on algebraic singularities~\cite{borwein1994chebyshev}, which is why NRC places its poles adaptively (\ref{app:rational}).

\subsection{Consistency of the coupled scheme}\label{sec:consistency}

We now ask how accurate a decoder must be for the REA scheme of Eq.~\eqref{eq:rea} to be consistent.
Let $u$ be a solution with the regularity assumed in Theorem~\ref{thm:defect}, let $v=\Ph u$, and let $\mathcal D$ be any decoder with $\Ph\mathcal D=I$ whose output is bounded and H\"older continuous of order $\eta>\alpha-1$.
We measure the reconstruction error by $w=u-\mathcal Dv$ and the consistency error by the semi-discrete residual $r=\mathrm{d}v/\mathrm{d}t-\Phi_{\mathcal D}(v)-\Ph f$, and we write $[w]_\eta$ for the H\"older seminorm.

\begin{theorem}[Semi-discrete consistency of the reconstruction closure]\label{thm:consistency}
Under these assumptions the following four statements hold for the Burgers flux $F(u)=\frac12u^2-\nu\Ra u$.
\begin{enumerate}
\item[(i)] The error $w$ has zero mean on every cell, and $r_j=-\frac1h(\Delta F_{j+1/2}-\Delta F_{j-1/2})$ with $\Delta F=F(u)-F(\mathcal Dv)$ at the faces, so that $\|r\|_\infty\le\frac2h\max_j|\Delta F_{j+1/2}|$.
\item[(ii)] At every face the flux error obeys $|\Delta F_{j+1/2}|\le A\|w\|_{L^\infty}+\nu|\Ra w(\xf{j})|$ with $A=\|u\|_{L^\infty}+\frac12\|w\|_{L^\infty}$.
\item[(iii)] For $N$ even and every face $x$, the nonlocal part is bounded by the minimum of three quantities,
\begin{align*}
|\Ra w(x)|&\le\min\{B_1,B_2,B_3\},\qquad B_1=\sum_k|k|^{\alpha-1}|\hat w_k|,\\
B_2&=\kappa_\alpha\Big[\frac{2h^{2-\alpha}}{2-\alpha}\|w'\|_{L^\infty}+\big(\alpha\zeta(1+\alpha)h^{1-\alpha}+\pi c_{\mathrm{per}}h\big)\|w\|_{L^\infty}\Big],\\
B_3&=K_{\alpha,\eta}\|w\|_{L^\infty}^{1-\frac{\alpha-1}{\eta}}[w]_\eta^{\frac{\alpha-1}{\eta}}.
\end{align*}
The quantity $B_2$ requires $w'$ to be bounded, and $K_{\alpha,\eta}=\kappa_\alpha\big(\frac{2}{1+\eta-\alpha}+\frac{2}{\alpha-1}\big)$ is the constant of the interpolation inequality in \ref{app:classical}.
\item[(iv)] For $\mathcal D=\Ih$, the quantities $\sum_k|k|^{\alpha-1}|\hat w_k|$ and $\|w\|_{L^\infty}$ are at most $1+\frac\pi2$ times $\sum_{|k|\ge N/2}|k|^{\alpha-1}|\hat u_k|$ and $\sum_{|k|\ge N/2}|\hat u_k|$, respectively.
For $u$ bounded and analytic in a strip of half-width $\tau$, the unweighted sum is of order $e^{-\tau N/2}$ and the weighted sum of order $N^{\alpha-1}e^{-\tau N/2}$.
\end{enumerate}
\end{theorem}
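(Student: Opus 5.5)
We prove the four items in order, reusing the semi-discrete identity of \Cref{thm:defect} and the far-field estimates of \Cref{lem:dipole}.

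For item~(i), the zero cell means of $w$ follow from $\Ph w=\Ph u-\Ph\mathcal Dv=v-v=0$. Subtracting the REA right-hand side of Eq.~\eqref{eq:rea} from the exact semi-discrete identity of Eq.~\eqref{eq:semidiscrete} gives $r_j=-\frac1h(\Delta F_{j+1/2}-\Delta F_{j-1/2})$. Both face fluxes are finite: $F(u)$ is continuous under Assumption~\ref{ass:regularity}, and $F(\mathcal Dv)$ is finite by item~(ii) of Lemma~\ref{lem:riesz} because $\mathcal Dv\in C^{0,\eta}$. The triangle inequality then gives the bound $\frac2h\max_j|\Delta F_{j+1/2}|$.

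For item~(ii), we write $\tfrac12u^2-\tfrac12(u-w)^2=uw-\tfrac12w^2$. This is bounded by $(\|u\|_\infty+\tfrac12\|w\|_\infty)\|w\|_\infty$. The fractional part of $\Delta F$ is $-\nu\Ra w$ by linearity of $\Ra$, which holds on Hölder functions through the integral definition.

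For item~(iii), we treat the three bounds separately.
\begin{itemize}
\item $B_1$ follows from the Fourier characterization in Lemma~\ref{lem:riesz}(ii), since $|i\,\sgn(k)|k|^{\alpha-1}e^{ikx}|=|k|^{\alpha-1}$. When the sum is finite, the Fourier series converges absolutely to the continuous function $\Ra w$.
\item $B_2$ is exactly the periodic case of Lemma~\ref{lem:dipole}(i), applied to $w$, which has zero cell means with $N$ even.
\item $B_3$ comes from the interpolation argument. We split the integral $\kappa_\alpha\int_0^\infty(w(x+z)-w(x-z))z^{-\alpha}\dd z$ at a radius $\rho$. The near part is bounded by $2[w]_\eta z^\eta$, which gives $\kappa_\alpha\frac{2[w]_\eta\rho^{1+\eta-\alpha}}{1+\eta-\alpha}$. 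The far part is bounded by $2\|w\|_\infty$, which gives $\kappa_\alpha\frac{2\|w\|_\infty\rho^{1-\alpha}}{\alpha-1}$. On $\T$ the periodic extension stays bounded by $\|w\|_\infty$ and the integral converges absolutely, so this split is legitimate. Choosing $\rho=(\|w\|_\infty/[w]_\eta)^{1/\eta}$ balances the two terms and yields $K_{\alpha,\eta}\|w\|^{1-(\alpha-1)/\eta}_\infty[w]_\eta^{(\alpha-1)/\eta}$.
\end{itemize}
The minimum of the three bounds then holds trivially.

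For item~(iv), the main step is aliasing, and it is the main obstacle. We write $u=P_Nu+Q_Nu$, with $P_N$ the part in $\T_N$ and $Q_N$ as in \Cref{thm:rea}. Since $\Ih\Ph$ reproduces $\T_N$, we get $w=Q_Nu-\Ih\Ph Q_Nu$. A mode $e^{ikx}$ with $|k|\ge N/2$ has cell averages $\mu_ke^{ik\cdot}$, which alias to a resolved wavenumber $k'\equiv k\pmod N$ with $|k'|\le N/2$. The histopolant returns $(\mu_k/\mu_{k'})e^{ik'x}$, or the corresponding Nyquist term when $k'=N/2$.

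The key estimates are therefore the following.
\begin{itemize}
\item We need $|\mu_k/\mu_{k'}|\le\pi/2$. This holds because $|\mu_k|=|\sinc(kh/2)|\le 1/(|k|h/2)\cdot|\sin(k'h/2)|$ and $|\sin(k'h/2)|/(|k'|h/2)\ge 2/\pi$ for $|k'|h/2\le\pi/2$. We also need an analogous check for the Nyquist coefficient $\frac\pi2\hat v_{N/2}$, where $|\sin(Nx/2)|\le1$.
\item We need the weight comparison $|k'|^{\alpha-1}\le|k|^{\alpha-1}$, which holds since $|k'|\le|k|$.
\end{itemize}
Summing over the modes $|k|\ge N/2$, each contributes at most $(1+\frac\pi2)$ times its own weighted amplitude: one copy from $Q_Nu$ and at most $\frac\pi2$ from the aliased histopolant term. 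This proves both claimed bounds.

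There is one subtlety: $Q_N$ at even $N$ contains the $\cos(Nx/2)$ part of the $\pm N/2$ modes. Here we check that $\Ih$ maps its cell averages (which are zero, since $\cos(Nx/2)$ integrates to zero on every cell) consistently, so it contributes only itself.

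Finally, analyticity in a strip of half-width $\tau$ gives $|\hat u_k|\le Ce^{-\tau'|k|}$ for every $\tau'<\tau$. Summing the geometric tails then yields orders $e^{-\tau N/2}$ and $N^{\alpha-1}e^{-\tau N/2}$, with the strip interpreted up to the usual loss of an arbitrarily small amount in $\tau$.
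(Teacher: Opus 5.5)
Items (i)--(iii) follow the paper's proof. For $B_3$ you redo the interpolation splitting inline with the one-sided kernel form instead of citing Lemma~\ref{lem:interp}, which is equivalent. Your first half of (iv) is the paper's ``one copy from $Q_Nu$ plus one aliased copy'' triangle inequality, organized per mode rather than per alias class. The per-mode ratio is in fact exactly $|\mu_k/\mu_{k'}|=|k'|/|k|\le1$, because $|\sin(kh/2)|=|\sin(k'h/2)|$, so your factor can be sharpened to $2$, better than the paper's $1+C_N$.

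You only announce the Nyquist check, and $|\sin(Nx/2)|\le1$ controls only the sup norm. For the weighted sum you need two facts. First, a mode $k=(2m+1)N/2$ with $m\neq0,-1$ is reproduced as $\frac\pi2\mu_k\sin(Nx/2)$ with $\frac\pi2|\mu_k|=1/|2m+1|\le\frac13$. Second, $\sin(Nx/2)$ has two Fourier coefficients of modulus $\frac12$. Together they show that this class contributes a factor of at most $\frac43$. You should also state that $\|w\|_{L^\infty}\le\sum_k|\hat w_k|$, because $w$ is continuous with an absolutely summable Fourier series.

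The genuine shortfall is the last claim of (iv). You only obtain $|\hat u_k|\le C_{\tau'}e^{-\tau'|k|}$ for $\tau'<\tau$ and concede ``the usual loss''. That proves $O(e^{-\tau'N/2})$, not the stated $O(e^{-\tau N/2})$ and $O(N^{\alpha-1}e^{-\tau N/2})$. With any loss in the exponent, the factor $N^{\alpha-1}$ in the weighted rate carries no information.

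The hypothesis is that $u$ is bounded \emph{in the strip}, $M=\sup_{|\Imag x|<\tau}|u(x)|<\infty$, and this removes the loss. By periodicity and Cauchy's theorem, you may move the integral for $\hat u_k$ to the line $\Imag x=y$ for any $|y|<\tau$. This gives $|\hat u_k|\le Me^{ky}$ with a constant independent of $y$. Letting $y\to-\sgn(k)\tau$ gives $|\hat u_k|\le Me^{-\tau|k|}$ with no loss. Summing over $|k|\ge K=\lceil N/2\rceil$, and using $(K+j)^{\alpha-1}\le K^{\alpha-1}(1+j)^{\alpha-1}$ for the weighted tail, then yields both stated orders exactly.
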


The proof in \ref{app:proofs:consistency} compares the REA scheme with Theorem~\ref{thm:defect} and uses \Cref{lem:dipole} and an interpolation inequality.

\begin{corollary}[Accuracy required of a decoder]\label{cor:decoder}
Assume that $N$ is even and that $\|u\|_{L^\infty}\le C_u$, $\|w\|_{L^\infty}\le C_0h^q$ and $\|w'\|_{L^\infty}\le C_1h^{q-1}$, uniformly on a class of solutions.
Then the residual satisfies $\|r\|_\infty=O(h^{q-\alpha})$ on that class, so the REA scheme is consistent whenever $q>\alpha$.
A sufficient condition for consistency is therefore that the decoder is accurate in $W^{1,\infty}$, or alternatively in the weighted Wiener norm $\sum_k|k|^{\alpha-1}|\hat w_k|$ or in a H\"older norm of order $\eta>\alpha-1$.
\end{corollary}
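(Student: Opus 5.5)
The corollary follows from \Cref{thm:consistency} once its hypotheses are plugged in; I will chain items~(i)--(iii) and track powers of $h$. By item~(i) of \Cref{thm:consistency}, $\|r\|_\infty\le\frac2h\max_j|\Delta F_{j+1/2}|$, so it suffices to show $\max_j|\Delta F_{j+1/2}|=O(h^{q+1-\alpha})$. By item~(ii), $|\Delta F_{j+1/2}|\le A\|w\|_{L^\infty}+\nu|\Ra w(\xf{j})|$ with $A\le C_u+\frac12C_0h^q$, which is bounded uniformly on the class for $h\le 2\pi$. The local term is therefore $O(h^q)$, and this is $O(h^{q+1-\alpha})$ because $1-\alpha<0$ and $h$ is bounded.

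For the nonlocal term I would use the bound $B_2$ of item~(iii), since the hypotheses are stated in $W^{1,\infty}$ and $N$ is even. Substituting gives
\[
|\Ra w(x)|\le\kappa_\alpha\Big[\tfrac{2}{2-\alpha}C_1h^{2-\alpha}h^{q-1}+\big(\alpha\zeta(1+\alpha)h^{1-\alpha}+\pi c_{\mathrm{per}}h\big)C_0h^q\Big]=O(h^{q+1-\alpha}),
\]
because the first two contributions are both of exact order $h^{q+1-\alpha}$ and the periodic correction $h^{q+1}$ is smaller. The constants $\kappa_\alpha$, $\zeta(1+\alpha)$ and $c_{\mathrm{per}}$ depend only on $\alpha$, and $c_{\mathrm{per}}$ is finite since $1+\alpha>1$. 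Combining with the factor $2/h$ yields $\|r\|_\infty=O(h^{q-\alpha})$ uniformly on the class, which tends to zero exactly when $q>\alpha$. This is the consistency claim.

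For the alternative sufficient conditions I would use $B_1$ and $B_3$ in place of $B_2$. If $\sum_k|k|^{\alpha-1}|\hat w_k|=o(h)$ and $\|w\|_{L^\infty}=o(h)$, then items~(i)--(iii) directly give $\|r\|_\infty=o(1)$. Here $\|w\|_{L^\infty}\le\sum_k|\hat w_k|$ is itself controlled by the weighted norm, since $|\hat w_0|=0$ ($w$ has zero mean) and $|k|^{\alpha-1}\ge1$ for $k\ne0$. Likewise, with $B_3$, smallness of $\|w\|_{L^\infty}$ and $[w]_\eta$ at a rate making $B_3=o(h)$ gives consistency through the interpolation inequality. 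These are the senses in which the decoder must be accurate in the weighted Wiener norm or in $C^{0,\eta}$. I would state them as $o(h)$ requirements rather than extract exponents.

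The only point needing care is the exponent bookkeeping in $B_2$. The term $h^{2-\alpha}\|w'\|$ and the monopole term $h^{1-\alpha}\|w\|$ must both balance at $h^{q+1-\alpha}$, and this is exactly why the hypothesis $\|w'\|_{L^\infty}\le C_1h^{q-1}$ is scaled one power below $\|w\|_{L^\infty}$. I would check this balance explicitly and then note that uniformity of all constants over the class makes the $O$ uniform.
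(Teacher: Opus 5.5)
Your proposal is correct and follows essentially the same route as the paper: substitute the hypotheses into items~(i) to~(iii) of Theorem~\ref{thm:consistency}, bound the nonlocal face term by $B_2$ to get $O(h^{q+1-\alpha})$, and note that the advective contribution $O(h^{q})$ is dominated because $\alpha>1$, so the factor $2/h$ gives $\|r\|_\infty=O(h^{q-\alpha})$. You also treat the weighted Wiener and H\"older alternatives explicitly, as $o(h)$ requirements on $B_1$ or $B_3$ together with $\|w\|_{L^\infty}=o(h)$; this spells out what the paper leaves to the word ``therefore'', and it is sound.
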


\begin{proof}
Substituting the assumptions into items~(i) to~(iii) of Theorem~\ref{thm:consistency} shows that the advective part contributes $O(h^{q-1})$ to $\|r\|_\infty$.
The fractional part contributes $\frac2h\,O(h^{1+q-\alpha})=O(h^{q-\alpha})$ through the bound $B_2$, which dominates the advective part because $\alpha>1$.
\end{proof}

\begin{remark}[Stability and design rules]\label{rem:stability}
Consistency alone does not give convergence, and the learned scheme carries no stability guarantee that holds for every value of $\theta$.
\ref{app:tube} states a tube theorem in the spirit of L\'opez-Marcos and Sanz-Serna~\cite{lopezmarcos1988stability}, with $L$ the sum of the Lipschitz rates of the coarse scheme and the learned correction on a tube of radius $\varrho$ around $U^n=\Ph u(t_n)$.
Let the one-step consistency error of the corrected scheme, $\|\dfh(u(t_n))-\mathcal M(U^n)\|$, be at most $\Delta t\,\varepsilon$, and let $L>0$.
The error of the coarse states $V^n$ then stays below $e^{Lt_n}\|V^0-U^0\|+\varepsilon(e^{Lt_n}-1)/L$ as long as this bound stays below $\varrho$. Here, $\varepsilon$ is a supremum along the test trajectory, which $L^2$ training does not control, and over all reference states non-closability forces it to be at least the fiber oscillation of Theorem~\ref{thm:rea} divided by $2\Delta t$ (Remark~\ref{rem:tube}).
The theorem suggests three design rules for learned closures of stiff nonlocal equations.
Rule (i) lets the learned correction enter the update with a factor $\Delta t$, so that its Lipschitz contribution per step is of order $\Delta t$.
Rule (ii) keeps the stiff linear fractional operator in the coarse scheme, treated implicitly or exponentially, since a network that supplies the far field has a Lipschitz constant of order $\nu h^{-\alpha}$.
Rule (iii) controls the Lipschitz constant of the learned correction on the tube, for instance by spectral normalization.
{NRC follows rules (i) and (ii) through ETDRK2 with the exact symbol.
It does not enforce rule (iii), so its stability is an empirical observation, and none of its 216 in-distribution test rollouts blows up (\Cref{sec:accuracy}).}
\end{remark}

\subsection{Network, time integration and training}\label{sec:training}

This subsection fixes how the poles are chosen, the time integrator and the training, and \ref{app:training} gives the remaining settings.

\paragraph{Network}
Four circular convolutions of width 40 and kernel size 5 with GELU activations~\cite{hendrycks2016gaussian} map the inputs to 40 features per cell.
The inputs are $v_j/0.6$, $(\alpha-1.5)/0.3$ and $(\ln\nu+2.12)/0.9$, a fourth channel is affine in the last two (\ref{app:training}), and the forcing is not an input.
The receptive field of 17 cells acts as a shift-equivariant stencil, but $\Ih$, $\Ih\Ph$ and the tails of the atoms couple all cells (\Cref{sec:locality}).
A pointwise convolution maps the features of cell $j$ to the outputs $o^{\phi}_{jm}$, $o^{\delta}_{jm}$, $o^{\mathrm{re}}_{jm}$, $o^{\mathrm{im}}_{jm}$ of its $p=2$ poles and a gate output $o^{\pi}_{j}$, and sets
\begin{align}
\phi_{jm}&=x_j+\frac h2\tanh\big(o^{\phi}_{jm}\big), &
\delta_{jm}&=0.02\,h+3h\,\mathrm{sig}\big(o^{\delta}_{jm}\big), \nonumber\\
\pi_j&=\mathrm{sig}\big(o^{\pi}_{j}\big), &
\gamma_{jm}&=0.6\,\pi_j\,\delta_{jm}\big(o^{\mathrm{re}}_{jm}+i\,o^{\mathrm{im}}_{jm}\big),\label{eq:nrc_head}
\end{align}
where $\mathrm{sig}(s)=1/(1+e^{-s})$ and the cell center $x_j=(j+\frac12)h$.
The enrichment of Eq.~\eqref{eq:enrichment} thus has $Np=128$ atoms, and backbone and head have $840+3\times8040$ and 369 parameters, respectively.
{\Cref{tab:hyper} in \Cref{sec:setup} lists these settings together with those of the learned baselines.}

Confining each pole to its own cell limits cancellation in partial-fraction sums, and the lower depth bound keeps the near-pole Riesz flux $\Gamma(\alpha)\delta^{-\alpha}$ of a unit atom bounded (\Cref{lem:atoms}).
At the upper bound, the gap $e^{-\pi\delta/h}$ of \Cref{prop:gap} is about $8\times10^{-5}$, so deeper atoms add little to $\Ih$.
The gate $\pi_j\in(0,1)$ switches the enrichment of a whole cell on or off and leaves smooth regions to the histopolant.
Scaling $\gamma_{jm}$ by $\delta_{jm}$ makes the peak value $|\gamma_{jm}|/\delta_{jm}$ independent of the depth, lowers the Riesz flux to order $\delta^{1-\alpha}$ and gives the amplitude used in \Cref{sec:mechanism}.

\paragraph{Interface fluxes}
The advective flux is the central flux
\begin{equation}
F^{\mathrm{adv}}_{j+1/2}(v)=\tfrac12\,U_{j+1/2}^2,\qquad U_{j+1/2}=(\Dth v)(\xf{j}),\label{eq:nrc_adv}
\end{equation}
with no Riemann solver, limiter or added viscosity, which is admissible because $\Dth v$ is analytic on $\T$, whereas a jump would make the Riesz flux unbounded (\Cref{lem:jump}).
{On the 216 in-distribution test rollouts, the trained NRC needs no numerical dissipation and never blows up, whereas the same central flux on the histopolant alone, with $g_\theta=0$, blows up on one test and two out-of-distribution trajectories (\Cref{sec:coupled}).
In distribution, the learned reconstruction therefore stabilizes the scheme, which we observe but do not prove (\Cref{rem:stability}).}
By linearity of $\Ra$, the Riesz flux at a face splits into the flux of the histopolant and a sub-grid flux,
\begin{align}
-\nu\big(\Ra\Dth v\big)(\xf{j})&=-\nu\big(\Ra\Ih v\big)(\xf{j})+F^{\mathrm{sg}}_{j+1/2}(v),\nonumber\\
F^{\mathrm{sg}}_{j+1/2}(v)&=-\nu\Big(\Ra(I-\Ih\Ph)g_\theta(v)\Big)(\xf{j}).\label{eq:nrc_sg}
\end{align}
The first part has the exact diagonal divergence $\mathcal L$ with symbol $\lambda_k=-\nu|k|^\alpha$ (\Cref{prop:decoders}), and the second, the flux of a field with zero cell averages (\Cref{lem:dipole}), follows in closed form from $\Ra\psi_a=i\Li_{1-\alpha}(\bar ae^{ix})$ (\Cref{lem:atoms}).
Its polylogarithms are summed in single precision, with a relative accuracy of about $10^{-5}$, far below the rollout errors (\ref{app:atoms}).
NRC thus advances the cell averages by the reconstruct-evolve-average semi-discretization $\mathcal Lv+\mathcal N_\theta=\Phi_{\Dth}(v)+\Ph f$ of \Cref{prop:decoders},
\begin{equation}
\frac{\mathrm{d}v}{\mathrm{d}t}=\mathcal Lv+\mathcal N_\theta(v,t),\qquad
\mathcal N_\theta(v,t)=-\frac1h\Dm\Big[F^{\mathrm{adv}}(v)+F^{\mathrm{sg}}(v)\Big]+\Ph f(t).\label{eq:nrc_semi}
\end{equation}

\paragraph{Exponential time integration}
We integrate Eq.~\eqref{eq:nrc_semi} with the scheme ETDRK2 of Cox and Matthews~\cite{cox2002exponential}, which in coarse discrete Fourier variables reads
\begin{align}
\hat{\tilde v}_k&=e^{\Delta t\lambda_k}\,\hat v^n_k+\Delta t\,\varphi_1(\Delta t\lambda_k)\,\widehat{\mathcal N}_{\theta,k}(v^n,t_n),\label{eq:nrc_etd1}\\
\hat v^{n+1}_k&=\hat{\tilde v}_k+\Delta t\,\varphi_2(\Delta t\lambda_k)\Big(\widehat{\mathcal N}_{\theta,k}(\tilde v,t_{n+1})-\widehat{\mathcal N}_{\theta,k}(v^n,t_n)\Big),\label{eq:nrc_etd2}
\end{align}
with $\varphi_1(z)=(e^z-1)/z$, $\varphi_2(z)=(e^z-1-z)/z^2$, $\varphi_1(0)=1$ and $\varphi_2(0)=\frac12$.
The step conserves the discrete mass for every $\theta$ (\Cref{prop:mass}), and the forcing enters through exact cell averages at the stage times, so its quadrature error is a zero-sum term in the defect of Theorem~\ref{thm:defect}.

\begin{algorithm}[t]
\caption{One NRC time step from $v^n$ at $t_n$ to $v^{n+1}$ at $t_{n+1}=t_n+\Delta t$. Hats denote coarse discrete Fourier coefficients, and each call of the explicit right-hand side evaluates the network and all $Np$ atoms at all faces.}
\label{alg:nrc}
\begin{algorithmic}[1]
\Require cell averages $v^n$, time $t_n$, order $\alpha$, diffusivity $\nu$, trained weights $\theta$
\Ensure cell averages $v^{n+1}$
\Function{ExplicitRHS}{$y,t$}\Comment{returns $\mathcal N_\theta(y,t)$ of Eq.~\eqref{eq:nrc_semi}}
\State $(\phi_{jm},\delta_{jm},\gamma_{jm})_{j,m}\gets$ backbone and head applied to $(y,\alpha,\nu)$, Eq.~\eqref{eq:nrc_head}
\State face values, cell averages and face Riesz fluxes of $g=2\Real\sum_{j,m}\gamma_{jm}\psi_{a_{jm}}$ by \Cref{lem:atoms}
\State $U_{j+1/2}\gets(\Ih y)(\xf{j})+g(\xf{j})-(\Ih\Ph g)(\xf{j})$ for all $j$
\State $F^{\mathrm{sg}}_{j+1/2}\gets-\nu\big[(\Ra g)(\xf{j})-(\Ra\Ih\Ph g)(\xf{j})\big]$ for all $j$
\State $\hat F_{j+1/2}\gets\frac12U_{j+1/2}^2+F^{\mathrm{sg}}_{j+1/2}$ for all $j$\Comment{central flux, no Riemann solver}
\State \Return $-\frac1h\Dm\hat F+\Ph f(t)$
\EndFunction
\State $\mathcal N^0\gets$ \Call{ExplicitRHS}{$v^n,t_n$}
\State $\hat{\tilde v}_k\gets e^{\Delta t\lambda_k}\hat v^n_k+\Delta t\,\varphi_1(\Delta t\lambda_k)\,\hat{\mathcal N}^0_k$ for all $k$, with $\lambda_k=-\nu|k|^\alpha$
\State $\mathcal N^1\gets$ \Call{ExplicitRHS}{$\tilde v,t_{n+1}$}
\State $\hat v^{n+1}_k\gets\hat{\tilde v}_k+\Delta t\,\varphi_2(\Delta t\lambda_k)\big(\hat{\mathcal N}^1_k-\hat{\mathcal N}^0_k\big)$ for all $k$
\State \Return $v^{n+1}$\Comment{discrete mass conserved for every $\theta$}
\end{algorithmic}
\end{algorithm}

{Each step of Algorithm~\ref{alg:nrc} evaluates all atoms at all faces twice, so a step of NRC costs more than a classical finite-volume step on the same grid (\ref{app:atoms}), and \Cref{sec:future} outlines how this cost can be reduced.}

\paragraph{Stage A, a priori training on exact face fluxes}
Stage A, the a priori stage in the sense of Melchers et al.~\cite{melchers2023comparison}, fits the 48,192 fine training states, stored every 0.1 time units, with $v=\Ph u$ and exact face fluxes $F^\ast_{j+1/2}=\big(\frac12u^2-\nu\Ra u\big)(\xf{j})$ by minimizing
\begin{align}
\mathcal J_A(\theta)=\bigg\langle&\bigg(\frac{F(\Dth v)(\xf{j})-F^\ast_{j+1/2}}{\sigma_F(\alpha)}\bigg)^2+\bigg(\frac{U_{j+1/2}-u(\xf{j})}{\sigma_U(\alpha)}\bigg)^2\nonumber\\
&+\bigg(\frac{h\,\partial_x(\Dth v-u)(\xf{j})}{\sigma_U(\alpha)}\bigg)^2\bigg\rangle.\label{eq:nrc_lossA}
\end{align}
Here $\langle\cdot\rangle$ averages over faces and snapshots, and $\sigma_F(\alpha)$ and $\sigma_U(\alpha)$, the root-mean-square face-flux and face-value errors of $\Ih v$ on every seventh training snapshot, make each term of order one for $g_\theta=0$.
The flux target is gauge fixed because $F^\ast$ is a physical flux rather than a defect known up to a constant (Corollary~\ref{cor:flux}).
The value and slope terms form an $H^1$-type face loss, a discrete counterpart of the sufficient $W^{1,\infty}$ accuracy of Corollary~\ref{cor:decoder}, whereas an $L^2$ fit would leave the nonlocal flux uncontrolled.
Flux and value terms have different minimizers in general, since the conditional mean of $\frac12u^2$ exceeds half the squared conditional mean of $u$ by half its conditional variance (\Cref{prop:closable}).

\paragraph{Stage B, unrolled training through the scheme}
In a rollout the network sees the drifting states of the scheme, and the optimal closure is a conditional expectation under their distribution (\Cref{prop:closable}).
Stage B therefore trains through the solver~\cite{um2020solver,list2022learned}, unrolling Algorithm~\ref{alg:nrc} for $H$ steps with full backpropagation from exact cell averages at a random order, trajectory and start time $t_{n_0}\ge1$, and minimizes
\begin{equation}
\mathcal J_B(\theta)=\frac1H\sum_{q=1}^{H}\bigg\langle\frac{\sum_{k=0}^{N/2}W_k\,\big|\hat e^{(q)}_k\big|^2}{\sum_{k=0}^{N/2}W_k\,\big|\hat r^{(q)}_k\big|^2}\bigg\rangle.\label{eq:nrc_lossB}
\end{equation}
Here $\hat e^{(q)}$ and $\hat r^{(q)}$ are the real Fourier coefficients of $v^{n_0+q}_\theta-\Ph u(t_{n_0+q})$ and $\Ph u(t_{n_0+q})$, and $W_k=4$ for $1\le k\le4$, else $1$.
The horizon grows as $H=4,8,16,32$ but never exceeds 0.64 time units, so {the rollouts of 40 time units in \Cref{sec:coupled} test a long-time behavior that training never sees}.
Both stages use Adam~\cite{kingma2015adam} with cosine schedules~\cite{loshchilov2017sgdr} and gradient-norm clipping~\cite{pascanu2013difficulty}.

\paragraph{Model selection}
Checkpoints of NRC and of all learned baselines are selected on validation rollouts only.
For all three seeds, the last checkpoint was selected, so the validation score was still decreasing when training stopped.
Including data loading and validation, training takes from 2528 to 2716~seconds per seed on two threads of a shared CPU host.

\subsection{Scope, analysis and organization}\label{sec:scope}

Classical finite-volume schemes fix the reconstruction in advance and meet the limits of \Cref{lem:jump} for jumps and of \Cref{prop:gap} for trigonometric reconstructions.
Learned closures typically regress a correction to the coarse update and leave conservation, the nonlocal operator and the meaning of the learned quantity to the network.
NRC {combines the strengths of both views}, since its learned reconstruction preserves cell averages, its interface fluxes are exact and its learned object is a set of poles and amplitudes{ with a direct interpretation as sub-cell structure}.

By training a decoder on coarse cell averages and inserting it into a conservative scheme, we investigate two groups of questions.
\emph{A) For the reconstruction in isolation}, on frozen cell averages of reference solutions, we raise three questions and address them in \Cref{sec:apriori}.
(A1) Which parts of the defect of \Cref{thm:defect} can be computed exactly, and which must be learned (\Cref{sec:nonlocality})?
(A2) Why should the learned sub-cell degrees of freedom be complex poles rather than more Fourier modes or fixed stencils (\Cref{sec:whypoles})?
(A3) How local can the learned part of the reconstruction be when the Riesz flux itself is global (\Cref{sec:locality})?
\emph{B) For the coupled scheme}, where the decoder feeds the fluxes of a time-stepping scheme over long rollouts, we ask three questions and address them in \Cref{sec:coupled}.
(B1) How much accuracy per degree of freedom does the decoder {provide} over high-resolution finite-volume schemes and black-box learned corrections (\Cref{sec:accuracy} and~\ref{sec:grid})?
(B2) Which structural components carry this accuracy, and what do the learned poles represent (\Cref{sec:ablation} and~\ref{sec:mechanism})?
(B3) 
{How robust is the closure to shifts of the fractional order, the forcing and the diffusivity, and how does its advantage change under grid refinement and in the integer-order limit (\Cref{sec:ood}, \ref{sec:grid} and~\ref{sec:barsinai})?}

Following these research questions, \Cref{sec:apriori} treats nonlocality (A1), complex poles (A2) and locality (A3) on frozen cell averages.
\Cref{sec:coupled} reports the accuracy per degree of freedom of the coupled scheme (B1), the components and poles that carry it (B2), and its {robustness and scope} (B3).
{The last part covers shifts of order, forcing and diffusivity, grid refinement and the integer-order setting of Bar-Sinai et al.~\cite{barsinai2019learning}.}
This lets us assess how far NRC can be read as a \emph{data-adaptive} realization of a classical reconstruct-evolve-average scheme.
{The proofs of \Cref{sec:theory}, the implementation of the atoms, the training and evaluation settings, additional results and supporting classical results are given in Sections~S1 to~S5 of the Supplementary Material.}

\section{The reconstruction in isolation}\label{sec:apriori}

We begin with the decoder on frozen and exact cell averages, where neither error accumulation nor distribution drift obscures the analysis of \Cref{sec:theory}.
Only \Cref{sec:locality} and one band in \Cref{fig:poles_gap} use trained NRC decoders, from three seeds, and all other computations are deterministic.

\subsection{Nonlocality and the cost of local surrogates}\label{sec:nonlocality}

\paragraph{Problem setup}
To test three predictions of \Cref{prop:nearfar} and \Cref{lem:jump}, we first compute the share of $(-\Delta)^{\alpha/2}u(x)$ from $|y-x|>h$ at $x=0$, for $u=e^{-x^2}$ on the real line and $u=\cos x$ on $\T$.
Second, we compare the exact coarse operator $s_0$, the Riesz flux of $\Ih v$ at the faces by item~3 of \Cref{prop:decoders}, with the local surrogate $s_2$ of \Cref{sec:ablation}.
This surrogate is the fractional centerd difference~\cite{ortigueira2006riesz,celik2012crank} truncated after four neighbors on each side, and both act on one smooth state of the fractional Burgers equation with $\nu=0.5$ for $N$ from 16 to 1024.
We record the relative operator error and the one-step defect $\|\dfh\|_\infty/\Delta t$ relative to $\|\partial_t\Ph u\|_\infty$, for forward Euler with a Rusanov flux at Courant number $0.25$.
Third, we evaluate the Riesz flux at the center of the front $u=-\tanh(\sin(x-\pi)/\ell)$ by a fast Fourier transform, for $\ell$ from $4h$ to $h/1024$ with $h=2\pi/64$.

\begin{figure}[!tbp]
\centering
\includegraphics[width=\textwidth]{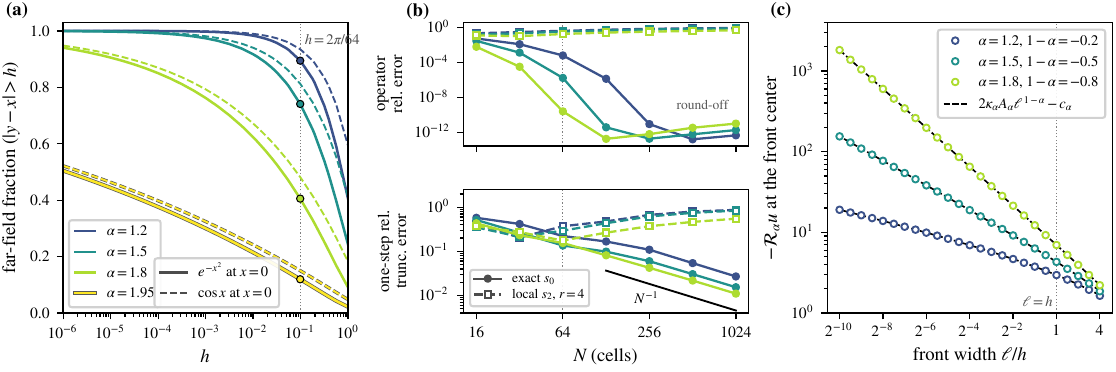}
\caption{Nonlocality of the Riesz flux and the cost of local surrogates, computed without training.
\textbf{(a)} Share of the operator from $|y-x|>h$ at $x=0$ for the Gaussian (solid) and the cosine (dashed), with dots at $h=2\pi/64$.
\textbf{(b)} Relative operator error (top) and relative one-step defect (bottom) of $s_0$ (solid) and $s_2$ (dashed).
\textbf{(c)} Front-center Riesz flux against $\ell/h$, with the law of \Cref{lem:jump} dashed.}
\label{fig:nonlocal}
\end{figure}

\paragraph{Results and observations}
At our resolution the far field carries more than half of the operator only for $\alpha\le1.5$, with Gaussian shares at $h=2\pi/64$ of 0.895, 0.741, 0.406 and 0.120 for $\alpha=1.2$, $1.5$, $1.8$ and $1.95$ (\Cref{fig:nonlocal}(a)). Therefore, neither a stencil of fixed width, which misses the far field, nor a quadrature of distant cell averages, which misses the near field, is accurate at $N=64$. As part (ii) of \Cref{prop:nearfar} predicts for a model error, the relative error of $s_2$ grows from 0.439, 0.356 and 0.189 at $N=64$ to 0.908, 0.803 and 0.499 at $N=1024$ for $\alpha=1.2$, $1.5$ and $1.8$, respectively.
The error of $s_0$, which is at most $6.6\times10^{-4}$ at $N=64$ and below $1.1\times10^{-11}$ for $N\ge256$, comes only from $u-\Ih\Ph u$, with the spectral decay following item~4 of \Cref{thm:consistency}.

{The one-step defect separates $s_0$ from $s_2$ less sharply, because on coarse grids the first-order error of forward Euler with a Rusanov flux masks the model error of $s_2$ (\Cref{fig:nonlocal}(b), bottom).}
The ratio of the $s_2$ defect to the $s_0$ defect lies between 0.49 and 0.86 at $N=16$ for all orders and at $N=32$ for $\alpha\le1.5$, and grows to between 1.14 and 2.17 at $N=64$.
A learned closure on one coarse grid can thus absorb part of a model error as if it were a truncation error.
By part (iii) of \Cref{prop:nearfar}, such a correction must see the whole far field, which a receptive field of fixed cell count cannot do as $h\to0$.

Without the bounded contribution of the opposite front, the flux at the center of a sharpening front follows the law $\ell^{1-\alpha}$ of \Cref{lem:jump}, with fitted slopes $-0.200$, $-0.500$ and $-0.800$ for $\ell\le h/4$ (\Cref{fig:nonlocal}(c)).
A reconstruction that smears a sub-cell front over one cell thus underestimates the face flux by a factor that grows like $(h/\ell)^{\alpha-1}$, and a jump makes the flux infinite.

Together, these results answer (A1) for the fractional part of the defect of \Cref{thm:defect}, since the Riesz flux of the histopolant can be computed exactly and only the sub-cell reconstruction must be learned.
{Even at $\alpha=1.8$, where the near field dominates at $N=64$, the local surrogate keeps a relative error of 0.189, so the exact global evaluation is needed at all three orders.}

\begin{figure}[!tbp]
\centering
\includegraphics[width=\textwidth]{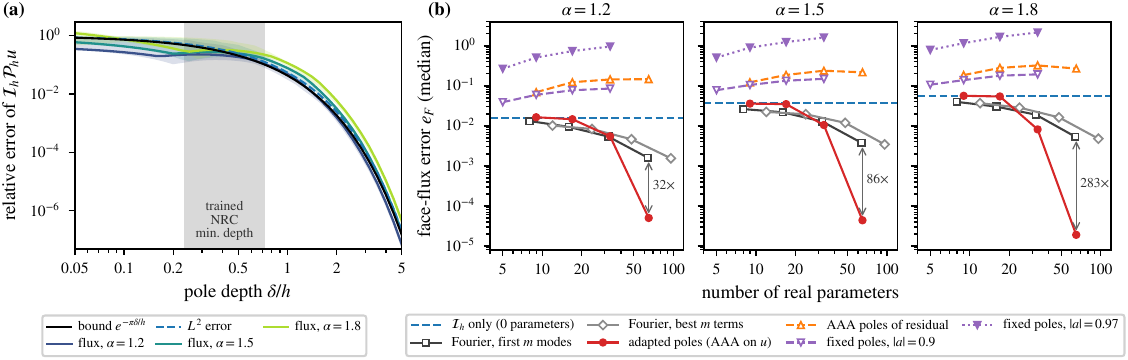}
\caption{One-atom gap of trigonometric reconstructions and oracle headroom of rational enrichment on 64 cells.
\textbf{(a)} Relative error of $\Ih\Ph u$ for one atom against $\delta/h$, with the bound $e^{-\pi\delta/h}$ (black), the $L^2$ error (dashed) and the maximum face-flux error for three orders.
Light bands span 10 pole positions, and the gray band spans the trained minimum pole depths.
\textbf{(b)} Median $e_F$ against the number of real parameters, an oracle bound on the decoder class and not a training result.}
\label{fig:poles_gap}
\end{figure}

\subsection{Why poles}\label{sec:whypoles}

\paragraph{Problem setup}
We test \Cref{prop:gap} on one atom $u=2\Real[\psia]$ with $N=64$, for 41 depths $\delta/h$ from 0.05 to 5 at the pole position $\phi=20.4h$ and at ten more positions $\phi=(20+f)h$ with $f$ from 0.05 to 0.95.
For each depth, we compute the relative $L^2$ error of $\Ih\Ph u$ and its maximal relative face-flux error, and we mark the minimum atom depths of the trained NRC over the 576 test states of \Cref{sec:locality}.
A second, oracle computation asks what a richer reconstruction class gains at a given number of real parameters, with every enrichment $g$ of the decoder~\eqref{eq:nrcdecoder} fitted to the true fine state, which a learned decoder never sees.
It uses 96 states per order, six times on each of 16 held-out forced fractional Burgers trajectories from a separate precursor study with $N=64$ and the diffusivities of \Cref{sec:problem}.
Fourier enrichments add the first $m$ modes above the Nyquist mode or the $m$ dominant modes, with $2m$ and $3m$ parameters and $m=4$, $8$, $16$ or $32$.
Rational enrichments use $K=2$, $4$, $8$ or $16$ poles, placed by AAA~\cite{nakatsukasa2018aaa} on the fine state or on the residual $u-\Ih\Ph u$ with $4K+1$ parameters, or fixed uniformly on the circles $|a|=0.9$ and $|a|=0.97$ with $2K+1$ parameters.
The error $e_F$ is the root mean square of the advective plus fractional face-flux error over that of the exact face flux minus its mean, and we report its median over states.

\paragraph{Results and observations}
The relative $L^2$ error lies above the bound $e^{-\pi\delta/h}$ at all 41 depths and 10 positions, as \Cref{prop:gap} requires (\Cref{fig:poles_gap}(a)).
The trained poles have minimum depths from $0.231h$ to $0.732h$, where every trigonometric reconstruction keeps a relative $L^2$ error of at least 0.100 to 0.484 for a single atom.
The face-flux error, which enters the scheme, follows the bound only loosely, since it ranges from 0.180 to 0.485 over 10 positions for $\alpha=1.5$ and $\delta=0.5h$ and reaches 1.22 at $\alpha=1.8$ and $\delta=0.05h$.
A Fourier enrichment by $m$ further modes has degree $N/2+m$, so the argument of \Cref{prop:gap} gives the bound $e^{-\delta(N/2+m)}$, and $m=32$ on 64 cells only squares the gap, whereas one atom with the correct pole removes it.

Sixteen adapted poles with 65 parameters lower the median $e_F$ of the first 32 Fourier modes with 64 parameters by factors of 32, 86 and 283 for $\alpha=1.2$, $1.5$ and $1.8$ (\Cref{fig:poles_gap}(b)).
At 90\% quantile over states the factors decrease to 16, 16 and 87.
With 2 or 4 adapted poles $e_F$ stays within 6\% of $\Ih$ alone, so the advantage of poles appears only once enough poles are used.
Fixed poles and poles of the residual are less accurate than $\Ih$ alone at every parameter count, by factors of at least 1.9 and 3.3, which for fixed poles is the obstruction of \Cref{sec:atoms}.
This holds even for 16 poles on the circle $|a|=0.97$, whose depth $-\ln|a|\approx0.31h$ is close to the trained depths, so depth alone is not sufficient.

Together, these results answer (A2) and justify letting the network choose both the position and the depth of every pole.
The oracle places up to 16 poles anywhere on the torus, whereas NRC places two in every cell, so the study bounds the class of rational enrichments and not what training reaches.

\subsection{Locality of the learned correction}\label{sec:locality}

\paragraph{Problem setup}
A synthetic test evaluates the Riesz flux of the Legendre polynomials $P_0$, $P_1$ and $P_2$ on one cell by quadrature on the real line.
The in-situ test applies the trained NRC decoders of seeds 0, 1 and 2 to exact cell averages of the first eight test trajectories at eight times from $t=5$ to $45$.
For the corrector $w_\theta=(I-\Ih\Ph)g_\theta(v)$ and the unprojected atoms $g_\theta(v)$, we record the far-field fraction $\max|F_{\mathrm{far}}|/\max|F|$ over faces, where $F_{\mathrm{far}}$ is the part of the Riesz face flux $F$ from $|y-x|\ge m_0h$, for $m_0$ from 2 to 31.

\begin{figure}[!tbp]
\centering
\includegraphics[width=\textwidth]{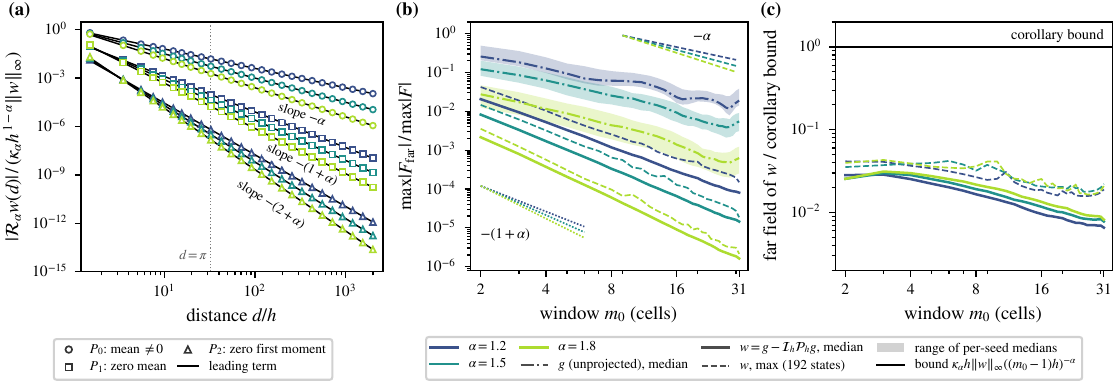}
\caption{Locality of the sub-grid Riesz flux, synthetic and in situ for the trained NRC.
\textbf{(a)} Normalized Riesz flux of single-cell Legendre sources against $d/h$, with leading asymptotes in black.
\textbf{(b)} Far-field fraction against $m_0$ for $w_\theta$ (solid median, dashed maximum) and $g_\theta$ (dash-dot median).
\textbf{(c)} Far field of $w_\theta$ over the bound of \Cref{cor:window}, pooled median (solid) and maximum (dashed).
Panels (b) and (c) pool 192 test states per order from three seeds, with bands spanning per-seed medians.}
\label{fig:dipole}
\end{figure}

\paragraph{Results and observations}
The Legendre sources decay at the monopole, dipole and quadrupole rates of item~3 of \Cref{lem:dipole}, with fitted slopes for $d/h$ from 8 to 200 within $1.1\times10^{-3}$ of $-\alpha$, $-(1+\alpha)$ and $-(2+\alpha)$ (\Cref{fig:dipole}(a)).
In situ, the far field of $w_\theta$ decays with pooled median slopes of $-2.06$, $-2.35$ and $-2.67$ for $m_0\ge4$, slightly slower than the dipole rates $-2.2$, $-2.5$ and $-2.8$, and that of $g_\theta$ close to the monopole rate.
At $m_0=8$ the pooled median far-field fraction of $w_\theta$ is $1.18\times10^{-3}$, $3.31\times10^{-4}$ and $6.02\times10^{-5}$, against 0.0614, 0.0253 and 0.00437 for $g_\theta$ (\Cref{fig:dipole}(b)).
The per-seed medians differ by at most 11\% for $w_\theta$ but by factors of up to 5.0 for $g_\theta$, possibly because the projection removes the cell means and with them the seed-dependent part of the far field.

The window $m_0=8$ is the half-width of the 17-cell receptive field, and beyond it $w_\theta$ contributes less than 0.26\% of the maximum sub-grid face flux in each of the 576 states.
{The current implementation does not yet exploit this locality, since $w_\theta$ on the window depends on all atoms through $\Ih\Ph$, and \Cref{sec:future} discusses how it can reduce the cost of a step.}
The far field of $w_\theta$ stays below 4.3\% of the bound of \Cref{cor:window} for every window and state, so the bound is valid but loose (\Cref{fig:dipole}(c)).
The bound allows a dipole of maximal strength in every cell beyond the window, whereas the measured decay is close to that of a single dipole, which suggests a corrector concentrated near the fronts.

Together, these results address (A3), since the learned part of the nonlocal flux is local in the sense of a dipole tail, while the global part is evaluated exactly.
{All results of this section use frozen and exact cell averages, and \Cref{sec:coupled} turns to the rollout setting, in which the decoder sees the states of the scheme itself.}
\section{The coupled scheme for forced fractional Burgers}\label{sec:coupled}

We now insert the learned decoder into the time-stepping scheme and answer question (B1) over long rollouts, in which reconstruction errors feed back into the states that the network sees.

\subsection{Data, baselines and protocol}\label{sec:setup}

\paragraph{Dynamical system}
All experiments solve the forced fractional Burgers equation~\eqref{eq:pde} on $\T$ with the random forcing~\eqref{eq:forcing}, initial data, orders and diffusivities of Section~\ref{sec:problem}.
The coarse data are the exact averages on $N=64$ cells, stored in single precision every $\Delta t=0.02$, of a Fourier pseudo-spectral ETDRK4 solution~\cite{cox2002exponential,kassam2005fourth} on $N_f=1024$ points with the step $\Delta t/16$.
On this grid the advective Courant number is about $0.3$, and the minimal front width at the training orders lies between 0.38 and 0.48 coarse cells.

The splits of Table~\ref{tab:splits} have disjoint data seeds, with 64 and 16 trajectories per order up to $t=25$ for training and validation and 24 per order up to $t=45$ for testing.
Three shifted splits for Section~\ref{sec:ood} change the order to $\alpha\in\{1.1,1.3,1.7,1.9\}$ with the rule~\eqref{eq:C:nurule} fixed before training, the forcing wavenumbers to $l_i\in\{7,8\}$, or $\nu$ at $\alpha=1.5$ by the factors $0.7$ and $2$.

\paragraph{Closure setup}
NRC is used as specified in Section~\ref{sec:training}, with $p=2$ poles per cell, 128 atoms, a receptive field of 17 cells and 25,329 parameters, and with the seeds 0, 1 and 2 trained on the CPU.

\paragraph{Baselines}
\begin{table}[!tbp]
\centering
\caption{Hyperparameters, parameter counts, seeds and training wall times of NRC and the learned baselines. Wall times are measured on the CPU with two threads per process on a shared host, and ranges are over seeds. All networks receive the four input channels of \ref{app:training:nrc}, and \texttt{c5\_fno} also receives the forcing increment. {Model names encode the learned object and its base: \texttt{bb} marks a black-box face-flux correction on C0 (\texttt{s0}), C0+ (\texttt{weno}), the central flux on $\Ih v$ (\texttt{central}) or C0 with the local operator (\texttt{s2}), \texttt{fluxA} marks the stage-A flux target of NRC, \texttt{fno} a Fourier neural operator, \texttt{point} and \texttt{mult} additive and multiplicative pointwise corrections, \texttt{\_long} four times the iterations, the prefixes \texttt{c1} to \texttt{c5} number the comparison groups, and \texttt{nrc\_nudiv} is the original CPU seed of the diffusivity-diverse NRC of Section~\ref{sec:ood}.}}
\label{tab:hyper}
{\footnotesize\textbf{(a)} Models\par}\smallskip
\resizebox{\textwidth}{!}{%
\begin{tabular}{lllrllccc}
\toprule
Model & Backbone, output & Base flux, operator & Params & Stage A & Stage B & Seeds & Wall A [s] & Wall B [s] \\
\midrule
\texttt{nrc} & CNN, poles & central on $\Dth v$, exact & 25,329 & flux and $H^1$, 1500 & full & 0, 1, 2 & 267 to 305 & 2152 to 2384 \\
\texttt{nrc\_nudiv} & CNN, poles & central on $\Dth v$, exact & 25,329 & flux and $H^1$, 1500 & full & 0 & 315 & 2144 \\
\texttt{bb\_weno\_fluxA} & CNN, face flux & WENO5-Z, Rusanov, exact & 25,041 & flux, 1500 & full & 0, 1, 2 & 54 to 56 & 592 to 724 \\
\texttt{bb\_weno\_fluxA\_long} & CNN, face flux & WENO5-Z, Rusanov, exact & 25,041 & flux, 6000 & long & 0 & 261 & 2416 \\
\texttt{bb\_central\_fluxA} & CNN, face flux & central on $\Ih v$, exact & 25,041 & flux, 1500 & full & 0 & 57 & 500 \\
\texttt{bb\_s0} & CNN, face flux & MUSCL-MC, Godunov, exact & 25,041 & one step, 1500 & full & 0 & 116 & 521 \\
\texttt{bb\_weno} & CNN, face flux & WENO5-Z, Rusanov, exact & 25,041 & one step, 1500 & full & 0 & 105 & 609 \\
\texttt{bb\_central} & CNN, face flux & central on $\Ih v$, exact & 25,041 & one step, 1500 & full & 0 & 105 & 511 \\
\texttt{c2\_bb\_s2} & CNN, face flux & MUSCL-MC, Godunov, local & 25,041 & one step, 1500 & full & 0 & 92 & 405 \\
\texttt{c4\_fno\_s0} & FNO, face flux & MUSCL-MC, Godunov, exact & 40,221 & one step, 1500 & full & 0, 1, 2 & 59 to 97 & 299 to 450 \\
\texttt{c1\_point} & CNN, additive & C0 step & 25,001 & one step, 3000 & none & 0 & 97 & 0 \\
\texttt{c1\_mult} & CNN, ratio & C0 step & 25,001 & ratio, 3000 & none & 0 & 119 & 0 \\
\texttt{c5\_fno} & FNO, increment & none & 40,221 & one step, 1500 & full & 0 & 46 & 164 \\
\texttt{c5\_fno\_long} & FNO, increment & none & 40,221 & one step, 6000 & long & 0 & 217 & 754 \\
\bottomrule
\end{tabular}}

\medskip
{\footnotesize\textbf{(b)} Shared settings\par}\smallskip
{\footnotesize
\setlength{\tabcolsep}{4pt}
\begin{tabular}{@{}lp{0.80\textwidth}@{}}
\toprule
Setting & Value \\
\midrule
CNN backbone & 4 circular convolutions, kernel 5, width 40, GELU, layers 2 to 4 residual, receptive field 17 cells \\
FNO backbone & width 20, 12 Fourier modes, 4 layers, GELU, complex weights counted twice \\
Optimizer & Adam with PyTorch defaults, gradient norm clipped at 1, non-finite steps skipped \\
Stage A & batch of 32 snapshots, learning rate from $10^{-3}$ to $10^{-4}$ on a cosine schedule \\
Stage B & batch of 16 windows, learning rate from $3\times10^{-4}$ to $3\times10^{-5}$ on a cosine schedule \\
Full curriculum & $(H,\text{iterations})=(4,300)$, $(8,300)$, $(16,250)$, $(32,100)$ \\
Short curriculum & $(4,300)$, $(8,150)$, used by the ablations and the receptive-field sweep \\
Long curriculum & stage A 6000 iterations, then $(4,1200)$, $(8,1200)$, $(16,1000)$, $(32,400)$ \\
Model selection & validation score every 150 stage-B iterations and after each phase, lowest kept \\
Arithmetic & double precision, polylogarithm sums in complex64, reference data in single precision \\
\bottomrule
\end{tabular}}
\end{table}
All baselines use the same 64 cells and step, and all except the two local-operator rows and the mean-free surrogate \texttt{c5\_fno} use the exact global fractional operator.
C0 is MUSCL with the monotonized central limiter~\cite{vanleer1977towards,vanleer1979towards}, the Godunov flux~\cite{leveque2002finite} and SSPRK3~\cite{shu1988efficient}, and C0+ is WENO5-Z~\cite{jiang1996efficient,borges2008improved} with the Rusanov flux~\cite{rusanov1962calculation} and ETDRK2, both selected on the validation split (\ref{app:training:classical}).
Two further classical rows isolate parts of NRC, namely the NRC base with $g=0$, which is the central flux on $\Ih v$, and C0 with the local operator~\eqref{eq:C:s2}.
The learned baselines of \ref{app:training:baselines} use the backbone of NRC or a Fourier neural operator (FNO)~\cite{li2021fourier}.
The pointwise corrections \texttt{c1\_point} and \texttt{c1\_mult} act additively or multiplicatively on the C0 step, are trained on one step only and are not conservative.
The face-flux models correct the interface flux of C0{, C0+ or the NRC base} and conserve mass by construction, and \texttt{bb\_weno\_fluxA} replaces the one-step target of \texttt{bb\_weno} by the stage-A flux target of NRC.
{Each CNN face correction is computed by a pointwise head from the backbone features of the two cells adjacent to the face, so it sees 18 cells, and the suffix of its name gives the base, namely \texttt{s0} for C0, \texttt{weno} for C0+, \texttt{central} for the NRC base and \texttt{s2} for C0 with the local operator.
The FNO models, with four Fourier layers of width 20 and 12 modes, either correct the face flux of C0 (\texttt{c4\_fno\_s0}) or act as a surrogate that adds a mean-free learned increment and the exact forcing integral to the cell averages (\texttt{c5\_fno}).
\Cref{tab:hyper} summarizes the architecture, parameter count, training stages and training time of every learned model.
Apart from the pointwise corrections, which are trained on one step only, all learned baselines share the data, the two-stage structure of the training and the validation-only model selection of NRC.
Besides their base scheme and stage-A target, they differ from NRC mainly in what the network outputs, namely a correction of the face flux or of the update instead of a sub-cell reconstruction.}

\paragraph{Evaluation}
Every model is rolled out from the exact cell averages at $t=5$ for 2000 steps, and $e(t_n)=\|V^n-R^n\|_2/\|R^n\|_2$ compares the model cell averages $V^n$ with the reference $R^n$.
The window errors $\errA$ and $\errB$ are the time means of $e$ over $t\in(5,25]$ and over $t\in(25,45]$, beyond the end of the training trajectories.
The pooled median is the median of $\errA$ over the 72 test trajectories, and its seed range in brackets shows the spread of training outcomes rather than a confidence interval.
Other aggregations, such as the pooled mean, move the value of NRC seed 0 from 0.0328\% to at most 0.0349\%.
For the paired comparison we divide the per-trajectory $\errA$ of NRC by that of a baseline for every seed pair, and report the median ratio and the fraction below one as medians over pairs (\ref{app:training:metrics}).

\subsection{Accuracy on the coarse grid}\label{sec:accuracy}

\paragraph{Results and observations}
\begin{table}[!tbp]
\centering
\caption{Accuracy on the in-distribution test split, with 72 trajectories at $\alpha\in\{1.2,1.5,1.8\}$ rolled out on 64 cells from exact cell averages at $t=5$. The errors $\errA$ and $\errB$ average the relative $L^2$ error over $t\in(5,25]$ and over $t\in(25,45]$, beyond the training horizon. Single runs give the pooled median with the 10th and 90th percentiles, and three-seed rows the median over seeds with the seed range. The spectrum error is in units of $10^{-3}$, and the mass drift floor of about $10^{-7}$ comes from the single-precision reference. Of the lower block only \texttt{bb\_weno\_fluxA\_long} matches the training time of NRC, with 2677~s against 2457 to 2651~s.}
\label{tab:main}
\footnotesize
\setlength{\tabcolsep}{3pt}
\resizebox{\textwidth}{!}{%
\begin{tabular}{lrrcccccc}
\toprule
Method & Seeds & Params & $\errA$ & $\errB$ & Front & Spectrum & Mass & Blow- \\
 & & & (\%) & (\%) & (cells) & ($10^{-3}$) & drift & ups \\
\midrule
\multicolumn{9}{l}{\emph{Classical schemes}} \\
C0 & n/a & 0 & 1.50 & 1.42 & 0.577 & 50.9 & $1.01{\times}10^{-7}$ & 0 \\
\hspace{0.5em}{\scriptsize MUSCL-MC, Godunov} &  &  & {\scriptsize(1.12 to 2.02)} & {\scriptsize(1.03 to 1.99)} &  &  &  &  \\
C0+ & n/a & 0 & 0.431 & 0.384 & 0.240 & 15.3 & $1.01{\times}10^{-7}$ & 0 \\
\hspace{0.5em}{\scriptsize WENO5-Z, Rusanov} &  &  & {\scriptsize(0.291 to 0.679)} & {\scriptsize(0.258 to 0.558)} &  &  &  &  \\
NRC base, $g=0$ & n/a & 0 & 3.34 & 2.72 & 0.535 & 175 & $1.01{\times}10^{-7}$ & 1 \\
\hspace{0.5em}{\scriptsize central flux on $\Ih v$} &  &  & {\scriptsize(1.04 to 9.29)} & {\scriptsize(0.821 to 10.2)} &  &  &  &  \\
C0, local operator & n/a & 0 & 39.5 & 44.8 & 6.24 & 173 & $1.01{\times}10^{-7}$ & 0 \\
\hspace{0.5em}{\scriptsize Eq.~\eqref{eq:C:s2}} &  &  & {\scriptsize(8.41 to 87.2)} & {\scriptsize(11.1 to 104)} &  &  &  &  \\
\midrule
\multicolumn{9}{l}{\emph{Learned baselines}} \\
\texttt{c1\_point} & 1 & 25,001 & 4.10 & 9.03 & 1.30 & 16.4 & 0.462 & 0 \\
\hspace{0.5em}{\scriptsize pointwise additive} &  &  & {\scriptsize(2.02 to 7.82)} & {\scriptsize(4.19 to 17.4)} &  &  &  &  \\
\texttt{c1\_mult} & 1 & 25,001 & 13.0 & 27.8 & 2.81 & 59.6 & 4.00 & 0 \\
\hspace{0.5em}{\scriptsize pointwise multiplicative} &  &  & {\scriptsize(5.09 to 22.7)} & {\scriptsize(11.5 to 43.1)} &  &  &  &  \\
\texttt{c2\_bb\_s2} & 1 & 25,041 & 7.45 & 8.57 & 1.69 & 41.4 & $1.01{\times}10^{-7}$ & 0 \\
\hspace{0.5em}{\scriptsize flux CNN, local operator} &  &  & {\scriptsize(1.71 to 13.4)} & {\scriptsize(2.04 to 17.3)} &  &  &  &  \\
\texttt{c4\_fno\_s0} & 3 & 40,221 & 0.675 & 0.679 & 0.332 & 9.01 & $1.01{\times}10^{-7}$ & 0 \\
\hspace{0.5em}{\scriptsize flux FNO on C0} &  &  & {\scriptsize[0.674, 0.737]} & {\scriptsize[0.667, 0.715]} & {\scriptsize[0.319, 0.404]} & {\scriptsize[8.27, 9.93]} &  &  \\
\texttt{c5\_fno} & 1 & 40,221 & 7.26 & 8.07 & 2.42 & 145 & $1.01{\times}10^{-7}$ & 0 \\
\hspace{0.5em}{\scriptsize FNO surrogate} &  &  & {\scriptsize(4.56 to 11.2)} & {\scriptsize(4.62 to 13.1)} &  &  &  &  \\
\texttt{bb\_s0} & 1 & 25,041 & 0.448 & 0.464 & 0.211 & 4.98 & $1.01{\times}10^{-7}$ & 0 \\
\hspace{0.5em}{\scriptsize flux CNN on C0} &  &  & {\scriptsize(0.329 to 0.706)} & {\scriptsize(0.324 to 0.804)} &  &  &  &  \\
\texttt{bb\_weno} & 1 & 25,041 & 0.221 & 0.223 & 0.110 & 3.28 & $1.01{\times}10^{-7}$ & 0 \\
\hspace{0.5em}{\scriptsize flux CNN on C0+} &  &  & {\scriptsize(0.120 to 0.383)} & {\scriptsize(0.116 to 0.363)} &  &  &  &  \\
\texttt{bb\_weno\_fluxA} & 3 & 25,041 & 0.153 & 0.149 & 0.0930 & 3.75 & $1.01{\times}10^{-7}$ & 0 \\
\hspace{0.5em}{\scriptsize on C0+, NRC flux target} &  &  & {\scriptsize[0.153, 0.168]} & {\scriptsize[0.144, 0.168]} & {\scriptsize[0.0905, 0.113]} & {\scriptsize[3.72, 3.83]} &  &  \\
\texttt{bb\_central\_fluxA} & 1 & 25,041 & 0.199 & 0.193 & 0.0675 & 8.09 & $1.01{\times}10^{-7}$ & 0 \\
\hspace{0.5em}{\scriptsize on NRC base, NRC target} &  &  & {\scriptsize(0.112 to 0.344)} & {\scriptsize(0.106 to 0.330)} &  &  &  &  \\
\midrule
\multicolumn{9}{l}{\emph{Proposed method}} \\
NRC & 3 & 25,329 & 0.0308 & 0.0293 & 0.0190 & 0.534 & $1.01{\times}10^{-7}$ & 0 \\
\hspace{0.5em}{\scriptsize rational atoms} &  &  & {\scriptsize[0.0238, 0.0328]} & {\scriptsize[0.0224, 0.0336]} & {\scriptsize[0.00200, 0.0230]} & {\scriptsize[0.533, 0.632]} &  &  \\
\midrule
\multicolumn{9}{l}{\emph{Four times the training iterations, single seed}} \\
\texttt{c5\_fno\_long} & 1 & 40,221 & 1.57 & 1.59 & 0.702 & 25.8 & $1.01{\times}10^{-7}$ & 0 \\
\hspace{0.5em}{\scriptsize 4 times the iterations} &  &  & {\scriptsize(0.773 to 3.11)} & {\scriptsize(0.789 to 3.41)} &  &  &  &  \\
\texttt{bb\_weno\_fluxA\_long} & 1 & 25,041 & 0.0906 & 0.0885 & 0.0480 & 2.47 & $1.01{\times}10^{-7}$ & 0 \\
\hspace{0.5em}{\scriptsize 4 times the iterations} &  &  & {\scriptsize(0.0458 to 0.179)} & {\scriptsize(0.0449 to 0.203)} &  &  &  &  \\
\bottomrule
\end{tabular}}
\end{table}
Table~\ref{tab:main} shows that on the 64-cell grid NRC reaches a pooled median $\errA$ of 0.0308\% ($[0.0238, 0.0328]$) over three seeds.
This value is 14 times below the 0.431\% of C0+ and 49 times below the 1.50\% of C0, two deterministic schemes with the same exact fractional operator.
Per trajectory, the median paired ratio to C0+ is 0.0710 over the three seed pairs, with a 10th to 90th percentile range from 0.0470 to 0.0910.
As Figure~\ref{fig:snapshot} shows, the difference concentrates in the cell holding a front of width $0.47h$, where $\Dth v$ follows the fine solution while $\Ih v$ oscillates over several cells.
From a numerical analysis perspective, the decoder removes the Gibbs-type oscillation of the global reconstruction while keeping every cell average, a role that limiters and nonlinear weights play in MUSCL and WENO.

\begin{figure}[!tbp]
\centering
\includegraphics[width=\textwidth]{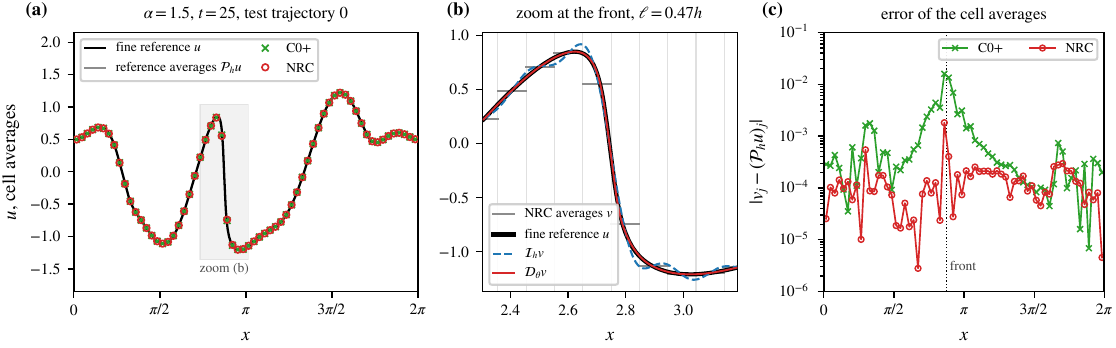}
\caption{Test trajectory 0 at $\alpha=1.5$ and $t=25$, for NRC seed 0 and C0+ started from the exact cell averages at $t=5$.
\textbf{(a)} Reference $u$ and $\Ph u$ with the cell averages of NRC and C0+, which agree to plotting accuracy.
\textbf{(b)} Zoom at the front of width $\ell=0.47h$, where the decoded state $\Dth v$ (red) follows $u$ while the histopolant $\Ih v$ of the same averages (blue dashed) oscillates.
\textbf{(c)} Pointwise error of the cell averages, which peaks in the front cell at $1.82\times10^{-3}$ for NRC and $1.59\times10^{-2}$ for C0+.
Relative errors are 0.0361\% for NRC and 0.388\% for C0+ in the cell averages, and 0.137\% for $\Dth v$ against 2.45\% for $\Ih v$.
The trajectory was chosen from the reference alone, and its $\errA$ ranks 17th of 24 for NRC seed 0 and 13th of 24 for C0+ at this order.}
\label{fig:snapshot}
\end{figure}

\begin{figure}[!tbp]
\centering
\includegraphics[width=\textwidth]{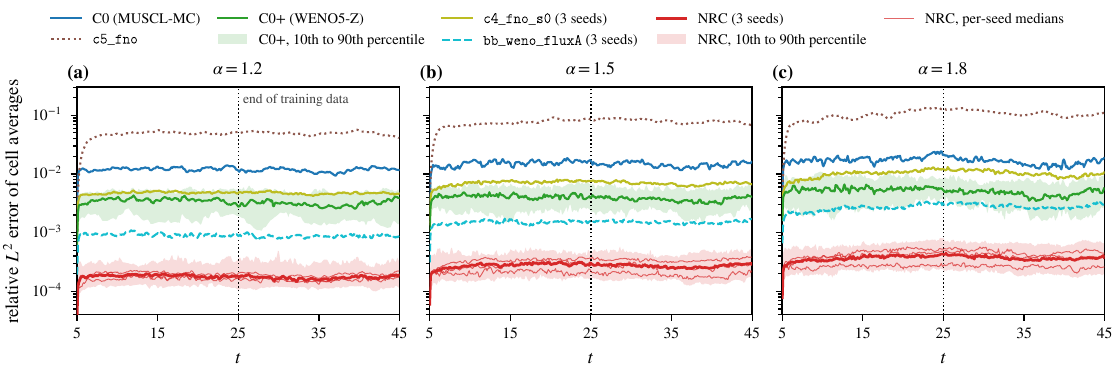}
\caption{Relative $L^2$ error of the 64 cell averages against time on the test split, for $\alpha=1.2$, $1.5$ and $1.8$ in panels \textbf{(a)} to \textbf{(c)}.
Curves are medians over trajectories, pooled over seeds for NRC, \texttt{bb\_weno\_fluxA} and \texttt{c4\_fno\_s0}, and shaded bands give the 10th to 90th percentiles of NRC and C0+.
Thin red lines are the medians of the three NRC seeds, and the dotted line marks the end of the training trajectories.
The 90th percentile of NRC stays below the 10th percentile of C0+ and of \texttt{bb\_weno\_fluxA} at every stored time and for every order.
From $t\in(5,25]$ to $t\in(25,45]$, the time mean of the NRC median changes by factors from 0.94 to 1.02, against 1.05 to 1.13 for \texttt{c5\_fno}.
These differ from the pooled median $\errA$, which averages each trajectory in time before the median.}
\label{fig:errtime}
\end{figure}

Among the learned baselines, only the two models trained on the stage-A flux target of NRC, \texttt{bb\_weno\_fluxA} and \texttt{bb\_central\_fluxA}, stay below 0.2\%.
On the C0+ base this target lowers the error of the face correction from 0.221\% for \texttt{bb\_weno} to 0.153\% ($[0.153, 0.168]$) over three seeds.
NRC remains about five times below \texttt{bb\_weno\_fluxA}, with a median paired ratio of 0.193 over nine seed pairs and a 10th to 90th percentile range from 0.122 to 0.248.
Since \texttt{bb\_central\_fluxA} shares the base, the flux target, the backbone and the curriculum with NRC, its error measures the effect of learning a reconstruction rather than a face correction (Section~\ref{sec:ablation}).
A global receptive field alone does not close the gap, because the face-flux FNO \texttt{c4\_fno\_s0} reaches 0.675\% ($[0.674, 0.737]$), above C0+.

{To check that the gap to \texttt{bb\_weno\_fluxA} does not stem from the longer training of NRC, about 3.7 times that of this baseline, we also trained one seed of \texttt{bb\_weno\_fluxA} with four times the iterations, which matches the training time of NRC.} Its pooled median $\errA$ fell to 0.0906\%, still 2.9 times above NRC, with a median paired ratio of 0.337 over three seed pairs. With the same extension \texttt{c5\_fno} improved from 7.26\% to 1.57\%, so some baselines were limited by their training budget rather than by their structure. 

The paired comparison is uniform in sign, since in every seed pair NRC has the lower $\errA$ on each of the 72 test trajectories against every baseline, including the two longer runs.
{Per order, NRC stays 13.7 to 19.4 times below C0+, with errors from 0.0189\% at $\alpha=1.2$ to 0.0439\% at $\alpha=1.8$ (Table~\ref{tab:peralpha}).}
The front position error of NRC is 0.0190 cells, against 0.0930 for \texttt{bb\_weno\_fluxA} and 0.240 for C0+, and its spectrum error lies about 29 times below that of C0+.
The gain is therefore not confined to the relative $L^2$ error of the cell averages.

Figure~\ref{fig:errtime} shows that the errors settle after a short transient and do not grow beyond the end of the training trajectories at $t=25$.
For NRC the pooled median $\errB$ is 0.0293\% ($[0.0224, 0.0336]$), slightly below its $\errA$, and the error $\mathrm{err}_{200}$ after 200 steps from $t_0=5$, $10$ and $15$ is 0.0280\% ($[0.0206, 0.0295]$).
The pointwise corrections more than double their error in the second window.
Over the 216 test rollouts of its three seeds, NRC has no blow-up, although stage B never unrolled more than 32 steps and Remark~\ref{rem:stability} gives no stability guarantee.
Every flux-form method and \texttt{c5\_fno} keep the mass drift at the single-precision floor, in line with Corollary~\ref{cor:flux} and Proposition~\ref{prop:mass}.
The pointwise corrections drift by 0.462 and 4.00, since nothing in their update ties the sum of the cell averages to the forcing.

{Three rows of \Cref{tab:main} illustrate failure modes that are consistent with our analysis and that the design of NRC avoids, namely the multiplicative pointwise correction \texttt{c1\_mult}, the NRC base with $g=0$ and C0 with the local operator.}
With a single seed each, the multiplicative correction \texttt{c1\_mult} reaches 13.0\%, as opposed to 4.10\% for the additive \texttt{c1\_point} and 1.50\% for C0 without any correction.
This ordering is consistent with Proposition~\ref{prop:hadamard} in \ref{app:ratio}, by which a ratio target has an infinite mean when the base step has a positive density near zero.
Our zero-mean data come close to this condition, since the base step changes sign in every snapshot.
Note that one seed and one architecture do not test the proposition, and part of the error of both pointwise corrections may stem from their missing stage B.
The NRC base with $g=0$ reaches 3.34\% and blows up on one test trajectory at $\alpha=1.5$ after $t=25$.
Its central advective flux adds no numerical dissipation, so {it is the trained reconstruction that stabilizes NRC, which has no blow-up in its 216 test rollouts}.
The C0 scheme with the local operator reaches 39.5\%, and its per-order error falls from 81.3\% at $\alpha=1.2$ to 9.71\% at $\alpha=1.8$.
This trend is consistent with the $O(1)$ model error of Proposition~\ref{prop:nearfar}, since the near-field share $h^{2-\alpha}$ of the fractional flux shrinks as $\alpha$ approaches one, and Section~\ref{sec:nonlocality} finds the same ordering on frozen states{, which is why NRC keeps the exact global operator}.

Together, these results indicate that on 64 cells and for the training coefficients, the decoder lowers $\errA$ about 14 times against WENO5-Z and 2.9 to 5 times against a black-box face correction on the same target{, with a lower error on every test trajectory in every seed pair and no error growth beyond the training horizon}.

\subsection{Which components carry the accuracy}\label{sec:ablation}

\paragraph{Problem setup}
To answer the first part of question (B2), we conduct systematic ablation studies by changing one component of NRC at a time and {keeping} everything else fixed (\ref{app:training:baselines}).
All variants and the reference \texttt{nrc\_short} use three GPU seeds and the short curriculum of \Cref{tab:hyper}, so they are compared with this reference only.
Factors quoted in the text are ratios of medians over seeds, unlike the last column of \Cref{tab:ablation}.
A sweep of the receptive field over 5, 17, 33 and 65 cells compares NRC with \texttt{c2\_bb\_s2}, a black-box face flux on the MUSCL-MC and Godunov base of C0 with the local surrogate of Eq.~\eqref{eq:C:s2}.
The default of 17 cells was fixed before this sweep, and the wider fields use dilated kernels.

\paragraph{Results and observations}
\begin{table}[!tbp]
\centering
\caption{Ablations of NRC under the short protocol, with three GPU-trained seeds per row, a realization separate from the CPU-trained models of Table~\ref{tab:main}. Entries give the median over seeds of the pooled median with the seed range, and the extrapolation column pools the orders $1.1$ and $1.9$. Ratios to the reference, 0.0370\%, are paired by trajectory and seed, with the 10th and 90th percentiles of all 216 ratios. Variants (i) to (vii) are worse on all 216 pairs, the sparsity penalty (viii) on 54.6\% of them, and no trajectory blew up.}
\label{tab:ablation}
\footnotesize
\setlength{\tabcolsep}{3pt}
\begin{tabular}{lrcccc}
\toprule
Variant & Params & $\errA$ (\%) & $\errB$ (\%) & Extrap. (\%) & Ratio to \\
 & & test & test & $\alpha\in\{1.1,1.9\}$ & reference \\
\midrule
NRC reference & 25,329 & 0.0370 & 0.0365 & 0.0433 & 1 \\
\hspace{0.5em}{\scriptsize short protocol, $p=2$} & & {\scriptsize[0.0360, 0.0393]} & {\scriptsize[0.0340, 0.0375]} & {\scriptsize[0.0365, 0.0439]} &  \\
Stage A only & 25,329 & 0.0506 & 0.0438 & 0.0567 & 1.34 \\
\hspace{0.5em}{\scriptsize no solver in the loop} & & {\scriptsize[0.0469, 0.0555]} & {\scriptsize[0.0424, 0.0482]} & {\scriptsize[0.0494, 0.0621]} & {\scriptsize(1.04 to 1.67)} \\
\midrule
(i) local surrogate $s_2$ & 25,329 & 8.42 & 10.7 & 4.77 & 266 \\
\hspace{0.5em}{\scriptsize instead of the exact operator} & & {\scriptsize[8.11, 9.36]} & {\scriptsize[9.45, 11.4]} & {\scriptsize[4.67, 5.58]} & {\scriptsize(48.0 to 605)} \\
(ii) no projection $\Ph$ & 25,329 & 0.365 & 0.360 & 0.304 & 9.73 \\
\hspace{0.5em}{\scriptsize $\mathcal{D}v=\Ih v+g_\theta$} & & {\scriptsize[0.308, 0.398]} & {\scriptsize[0.327, 0.405]} & {\scriptsize[0.296, 0.337]} & {\scriptsize(7.40 to 13.6)} \\
(iii) no sub-grid Riesz flux & 25,329 & 0.463 & 0.436 & 0.363 & 12.4 \\
\hspace{0.5em}{\scriptsize decoder in advective flux only} & & {\scriptsize[0.433, 0.490]} & {\scriptsize[0.374, 0.465]} & {\scriptsize[0.341, 0.449]} & {\scriptsize(9.42 to 17.0)} \\
(iv) black-box face flux & 25,041 & 0.306 & 0.288 & 0.204 & 8.21 \\
\hspace{0.5em}{\scriptsize same base, target, backbone} & & {\scriptsize[0.262, 0.308]} & {\scriptsize[0.253, 0.295]} & {\scriptsize[0.202, 0.242]} & {\scriptsize(5.75 to 10.5)} \\
(v) sub-grid field head & 25,616 & 0.240 & 0.249 & 0.245 & 6.23 \\
\hspace{0.5em}{\scriptsize 16 nodal values per cell} & & {\scriptsize[0.184, 0.250]} & {\scriptsize[0.187, 0.263]} & {\scriptsize[0.231, 0.273]} & {\scriptsize(3.99 to 8.94)} \\
(vi) one pole per cell & 25,165 & 0.0556 & 0.0547 & 0.0542 & 1.53 \\
\hspace{0.5em}{\scriptsize $p=1$} & & {\scriptsize[0.0539, 0.0597]} & {\scriptsize[0.0493, 0.0612]} & {\scriptsize[0.0438, 0.0560]} & {\scriptsize(1.26 to 1.91)} \\
(vii) network sees only $v$ & 25,329 & 0.0859 & 0.0809 & 0.106 & 2.43 \\
\hspace{0.5em}{\scriptsize parameter channels zeroed} & & {\scriptsize[0.0854, 0.0860]} & {\scriptsize[0.0771, 0.0851]} & {\scriptsize[0.100, 0.115]} & {\scriptsize(1.79 to 3.39)} \\
(viii) sparsity penalty & 25,329 & 0.0350 & 0.0329 & 0.0444 & 0.946 \\
\hspace{0.5em}{\scriptsize weight 0.02 in stage A} & & {\scriptsize[0.0304, 0.0543]} & {\scriptsize[0.0293, 0.0464]} & {\scriptsize[0.0337, 0.0622]} & {\scriptsize(0.833 to 1.54)} \\
\bottomrule
\end{tabular}
\end{table}

Under the short protocol, removing or replacing any of the components (i) to (v) raises the median $\errA$ over three seeds at least {6.5 times}, with a higher error on all 216 seed-matched pairs.
The local surrogate $s_2$ in place of the exact operator {raises the median $\errA$ 228 times}, consistent with the $O(1)$ model error of \Cref{prop:nearfar}.
Its median seed-matched ratio falls from 495 at $\alpha=1.2$ to 57.5 at $\alpha=1.8$, as do the far-field shares of \Cref{sec:nonlocality}.
Dropping the projection {raises the median $\errA$ 9.9 times}, and the atoms then alter the cell averages and have the far field of a monopole (\Cref{sec:locality}).
By \Cref{thm:rea} and \Cref{prop:decoders}, the projection instead confines the learned part to functions with zero cell averages, the class of the error $(I-\Ih\Ph)u$ of the histopolant.
Removing the sub-grid Riesz flux {raises the median $\errA$ 12.5 times}, although the decoder still enters the advective flux.
In the bounds of \Cref{thm:consistency} the fractional part of the residual dominates the advective part by a factor $h^{1-\alpha}$ (\Cref{cor:decoder}), and this variant leaves the face-flux error of $\Ih v$ in it.

{Without rational atoms, the median $\errA$ rises 8.3 times for the black-box face flux (iv) on the base and flux target of NRC and 6.5 times for the sub-grid field head (v).}
\Cref{prop:gap} traces the difficulty to the Fourier tail $e^{-n\delta}$ of a pole near the real axis, which one atom encodes in its depth, while (iv) must learn the resulting flux error of the histopolant.
{The field of (v) is expressive enough, since on 1024 nodes it approximates an atom of the minimum trained depth $0.231h$ to a relative error of $9\times10^{-6}$.
Its larger error therefore points to the learning problem rather than to the representation, since its head must predict 16 coupled nodal values per cell, whereas the head of NRC predicts nine outputs per cell and each atom encodes the steepness of a transition in its depth alone.}
{Training through the solver in stage B lowers the median error by a further factor of 1.37 relative to the a priori stage-A state and improves 94.4\% of the pairs, a smaller but consistent gain.}
By \Cref{prop:closable}, stage A approximates the conditional mean under the measure of exact states, whereas in a rollout the network sees the states of the scheme, which only stage B trains on.
The remaining choices have less significant effects, since one pole per cell (vi) and zeroing the parameter channels (vii) {raise the median $\errA$ by factors of} 1.5 and 2.3.
The first {increase} is consistent with the groups of deep poles per front in \Cref{sec:mechanism}, and the second with \Cref{prop:closable}, by which the coarser input $V=v$ cannot lower the irreducible error.
Meanwhile, the sparsity penalty (viii) has no measurable effect, with per-seed ratios from 0.931 to 1.45.

\begin{figure}[!tbp]
\centering
\includegraphics[width=\textwidth]{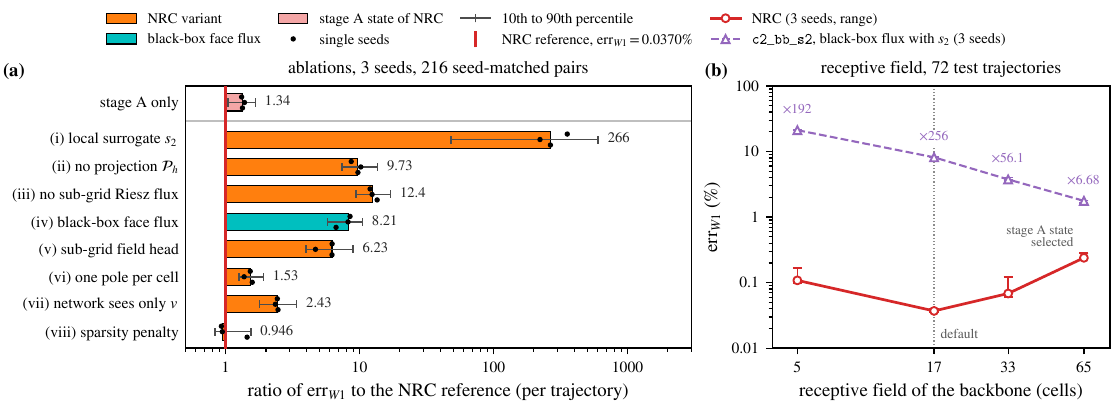}
\caption{Ablations and receptive-field sweep under the short protocol, for three GPU-trained seeds on the 72 test trajectories.
\textbf{(a)} Seed-matched ratio of $\errA$ to the NRC reference, with bars at the median of the per-seed medians, seed dots and whiskers for the 10th to 90th percentile of the 216 pairs.
\textbf{(b)} Median $\errA$ against the receptive field for NRC and \texttt{c2\_bb\_s2}, with seed ranges as bars and the seed-matched ratio of \texttt{c2\_bb\_s2} to NRC as numbers.
At 65 cells validation selected the stage-A state of NRC in all seeds, and the default of 17 cells was not chosen from this sweep.}
\label{fig:rfsweep}
\end{figure}

As \Cref{fig:rfsweep}(b) shows for three seeds, the median $\errA$ of NRC is lower at the default of 17 cells than at 5, 33 and 65 cells, with 0.0370\% as opposed to 0.108\%, 0.0683\% and 0.239\%.
In contrast, \texttt{c2\_bb\_s2} improves monotonically from 21.2\% to 1.77\%, but at 65 cells, which cover the torus, it stays 42.7 times above the NRC reference in the seed-matched ratio.
With the local surrogate, part~(iii) of \Cref{prop:nearfar} places the missing flux in the far field, which a wider window sees but must still learn from cell averages.
With the exact operator, \Cref{lem:dipole} and \Cref{cor:window} make the learned flux local, with less than 0.26\% beyond the half-width of the default window (\Cref{sec:locality}).
Consistent with this locality, even the 5-cell NRC with 9,009 parameters stays 16 times below \texttt{c2\_bb\_s2} at 65 cells.
{The default of 17 cells, fixed before this sweep, is thus also the best of the four fields for NRC, consistent with a learned flux that is negligible beyond the half-width of this field, and \ref{app:add:ablation} gives the sweep for every order.}

\subsection{Where the poles go: a mechanistic check}\label{sec:mechanism}

\paragraph{Problem setup}
To answer the second part of (B2), we compare the poles of the three NRC models of \Cref{tab:main}, decoded a priori from exact cell averages, with the fronts of the reference solution on the test split.
A front is a local minimum of $u_x$ below $-\frac14\max|u_x|$, with the tanh-equivalent width $\ell=\Delta U/(2|u_x|)$, $\Delta U=u(x-2h)-u(x+2h)$, and it is sub-cell if $\ell<h$.
Over 24 test trajectories per order and 41 times from $t=5$ to 45, this gives 6281 sub-cell fronts for each seed.
For each front we record the distance to the nearest active pole, with atoms active if $\delta<0.75h$ under the depth criterion or if $|\gamma|/\delta\geq0.05$ under the amplitude criterion.
Every pole stays in its own cell, so the distance $h/2$ mostly checks that the cell of the front holds an active pole, and $h/4$ is the informative threshold.

\begin{figure}[!tbp]
\centering
\includegraphics[width=\textwidth]{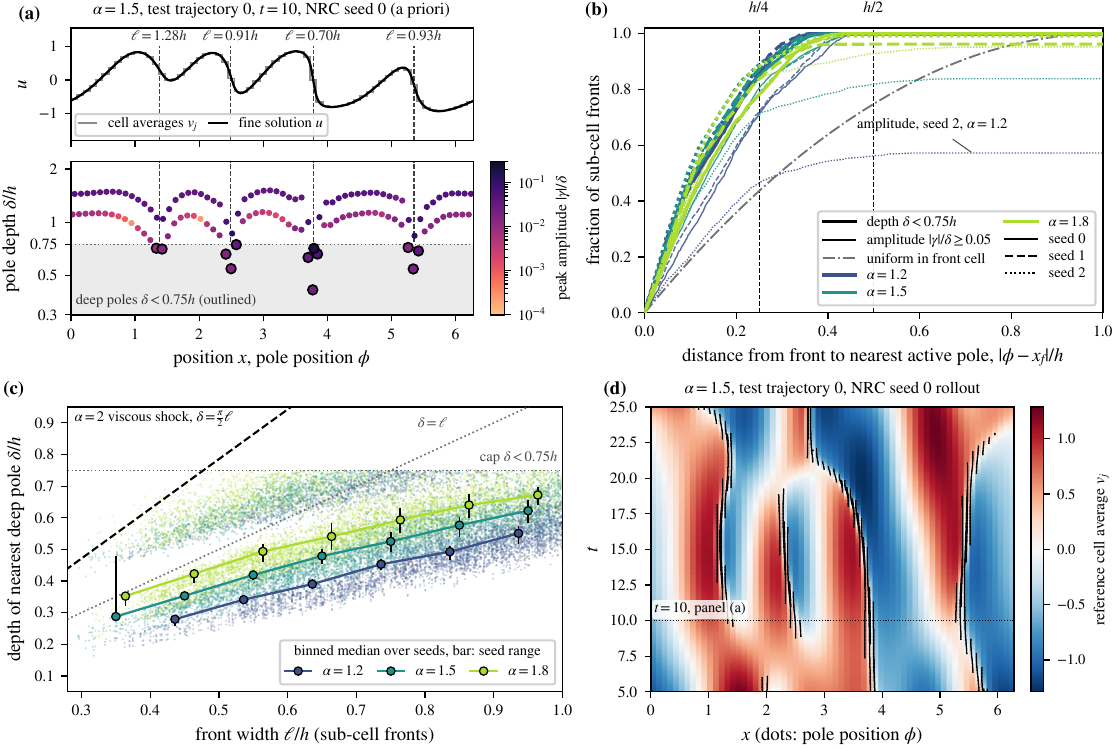}
\caption{Learned poles of the three NRC models of \Cref{tab:main} and the sub-cell fronts of the reference solution on the test split.
\textbf{(a)} Test trajectory 0 at $\alpha=1.5$ and $t=10$ above the 128 atoms of seed 0, decoded a priori, colored by $|\gamma|/\delta$ and outlined when deep.
\textbf{(b)} Cumulative distribution of the distance from each sub-cell front to the nearest active pole, thick for the depth and thin for the amplitude criterion, gray for the uniform reference.
\textbf{(c)} Depth of the nearest deep pole against the front width for 18,841 fronts with binned medians, the relation $\delta=\frac{\pi}{2}\ell$ of \Cref{prop:poles} dashed and the cap $\delta=0.75h$ dotted.
\textbf{(d)} Reference cell averages of the trajectory of (a) for $t$ from 5 to 25, with the deep atoms of the rollout of seed 0 as black dots.}
\label{fig:poles}
\end{figure}

\paragraph{Results and observations}
Under the depth criterion, every seed and order places a pole within $h/4$ of 78.2 to 89.0\% of the sub-cell fronts, and within $h/2$ of 99.4\% when pooled (\Cref{fig:poles}(b)).
A point placed uniformly in the cell of the front gives only 43.7 and 74.8\%.
Deep poles are rare, a median of 7 to 10 of 128 atoms per snapshot, and the same number placed uniformly on the circle would come within $h/4$ of only 5.1 to 7.5\% of fronts.
{The depth criterion is the informative one, for two reasons.
First, the amplitude criterion also counts shallow background atoms: in \Cref{fig:poles}(a), 19 of the 21 atoms above the amplitude threshold lie at depths near $1.1h$ and $1.5h$, and by \Cref{prop:gap} an atom at $1.1h$ is within $e^{-1.1\pi}\approx0.03$ of a trigonometric polynomial of degree $N/2$ in the relative $L^2$ norm, so these atoms act as smooth corrections.
Second, the amplitude scale of the atoms differs between seeds (\ref{app:add:interp}), so a fixed amplitude threshold selects different atoms in different seeds.
Under the amplitude criterion the fraction within $h/4$ therefore ranges from 46.5 to 86.4\%, and for seed 2 at $\alpha=1.2$, 347 of the 1650 sub-cell fronts have no atom above the threshold.}

The depth of the nearest deep pole grows with the front width and with the order and lies below the viscous shock relation $\delta=\frac\pi2\ell$ of \Cref{prop:poles}, although the cap $\delta<0.75h$ shapes \Cref{fig:poles}(c).
The pooled median of $\delta/\ell$ is 0.587, 0.709 and 0.821 at $\alpha=1.2$, $1.5$ and $1.8$, and the amplitude criterion, which has no cap, still gives 0.948 to 0.996, below $\pi/2\approx1.571$.
\Cref{prop:poles} fixes $\delta/\ell$ only at $\alpha=2$, so this growth with $\alpha$ is consistent with it but does not test it.
Each front of \Cref{fig:poles}(a) carries a group of 2 to 4 deep atoms within one cell, consistent with the pole clusters that AAA fits use in Remark~\ref{rem:branch}, which one snapshot cannot test.
In the rollout of seed 0 the deep atoms follow the fronts over $t\in[5,25]$ (\Cref{fig:poles}(d)), {and} 66.3\% of its 1940 deep atoms lie within $h/2$ of a sub-cell front of $\Ih\Ph u$.
The others sit mostly at fronts wider than a cell or below the detection threshold.
Together, for three seeds on exact cell averages, these results answer the second part of (B2), since the deep atoms act as sub-cell shape functions that mark the fronts and deepen as the fronts widen.

\subsection{Out-of-distribution order, forcing and diffusivity}\label{sec:ood}

\paragraph{Problem setup}
We now ask {how robust the closure is} once the data leave the training distribution, which is part of question (B3).
The shifted splits of \Cref{sec:setup} move the network inputs in different ways, because each training order comes with a single diffusivity.
The order shift keeps the pair $(\alpha,\nu)$ on the curve of Eq.~\eqref{eq:C:nurule} through the three training pairs, and the forcing shift changes only the distribution of the input states.
The coefficient shift multiplies $\nu$ at fixed $\alpha=1.5$ and moves the inputs off this curve, in a direction along which the network has never seen $\nu$ vary.
\Cref{tab:ood} and \Cref{fig:ood} write the diffusivity as $D$ and give each shift as a factor of $D_{\mathrm{train}}(\alpha)$.

{The $D$-diverse training set adds the factors $0.8$ and $1.5$ at every training order, and we train three GPU seeds on it with validation-only selection.
Fresh splits, generated with the unused data seed 88, serve as its test data.}
These hold 24 trajectories per training order at the factors 0.6 and 2.5, which extrapolate beyond the range $[0.8,1.5]$, as well as at 1.25, which interpolates.
The three new columns of \Cref{tab:ood} are paired, as all nine fresh files share one set of initial conditions and forcings.
At the factor 0.6 the sharpest checked fronts are 0.14 to 0.22 cells wide, and doubling the reference resolution changes their cell averages by at most a relative $9.1\times10^{-6}$.

\begin{figure}[!tbp]
\centering
\includegraphics[width=\textwidth]{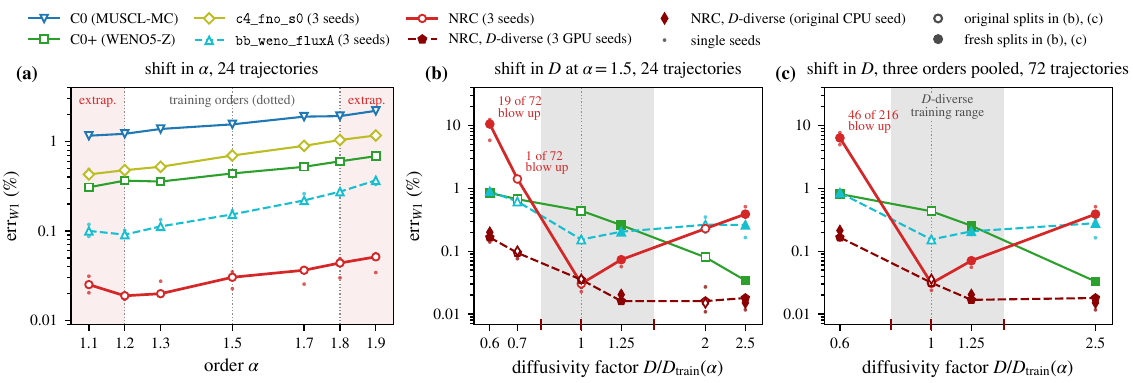}
\caption{Errors outside the training distribution in the order and the diffusivity, given as the median of $\errA$ over the trajectories of each split.
Lines join the median over seeds of the per-seed medians, and small dots show the individual seeds of the three-seed models.
\textbf{(a)} Error against $\alpha$ with 24 trajectories per point, where the dotted training orders come from the test split and shaded bands mark extrapolation.
\textbf{(b)} Error against $D/D_{\mathrm{train}}(\alpha)$ at $\alpha=1.5$, with open markers for the original splits and filled markers for the fresh splits.
\textbf{(c)} The fresh splits pooled over $\alpha\in\{1.2,1.5,1.8\}$ with 72 trajectories per point, together with the test error at factor 1.
The gray band is the training range of the $D$-diverse model and dark red diamonds mark its original CPU seed.
Numbers next to the NRC points count the trajectories with a non-finite state, summed over the three seeds.}
\label{fig:ood}
\end{figure}

\paragraph{Results and observations}
\begin{table}[!tbp]
\centering
\caption{Errors $\errA$ in percent outside the training distribution, with the rollout and the metric of Table~\ref{tab:main} and the trajectories per seed in the header. Three-seed rows give the median over seeds with the seed range, and the original splits change the forcing wavenumbers to $\{7,8\}$ or the coefficient to $0.7D$ or $2D$ at $\alpha=1.5$. The fresh splits pool the three training orders and share one data seed, so their columns are paired. Daggers mark extrapolation, and the double dagger marks $1.25D$, inside the range of the $D$-diverse rows only. {The fresh splits played no role in the design of the $D$-diverse set.}}
\label{tab:ood}
\footnotesize
\setlength{\tabcolsep}{3pt}
\resizebox{\textwidth}{!}{%
\begin{tabular}{lrcccccccccr}
\toprule
 & & Test & \multicolumn{2}{c}{Order $\alpha$ (48)} & \multicolumn{3}{c}{Original splits, $\alpha=1.5$ (24)} & \multicolumn{3}{c}{Fresh splits, three orders (72)} & Blow- \\
\cmidrule(lr){4-5}\cmidrule(lr){6-8}\cmidrule(lr){9-11}
Method & Seeds & (72) & $1.3,1.7$ & $1.1,1.9^{\dagger}$ & $l\in\{7,8\}^{\dagger}$ & $0.7D^{\dagger}$ & $2D^{\dagger}$ & $0.6D^{\dagger}$ & $1.25D^{\ddagger}$ & $2.5D^{\dagger}$ & ups \\
\midrule
\multicolumn{12}{l}{\emph{Classical schemes}} \\
C0 & n/a & 1.50 & 1.54 & 1.38 & 3.49 & 1.88 & 0.850 & 1.88 & 1.16 & 0.692 & 0 \\
\hspace{0.5em}{\scriptsize MUSCL-MC, Godunov} & &  &  &  &  &  &  &  &  &  & \\
C0+ & n/a & 0.431 & 0.420 & 0.417 & 0.571 & 0.670 & 0.0799 & 0.811 & 0.254 & 0.0330 & 0 \\
\hspace{0.5em}{\scriptsize WENO5-Z, Rusanov} & &  &  &  &  &  &  &  &  &  & \\
\midrule
\multicolumn{12}{l}{\emph{Learned baselines, trained at $D\times1$}} \\
\texttt{c4\_fno\_s0} & 3 & 0.675 & 0.625 & 0.582 & 3.05 & 1.20 & 0.612 & n/a & n/a & n/a & 0 \\
\hspace{0.5em}{\scriptsize flux FNO on C0} & & {\scriptsize[0.674, 0.737]} & {\scriptsize[0.607, 0.651]} & {\scriptsize[0.552, 0.697]} & {\scriptsize seed 0} & {\scriptsize seed 0} & {\scriptsize seed 0} &  &  &  & \\
\texttt{bb\_weno\_fluxA} & 3 & 0.153 & 0.149 & 0.189 & 0.309 & 0.617 & 0.263 & 0.849 & 0.207 & 0.279 & 0 \\
\hspace{0.5em}{\scriptsize on C0+, NRC flux target} & & {\scriptsize[0.153, 0.168]} & {\scriptsize[0.136, 0.168]} & {\scriptsize[0.159, 0.224]} & {\scriptsize[0.277, 0.311]} & {\scriptsize[0.614, 0.630]} & {\scriptsize[0.199, 0.349]} & {\scriptsize[0.817, 0.859]} & {\scriptsize[0.184, 0.248]} & {\scriptsize[0.163, 0.306]} & \\
\midrule
\multicolumn{12}{l}{\emph{Proposed method, trained at $D\times1$}} \\
NRC & 3 & 0.0308 & 0.0312 & 0.0390 & 0.0540 & 1.41 & 0.228 & 6.33 & 0.0704 & 0.388 & 47 \\
\hspace{0.5em}{\scriptsize rational atoms} & & {\scriptsize[0.0238, 0.0328]} & {\scriptsize[0.0224, 0.0319]} & {\scriptsize[0.0272, 0.0399]} & {\scriptsize[0.0422, 0.0555]} & {\scriptsize[1.27, 1.52]} & {\scriptsize[0.214, 0.285]} & {\scriptsize[4.94, 7.68]} & {\scriptsize[0.0554, 0.0741]} & {\scriptsize[0.348, 0.509]} & \\
\midrule
\multicolumn{12}{l}{\emph{NRC trained at $D\times\{0.8,1,1.5\}$}} \\
NRC, $D$-diverse & 3 & 0.0314 & 0.0302 & 0.0374 & 0.0513 & 0.0939 & 0.0160 & 0.165 & 0.0168 & 0.0180 & 0 \\
\hspace{0.5em}{\scriptsize GPU-trained, new seeds} & & {\scriptsize[0.0304, 0.0386]} & {\scriptsize[0.0282, 0.0376]} & {\scriptsize[0.0350, 0.0389]} & {\scriptsize[0.0418, 0.0641]} & {\scriptsize[0.0759, 0.105]} & {\scriptsize[0.0108, 0.0270]} & {\scriptsize[0.151, 0.225]} & {\scriptsize[0.0160, 0.0221]} & {\scriptsize[0.0116, 0.0344]} & \\
NRC, $D$-diverse & 1 & 0.0358 & 0.0338 & 0.0346 & 0.0538 & 0.100 & 0.0149 & 0.212 & 0.0204 & 0.0150 & 0 \\
\hspace{0.5em}{\scriptsize CPU-trained, original seed} & &  &  &  &  &  &  &  &  &  & \\
\bottomrule
\end{tabular}}
\end{table}
Along the training curve, interpolation in the order reproduces the test error and extrapolation raises it by a factor of about 1.3, for three seeds.
NRC reaches 0.0312\% at $\alpha=1.3$ and $1.7$ and 0.0390\% at $\alpha=1.1$ and $1.9$, against 0.0308\% on the test split and about 0.42\% for WENO5-Z.
In every seed pair NRC has the lower error on each of the 96 order-shifted trajectories against every baseline of \Cref{tab:main}.
All methods have larger errors at $\alpha=1.9$ than at $1.1$ (\Cref{fig:ood}(a)), and NRC stays 4.0 to 7.2 times below \texttt{bb\_weno\_fluxA} at these two orders.
The cost of extrapolation shows at $\alpha=1.1$, where the error of NRC rises above its value at $\alpha=1.2$ while that of WENO5-Z falls.
New forcing wavenumbers $l_i\in\{7,8\}$ at $\alpha=1.5$ raise the error relative to the test split by a factor of 1.8 for NRC and 1.3 for WENO5-Z.
NRC still stays about 11 times below WENO5-Z, with 0.0540\% as opposed to 0.571\%.

{A shift of the diffusivity at fixed order is harder, and NRC trained on one diffusivity per order becomes less accurate than WENO5-Z at both shifted diffusivities of the original split in every seed.}
At $\nu\times0.7$ NRC reaches 1.41\%, or 2.1 times the 0.670\% of WENO5-Z, and one of the 72 seed trajectories blows up.
At $\nu\times2$ it reaches 0.228\%, or 2.9 times the 0.0799\% of WENO5-Z, although the sharpest checked fronts there are 1.07 cells wide.
Relative to the test error at this order, WENO5-Z gains a factor of 5.5 at $\nu\times2$, whereas NRC loses a factor of 7.5.
The learned baseline \texttt{bb\_weno\_fluxA}, also trained at one diffusivity, degrades in the same way, to 0.92 and 3.3 times WENO5-Z.

\Cref{prop:closable} and \Cref{rem:distribution} explain why a shift of $\nu$ at fixed $\alpha$ differs from a shift along the training curve.
The network approximates $m^\star(V)=\EE[Y\mid V]$ under the occupation measure $\mu$ of the training trajectories, and $\mu$ holds one diffusivity per order.
The loss therefore never constrains the response to $\nu$ at fixed $\alpha$, and a coefficient shift moves both the inputs off the support of $\mu$ and the optimal closure itself.
A single-seed diagnostic separates the two effects by feeding the network the training value of $\nu$ while the exact operator keeps the true one.
For seed 0 this lowers $\errA$ from 0.214\% to 0.0414\% at $\nu\times2$, below WENO5-Z, so most of that failure comes from the unseen input value (Table~\ref{tab:extra}).
At $\nu\times0.7$ the error falls only to 0.779\%, and the sharpest checked fronts are 0.25 cells wide against 0.38 to 0.48 cells in training, consistent with a shift of the target.

{\Cref{rem:distribution} suggests a remedy, namely a training measure that varies $\nu$ at every order, which the $D$-diverse model realizes with the factors 0.8, 1 and 1.5.
Since the original splits at $0.7D$ and $2D$ informed the choice of these factors, they cannot serve as an unbiased test of the remedy, and we therefore test it on the fresh splits with the factors 0.6, 1.25 and 2.5, which played no role in its design (\Cref{fig:ood}(c)).
On these splits, plain NRC stays 3.6 times below WENO5-Z at $\nu\times1.25$, close to its training data, but at the extrapolated factors 0.6 and 2.5 it loses to WENO5-Z on every trajectory in every seed, by factors of 7.8 and 11.8, with 46 of its 216 rollouts blowing up at $\nu\times0.6$.}

The three GPU seeds of the $D$-diverse model stay below WENO5-Z at every fresh factor in the median over seeds, without a blow-up on any shifted split.
Their errors are 0.20, 0.066 and 0.55 times those of WENO5-Z at the factors 0.6, 1.25 and 2.5{, so training on three diffusivities turns the worst failure of plain NRC, an error of 6.33\% with 46 blow-ups at $\nu\times0.6$, into a fivefold advantage over WENO5-Z}.
{At $\nu\times2.5$, seeds 0 and 2 win on at least 22 of the 24 trajectories at every order, whereas seed 1 is above WENO5-Z, with a pooled median of 0.0344\% against 0.0330\% and median paired ratios of 1.50 and 1.57 at $\alpha=1.2$ and $1.8$.}

{This robustness costs little accuracy in distribution.
On the test split the $D$-diverse model reaches 0.0314\% against 0.0308\% for NRC, and their median paired ratio of 1.20 over nine seed pairs lies within the seed-to-seed spread of NRC itself, whose seeds differ by ratios from 0.729 to 1.37.
The comparison mixes GPU-trained and CPU-trained models, but the GPU-trained NRC models of \Cref{sec:grid} reach 0.0307\% at $N=64$, against 0.0308\% for the CPU-trained ones.}

Together, these results indicate that the closure inherits the distribution of its training data, as \Cref{prop:closable} predicts, tolerating shifts along the training relation between $\alpha$ and $\nu$ but not across it.
Widening the distribution in $\nu${, as \Cref{rem:distribution} suggests, restores the advantage of NRC over WENO5-Z on fresh data in the median over three seeds, without a blow-up on any shifted split}.

\subsection{Grid refinement and accuracy per degree of freedom}\label{sec:grid}

\paragraph{Problem setup}
All comparisons so far use 64 cells, and we now vary $N$ to measure the accuracy per degree of freedom {defined in \Cref{sec:intro}} that the decoder {provides} (B1) and its change under refinement (B3).
The data are stored as exact cell averages at $N=32$, 64, 128, 256 and 512, and the 64-cell data equal the original files bit for bit.
We train NRC separately at $N=32$, 64 and 128, with three GPU seeds each, validation-only selection and all hyperparameters tuned at $N=64$.
The receptive field thus stays at 17 cells, from 53\% down to 13\% of the domain, and the input $\ln(\Delta t\,\nu/h^\alpha)$ keeps its 64-cell normalization.
At $N=64$ these GPU-trained models reach 0.0307\%, against 0.0308\% for the separately trained CPU models of \Cref{tab:main}.
C0, C0+ and a pseudo-spectral ETDRK4 solver run at all five resolutions with $\Delta t=0.02\min(1,64/N)$, except C0 at $N=512$, which needs $\Delta t=0.00125$.
NRC keeps $\Delta t=0.02$ at every $N$, and the pseudo-spectral solver starts from point values instead of cell averages.
{We report the pooled median $\errA$ over the 72 test trajectories in two ways, as the own-grid error, which compares the $N$ cell averages of a method with the reference averaged onto the same $N$ cells, and after averaging both onto 32 cells.}
For matched comparisons we average a classical solution on $2N$ or $4N$ cells onto the $N$ cells of NRC and form paired per-trajectory ratios.

\begin{figure}[!tbp]
\centering
\includegraphics[width=\textwidth]{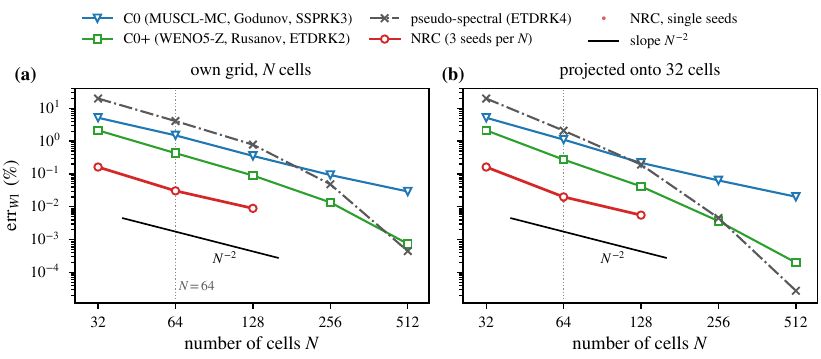}
\caption{Pooled test error against the number of cells, given as the median of $\errA$ over the 72 test trajectories of the three orders.
\textbf{(a)} Error of the $N$ cell averages against the reference $N$-cell averages, and \textbf{(b)} error after both are averaged onto 32 cells.
NRC is trained separately at each $N$ with the hyperparameters tuned at $N=64$, and small dots show its three GPU-trained seeds.
The classical schemes shrink the time step for $N\geq128$ while NRC keeps $\Delta t=0.02$, and the thin line has slope $N^{-2}$.
The dotted line marks the benchmark grid with $N=64$, and no rollout of any method shown in the figure blows up.}
\label{fig:grid}
\end{figure}

\paragraph{Results and observations}
At equal $N$ the gain of NRC over WENO5-Z hardly depends on the resolution, for three GPU seeds per $N$.
NRC has the lower $\errA$ on every test trajectory in every seed, with median paired ratios of 0.077, 0.071 and 0.084 at $N=32$, 64 and 128.
{Its own-grid errors are 0.163\%, 0.0307\% and 0.00900\%, against 2.12\%, 0.431\% and 0.0899\% for WENO5-Z, and no NRC rollout blows up at any resolution.}

{When the WENO5-Z solutions on finer grids are averaged onto the 64 cells of NRC, NRC on 64 cells is more accurate than WENO5-Z on 128 cells and less accurate than WENO5-Z on 256 cells, with 0.0307\% against 0.0703\% and 0.00885\%.}
In every seed, NRC has the lower error on at least 94\% of the trajectories against WENO5-Z on 128 cells, and on none against 256 cells.
To match NRC, the MUSCL scheme C0 needs between 256 and 512 cells and the pseudo-spectral solver between 128 and 256 cells.
{NRC on 64 cells thus reaches an accuracy that WENO5-Z attains only on a grid with two to four times as many cells, which quantifies the accuracy per degree of freedom of NRC.}

This {refinement} factor {is larger on the coarse grids with 32 and 64 cells than on 128 cells}, because the benefit of sub-cell poles depends on the width of the fronts relative to $h$.
Against WENO5-Z on twice as many cells, the median paired ratio is 0.654 at $N=32$, 0.401 at $N=64$ and 0.862 at $N=128$.
NRC at $N=128$ is thus about as accurate as WENO5-Z at $N=256$, winning on 58\% of the trajectories, and on the 32-cell metric of \Cref{fig:grid}(b) WENO5-Z is lower, with 0.00368\% against 0.00559\%.
The sharpest fronts are 0.38 to 0.48 cells wide at $N=64$, hence 0.76 to 0.96 cells at $N=128$ and 1.5 to 1.9 cells at $N=256$.
Once a front spans more than a cell, WENO5-Z approaches its asymptotic regime, and its observed order rises from about 2.3 to 2.72 and 4.19.
By \Cref{prop:gap}, a trigonometric reconstruction of an atom of depth $\delta$ keeps a relative error of at least $e^{-\pi\delta/h}$.
Using the viscous relation $\delta=\pi\ell/2$ of \Cref{prop:poles} as a guide, this floor is 0.09 to 0.15 at $N=64$ but below $10^{-3}$ at $N=256$.
{The learned poles therefore improve the accuracy most where fronts are narrower than a cell, as at $N=32$ and 64, and less where fronts span several cells and the grid itself resolves them.}
The margin at $N=64$, where the hyperparameters were tuned, exceeds those at $N=32$ and 128, so the smaller margins may partly reflect untuned settings.

{Under refinement, NRC converges with observed orders of 2.41 between 32 and 64 cells and 1.77 between 64 and 128 cells, against 2.30 and 2.26 for WENO5-Z.
NRC thus keeps the lower error at every $N$ we tested, but its lower order between 64 and 128 cells shrinks its margin there.
Between 64 and 128 cells the order of NRC decreases with the fractional order, from 1.95 at $\alpha=1.2$ to 1.46 at $\alpha=1.8$, which is consistent with the loss of $\alpha$ orders through the Riesz flux in \Cref{cor:decoder}.
\Cref{thm:consistency} does not predict the values themselves, since it bounds the residual of a decoder family with a known accuracy $O(h^q)$, whereas each $N$ here has its own trained network.}

{Two remarks help to read the classical curves of \Cref{fig:grid} at the finest grids.
Own-grid errors are not directly comparable across $N$, because the reference averages on $N$ cells resolve more small-scale content as $N$ grows, so the matched comparisons above average all solutions onto a common grid.}
At $N=256$ and 512 the pseudo-spectral solver belongs to the scheme family of the reference, so its error measures the distance to a nearly identical discretization.
These comparisons count cells {, the degrees of freedom of the scheme, rather than operations}.

Together, these results answer the resolution part of questions (B1) and (B3) for three seeds per grid on the pooled test split.
{At 64 cells, the decoder reduces the number of cells needed to reach a given accuracy by a factor between two and four relative to WENO5-Z, and this factor falls to about two at 128 cells as the grid itself resolves the fronts.}

\subsection{Relation to learned discretizations for integer-order Burgers}\label{sec:barsinai}

\paragraph{Problem setup}
The learned discretization closest to NRC is the data-driven finite-volume scheme of Bar-Sinai et al.~\cite{barsinai2019learning}, which we test on their own integer-order Burgers setting as part of question (B3).
Their network outputs solution-dependent coefficients of a six-point stencil for the face value and slope inside a conservative update, and we ported their published Burgers configuration to PyTorch.
Block (A) of Table~\ref{tab:barsinai} uses their equation with $\alpha=2$ and $\nu=0.01$, and block (B) replaces the Laplacian by $-\nu(-\Delta)^{\alpha/2}$ with our pairs $(\alpha,\nu)$ but keeps forcing, data and split.
The name Bar-Sinai thus denotes two models, their coefficient model in block (A) and, in block (B) and the last line, their order-one face stencil inside our ETDRK2 step.
Every learned entry is a single run with one seed, except the three-seed NRC entry of the last line.

\paragraph{Results and observations}
\begin{table}[!tbp]
\centering
\caption{Comparison with the learned discretization of Bar-Sinai et al.~\cite{barsinai2019learning}, where all learned entries except the three-seed NRC entry of the last line are single runs. Blocks (A) and (B) use their forcing and 8 training and 4 test trajectories on 64 cells, with the mean absolute error over $t\in(0,15]$ against a 1024-point reference. Block (A) is viscous Burgers with viscosity 0.01, and block (B) replaces the Laplacian by $-D(-\Delta)^{\alpha/2}$. Unless stated, rows use ETDRK2 at $\Delta t=0.02$ and 2000 training steps, and the coefficient model trains on the time derivative. The last line gives $\errA$ on the test split of Table~\ref{tab:main}.}
\label{tab:barsinai}
\footnotesize
\setlength{\tabcolsep}{4pt}
\begin{tabular}{lccccc}
\toprule
\multicolumn{6}{l}{\emph{(A) Integer order, $\alpha=2$, mean absolute error over 4 test trajectories in units of $10^{-3}$}} \\
\midrule
Method & Mean & Median & Min & Max & Ratio to \\
 & & & & & Bar-Sinai \\
\midrule
FV1 & 83.8 & 84.3 & 69.3 & 97.1 & 36.2 \\
\hspace{0.5em}{\scriptsize Rusanov, three-point Laplacian} & & & & & \\
WENO5-Z & 6.04 & 6.20 & 4.35 & 7.39 & 2.61 \\
\hspace{0.5em}{\scriptsize Rusanov, three-point Laplacian} & & & & & \\
Bar-Sinai & 2.32 & 2.25 & 1.71 & 3.06 & 1 \\
\hspace{0.5em}{\scriptsize coefficient model, $\Delta t=0.001$} & & & & & \\
NRC, short schedule & 5.40 & 5.11 & 4.55 & 6.86 & 2.33 \\
\hspace{0.5em}{\scriptsize 400 flux-matching, 120 unrolled steps} & & & & & \\
NRC, flux matching & 3.35 & 3.36 & 2.63 & 4.06 & 1.45 \\
\hspace{0.5em}{\scriptsize 2000 flux-matching steps} & & & & & \\
NRC, matched clock & 6.23 & 6.41 & 3.01 & 9.08 & 2.69 \\
\hspace{0.5em}{\scriptsize plus 120 unrolled steps, $\Delta t=0.001$} & & & & & \\
\midrule
\multicolumn{6}{l}{\emph{(B) Fractional variants, mean absolute error over 4 test trajectories in units of $10^{-3}$}} \\
\midrule
Order and diffusivity & WENO5-Z & Bar-Sinai & NRC & \multicolumn{2}{c}{Ratio to NRC} \\
 & {\scriptsize exact operator} & {\scriptsize face stencil} & {\scriptsize flux matching} & {\scriptsize WENO5-Z} & {\scriptsize Bar-Sinai} \\
\midrule
$\alpha=1.2$, $D=0.3$ & 0.827 & 1.73 & 0.137 & 6.06 & 12.7 \\
$\alpha=1.5$, $D=0.12$ & 1.42 & 4.22 & 0.192 & 7.42 & 22.0 \\
$\alpha=1.8$, $D=0.05$ & 2.15 & 8.17 & 0.171 & 12.6 & 47.8 \\
$\alpha=2.0$, $D=0.01$ & 7.34 & 25.3 & 3.35 & 2.19 & 7.54 \\
\midrule
Test split, $\errA$ (\%) & 0.431 & 0.467 & 0.0308 & 14.0 & 15.2 \\
\hspace{0.5em}{\scriptsize 72 trajectories, $\alpha\in\{1.2,1.5,1.8\}$} & & & {\scriptsize[0.0238, 0.0328]} & & \\
\bottomrule
\end{tabular}
\end{table}
On their integer-order setting, the coefficient model of Bar-Sinai et al.\ is more accurate than every NRC variant we tried, with a mean absolute error of $2.32\times10^{-3}$ against $3.35\times10^{-3}$ for the closest NRC run.
This NRC run keeps its error at the step $\Delta t=0.001$ of the coefficient model, so the factor of 1.45 is not a time-step effect.
At $\alpha=2$ the multiplier of $\Ra$ becomes $ik$, so the dissipative flux $-\nu u_x$ is local and the exact operator has no far field to supply.
The exact nonlocal flux and the sub-grid Riesz flux, whose replacement or removal {raises the median $\errA$ of NRC} 228 and 12.5 {times} for $\alpha<2$ (Section~\ref{sec:ablation}), then have nothing to contribute.
What remains is a local reconstruction of the face value and slope, for which a learned stencil with a polynomial accuracy constraint is the natural model.
With single runs we cannot exclude that training also contributes to the gap.

In the fractional variants of block (B), NRC is between 6.06 and 12.6 times below WENO5-Z with the exact operator for $\alpha<2$, but only 2.19 times below it at $\alpha=2$.
The face stencil with the same operator lies 2.1 to 3.8 times above WENO5-Z at every order, and it reaches 0.467\% against 0.431\% for WENO5-Z on the test split of Table~\ref{tab:main}.
At $\alpha=2$ it is 10.9 times above the coefficient model on identical data, so block (B) compares reconstructions inside one shared step and not NRC with the published method.
Together, these single-seed comparisons locate the benefit of NRC in the nonlocal flux, whereas for integer-order Burgers a learned stencil of the type of Bar-Sinai et al.\ was the more accurate choice in our tests.


\section{Conclusions and future directions}\label{sec:conclusions}

{In this work, we developed NRC, a decoder-only learned closure for the forced space-fractional Burgers equation that reconstructs sub-cell fronts with learned rational atoms, keeps every cell average exact and evaluates the advective and nonlocal Riesz fluxes of the reconstruction in closed form.
On coarse grids, where the fronts are narrower than a cell, NRC is an order of magnitude more accurate than a WENO5-Z scheme with the same exact fractional operator, it conserves mass for every value of its weights, and its learned poles mark the sub-cell fronts.
We summarize the novelty and contributions of this work, its key insights and the main directions for future research below.}

\subsection{\texorpdfstring{{Novelty and contributions}}{Novelty and contributions}}\label{sec:novelty}
{The novelty of this work lies in treating a learned closure for a fractional conservation law as a reconstruction operator with exact nonlocal fluxes, rather than as a correction of the coarse update, which places the learning problem exactly where the theory locates the error.
This idea yields the four contributions stated in \Cref{sec:intro}, and \Cref{sec:insights} collects the evidence for each of them.}
{
\begin{itemize}
\item \emph{An exact target for learned closures.} The exact defect identity places the closure problem at the cell faces, where the Riesz flux can be evaluated exactly, so only the sub-cell reconstruction must be learned, whereas local surrogates of the operator carry a model error of order one and reconstructions with jumps have unbounded Riesz fluxes.
\item \emph{A decoder-only closure with structural guarantees.} NRC learns only the map from cell averages to a sub-cell reconstruction, keeps every cell average exactly, conserves mass for every value of the network weights and treats the stiff global operator exactly, so the network never has to supply the far field.
\item \emph{Rational atoms with closed-form fluxes.} Learned complex poles give sub-cell shape functions whose values, cell averages and Riesz fluxes have closed forms, and the resolution gap and the dipole tail explain why they are efficient degrees of freedom and why the learned part stays local although the operator is global.
\item \emph{An accurate and interpretable learned reconstruction.} On coarse grids NRC reaches an error an order of magnitude below that of WENO5-Z with the same exact operator, and it keeps this advantage at equal resolution up to 128 cells and under the tested shifts of the order, the forcing and, with training on several diffusivities, the diffusivity.
Its learned poles can be checked directly against the sub-cell fronts of the solution.
\end{itemize}}

\subsection{Summary of key insights}\label{sec:insights}

\paragraph{Insights from the analysis}
The Riesz flux of the histopolant can be computed exactly, so only the sub-cell reconstruction must be learned, whereas a local surrogate of the operator carries a model error of order one.

No trigonometric reconstruction resolves an atom of depth $\delta$ to better than $e^{-\pi\delta/h}$, whereas one atom with the correct pole removes this gap.
In the oracle study, 16 adapted poles lowered the median face-flux error by factors from 32 to 283 against Fourier modes with nearly the same parameter count, 65 against 64, whereas fixed poles did not help.

Because the projected correction has zero cell averages, its Riesz flux decays like that of a dipole, and beyond the half-width of the receptive field it carries less than 0.26\% of the maximal sub-grid face flux.

\paragraph{Insights from the coupled scheme}
On 64 cells NRC reaches a pooled median error of 0.0308\% over three seeds, 14 times below WENO5-Z and 2.9 to 5 times below a black-box face correction trained on the same flux target.
It has the lower error on every test and order-shifted trajectory in every seed pair, its error does not grow beyond the training horizon, and it is {more accurate than WENO5-Z on twice as many cells at 32 and 64 cells and about as accurate at 128 cells}.

Under a short protocol, removing or replacing the exact operator, the sub-grid Riesz flux, the projection or the rational atoms raises the median error over three seeds by factors from 6.5 to 228.
For every seed and order, a deep pole lies within a quarter cell of 78 to 89\% of the sub-cell fronts, against 43.7\% for a uniformly placed point, and deeper poles mark wider fronts.
We therefore interpret the deep atoms as data-adaptive sub-cell shape functions.

{NRC keeps its advantage under shifts of the fractional order and of the forcing.
Shifts of the diffusivity at fixed order move the network inputs off the training data and the optimal closure itself, as the conditional expectation view of Proposition~\ref{prop:closable} predicts, and with training on three diffusivities per order NRC stays below WENO5-Z at all five tested factors from 0.6 to 2.5 in the median over seeds.
Under refinement, NRC stays more accurate than WENO5-Z at equal resolution, and its advantage over WENO5-Z on finer grids shrinks as the grid itself resolves the fronts.}

\subsection{\texorpdfstring{ {Limitations and future directions}}{Limitations and future directions}}\label{sec:future}
{Three points delimit the present study and indicate the most important next steps.}
{
\begin{itemize}
\item \emph{Computational cost.} In timings on an NVIDIA GeForce RTX 5090 GPU (\ref{app:training:metrics}), NRC with 64 and with 128 cells lies on the Pareto front of the finite-volume and learned schemes we tested, and NRC on 128 cells reaches about the accuracy of WENO5-Z on 256 cells in less wall time, although on this periodic benchmark a pseudo-spectral solver is faster still.
The present implementation evaluates all atoms at all faces twice per step, and windowed sub-grid fluxes with a local projection (\Cref{cor:window}), reuse of the poles across the two stages of the exponential integrator and fused GPU kernels could lower this cost further.
\item \emph{Coefficient-aware training.} A closure trained on one diffusivity per order does not transfer to diffusivity shifts at fixed order.
Training on several diffusivities restores the advantage of NRC in the median over seeds, and training data that vary all coefficients independently, tested on several fresh data sets, would make this robustness systematic.
\item \emph{Bounded domains and several dimensions.} All results concern one periodic and one-dimensional benchmark with constant coefficients, and our atoms and exponential step rely on periodicity.
Bounded domains, where no fast spectral solver applies and finite-volume schemes are the natural competitors, and multi-dimensional fronts, whose point poles would become singularities along curves or surfaces, are the natural next settings for NRC, and both require new atoms with closed-form fluxes.
\end{itemize}}

{More broadly, our results suggest that a small network becomes an accurate, conservative and interpretable part of a finite-volume scheme when it learns only what the theory cannot supply, here the sub-cell reconstruction, and everything else is computed exactly.}
Ultimately, we hope this study helps to connect data-driven closures with classical reconstruction theory for nonlocal conservation laws.

\section*{CRediT authorship contribution statement}
\noindent \textbf{Zeyuan Song:} Conceptualization, Methodology, Software, Formal analysis, Investigation, Writing, original draft.

\noindent \textbf{Zheyu Jiang:} Conceptualization, Supervision, Funding acquisition, Writing, review and editing.

\section*{Declaration of competing interest}
The authors declare that they have no known competing financial interests or personal relationships that could have appeared to influence the work reported in this paper.

\section*{Data availability}
The code and data that support the findings of this study will be made available in a public repository upon publication.

\section*{Declaration of generative AI and AI-assisted technologies in the writing process}
During the preparation of this manuscript the authors used Claude (Anthropic) in order to check the grammar and polish the manuscript.
After using this tool, the authors reviewed and edited the content as needed and take full responsibility for the content of the manuscript.

\section*{Acknowledgments}
We acknowledge financial support from the U.S. National Science Foundation Award No.~2442806 and the startup fund of the College of Engineering, Architecture, and Technology at Oklahoma State University.

\bibliographystyle{elsarticle-num-nodash}
\bibliography{references_all}

@article{lischke2020what,
  author  = {Lischke, Anna and Pang, Guofei and Gulian, Mamikon and Song, Fangying and Glusa, Christian and Zheng, Xiaoning and Mao, Zhiping and Cai, Wei and Meerschaert, Mark M. and Ainsworth, Mark and Karniadakis, George Em},
  title   = {What is the fractional {L}aplacian? {A} comparative review with new results},
  journal = {Journal of Computational Physics},
  volume  = {404},
  pages   = {109009},
  year    = {2020},
  doi     = {10.1016/j.jcp.2019.109009}
}

@article{kwasnicki2017ten,
  author  = {Kwa{\'s}nicki, Mateusz},
  title   = {Ten equivalent definitions of the fractional {L}aplace operator},
  journal = {Fractional Calculus and Applied Analysis},
  volume  = {20},
  number  = {1},
  pages   = {7-51},
  year    = {2017},
  doi     = {10.1515/fca-2017-0002}
}

@article{huang2014numerical,
  author  = {Huang, Yanghong and Oberman, Adam},
  title   = {Numerical methods for the fractional {L}aplacian: {A} finite difference-quadrature approach},
  journal = {SIAM Journal on Numerical Analysis},
  volume  = {52},
  number  = {6},
  pages   = {3056-3084},
  year    = {2014},
  doi     = {10.1137/140954040}
}

@article{bonito2018numerical,
  author  = {Bonito, Andrea and Borthagaray, Juan Pablo and Nochetto, Ricardo H. and Ot{\'a}rola, Enrique and Salgado, Abner J.},
  title   = {Numerical methods for fractional diffusion},
  journal = {Computing and Visualization in Science},
  volume  = {19},
  number  = {5-6},
  pages   = {19-46},
  year    = {2018},
  doi     = {10.1007/s00791-018-0289-y}
}

@article{ortigueira2006riesz,
  author  = {Ortigueira, Manuel Duarte},
  title   = {Riesz potential operators and inverses via fractional centred derivatives},
  journal = {International Journal of Mathematics and Mathematical Sciences},
  volume  = {2006},
  pages   = {48391},
  year    = {2006},
  doi     = {10.1155/IJMMS/2006/48391}
}

@article{celik2012crank,
  author  = {{\c{C}}elik, Cem and Duman, Melda},
  title   = {{C}rank-{N}icolson method for the fractional diffusion equation with the {R}iesz fractional derivative},
  journal = {Journal of Computational Physics},
  volume  = {231},
  number  = {4},
  pages   = {1743-1750},
  year    = {2012},
  doi     = {10.1016/j.jcp.2011.11.008}
}

@article{biler1998fractal,
  author  = {Biler, Piotr and Funaki, Tadahisa and Woyczynski, Wojbor A.},
  title   = {Fractal {B}urgers equations},
  journal = {Journal of Differential Equations},
  volume  = {148},
  number  = {1},
  pages   = {9-46},
  year    = {1998},
  doi     = {10.1006/jdeq.1998.3458}
}

@article{kiselev2008blowup,
  author  = {Kiselev, Alexander and Nazarov, Fedor and Shterenberg, Roman},
  title   = {Blow up and regularity for fractal {B}urgers equation},
  journal = {Dynamics of Partial Differential Equations},
  volume  = {5},
  number  = {3},
  pages   = {211-240},
  year    = {2008},
  doi     = {10.4310/DPDE.2008.v5.n3.a2}
}

@article{alibaud2007occurrence,
  author  = {Alibaud, Natha{\"e}l and Droniou, J{\'e}r{\^o}me and Vovelle, Julien},
  title   = {Occurrence and non-appearance of shocks in fractal {B}urgers equations},
  journal = {Journal of Hyperbolic Differential Equations},
  volume  = {4},
  number  = {3},
  pages   = {479-499},
  year    = {2007},
  doi     = {10.1142/S0219891607001227}
}

@article{droniou2003global,
  author  = {Droniou, J{\'e}r{\^o}me and Gallou{\"e}t, Thierry and Vovelle, Julien},
  title   = {Global solution and smoothing effect for a non-local regularization of a hyperbolic equation},
  journal = {Journal of Evolution Equations},
  volume  = {3},
  number  = {3},
  pages   = {499-521},
  year    = {2003},
  doi     = {10.1007/s00028-003-0503-1}
}

@article{droniou2010numerical,
  author  = {Droniou, J{\'e}r{\^o}me},
  title   = {A numerical method for fractal conservation laws},
  journal = {Mathematics of Computation},
  volume  = {79},
  number  = {269},
  pages   = {95-124},
  year    = {2010},
  doi     = {10.1090/S0025-5718-09-02293-5}
}

@article{sulem1983tracing,
  author  = {Sulem, Catherine and Sulem, Pierre-Louis and Frisch, H{\'e}l{\`e}ne},
  title   = {Tracing complex singularities with spectral methods},
  journal = {Journal of Computational Physics},
  volume  = {50},
  number  = {1},
  pages   = {138-161},
  year    = {1983},
  doi     = {10.1016/0021-9991(83)90045-1}
}

@book{leveque2002finite,
  author    = {LeVeque, Randall J.},
  title     = {Finite Volume Methods for Hyperbolic Problems},
  publisher = {Cambridge University Press},
  address   = {Cambridge},
  year      = {2002},
  doi       = {10.1017/CBO9780511791253}
}

@article{harten1983high,
  author  = {Harten, Ami},
  title   = {High resolution schemes for hyperbolic conservation laws},
  journal = {Journal of Computational Physics},
  volume  = {49},
  number  = {3},
  pages   = {357-393},
  year    = {1983},
  doi     = {10.1016/0021-9991(83)90136-5}
}

@article{vanleer1977towards,
  author  = {van Leer, Bram},
  title   = {Towards the ultimate conservative difference scheme. {IV}. {A} new approach to numerical convection},
  journal = {Journal of Computational Physics},
  volume  = {23},
  number  = {3},
  pages   = {276-299},
  year    = {1977},
  doi     = {10.1016/0021-9991(77)90095-X}
}

@article{vanleer1979towards,
  author  = {van Leer, Bram},
  title   = {Towards the ultimate conservative difference scheme. {V}. {A} second-order sequel to {G}odunov's method},
  journal = {Journal of Computational Physics},
  volume  = {32},
  number  = {1},
  pages   = {101-136},
  year    = {1979},
  doi     = {10.1016/0021-9991(79)90145-1}
}

@article{jiang1996efficient,
  author  = {Jiang, Guang-Shan and Shu, Chi-Wang},
  title   = {Efficient implementation of weighted {ENO} schemes},
  journal = {Journal of Computational Physics},
  volume  = {126},
  number  = {1},
  pages   = {202-228},
  year    = {1996},
  doi     = {10.1006/jcph.1996.0130}
}

@article{borges2008improved,
  author  = {Borges, Rafael and Carmona, Monique and Costa, Bruno and Don, Wai Sun},
  title   = {An improved weighted essentially non-oscillatory scheme for hyperbolic conservation laws},
  journal = {Journal of Computational Physics},
  volume  = {227},
  number  = {6},
  pages   = {3191-3211},
  year    = {2008},
  doi     = {10.1016/j.jcp.2007.11.038}
}

@article{shu2009high,
  author  = {Shu, Chi-Wang},
  title   = {High order weighted essentially nonoscillatory schemes for convection dominated problems},
  journal = {SIAM Review},
  volume  = {51},
  number  = {1},
  pages   = {82-126},
  year    = {2009},
  doi     = {10.1137/070679065}
}

@article{shu1988efficient,
  author  = {Shu, Chi-Wang and Osher, Stanley},
  title   = {Efficient implementation of essentially non-oscillatory shock-capturing schemes},
  journal = {Journal of Computational Physics},
  volume  = {77},
  number  = {2},
  pages   = {439-471},
  year    = {1988},
  doi     = {10.1016/0021-9991(88)90177-5}
}

@article{rusanov1962calculation,
  author  = {Rusanov, V. V.},
  title   = {The calculation of the interaction of non-stationary shock waves and obstacles},
  journal = {USSR Computational Mathematics and Mathematical Physics},
  volume  = {1},
  number  = {2},
  pages   = {304-320},
  year    = {1962},
  doi     = {10.1016/0041-5553(62)90062-9}
}

@article{cox2002exponential,
  author  = {Cox, S. M. and Matthews, P. C.},
  title   = {Exponential time differencing for stiff systems},
  journal = {Journal of Computational Physics},
  volume  = {176},
  number  = {2},
  pages   = {430-455},
  year    = {2002},
  doi     = {10.1006/jcph.2002.6995}
}

@article{kassam2005fourth,
  author  = {Kassam, Aly-Khan and Trefethen, Lloyd N.},
  title   = {Fourth-order time-stepping for stiff {PDE}s},
  journal = {SIAM Journal on Scientific Computing},
  volume  = {26},
  number  = {4},
  pages   = {1214-1233},
  year    = {2005},
  doi     = {10.1137/S1064827502410633}
}

@article{hochbruck2010exponential,
  author  = {Hochbruck, Marlis and Ostermann, Alexander},
  title   = {Exponential integrators},
  journal = {Acta Numerica},
  volume  = {19},
  pages   = {209-286},
  year    = {2010},
  doi     = {10.1017/S0962492910000048}
}

@article{barsinai2019learning,
  author  = {Bar-Sinai, Yohai and Hoyer, Stephan and Hickey, Jason and Brenner, Michael P.},
  title   = {Learning data-driven discretizations for partial differential equations},
  journal = {Proceedings of the National Academy of Sciences},
  volume  = {116},
  number  = {31},
  pages   = {15344-15349},
  year    = {2019},
  doi     = {10.1073/pnas.1814058116}
}

@article{kochkov2021machine,
  author  = {Kochkov, Dmitrii and Smith, Jamie A. and Alieva, Ayya and Wang, Qing and Brenner, Michael P. and Hoyer, Stephan},
  title   = {Machine learning-accelerated computational fluid dynamics},
  journal = {Proceedings of the National Academy of Sciences},
  volume  = {118},
  number  = {21},
  pages   = {e2101784118},
  year    = {2021},
  doi     = {10.1073/pnas.2101784118}
}

@article{zhuang2021learned,
  author  = {Zhuang, Jiawei and Kochkov, Dmitrii and Bar-Sinai, Yohai and Brenner, Michael P. and Hoyer, Stephan},
  title   = {Learned discretizations for passive scalar advection in a two-dimensional turbulent flow},
  journal = {Physical Review Fluids},
  volume  = {6},
  number  = {6},
  pages   = {064605},
  year    = {2021},
  doi     = {10.1103/PhysRevFluids.6.064605}
}

@inproceedings{um2020solver,
  author    = {Um, Kiwon and Brand, Robert and Fei, Yun (Raymond) and Holl, Philipp and Thuerey, Nils},
  title     = {Solver-in-the-loop: {L}earning from differentiable physics to interact with iterative {PDE}-solvers},
  booktitle = {Advances in Neural Information Processing Systems},
  editor    = {Larochelle, H. and Ranzato, M. and Hadsell, R. and Balcan, M. F. and Lin, H.},
  volume    = {33},
  pages     = {6111-6122},
  publisher = {Curran Associates, Inc.},
  year      = {2020},
  url       = {https://proceedings.neurips.cc/paper/2020/hash/43e4e6a6f341e00671e123714de019a8-Abstract.html}
}

@article{list2022learned,
  author  = {List, Bj{\"o}rn and Chen, Li-Wei and Thuerey, Nils},
  title   = {Learned turbulence modelling with differentiable fluid solvers: physics-based loss functions and optimisation horizons},
  journal = {Journal of Fluid Mechanics},
  volume  = {949},
  pages   = {A25},
  year    = {2022},
  doi     = {10.1017/jfm.2022.738}
}

@article{sirignano2020dpm,
  author  = {Sirignano, Justin and MacArt, Jonathan F. and Freund, Jonathan B.},
  title   = {{DPM}: {A} deep learning {PDE} augmentation method with application to large-eddy simulation},
  journal = {Journal of Computational Physics},
  volume  = {423},
  pages   = {109811},
  year    = {2020},
  doi     = {10.1016/j.jcp.2020.109811}
}

@article{melchers2023comparison,
  author  = {Melchers, Hugo and Crommelin, Daan and Koren, Barry and Menkovski, Vlado and Sanderse, Benjamin},
  title   = {Comparison of neural closure models for discretised {PDE}s},
  journal = {Computers and Mathematics with Applications},
  volume  = {143},
  pages   = {94-107},
  year    = {2023},
  doi     = {10.1016/j.camwa.2023.04.030}
}

@article{gupta2023generalized,
  author  = {Gupta, Abhinav and Lermusiaux, Pierre F. J.},
  title   = {Generalized neural closure models with interpretability},
  journal = {Scientific Reports},
  volume  = {13},
  pages   = {10634},
  year    = {2023},
  doi     = {10.1038/s41598-023-35319-w}
}

@article{duraisamy2019turbulence,
  author  = {Duraisamy, Karthik and Iaccarino, Gianluca and Xiao, Heng},
  title   = {Turbulence modeling in the age of data},
  journal = {Annual Review of Fluid Mechanics},
  volume  = {51},
  pages   = {357-377},
  year    = {2019},
  doi     = {10.1146/annurev-fluid-010518-040547}
}

@article{ray2018artificial,
  author  = {Ray, Deep and Hesthaven, Jan S.},
  title   = {An artificial neural network as a troubled-cell indicator},
  journal = {Journal of Computational Physics},
  volume  = {367},
  pages   = {166-191},
  year    = {2018},
  doi     = {10.1016/j.jcp.2018.04.029}
}

@inproceedings{stachenfeld2022learned,
  author    = {Stachenfeld, Kimberly and Fielding, Drummond B. and Kochkov, Dmitrii and Cranmer, Miles and Pfaff, Tobias and Godwin, Jonathan and Cui, Can and Ho, Shirley and Battaglia, Peter and Sanchez-Gonzalez, Alvaro},
  title     = {Learned coarse models for efficient turbulence simulation},
  booktitle = {International Conference on Learning Representations (ICLR)},
  year      = {2022},
  eprint    = {2112.15275},
  archivePrefix = {arXiv}
}

@inproceedings{brandstetter2022message,
  author    = {Brandstetter, Johannes and Worrall, Daniel E. and Welling, Max},
  title     = {Message passing neural {PDE} solvers},
  booktitle = {International Conference on Learning Representations (ICLR)},
  year      = {2022},
  eprint    = {2202.03376},
  archivePrefix = {arXiv}
}

@inproceedings{li2021fourier,
  author    = {Li, Zongyi and Kovachki, Nikola and Azizzadenesheli, Kamyar and Liu, Burigede and Bhattacharya, Kaushik and Stuart, Andrew and Anandkumar, Anima},
  title     = {Fourier neural operator for parametric partial differential equations},
  booktitle = {International Conference on Learning Representations (ICLR)},
  year      = {2021},
  eprint    = {2010.08895},
  archivePrefix = {arXiv}
}

@article{lu2021learning,
  author  = {Lu, Lu and Jin, Pengzhan and Pang, Guofei and Zhang, Zhongqiang and Karniadakis, George Em},
  title   = {Learning nonlinear operators via {DeepONet} based on the universal approximation theorem of operators},
  journal = {Nature Machine Intelligence},
  volume  = {3},
  number  = {3},
  pages   = {218-229},
  year    = {2021},
  doi     = {10.1038/s42256-021-00302-5}
}

@article{kovachki2023neural,
  author  = {Kovachki, Nikola and Li, Zongyi and Liu, Burigede and Azizzadenesheli, Kamyar and Bhattacharya, Kaushik and Stuart, Andrew and Anandkumar, Anima},
  title   = {Neural operator: {L}earning maps between function spaces with applications to {PDE}s},
  journal = {Journal of Machine Learning Research},
  volume  = {24},
  number  = {89},
  pages   = {1-97},
  year    = {2023},
  url     = {https://www.jmlr.org/papers/v24/21-1524.html}
}

@article{mcgreivy2024weak,
  author  = {McGreivy, Nick and Hakim, Ammar},
  title   = {Weak baselines and reporting biases lead to overoptimism in machine learning for fluid-related partial differential equations},
  journal = {Nature Machine Intelligence},
  volume  = {6},
  number  = {10},
  pages   = {1256-1269},
  year    = {2024},
  doi     = {10.1038/s42256-024-00897-5}
}

@article{stolz1999approximate,
  author  = {Stolz, S. and Adams, N. A.},
  title   = {An approximate deconvolution procedure for large-eddy simulation},
  journal = {Physics of Fluids},
  volume  = {11},
  number  = {7},
  pages   = {1699-1701},
  year    = {1999},
  doi     = {10.1063/1.869867}
}

@article{maddu2023stencil,
  author  = {Maddu, Suryanarayana and Sturm, Dominik and Cheeseman, Bevan L. and M{\"u}ller, Christian L. and Sbalzarini, Ivo F.},
  title   = {{STENCIL-NET} for equation-free forecasting from data},
  journal = {Scientific Reports},
  volume  = {13},
  pages   = {12787},
  year    = {2023},
  doi     = {10.1038/s41598-023-39418-6}
}

@article{lee2020model,
  author  = {Lee, Kookjin and Carlberg, Kevin T.},
  title   = {Model reduction of dynamical systems on nonlinear manifolds using deep convolutional autoencoders},
  journal = {Journal of Computational Physics},
  volume  = {404},
  pages   = {108973},
  year    = {2020},
  doi     = {10.1016/j.jcp.2019.108973}
}

@article{agdestein2025discretize,
  author  = {Agdestein, Syver D{\o}ving and Sanderse, Benjamin},
  title   = {Discretize first, filter next: {L}earning divergence-consistent closure models for large-eddy simulation},
  journal = {Journal of Computational Physics},
  volume  = {522},
  pages   = {113577},
  year    = {2025},
  doi     = {10.1016/j.jcp.2024.113577}
}

@article{pang2019fpinns,
  author  = {Pang, Guofei and Lu, Lu and Karniadakis, George Em},
  title   = {{fPINNs}: Fractional physics-informed neural networks},
  journal = {SIAM Journal on Scientific Computing},
  volume  = {41},
  number  = {4},
  pages   = {A2603-A2626},
  year    = {2019},
  doi     = {10.1137/18M1229845}
}

@article{gulian2019machine,
  author  = {Gulian, Mamikon and Raissi, Maziar and Perdikaris, Paris and Karniadakis, George Em},
  title   = {Machine learning of space-fractional differential equations},
  journal = {SIAM Journal on Scientific Computing},
  volume  = {41},
  number  = {4},
  pages   = {A2485-A2509},
  year    = {2019},
  doi     = {10.1137/18M1204991}
}

@article{akhavansafaei2021data,
  author  = {Akhavan-Safaei, Ali and Samiee, Mehdi and Zayernouri, Mohsen},
  title   = {Data-driven fractional subgrid-scale modeling for scalar turbulence: {A} nonlocal {LES} approach},
  journal = {Journal of Computational Physics},
  volume  = {446},
  pages   = {110571},
  year    = {2021},
  doi     = {10.1016/j.jcp.2021.110571}
}

@article{chorin2000optimal,
  author  = {Chorin, Alexandre J. and Hald, Ole H. and Kupferman, Raz},
  title   = {Optimal prediction and the {M}ori-{Z}wanzig representation of irreversible processes},
  journal = {Proceedings of the National Academy of Sciences},
  volume  = {97},
  number  = {7},
  pages   = {2968-2973},
  year    = {2000},
  doi     = {10.1073/pnas.97.7.2968}
}

@article{chorin2002optimal,
  author  = {Chorin, Alexandre J. and Hald, Ole H. and Kupferman, Raz},
  title   = {Optimal prediction with memory},
  journal = {Physica D: Nonlinear Phenomena},
  volume  = {166},
  number  = {3-4},
  pages   = {239-257},
  year    = {2002},
  doi     = {10.1016/S0167-2789(02)00446-3}
}

@article{lopezmarcos1988stability,
  author  = {L{\'o}pez-Marcos, J. C. and Sanz-Serna, J. M.},
  title   = {Stability and convergence in numerical analysis {III}: {L}inear investigation of nonlinear stability},
  journal = {IMA Journal of Numerical Analysis},
  volume  = {8},
  number  = {1},
  pages   = {71-84},
  year    = {1988},
  doi     = {10.1093/imanum/8.1.71}
}

@article{newman1964rational,
  author  = {Newman, D. J.},
  title   = {Rational approximation to $|x|$},
  journal = {Michigan Mathematical Journal},
  volume  = {11},
  number  = {1},
  pages   = {11-14},
  year    = {1964},
  doi     = {10.1307/mmj/1028999029}
}

@article{stahl2003best,
  author  = {Stahl, Herbert R.},
  title   = {Best uniform rational approximation of $x^\alpha$ on $[0,1]$},
  journal = {Acta Mathematica},
  volume  = {190},
  number  = {2},
  pages   = {241-306},
  year    = {2003},
  doi     = {10.1007/BF02392691}
}

@article{stenger1986explicit,
  author  = {Stenger, Frank},
  title   = {Explicit nearly optimal linear rational approximation with preassigned poles},
  journal = {Mathematics of Computation},
  volume  = {47},
  number  = {175},
  pages   = {225-252},
  year    = {1986},
  doi     = {10.1090/S0025-5718-1986-0842132-0}
}

@article{gopal2019solving,
  author  = {Gopal, Abinand and Trefethen, Lloyd N.},
  title   = {Solving {L}aplace problems with corner singularities via rational functions},
  journal = {SIAM Journal on Numerical Analysis},
  volume  = {57},
  number  = {5},
  pages   = {2074-2094},
  year    = {2019},
  doi     = {10.1137/19M125947X}
}

@article{trefethen2021exponential,
  author  = {Trefethen, Lloyd N. and Nakatsukasa, Yuji and Weideman, J. A. C.},
  title   = {Exponential node clustering at singularities for rational approximation, quadrature, and {PDE}s},
  journal = {Numerische Mathematik},
  volume  = {147},
  number  = {1},
  pages   = {227-254},
  year    = {2021},
  doi     = {10.1007/s00211-020-01168-2}
}

@article{herremans2023resolution,
  author  = {Herremans, Astrid and Huybrechs, Daan and Trefethen, Lloyd N.},
  title   = {Resolution of singularities by rational functions},
  journal = {SIAM Journal on Numerical Analysis},
  volume  = {61},
  number  = {6},
  pages   = {2580-2600},
  year    = {2023},
  doi     = {10.1137/23M1551821}
}

@article{nakatsukasa2018aaa,
  author  = {Nakatsukasa, Yuji and S{\`e}te, Olivier and Trefethen, Lloyd N.},
  title   = {The {AAA} algorithm for rational approximation},
  journal = {SIAM Journal on Scientific Computing},
  volume  = {40},
  number  = {3},
  pages   = {A1494-A1522},
  year    = {2018},
  doi     = {10.1137/16M1106122}
}

@article{tang2020rational,
  author  = {Tang, Tao and Wang, Li-Lian and Yuan, Huifang and Zhou, Tao},
  title   = {Rational spectral methods for {PDE}s involving fractional {L}aplacian in unbounded domains},
  journal = {SIAM Journal on Scientific Computing},
  volume  = {42},
  number  = {2},
  pages   = {A585-A611},
  year    = {2020},
  doi     = {10.1137/19M1244299}
}

@article{qian2011algorithm,
  author  = {Qian, Tao and Zhang, Liming and Li, Zhixiong},
  title   = {Algorithm of adaptive {F}ourier decomposition},
  journal = {IEEE Transactions on Signal Processing},
  volume  = {59},
  number  = {12},
  pages   = {5899-5906},
  year    = {2011},
  doi     = {10.1109/TSP.2011.2168520}
}

@article{qian2011adaptive,
  author  = {Qian, Tao and Wang, Yan-Bo},
  title   = {Adaptive {F}ourier series---a variation of greedy algorithm},
  journal = {Advances in Computational Mathematics},
  volume  = {34},
  number  = {3},
  pages   = {279-293},
  year    = {2011},
  doi     = {10.1007/s10444-010-9153-4}
}

@article{song2026mp,
  author  = {Song, Zeyuan and Jiang, Zheyu},
  title   = {{MP-FVM}: Enhancing finite volume method for water infiltration modeling in unsaturated soils via message-passing encoder-decoder network},
  journal = {Computers and Geotechnics},
  volume  = {190},
  pages   = {107745},
  year    = {2026},
  doi     = {10.1016/j.compgeo.2025.107745}
}

@article{song2025physics,
  author  = {Song, Zeyuan and Jiang, Zheyu},
  title   = {A physics-based, data-driven numerical framework for anomalous diffusion of water in soil},
  journal = {Systems and Control Transactions},
  volume  = {4},
  pages   = {2391-2397},
  year    = {2025},
  doi     = {10.69997/sct.163304},
  note    = {{P}roceedings of the 35th European Symposium on Computer Aided Process Engineering ({ESCAPE} 35)}
}

@article{metzler2000random,
  author  = {Metzler, Ralf and Klafter, Joseph},
  title   = {The random walk's guide to anomalous diffusion: {A} fractional dynamics approach},
  journal = {Physics Reports},
  volume  = {339},
  number  = {1},
  pages   = {1-77},
  year    = {2000},
  doi     = {10.1016/S0370-1573(00)00070-3}
}

@article{benson2000application,
  author  = {Benson, David A. and Wheatcraft, Stephen W. and Meerschaert, Mark M.},
  title   = {Application of a fractional advection-dispersion equation},
  journal = {Water Resources Research},
  volume  = {36},
  number  = {6},
  pages   = {1403-1412},
  year    = {2000},
  doi     = {10.1029/2000WR900031}
}

@article{langford1999optimal,
  author  = {Langford, Jacob A. and Moser, Robert D.},
  title   = {Optimal {LES} formulations for isotropic turbulence},
  journal = {Journal of Fluid Mechanics},
  volume  = {398},
  pages   = {321-346},
  year    = {1999},
  doi     = {10.1017/S0022112099006369}
}

@article{beck2019deep,
  author  = {Beck, Andrea and Flad, David and Munz, Claus-Dieter},
  title   = {Deep neural networks for data-driven {LES} closure models},
  journal = {Journal of Computational Physics},
  volume  = {398},
  pages   = {108910},
  year    = {2019},
  doi     = {10.1016/j.jcp.2019.108910}
}

@article{shieh2015new,
  author  = {Shieh, Tien-Tsan and Spector, Daniel E.},
  title   = {On a new class of fractional partial differential equations},
  journal = {Advances in Calculus of Variations},
  volume  = {8},
  number  = {4},
  pages   = {321-336},
  year    = {2015},
  doi     = {10.1515/acv-2014-0009}
}

@article{achleitner2018asymptotic,
  author  = {Achleitner, Franz and Ueda, Yoshihiro},
  title   = {Asymptotic stability of traveling wave solutions for nonlocal viscous conservation laws with explicit decay rates},
  journal = {Journal of Evolution Equations},
  volume  = {18},
  number  = {2},
  pages   = {923-946},
  year    = {2018},
  doi     = {10.1007/s00028-018-0426-6}
}

@article{borwein1994chebyshev,
  author  = {Borwein, Peter and Erd{\'e}lyi, Tam{\'a}s and Zhang, John},
  title   = {Chebyshev polynomials and {Markov}-{Bernstein} type inequalities for rational spaces},
  journal = {Journal of the London Mathematical Society},
  volume  = {50},
  number  = {3},
  pages   = {501-519},
  year    = {1994},
  doi     = {10.1112/jlms/50.3.501}
}

@inproceedings{kingma2015adam,
  author        = {Kingma, Diederik P. and Ba, Jimmy},
  title         = {Adam: {A} method for stochastic optimization},
  booktitle     = {International Conference on Learning Representations (ICLR)},
  year          = {2015},
  eprint        = {1412.6980},
  archivePrefix = {arXiv}
}

@misc{hendrycks2016gaussian,
  author        = {Hendrycks, Dan and Gimpel, Kevin},
  title         = {Gaussian error linear units ({GELUs})},
  year          = {2016},
  eprint        = {1606.08415},
  archivePrefix = {arXiv},
  howpublished  = {arXiv preprint}
}

@inproceedings{loshchilov2017sgdr,
  author        = {Loshchilov, Ilya and Hutter, Frank},
  title         = {{SGDR}: Stochastic gradient descent with warm restarts},
  booktitle     = {International Conference on Learning Representations (ICLR)},
  year          = {2017},
  eprint        = {1608.03983},
  archivePrefix = {arXiv}
}

@inproceedings{pascanu2013difficulty,
  author    = {Pascanu, Razvan and Mikolov, Tomas and Bengio, Yoshua},
  title     = {On the difficulty of training recurrent neural networks},
  booktitle = {Proceedings of the 30th International Conference on Machine Learning},
  series    = {Proceedings of Machine Learning Research},
  editor    = {Dasgupta, Sanjoy and McAllester, David},
  volume    = {28},
  pages     = {1310-1318},
  publisher = {PMLR},
  address   = {Atlanta, Georgia, USA},
  year      = {2013},
  url       = {https://proceedings.mlr.press/v28/pascanu13.html}
}

@article{mcshane1934extension,
  author  = {McShane, E. J.},
  title   = {Extension of range of functions},
  journal = {Bulletin of the American Mathematical Society},
  volume  = {40},
  number  = {12},
  pages   = {837-842},
  year    = {1934},
  doi     = {10.1090/S0002-9904-1934-05978-0}
}

@book{kallenberg2002foundations,
  author    = {Kallenberg, Olav},
  title     = {Foundations of Modern Probability},
  edition   = {Second},
  series    = {Probability and Its Applications},
  publisher = {Springer},
  address   = {New York},
  year      = {2002},
  doi       = {10.1007/978-1-4757-4015-8}
}

@book{billingsley1999convergence,
  author    = {Billingsley, Patrick},
  title     = {Convergence of Probability Measures},
  edition   = {Second},
  series    = {Wiley Series in Probability and Statistics},
  publisher = {Wiley},
  address   = {New York},
  year      = {1999},
  doi       = {10.1002/9780470316962}
}

@misc{olver2026dlmfgen,
  author       = {Olver, F. W. J. and Olde Daalhuis, A. B. and Lozier, D. W. and Schneider, B. I. and Boisvert, R. F. and Clark, C. W. and Miller, B. R. and Saunders, B. V. and Cohl, H. S. and McClain, M. A.},
  title        = {{NIST Digital Library of Mathematical Functions}},
  howpublished = {Release 1.2.8 of 2026-09-15},
  url          = {https://dlmf.nist.gov/},
  year         = {2026}
}

@article{trefethen2014exponentially,
  author  = {Trefethen, Lloyd N. and Weideman, J. A. C.},
  title   = {The exponentially convergent trapezoidal rule},
  journal = {SIAM Review},
  volume  = {56},
  number  = {3},
  pages   = {385-458},
  year    = {2014},
  doi     = {10.1137/130932132}
}

\newpage

\setcounter{section}{0}
\renewcommand{\thesection}{S\arabic{section}}
\renewcommand{\thesubsection}{\thesection.\arabic{subsection}}
\makeatletter
\renewcommand{\p@section}{Section~}
\renewcommand{\p@subsection}{Section~}
\makeatother
\numberwithin{equation}{section}
\renewcommand{\theequation}{\thesection.\arabic{equation}}
\renewcommand{\thetable}{S\arabic{table}}
\renewcommand{\thefigure}{S\arabic{figure}}

\begin{center}
{\Large\bfseries Supplementary Material for\\[0.4em]
``A hybrid finite-volume scheme with learned rational reconstructions and exact nonlocal fluxes for the space-fractional Burgers equation''\par}
\vspace{1.2em}
{\large Zeyuan Song and Zheyu Jiang\par}
\vspace{0.4em}
{\itshape School of Chemical Engineering, Oklahoma State University, Stillwater, OK 74078, USA\par}
\end{center}
\vspace{1em}

\noindent{This Supplementary Material accompanies the main text.
Section~S1 proves the results of Section~\ref{sec:theory} whose proofs are deferred, Section~S2 derives the closed forms of the rational atoms and describes their implementation, Section~S3 gives the data, training and evaluation details behind Section~\ref{sec:coupled}, Section~S4 collects additional results, and Section~S5 states supporting classical results.
Sections, equations, results and tables of the Supplementary Material carry the prefix S, and all other numbers refer to the main text.}

\section{Proofs}\label{app:proofs}

This {section} proves the deferred results of Section~\ref{sec:theory} in the order in which they appear there.
We write $\hat u_k=\frac{1}{2\pi}\int_\T u(x)e^{-ikx}\dd x$, so that $u=\sum_k\hat u_ke^{ikx}$ in $L^2$ and $\|u\|_{L^2}^2=2\pi\sum_k|\hat u_k|^2$, and we set $e_k(x)=e^{ikx}$.
Functions on $\T$ are identified with their periodic extensions, and the H\"older seminorm $[u]_\eta=\sup_{x\neq y}|u(x)-u(y)|/|x-y|^\eta$ with $0<\eta\le1$ is taken over all $x,y\in\R$.
Complex values are allowed where convenient, since every operator below is linear and maps real functions to real functions, and special-function identities are quoted from the DLMF~\cite{olver2026dlmfgen}.

\subsection{Proof of Lemma~\ref{lem:riesz}}\label{app:proofs:riesz}

For the constants in Eq.~\eqref{eq:kappa}, we have $C_\alpha/\alpha=\Gamma(\alpha)\sin(\pi\alpha/2)/\pi$ by the identity $\Gamma(1+\alpha)=\alpha\Gamma(\alpha)$.
The reflection formula~\cite[Eq.~5.5.3]{olver2026dlmfgen} $\Gamma(\alpha)\Gamma(1-\alpha)=\pi/\sin(\pi\alpha)$ and the identity $\sin(\pi\alpha)=2\sin(\frac{\pi\alpha}{2})\cos(\frac{\pi\alpha}{2})$ give
\[
\frac{1}{2\Gamma(1-\alpha)\cos(\pi\alpha/2)}=\frac{\Gamma(\alpha)\sin(\pi\alpha)}{2\pi\cos(\pi\alpha/2)}=\frac{\Gamma(\alpha)\sin(\pi\alpha/2)}{\pi}.
\]
The reflection formula at $\alpha/2$ gives $1/\Gamma(1-\frac\alpha2)=\Gamma(\frac\alpha2)\sin(\frac{\pi\alpha}{2})/\pi$, and with the duplication formula $\Gamma(\frac\alpha2)\Gamma(\frac{1+\alpha}{2})=2^{1-\alpha}\sqrt\pi\,\Gamma(\alpha)$~\cite[Eq.~5.5.5]{olver2026dlmfgen} the third expression takes the same value.
All three expressions therefore equal $\Gamma(\alpha)\sin(\pi\alpha/2)/\pi$, which is positive because $\Gamma(\alpha)>0$ and $\pi/2<\pi\alpha/2<\pi$.

The kernel computations rest on the Mellin transform of the sine~\cite[Eq.~5.9.7]{olver2026dlmfgen} with the exponent $1-\alpha\in(-1,0)$,
\begin{equation}\label{eq:A:mellin}
\int_0^\infty\frac{\sin s}{s^{\alpha}}\dd s=\Gamma(1-\alpha)\sin\Big(\frac{\pi(1-\alpha)}{2}\Big)=\Gamma(1-\alpha)\cos\Big(\frac{\pi\alpha}{2}\Big)=\frac{1}{2\kappa_\alpha},
\end{equation}
which converges absolutely because the integrand is bounded by $s^{1-\alpha}$ near the origin and by $s^{-\alpha}$ at infinity.
For $k\neq0$, the identity $e_k(x+z)-e_k(x-z)=2i\,\sgn(k)\sin(|k|z)\,e_k(x)$, the substitution $s=|k|z$ and Eq.~\eqref{eq:A:mellin} yield
\begin{equation}\label{eq:A:expk}
\kappa_\alpha\int_0^\infty\frac{e_k(x+z)-e_k(x-z)}{z^\alpha}\dd z=i\,\sgn(k)|k|^{\alpha-1}e_k(x),
\end{equation}
and both sides vanish for $k=0$.

For item~(i), let $b_\alpha=\int_0^\infty s^{-\alpha}|\sin s|\dd s<\infty$, so that $\int_0^\infty z^{-\alpha}|e_k(x+z)-e_k(x-z)|\dd z=2b_\alpha|k|^{\alpha-1}$ for every integer $k$.
The hypothesis also gives $\sum_k|\hat u_k|<\infty$, so the Fourier series of $u$ converges absolutely and uniformly to $u$, and that of $\Ra u$ converges absolutely.
Fubini's theorem therefore allows us to integrate the series of $u(x+z)-u(x-z)$ termwise against $z^{-\alpha}$, and Eq.~\eqref{eq:A:expk} gives the first representation in Eq.~\eqref{eq:riesz-kernel} with an absolutely convergent integral.
The substitutions $y=x+z$ for $y>x$ and $y=x-z$ for $y<x$ turn the second representation into the difference of the half-line integrals of $(u(x\pm z)-u(x))z^{-\alpha}$, which is the first, and both converge absolutely because $|u(x\pm z)-u(x)|\le\min\{\|u'\|_{L^\infty}z,2\|u\|_{L^\infty}\}$.

For item~(ii), we may assume $\eta\le1$, because a H\"older function of order larger than one is constant and the integral and the claimed coefficients then vanish.
The bounds $|u(x+z)-u(x-z)|\le[u]_\eta(2z)^\eta$ and $|u(x+z)-u(x-z)|\le2\|u\|_{L^\infty}$ dominate the integrand, uniformly in $x$, by $g(z)=\min\{2^\eta[u]_\eta z^{\eta-\alpha},2\|u\|_{L^\infty}z^{-\alpha}\}$, which is integrable because $\eta-\alpha>-1$ and $\alpha>1$, so the integral converges absolutely and is continuous in $x$ by dominated convergence.
The bound $\int_0^\infty\int_\T|u(x+z)-u(x-z)|z^{-\alpha}\dd x\dd z\le2\pi\int_0^\infty g(z)\dd z$ justifies Fubini's theorem for the Fourier coefficients.
With $\frac{1}{2\pi}\int_\T(u(x+z)-u(x-z))e^{-ikx}\dd x=2i\sin(kz)\,\hat u_k$, Eq.~\eqref{eq:A:mellin} and the substitution $s=|k|z$ give the coefficients $\widehat{(\Ra u)}_k=2i\kappa_\alpha\hat u_k\int_0^\infty z^{-\alpha}\sin(kz)\dd z=i\,\sgn(k)|k|^{\alpha-1}\hat u_k$.
Under the hypotheses of item~(i) the integral equals the absolutely convergent Fourier series at every point, so the two definitions of $\Ra u$ agree.

For item~(iii), the symbol of $\partial_x\Ra$ is $(ik)(i\,\sgn(k)|k|^{\alpha-1})=-|k|^\alpha$, which is the symbol of $-(-\Delta)^{\alpha/2}$.
We prove the pointwise statements under $\sum_k|k|^\alpha|\hat u_k|<\infty$, which holds for $u\in H^s(\T)$ with $s>\alpha+\frac12$ by the Cauchy-Schwarz inequality, hence under Assumption~\ref{ass:regularity}.
The Fourier series of $\Ra u$ may then be differentiated termwise, and $\partial_x\Ra u=-(-\Delta)^{\alpha/2}u$ holds pointwise with both sides continuous.
For a solution with $f=0$ under Assumption~\ref{ass:regularity}, the term $u^2u_x=(u^3/3)_x$ has zero integral over $\T$, so Eq.~\eqref{eq:pde} is dissipative in the sense that $\frac12\frac{\mathrm{d}}{\mathrm{d}t}\|u\|_{L^2}^2=-\nu\int_\T u\,(-\Delta)^{\alpha/2}u\dd x=-2\pi\nu\sum_k|k|^\alpha|\hat u_k|^2\le0$.
For the hypersingular representation, an integration by parts and Eq.~\eqref{eq:A:mellin} give
\begin{equation}\label{eq:A:hyper}
\int_0^\infty\frac{1-\cos s}{s^{1+\alpha}}\dd s=\frac1\alpha\int_0^\infty\frac{\sin s}{s^\alpha}\dd s=\frac{1}{2\alpha\kappa_\alpha}=\frac{1}{2C_\alpha},
\end{equation}
where the boundary terms vanish because $(1-\cos s)s^{-\alpha}$ behaves like $s^{2-\alpha}/2$ near the origin and tends to zero at infinity.
For $\varepsilon>0$ and bounded $u$, the truncated integral over $|y-x|>\varepsilon$ converges absolutely and equals $\int_\varepsilon^\infty(u(x+z)+u(x-z)-2u(x))z^{-1-\alpha}\dd z$.
For $u=e_k$ the numerator equals $-2(1-\cos kz)e_k(x)$, and Eq.~\eqref{eq:A:hyper} with $s=|k|z$ gives $-C_\alpha^{-1}|k|^\alpha e_k(x)=-C_\alpha^{-1}(-\Delta)^{\alpha/2}e_k(x)$ for the integral over $(0,\infty)$.
For general $u$, the bound $\sum_k|\hat u_k|\int_0^\infty2(1-\cos kz)z^{-1-\alpha}\dd z=C_\alpha^{-1}\sum_k|k|^\alpha|\hat u_k|<\infty$ allows termwise integration and shows absolute convergence over $(0,\infty)$.
The principal value is therefore the limit of the truncated integrals as $\varepsilon\to0$, and multiplication by $\nu C_\alpha$ gives the representation of item~(iii).
Under the hypotheses of item~(i) alone the pointwise identity can fail, since $u(x)=\sum_{n\ge1}n^{-1}2^{-n\alpha}\cos(2^nx)$ satisfies them, while at $x=0$ the series of $(-\Delta)^{\alpha/2}u$ diverges and the principal value equals $-\infty$.
The constant $C_\alpha=\alpha\kappa_\alpha$ equals the constant $2^\alpha\Gamma(\frac{1+\alpha}{2})/(\sqrt\pi\,|\Gamma(-\frac\alpha2)|)$ of Kwa\'snicki~\cite{kwasnicki2017ten}, by the third expression in Eq.~\eqref{eq:kappa} and $|\Gamma(-\frac\alpha2)|=\frac2\alpha\Gamma(1-\frac\alpha2)$.

For item~(iv), the function $\cos x=\frac12(e_1(x)+e_{-1}(x))$ is mapped to $\frac12(ie_1(x)-ie_{-1}(x))=-\sin x$, and at $\alpha=2$ the symbol $i\,\sgn(k)|k|$ equals $ik$, the symbol of $\partial_x$.

\subsection{Proof of Lemma~\ref{lem:jump}}\label{app:proofs:jump}

We split the integral in Eq.~\eqref{eq:riesz-kernel} at $z=\varepsilon/2$, and since the integrand of the far part is at most $2\|u\|_\infty z^{-\alpha}$, this part is at most $2\|u\|_\infty\kappa_\alpha(\varepsilon/2)^{1-\alpha}/(\alpha-1)$.
For $z<\varepsilon/2$, both points $x\pm z$ lie in $(x_0-\varepsilon,x_0+\varepsilon)$, so the difference $u(x+z)-u(x-z)$ splits into a jump part and a H\"older part.
The H\"older part contributes at most $\kappa_\alpha[r]_\eta2^\eta\int_0^{\varepsilon/2}z^{\eta-\alpha}\dd z$ to the near part of the integral.
For $x\neq x_0$, the difference $H_{x_0}(x+z)-H_{x_0}(x-z)$ equals $\mathbf{1}[z>|x-x_0|]$ up to one point, so the jump part equals $c\,\kappa_\alpha\int_{|x-x_0|}^{\varepsilon/2}z^{-\alpha}\dd z=\frac{c\,\kappa_\alpha}{\alpha-1}(|x-x_0|^{1-\alpha}-(\varepsilon/2)^{1-\alpha})$.
Collecting the far part, the H\"older part and this jump part gives the expansion of Eq.~\eqref{eq:jump}.
For (i), the piecewise-constant function equals $v_j+(v_{j+1}-v_j)H_{\xf{j}}$ near the face $\xf{j}$, so the expansion applies with $c=v_{j+1}-v_j$ and a constant function $r$.
For (ii), the part $z\ge\varepsilon$ of the integral at $x=x_0$ is bounded by $2\|u\|_\infty\kappa_\alpha\varepsilon^{1-\alpha}/(\alpha-1)$, and it vanishes when $u$ is even about $x_0$.
For $0<z<\varepsilon$ the odd cusp gives $u(x_0+z)-u(x_0-z)=2z^\eta$, which is integrable against $z^{-\alpha}$ if and only if $\eta>\alpha-1$, while the even cusp gives zero.

\subsection{Proof of Theorem~\ref{thm:defect}}\label{app:proofs:defect}

Under Assumption~\ref{ass:regularity}, the flux $F(u)(\cdot,t)$ belongs to $C^1(\T)$ for every $t$, and $\partial_tu$ is continuous in $(x,t)$.
We integrate Eq.~\eqref{eq:pde} over $C_j\times[t_n,t_{n+1}]$, exchange the order of integration and apply the fundamental theorem of calculus in $x$, which gives
\begin{align*}
(\Ph u(t_{n+1}))_j-(\Ph u(t_n))_j&=-\frac1h\int_{t_n}^{t_{n+1}}\Big(F(u)(\xf{j},t)-F(u)(x_{j-1/2},t)\Big)\dd t\\
&\quad+\int_{t_n}^{t_{n+1}}(\Ph f)_j\dd t.
\end{align*}
Subtracting the scheme of Eq.~\eqref{eq:fvscheme} evaluated at $v=\Ph u(t_n)$ yields Eq.~\eqref{eq:defect}, and the semi-discrete identity follows from the same computation without the time integral.

\subsection{Proof of Corollary~\ref{cor:flux}}\label{app:proofs:flux}

On the periodic grid, $\Dm g=0$ holds if and only if $g$ is constant, so the kernel of $\Dm$ is spanned by the constant vector and its range has dimension $N-1$.
By telescoping, the range lies in the zero-sum hyperplane, which also has dimension $N-1$, and this proves (i).
Item~(ii) is~(i) applied to $r=\Sh(v)-v-\int_{t_n}^{t_{n+1}}\Ph f\dd t$, and item~(iii) follows from Theorem~\ref{thm:defect} and telescoping.
Item~(iv) follows from Theorem~\ref{thm:defect} and from the definition of closability, since $\widetilde{\mathcal S}_h(\Ph u)=\Ph\Ev u$ gives $\mathcal M^\ast(\Ph u)=\dfh(u)$.
Item~(v) is a Taylor expansion of the time average in Eq.~\eqref{eq:defect}.

\subsection{Proof of Proposition~\ref{prop:nearfar}}\label{app:proofs:nearfar}

For (i), integration by parts gives $\hat u_k=O(|k|^{-4})$ and hence $\sum_k|k|^\alpha|\hat u_k|<\infty$, so item~(iii) of Lemma~\ref{lem:riesz} shows that the full principal value integral equals $(-\Delta)^{\alpha/2}u(x)$.
A Taylor expansion of the near part to fourth order around $x$ gives
\begin{align*}
C_\alpha\int_0^h\frac{2u(x)-u(x+z)-u(x-z)}{z^{1+\alpha}}\dd z&=-C_\alpha\int_0^h\frac{u''(x)z^2+O(z^4)}{z^{1+\alpha}}\dd z\\
&=-\frac{C_\alpha}{2-\alpha}\,u''(x)\,h^{2-\alpha}+O(h^{4-\alpha}),
\end{align*}
and the far field is the full operator minus this near part, which proves (i).
For (ii), we split $G$ into the instantaneous error $F(u(t_n))(\xf{j})-\hat F_{j+1/2}(\Ph u(t_n))$ and the time-averaging error, which is at most $\frac{\Delta t}{2}\sup|\partial_tF(u)|$.
By the consistency assumption, the instantaneous error equals $-\nu(\Ra-\tilde{\mathcal R})u(\xf{j})$ up to the face errors of the two parts of the numerical flux, which proves the expansion of $G$.
For $\mathcal T_h=-\frac1h\Dm G$, the identity $\frac1h\Dm[g(x_{\cdot+1/2})]=\Ph(\partial_xg)$ for $g\in C^1(\T)$ turns the model term into $\nu\Ph[\partial_x(\Ra-\tilde{\mathcal R})u]$, and the face errors contribute $O(h^q)$ by assumption.
The same identity writes the contribution of the time-averaging error as $-\frac{1}{\Delta t}\int_{t_n}^{t_{n+1}}\Ph\big[\partial_xF(u(t))-\partial_xF(u(t_n))\big]\dd t$, which is at most $\frac{\Delta t}{2}\sup|\partial_t\partial_xF(u)|$ in modulus.
For (iii), the integral over $z>0$ in Eq.~\eqref{eq:riesz-kernel} and the bound $|u(x+z)-u(x-z)|\le2z\|u'\|_\infty$ give a near part of at most $2\kappa_\alpha\|u'\|_\infty\int_0^\rho z^{1-\alpha}\dd z$.
For $\rho=Rh$, the periodic images of the $R$ cells on each side of $x$ enter the far part as $\kappa_\alpha\int_0^{Rh}D(s)(u(x+s)-u(x-s))\dd s$, with the odd kernel $D$ of \ref{app:proofs:dipole}.
The bound $|D(s)|\le c|s|$, with a constant $c$ depending only on $\alpha$, then bounds this part by $\frac23\kappa_\alpha c\|u'\|_\infty(Rh)^3$.

\subsection{Proof of Proposition~\ref{prop:closable}}\label{app:proofs:closable}

In items~(i) to~(iii), $|\cdot|$ denotes a fixed norm on $\R^N$, and in items~(iv) and~(v) the norm of the inner product chosen there.
For item~(i), if $\widetilde{\mathcal S}_h$ closes the diagram on $M$ and $u,w\in M$ satisfy $\Ph u=\Ph w$, then $\Ph\Ev u=\widetilde{\mathcal S}_h(\Ph u)=\widetilde{\mathcal S}_h(\Ph w)=\Ph\Ev w$.
Conversely, under the fiber condition we set $\widetilde{\mathcal S}_h(v)=\Ph\Ev u$ for $v\in\Ph(M)$ and any $u\in M$ with $\Ph u=v$, and $\widetilde{\mathcal S}_h(v)=\Sh(v)$ for $v\notin\Ph(M)$.
The fiber condition makes this definition independent of the choice of $u$, and the defining relation shows that every closing map takes the same values on $\Ph(M)$.
For $u\in M$ the ideal closure satisfies $\mathcal M^\ast(\Ph u)=\Ph\Ev u-\Sh\Ph u=\dfh(u)$ by Eq.~\eqref{eq:defect-def}.
For the composition, let each $\mathcal E^{(k)}$ send $M$ into itself and satisfy the fiber condition, and let $\widetilde{\mathcal S}_h^{(k)}$ close the diagram of step $k$ on $M$.
For $u\in M$ the state $u_n=\mathcal E^{(n-1)}\circ\cdots\circ\mathcal E^{(0)}u$ lies in $M$, so $\widetilde{\mathcal S}_h^{(n)}(\Ph u_n)=\Ph\mathcal E^{(n)}u_n=\Ph u_{n+1}$, and induction on $n$ gives the identity.

For item~(ii), injectivity of $\Ph$ on $M$ implies the fiber condition, and $v=\Ph u$ with $u\in M$ gives $\mathcal D^\ast v=u$, hence $\widetilde{\mathcal S}_h(v)=\Ph\Ev\mathcal D^\ast v$.
Since $\Ph\mathcal D^\ast v=v$ on $\Ph(M)$, the definition of the defect gives $\Sh(v)+\dfh(\mathcal D^\ast v)=\Sh(v)+\Ph\Ev\mathcal D^\ast v-\Sh(\Ph\mathcal D^\ast v)=\Ph\Ev\mathcal D^\ast v$.
If $M$ is compact in a topology in which $\Ph$ is continuous, then $\Ph|_M$ is a continuous bijection onto the Hausdorff space $\Ph(M)$.
It maps closed, hence compact, subsets of $M$ to compact, hence closed, sets, so $\mathcal D^\ast$ is continuous.
Under the H\"older hypothesis, let $L_E$ be a Lipschitz constant of $\Ph\Ev$ on $M$ in the $L^2$ norm, and $L_S$ one of $\Sh$ on $\Ph(M)$.
For $v=\Ph u$ and $v'=\Ph w$ in $\Ph(M)$, the H\"older hypothesis gives $|\widetilde{\mathcal S}_h(v)-\widetilde{\mathcal S}_h(v')|=|\Ph\Ev u-\Ph\Ev w|\le L_E\|u-w\|_{L^2}\le L_EC\,|v-v'|^\eta$.
The set $\Ph(M)$ is bounded, since $M$ is bounded in $L^2$ and $\|\Ph u\|_h\le\|u\|_{L^2}$, and with its diameter $d_M$ we have $|\Sh(v)-\Sh(v')|\le L_Sd_M^{1-\eta}|v-v'|^\eta$ on $\Ph(M)$.
Hence $\mathcal M^\ast=\widetilde{\mathcal S}_h-\Sh$ is $\eta$-H\"older on $\Ph(M)$ with a constant of at most $L_EC+L_Sd_M^{1-\eta}$.
For the extension, $\varrho(v,v')=|v-v'|^\eta$ is a metric on $\R^N$ for $\eta\in(0,1]$, because $t\mapsto t^\eta$ is increasing and subadditive on $[0,\infty)$.
An $\eta$-H\"older map with constant $L$ is $L$-Lipschitz for $\varrho$, and so is the McShane extension~\cite{mcshane1934extension} $\tilde g(v)=\inf_{v'\in\Ph(M)}(g(v')+L\varrho(v,v'))$ of each real component $g$, which coincides with $g$ on $\Ph(M)$.
Collecting the components gives $\eta$-H\"older extensions of $\widetilde{\mathcal S}_h$ and $\mathcal M^\ast$ to $\R^N$, with constants that depend only on $L$, $N$ and the norm.

For item~(iii), we regard every $u\in M$ as the value at some time of a solution that satisfies Assumption~\ref{ass:regularity} and stays in $M$, with one common forcing.
By Eq.~\eqref{eq:semidiscrete}, the required relation at such a state reads $\Phi(\Ph u)=-\frac1h\Dm[F(u)(x_{\cdot+1/2})]$, in which the forcing no longer appears.
If such a map exists, then $\Ph u=\Ph w$ gives $\Dm[F(u)(x_{\cdot+1/2})]=-h\Phi(\Ph u)=\Dm[F(w)(x_{\cdot+1/2})]$, which is the stated condition.
Conversely, under this condition the same formula defines $\Phi$ consistently on $\Ph(M)$, and every extension of it to $\R^N$ has the required property.
If $\Ph$ is injective on $M$, then $\mathcal D^\ast(\Ph u)=u$, so $\Phi_{\mathcal D^\ast}$ coincides with this map on $\Ph(M)$ and is exact there.

For item~(iv), since $Y\in L^2(\mu)$, the conditional expectation $\EE[Y\mid\sigma(V)]$ exists, and Doob's functional representation lemma~\cite[Lemma~1.13]{kallenberg2002foundations} provides a Borel map $m^\star$ with $\EE[Y\mid\sigma(V)]=m^\star(V)$ almost surely.
For a Borel map $m$ with $m(V)\in L^2$, the difference $Z=m^\star(V)-m(V)$ is square integrable and $\sigma(V)$-measurable, while $\EE[Y-m^\star(V)\mid V]=0$.
With the dot denoting the chosen inner product, taking out what is known~\cite{kallenberg2002foundations} gives $\EE[(Y-m^\star(V))\cdot Z]=\EE\big[\EE[Y-m^\star(V)\mid V]\cdot Z\big]=0$.
Expanding $|Y-m(V)|^2=|(Y-m^\star(V))+Z|^2$ in this inner product then yields the Pythagorean identity.

For item~(v), we define $\tr\Cov(Y\mid V)=\EE\big[|Y-m^\star(V)|^2\mid V\big]$ in the same inner product, so that $J_{\mathrm{irr}}(Y)=\EE|Y-m^\star(V)|^2$.
By the Pythagorean identity every admissible $m$ has a mean-square error of at least $J_{\mathrm{irr}}(Y)$, with equality for $m=m^\star$, so $J_{\mathrm{irr}}(Y)$ is the minimum and involves no network.
For $Y=Y_0-s(V)$ with a Borel map $s$ and $s(V)\in L^2$ we have $Y-\EE[Y\mid V]=Y_0-\EE[Y_0\mid V]$, so $J_{\mathrm{irr}}(Y)=J_{\mathrm{irr}}(Y_0)$, and schemes that differ by such a function have defects with the same irreducible error.
If $J_{\mathrm{irr}}(Y)=0$, then $Y=m^\star(V)$ almost surely, and conversely a square-integrable $Y=g(V)$ with Borel $g$ equals its own conditional expectation.
For the last statement, we regard $\mu$ as a measure on $U$ with $\mu(\mathrm{dom}\,\Ev)=1$, and $\dfh$ is Borel on $\mathrm{dom}\,\Ev$ because $\Ev$ and $\Sh$ are Borel and $\Ph$ is continuous.
If $J_{\mathrm{irr}}(\dfh)=0$, the Borel set $M_1=\{u\in\mathrm{dom}\,\Ev:\dfh(u)=m^\star(\Ph u)\}$ has full measure, and $\widetilde{\mathcal S}_h=\Sh+m^\star$ closes the diagram on $M_1$.

Conversely, let $\widetilde{\mathcal S}_h$ close the diagram on a Borel set $M_0\subset\mathrm{dom}\,\Ev$ of full measure, and put $g=\widetilde{\mathcal S}_h-\Sh$, so that $\dfh(u)=g(\Ph u)$ on $M_0$, where $g$ need not be Borel.
Every Borel probability measure on the Polish space $U$ is inner regular with respect to closed sets and tight~\cite[Section~1]{billingsley1999convergence}, so every Borel set $E\subset U$ contains compact sets $K_n$ with $\mu(E\setminus K_n)<1/n$.
We fix a Borel set $B$, apply this to $E=M_0\cap\dfh^{-1}(B)$ and put $A=\bigcup_n\Ph(K_n)$, a countable union of compact sets and hence a Borel set.
Every $u\in M_0$ with $\Ph u\in A$ has the cell averages of some $u'\in E$, so $\dfh(u)=g(\Ph u)=g(\Ph u')=\dfh(u')\in B$, and $u\in E$.
Hence $\bigcup_nK_n\subset M_0\cap\Ph^{-1}(A)\subset E$, so the events $\{\dfh\in B\}$ and $\{V\in A\}$ differ only by a $\mu$-null set.
Since $B$ is arbitrary, $\dfh$ is measurable for the $\sigma$-algebra generated by $\sigma(V)$ and the $\mu$-null sets, on which conditional expectation agrees almost surely with $\EE[\,\cdot\mid V]$.
Consequently $\dfh=\EE[\dfh\mid V]=m^\star(V)$ almost surely, and $J_{\mathrm{irr}}(\dfh)=0$, which proves the converse.
Finally, every such closing map agrees with $\Sh+m^\star$ on $\Ph(M_2)$, where $M_2=M_0\cap M_1$ is a Borel set of full measure.
Compact sets $K_n\subset M_2$ with $\mu(M_2\setminus K_n)<1/n$ give a Borel set $A_2=\bigcup_n\Ph(K_n)\subset\Ph(M_2)$ with $\mu(\Ph^{-1}(A_2))=1$, so $\widetilde{\mathcal S}_h=\Sh+m^\star$ holds $\mu\circ\Ph^{-1}$-almost everywhere.

\subsection{Proof of Proposition~\ref{prop:decoders}}\label{app:proofs:decoders}

For item~(i), applying $\Ph$ and using $\Ph\Ih=I$ twice gives $\Ph\Dth v=v+\Ph g_\theta(v)-\Ph g_\theta(v)=v$.
Conversely, for any decoder with $\Ph\mathcal D=I$ the splitting $\mathcal D=\Ih\Ph\mathcal D+(I-\Ih\Ph)\mathcal D=\Ih+(I-\Ih\Ph)\mathcal D$ exhibits the required form.
For item~(ii), the face values of $\mathcal Dv$ and of $\Ra\mathcal Dv$ are finite by Lemma~\ref{lem:riesz} because $\eta>\alpha-1$, and the sum of a backward difference telescopes on the periodic grid.
For item~(iii), the fundamental theorem of calculus on each cell and $\partial_x\Ra=-(-\Delta)^{\alpha/2}$ give the first identity for the smooth function $\Ih v$.
The mode $(\hat v_k/\mu_k)e^{ikx}$ is mapped by $\Ph(-\Delta)^{\alpha/2}$ to $|k|^\alpha\hat v_ke^{ikjh}$, and the Nyquist term gives $(N/2)^\alpha\hat v_{N/2}(-1)^j$ in the same way, which proves the second identity.

\subsection{Proof of Theorem~\ref{thm:rea}}\label{app:proofs:rea}

The proof rests on the following classical aliasing identity, which loses the sign $(-1)^m$ when the origin is shifted to a cell center, as in Eq.~\eqref{eq:E:alias}.

\begin{lemma}[Aliasing of cell averages]\label{lem:A:alias}
Let $u\in L^2(\T)$ and $v=\Ph u$, and let $\hat v_\kappa=N^{-1}\sum_jv_je^{-i\kappa jh}$ and $\mu_k=(e^{ikh}-1)/(ikh)$ be as in Section~\ref{sec:decoder}, with $\mu_0=1$.
Then for every $\kappa\in\Z$ the discrete coefficient $\hat v_\kappa$ is given by the absolutely convergent series
\begin{equation}\label{eq:A:alias}
\hat v_\kappa=\sum_{m\in\Z}\mu_{\kappa+mN}\,\hat u_{\kappa+mN}=e^{i\kappa h/2}\sum_{m\in\Z}(-1)^m\sigma(\kappa+mN)\,\hat u_{\kappa+mN},
\end{equation}
and the weights in this series satisfy $\sum_{m\in\Z}\sigma(\kappa+mN)^2=1$.
\end{lemma}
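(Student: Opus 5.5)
The plan is to compute the cell averages of a single mode first, and then sum over modes. For $e_k(x)=e^{ikx}$, integration over $C_j=[jh,(j+1)h)$ gives
\[
(\Ph e_k)_j=\frac{1}{h}\int_{jh}^{(j+1)h}e^{ikx}\dd x=e^{ikjh}\mu_k .
\]
Since $\Ph$ is bounded from $L^2(\T)$ to $\R^N$ (Cauchy--Schwarz on each cell), and $u=\sum_k\hat u_ke_k$ converges in $L^2$, we may apply $\Ph$ termwise. This yields $v_j=\sum_k\mu_k\hat u_ke^{ikjh}$, with convergence in $\R^N$.

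Next I would take the discrete Fourier coefficient $\hat v_\kappa=N^{-1}\sum_jv_je^{-i\kappa jh}$ and exchange the finite sum over $j$ with the series over $k$. Discrete orthogonality, $N^{-1}\sum_je^{i(k-\kappa)jh}=\mathbf 1[k\equiv\kappa \bmod N]$, selects exactly $k=\kappa+mN$, which gives the first form of the aliasing series.

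Absolute convergence should follow from Cauchy--Schwarz:
\[
\sum_m|\mu_{\kappa+mN}\hat u_{\kappa+mN}|\le\Big(\sum_m|\mu_{\kappa+mN}|^2\Big)^{1/2}\Big(\sum_m|\hat u_{\kappa+mN}|^2\Big)^{1/2}.
\]
The second factor is finite because $u\in L^2$. The first is finite, and in fact equal to one, by the weight identity proved below.

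For the second form I would rewrite $\mu_k$ using $e^{ikh}-1=e^{ikh/2}\,2i\sin(kh/2)$. This gives $\mu_k=e^{ikh/2}\sinc(kh/2)=e^{ikh/2}\sigma(k)$. At $k=\kappa+mN$ we have $Nh=2\pi$, so the phase becomes $e^{i\kappa h/2}e^{im\pi}=e^{i\kappa h/2}(-1)^m$. The factor $e^{i\kappa h/2}$ is common to all terms and comes out of the sum, and the case $\mu_0=1$ is consistent with $\sigma(0)=1$.

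The main obstacle, though still classical, is the weight identity $\sum_m\sigma(\kappa+mN)^2=1$. Setting $\theta=\kappa h/2=\pi\kappa/N$, each term is
\[
\sigma(\kappa+mN)^2=\frac{\sin^2(\theta+m\pi)}{(\theta+m\pi)^2}=\frac{\sin^2\theta}{(\theta+m\pi)^2}.
\]
I would then invoke the partial-fraction identity $\sum_m(\theta+m\pi)^{-2}=\csc^2\theta$, obtained by differentiating the Mittag-Leffler expansion of $\cot$, which gives the sum $1$ whenever $\theta\notin\pi\Z$. When $\kappa\equiv0\bmod N$, only the single term with $\kappa+mN=0$ is nonzero, and it equals $\sigma(0)^2=1$, so the identity holds there as well.

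A second route to the same identity is Parseval applied to the indicator of one cell. Its Fourier coefficients have moduli proportional to $|\sigma(k)|$, and the corresponding sum along a residue class is a constant, fixed by comparing norms. I would keep this as a check and use the cotangent identity as the main argument.
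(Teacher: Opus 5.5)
Your proposal is correct and follows essentially the same route as the paper's proof. Both compute the single-mode cell averages $(\Ph e_k)_j=\mu_ke^{ikjh}$, use continuity of $\Ph$ on $L^2$, apply discrete orthogonality, factor $\mu_k=e^{ikh/2}\sigma(k)$ with $Nh=2\pi$, and use the expansion $\sum_m(z+m\pi)^{-2}=\csc^2z$ with a separate treatment of $z\in\pi\Z$. The only minor difference is how absolute convergence is justified: you apply Cauchy--Schwarz along the alias class using the weight identity itself, whereas the paper uses the global bound $|\mu_k|\le\min\{1,2/(|k|h)\}$ to get absolute convergence of the full series for $v_j$; both work.
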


\begin{proof}
The multiplier factorizes as $\mu_k=e^{ikh/2}\sigma(k)$, so $|\mu_k|\le\min\{1,2/(|k|h)\}$ is square summable, and direct integration over $C_j$ gives $(\Ph e_k)_j=\mu_ke^{ikjh}$.
Since $\Ph$ is continuous on $L^2$, applying it to the Fourier series of $u$ gives $v_j=\sum_k\mu_k\hat u_ke^{ikjh}$, a series that converges absolutely by the Cauchy-Schwarz inequality.
The discrete orthogonality relation $N^{-1}\sum_je^{i(k-\kappa)jh}=\mathbf 1[k\equiv\kappa\bmod N]$ then gives the first expression in Eq.~\eqref{eq:A:alias}.
The second expression follows from $e^{i(\kappa+mN)h/2}=(-1)^me^{i\kappa h/2}$, which holds because $Nh=2\pi$.
With $z=\kappa h/2$ we have $\sigma(\kappa+mN)^2=\sin^2z/(z+m\pi)^2$, and the expansion $\sum_{m\in\Z}(z+m\pi)^{-2}=\csc^2z$~\cite[Eq.~4.22.4]{olver2026dlmfgen} gives the value one whenever $z\notin\pi\Z$.
If $z\in\pi\Z$, exactly one weight equals one and all other weights vanish, so the identity holds as well.
\end{proof}

We prove item~(i) for complex-valued $u\in L^2(\T)$ and write $e=u-\Ih\Ph u$ for the reconstruction error.
For odd $N$, discrete Fourier inversion and $(\Ph e_\kappa)_j=\mu_\kappa e^{i\kappa jh}$ give $\Ih v=\sum_{|\kappa|\le(N-1)/2}(\hat v_\kappa/\mu_\kappa)e_\kappa$.
For both parities we partition $\Z$ into the $N$ alias classes $\kappa+N\Z$ with $-N/2<\kappa\le N/2$, whose exponentials span mutually orthogonal subspaces of $L^2(\T)$.
The histopolant maps $\hat v_\kappa$ to modes of the class of $\kappa$, and $\T_N$ is the sum of its intersections with these subspaces.
Hence $\|e\|_{L^2}^2$ and $\|Q_Nu\|_{L^2}^2$ split over the classes, and it suffices to compare the two contributions of each class.

Let $\kappa$ be a class representative with $e_\kappa\in\T_N$, that is, $|\kappa|\le(N-1)/2$ for odd $N$ and $|\kappa|\le N/2-1$ for even $N$.
The histopolant contributes only $(\hat v_\kappa/\mu_\kappa)e_\kappa$ to this class, so $\hat e_{\kappa+mN}=\hat u_{\kappa+mN}$ for $m\neq0$, and Lemma~\ref{lem:A:alias} gives $\hat e_\kappa=\hat u_\kappa-\hat v_\kappa/\mu_\kappa=-\sigma(\kappa)^{-1}\sum_{m\neq0}(-1)^m\sigma(\kappa+mN)\,\hat u_{\kappa+mN}$.
The Cauchy-Schwarz inequality and the identity $\sum_{m\neq0}\sigma(\kappa+mN)^2=1-\sigma(\kappa)^2$ of Lemma~\ref{lem:A:alias} yield
\[
\sum_{m\in\Z}|\hat e_{\kappa+mN}|^2\le\Big(\frac{1-\sigma(\kappa)^2}{\sigma(\kappa)^2}+1\Big)\sum_{m\neq0}|\hat u_{\kappa+mN}|^2=\frac{1}{\sigma(\kappa)^2}\sum_{m\neq0}|\hat u_{\kappa+mN}|^2,
\]
where the last sum is the contribution of the class to $\|Q_Nu\|_{L^2}^2/(2\pi)$.
Since $\sinc$ is positive and decreasing on $[0,\pi/2]$, the bounds $|\kappa h/2|\le\pi(N-1)/(2N)$ for odd $N$ and $|\kappa h/2|\le\pi(N-2)/(2N)$ for even $N$ give $1/\sigma(\kappa)\le C_N$.

For even $N$ the Nyquist class $\frac N2+N\Z$ remains, which consists of the odd multiples $(2m+1)\frac N2$ with $m\in\Z$.
At these wavenumbers $e^{ikh}=-1$ and hence $\mu_k=2i/((2m+1)\pi)$, and we write $a_m=\hat u_{(2m+1)N/2}$, so that $a_0=\hat u_{N/2}$ and $a_{-1}=\hat u_{-N/2}$.
Since the Nyquist term of Eq.~\eqref{eq:histopolant} equals $\frac{\pi}{4i}\hat v_{N/2}(e_{N/2}-e_{-N/2})$, Lemma~\ref{lem:A:alias} gives
\[
\frac{\pi}{4i}\,\hat v_{N/2}=\sum_{m\in\Z}\frac{a_m}{2(2m+1)}=\frac{a_0-a_{-1}}{2}+\frac{\rho}{2},\qquad\rho=\sum_{m\neq0,-1}\frac{a_m}{2m+1},
\]
and hence the error coefficients $\hat e_{\pm N/2}=\frac12(a_0+a_{-1})\mp\frac12\rho$ and $\hat e_{(2m+1)N/2}=a_m$ for $m\neq0,-1$.
In this class $\T_N$ contains only $\sin(\frac N2x)$, so $Q_N$ keeps the modes with $m\neq0,-1$ and the component $(a_0+a_{-1})\cos(\frac N2x)$, whose squared norm is $\pi|a_0+a_{-1}|^2$.
The expansion of $\csc^2z$ at $z=\pi/2$ gives $\sum_{m\neq0,-1}(2m+1)^{-2}=\pi^2/4-2$, and the parallelogram identity and the Cauchy-Schwarz inequality yield
\begin{align*}
\sum_{m\in\Z}\big|\hat e_{(2m+1)N/2}\big|^2&=\frac{|a_0+a_{-1}|^2}{2}+\frac{|\rho|^2}{2}+\sum_{m\neq0,-1}|a_m|^2\\
&\le\frac{|a_0+a_{-1}|^2}{2}+\frac{\pi^2}{8}\sum_{m\neq0,-1}|a_m|^2.
\end{align*}
The right-hand side is at most $\pi^2/8$ times the contribution of the Nyquist class to $\|Q_Nu\|_{L^2}^2/(2\pi)$, which yields the constant $\pi/(2\sqrt2)$ and completes the bound in item~(i).

Both candidates for $C_N$ are smaller than $\pi/2$, because $\sinc>2/\pi$ on $[0,\pi/2)$, which contains the arguments above, and $\pi/(2\sqrt2)<\pi/2$.
For $N=64$ the two candidates are $1/\sinc(31\pi/64)=1.52354$ and $\pi/(2\sqrt2)=1.11072$, so that $C_{64}=1.52354$.
For sharpness with odd $N\ge3$, we take $\kappa_*=(N-1)/2$, for which $\sigma(\kappa_*)=\sigma_{\min}$, and let all coefficients vanish except $\hat u_{\kappa_*+mN}=(-1)^m\sigma(\kappa_*+mN)$ for $m\neq0$.
This defines a function in $L^2(\T)$ by Lemma~\ref{lem:A:alias}, the Cauchy-Schwarz inequality above becomes an equality, and $\|Q_Nu\|_{L^2}^2=2\pi(1-\sigma_{\min}^2)$ and $\|e\|_{L^2}^2=2\pi(1-\sigma_{\min}^2)/\sigma_{\min}^2$ give the ratio $C_N$.
Adding the conjugate coefficients at $-\kappa_*-mN$, a different class for odd $N$, yields a real function with the same ratio.

For item~(ii), the definition $\mathcal S_{\mathrm{REA}}=\Ph\circ\Ev\circ\Ih$ gives $\mathcal S_{\mathrm{REA}}(\Ph u)-\Ph\Ev u=\Ph\Ev\Ih\Ph u-\Ph\Ev u$, and the hypothesis and item~(i) bound its norm by $L_E\|u-\Ih\Ph u\|_{L^2}\le L_EC_N\|Q_Nu\|_{L^2}$.
For item~(iii), let $u,w\in M$ with $\Ph u=\Ph w$, so that $\Ih\Ph(u-w)=0$ and item~(i) applied to $u-w$ gives $\|u-w\|_{L^2}\le C_N\|Q_N(u-w)\|_{L^2}$.
The Lipschitz bound and the triangle inequality then give $\|\Ph\Ev u-\Ph\Ev w\|_h\le L_EC_N(\|Q_Nu\|_{L^2}+\|Q_Nw\|_{L^2})$, which is at most the right-hand side of item~(iii).
For item~(iv), all modes with $|k|<N/2$ belong to $\T_N$, and for even $N$ the cosine component above has the squared norm $\pi|a_0+a_{-1}|^2\le2\pi(|a_0|^2+|a_{-1}|^2)$, so that $\|Q_Nu\|_{L^2}^2\le2\pi\sum_{|k|\ge\lceil N/2\rceil}|\hat u_k|^2$.
For $s\ge0$, as is implicit for a Sobolev ball, the weight $(1+k^2)^s\ge\lceil N/2\rceil^{2s}$ on these modes gives $\|Q_Nu\|_{L^2}\le\lceil N/2\rceil^{-s}R$, and the weight $e^{2\tau|k|}\ge e^{2\tau\lceil N/2\rceil}$ gives $\|Q_Nu\|_{L^2}\le e^{-\tau\lceil N/2\rceil}R$ on the analytic class.
Multiplying these bounds by $L_EC_N$ as in item~(ii) gives the two estimates of item~(iv).

We record two consequences that make the constant $L_E$ and the link with Proposition~\ref{prop:closable} concrete.
First, if $u$ and $w$ solve Eq.~\eqref{eq:pde} with the same forcing under Assumption~\ref{ass:regularity}, then $\omega=u-w$ and $\varsigma=u+w$ satisfy $\omega_t+\frac12(\omega\varsigma)_x=-\nu(-\Delta)^{\alpha/2}\omega$.
Since $\int_\T\omega(\omega\varsigma)_x\dd x=\frac12\int_\T\omega^2\varsigma_x\dd x$ and $\int_\T\omega(-\Delta)^{\alpha/2}\omega\dd x\ge0$, we obtain $\frac{\mathrm{d}}{\mathrm{d}t}\|\omega\|_{L^2}^2\le\frac12\|\partial_x\varsigma\|_{L^\infty}\|\omega\|_{L^2}^2$, and Gronwall's inequality with $\|\Ph g\|_h\le\|g\|_{L^2}$ gives $\|\Ph\Ev u-\Ph\Ev w\|_h\le e^{\Lambda\Delta t/4}\|u-w\|_{L^2}$ with $\Lambda=\sup_t\|\partial_x(u+w)\|_{L^\infty}$.
Second, let $\mu$ be concentrated on one value of $\xi$ with $V=\Ph u$ and $\dfh\in L^2$, let the hypothesis of item~(ii) hold $\mu$-almost surely, and let $m=\mathcal S_{\mathrm{REA}}-\Sh$ be Borel.
Item~(ii) then bounds $\|\dfh(u)-m(\Ph u)\|_h=\|\Ph\Ev u-\Ph\Ev\Ih\Ph u\|_h$ by $L_EC_N\|Q_Nu\|_{L^2}$ almost surely, so $m(V)\in L^2$ when $\EE\|Q_Nu\|_{L^2}^2<\infty$.
Item~(v) of Proposition~\ref{prop:closable} in the norm $\|\cdot\|_h$ then gives the bound $L_E^2C_N^2\,\EE\|Q_Nu\|_{L^2}^2$ on the irreducible error.

\subsection{Proof of Proposition~\ref{prop:mass}}\label{app:proofs:mass}

In the coarse discrete Fourier variables, with $z_k=\Delta t\lambda_k$, $\varphi_1(z)=(e^z-1)/z$ and $\varphi_2(z)=(e^z-1-z)/z^2$, the ETDRK2 step reads
\[
\hat{\tilde v}_k=e^{z_k}\hat v^n_k+\Delta t\,\varphi_1(z_k)\hat{\mathcal N}^0_k,\qquad \hat v^{n+1}_k=\hat{\tilde v}_k+\Delta t\,\varphi_2(z_k)\big(\hat{\mathcal N}^1_k-\hat{\mathcal N}^0_k\big),
\]
where $\mathcal N^0=\mathcal N_\theta(v^n,t_n)$ and $\mathcal N^1=\mathcal N_\theta(\tilde v,t_{n+1})$.
Since $\lambda_0=0$, the limits $\varphi_1(0)=1$ and $\varphi_2(0)=\frac12$ give $\hat v^{n+1}_0=\hat v^n_0+\frac{\Delta t}2(\hat{\mathcal N}^0_0+\hat{\mathcal N}^1_0)$ for the mean.
The flux divergence has zero sum by telescoping, so $\hat{\mathcal N}_0$ is the mean of $\Ph f$ at the stage time, and $m_h=2\pi\hat v_0$ gives the claim.

\subsection{Proof of Proposition~\ref{prop:gap}}\label{app:proofs:gap}

The geometric series in Eq.~\eqref{eq:atom} gives $\hat u_n=\gamma e^{-n\delta}e^{-in\phi}$ for $n\ge1$, $\hat u_{-n}=\overline{\hat u_n}$ and $\hat u_0=0$.
By orthogonality, the error $u-s$ of every trigonometric polynomial $s$ of degree at most $N/2$ contains at least the modes of $u$ with $|n|\ge n_0=\lfloor N/2\rfloor+1$.
Summing the geometric tails therefore gives $\|u-s\|_{L^2}^2/\|u\|_{L^2}^2\ge\sum_{n\ge n_0}e^{-2n\delta}/\sum_{n\ge1}e^{-2n\delta}=e^{-2\delta\lfloor N/2\rfloor}\ge e^{-N\delta}$ for every such $s$.
Taking square roots and using $N\delta/2=\pi\delta/h$ then proves the claimed lower bound on the relative error.

\subsection{Proof of Lemma~\ref{lem:atoms}}\label{app:proofs:atoms}

Since $|z|=e^{-\delta}<1$, the series of Eq.~\eqref{eq:atom} converges absolutely and uniformly on $\T$, so it may be differentiated and integrated termwise, and Fourier multipliers act termwise.
Item~(i) is direct, and item~(ii) follows from $\frac{\mathrm{d}}{\mathrm{d}x}\log(1-z)=-iz/(1-z)$.
For item~(iii), the multipliers $i\,\sgn(n)|n|^{\alpha-1}$ and $|n|^\alpha$ act on $z^n=\bar a^ne^{inx}$ with $n\ge1$, and $\Ra$ maps real functions to real functions.
Item~(iv) is DLMF 25.12.12 with $s=1-\alpha$, valid because $|\mu|^2\le\delta^2+\pi^2<4\pi^2$, and item~(v) follows because the power series in Eq.~\eqref{eq:dlmf} is bounded on the disc $|\mu|\le\sqrt{1+\pi^2}$.

\subsection{Proof of Lemma~\ref{lem:dipole}}\label{app:proofs:dipole}

We write $K(z)=\sgn(z)|z|^{-\alpha}$, which is smooth for $z\neq0$ with $|K'(z)|=\alpha|z|^{-1-\alpha}$, and use the kernel form of $\Ra$ in Eq.~\eqref{eq:riesz-kernel}.
All three items rest on one estimate for a cell $J$ of length $h$ with center $y_J$ whose nearest point lies at the distance $d_J>0$ from $x$.
The mean value theorem gives $|K(y-x)-K(y_J-x)|\le\frac h2\,\alpha d_J^{-1-\alpha}$ for $y\in J$, so that every bounded function $w$ satisfies
\begin{equation}\label{eq:A:cell}
\Big|\int_JK(y-x)w(y)\dd y-K(y_J-x)\int_Jw(y)\dd y\Big|\le\frac\alpha2\,h^2d_J^{-1-\alpha}\|w\|_{L^\infty(J)},
\end{equation}
in which the second term on the left vanishes when $w$ has zero mean on $J$.

For item~(i) on $\R$, we split the kernel integral at $z=h$, so that the near part involves only the two cells adjacent to the face $x$.
There $|w(x+z)-w(x-z)|\le2z\|w'\|_{L^\infty}$ with the norm over these two cells, which bounds the near part by $2\kappa_\alpha\|w'\|_{L^\infty}\int_0^hz^{1-\alpha}\dd z=\frac{2\kappa_\alpha h^{2-\alpha}}{2-\alpha}\|w'\|_{L^\infty}$.
Since $w$ is bounded with compact support, the far part equals $\kappa_\alpha\int_{|y-x|\ge h}K(y-x)w(y)\dd y$ with an absolutely convergent integral, and since $x$ is a face its domain is a union of cells.
For each $n\ge1$ two of these cells have $d_J=nh$, and Eq.~\eqref{eq:A:cell} bounds their joint contribution by $\kappa_\alpha\alpha n^{-1-\alpha}h^{1-\alpha}\|w\|_{L^\infty}$, which sums to the far part of item~(i) because $\sum_{n\ge1}n^{-1-\alpha}=\zeta(1+\alpha)$.

For periodic $w$ and even $N$, the near part is unchanged, and we fold the far part $\kappa_\alpha\int_{|z|\ge h}K(z)w(x+z)\dd z$ onto one period by writing $z=s+2\pi m$ with $s\in(-\pi,\pi]$.
Periodicity of $w$ then gives
\[
\int_{|z|\ge h}K(z)w(x+z)\dd z=\int_{h\le|s|\le\pi}K(s)w(x+s)\dd s+\int_{-\pi}^{\pi}D(s)w(x+s)\dd s,
\]
where $D(s)=\sum_{m\ge1}\big[(2\pi m+s)^{-\alpha}-(2\pi m-s)^{-\alpha}\big]$ collects the periodic images, and $\int_{|z|\ge\pi}|z|^{-\alpha}\dd z<\infty$ justifies the exchange of sum and integral.
Since $x+\pi$ is a face for even $N$, the first integral runs over the cells with $d_J=nh$ and $1\le n\le N/2-1$, so it obeys the bound of the line case.
The function $D$ is smooth on $[-\pi,\pi]$, and $|D'(s)|=\alpha\sum_{m\ge1}\big[(2\pi m+s)^{-1-\alpha}+(2\pi m-s)^{-1-\alpha}\big]$ is even and convex, hence maximal at $s=\pm\pi$.
Its maximum equals $\alpha\big(2\sum_{n\ge1}(\pi(2n-1))^{-1-\alpha}-\pi^{-1-\alpha}\big)$, which is smaller than the constant $c_{\mathrm{per}}$ of item~(i).
Subtracting the value of $D$ at the center of each of the $N$ cells that cover $(-\pi,\pi]$ relative to $x$ bounds the second integral by $N\cdot\frac h2c_{\mathrm{per}}\cdot h\|w\|_{L^\infty}=\pi c_{\mathrm{per}}h\|w\|_{L^\infty}$, which gives the periodic version after multiplication by $\kappa_\alpha$.

For item~(ii), the function $u-v$ is bounded with zero mean on every cell, also when $u$ and $v$ are periodic and are viewed on the unfolded line.
The integral in item~(ii) is the far part of the second kernel form in Eq.~\eqref{eq:riesz-kernel}, because the constant $u(x)-v(x)$ integrates to zero against the odd kernel on $|y-x|\ge\rho$.
This integral converges absolutely, and since $x$ is a face and $\rho=m_0h$, the region $|y-x|\ge\rho$ is the union of the cells with $d_J=nh$ and $n\ge m_0$, two for each $n$.
Applying Eq.~\eqref{eq:A:cell} with zero cell means and the comparison $h(nh)^{-1-\alpha}\le\int_{(n-1)h}^{nh}z^{-1-\alpha}\dd z$, we obtain
\[
\begin{aligned}
\Big|\kappa_\alpha\int_{|y-x|\ge\rho}K(y-x)\,(u-v)(y)\dd y\Big|&\le\kappa_\alpha\alpha h\|u-v\|_{L^\infty}\int_{\rho-h}^\infty z^{-1-\alpha}\dd z\\
&=\kappa_\alpha h\|u-v\|_{L^\infty}(\rho-h)^{-\alpha},
\end{aligned}
\]
where $m_0\ge2$ ensures that $\rho-h$ is positive.

For item~(iii), let $J$ be a cell that is not adjacent to $x$, let $d=y_J-x$ be the signed distance from $x$ to its center, so that $|d|\ge3h/2$, and let $\bar w$ be the mean of $w$ on $J$.
Then $K(y_J-x)\int_Jw\dd y=h\bar w\,\sgn(d)|d|^{-\alpha}$, and since $d_J=|d|-h/2\ge\frac23|d|$, Eq.~\eqref{eq:A:cell} bounds the remainder by $\frac\alpha2(\frac32)^{1+\alpha}h^2|d|^{-1-\alpha}\|w\|_{L^\infty}$, and multiplication by $\kappa_\alpha$ gives item~(iii).

\subsection{Proof of Proposition~\ref{prop:poles}}\label{app:proofs:poles}

For item~(i), the once-integrated stationary equation is $\nu u'=(u^2-U^2)/2$, which the profile satisfies because $\tanh'=1-\tanh^2$.
Its poles are those of $\tanh\xi$ at $\xi=i\pi(k+\frac12)$, where $\tanh$ has residue one, which gives the residue $-U\cdot2\nu/U=-2\nu$ in the variable $x$.
Since $\max|u_x|=U^2/(2\nu)$ at the center of the front and $\Delta U=2U$, the front width defined in item~(i) equals $\ell=2\nu/U$.
For item~(ii), $(-\Delta)^{\alpha/2}$ is homogeneous of degree $\alpha$ under dilations, so $u=U\chi((x-x_0)/\ell_\alpha)$ solves the equation with parameters $(U,\nu)$ if and only if $\chi$ solves it with $(1,1)$ and $\ell_\alpha^{\alpha-1}=\nu/U$.
Uniqueness up to translation gives the first claim, and the dilation maps the strip of $\chi_\alpha$ onto that of $u$.

\subsection{Proof of Theorem~\ref{thm:consistency}}\label{app:proofs:consistency}

We may assume $\eta\le1$, as in item~(ii) of Lemma~\ref{lem:riesz}, and $u(t)\in C^1(\T)$ under Assumption~\ref{ass:regularity}, so $w=u-\mathcal Dv$ is bounded and H\"older continuous of order $\eta$.
By item~(ii) of Lemma~\ref{lem:riesz} the face values of $\Ra u$, $\Ra\mathcal Dv$ and $\Ra w$ are finite, and linearity of the kernel integral gives $\Ra w=\Ra u-\Ra\mathcal Dv$.
For item~(i), the identity $\Ph w=\Ph u-\Ph\mathcal Dv=v-v=0$ shows that the reconstruction error $w$ has zero mean on every cell.
Subtracting $\Phi_{\mathcal D}(v)+\Ph f$ from Eq.~\eqref{eq:semidiscrete} gives $r=-\frac1h\Dm\Delta F$ with $\Delta F$ evaluated at the faces, and the triangle inequality gives $\|r\|_\infty\le\frac2h\max_j|\Delta F_{j+1/2}|$.
For item~(ii), the advective part of $\Delta F$ is $\frac12(u^2-(u-w)^2)=w(u-\frac12w)$, whose modulus is at most $(\|u\|_{L^\infty}+\frac12\|w\|_{L^\infty})\|w\|_{L^\infty}$, and its fractional part is $-\nu\Ra w$.

For item~(iii), the bound $B_1$ holds when $\sum_k|k|^{\alpha-1}|\hat w_k|<\infty$, since $\Ra w$ is then continuous with coefficients $i\,\sgn(k)|k|^{\alpha-1}\hat w_k$ by Lemma~\ref{lem:riesz} and equals its absolutely convergent Fourier series.
The bound $B_2$ is item~(i) of Lemma~\ref{lem:dipole} for periodic functions and even $N$, which applies because $w$ is Lipschitz when $w'$ is bounded and has zero cell means by item~(i).
The bound $B_3$ is the interpolation inequality of Lemma~\ref{lem:interp} applied to the periodic extension of $w$, and each bound holds trivially when its right-hand side is infinite.

For item~(iv), let $\mathcal D=\Ih$, and let $\kappa$ run over the class representatives with $e_\kappa\in\T_N$ as in the proof of Theorem~\ref{thm:rea}.
That proof gives $\hat w_{\kappa+mN}=\hat u_{\kappa+mN}$ for $m\neq0$ and, since $|\sigma|\le1$, the bound $|\hat w_\kappa|\le\sigma(\kappa)^{-1}\sum_{m\neq0}|\hat u_{\kappa+mN}|\le C_N\sum_{m\neq0}|\hat u_{\kappa+mN}|$.
For $m\neq0$ we have $|\kappa+mN|\ge N-|\kappa|>|\kappa|$, so the weight $|\kappa|^{\alpha-1}$ is at most $|\kappa+mN|^{\alpha-1}$, and these wavenumbers satisfy $|\kappa+mN|\ge N/2$.
Summing over these classes, and adding the modes with $m\neq0$ once more, bounds their weighted sum of $|\hat w_k|$ by $1+C_N$ times the weighted sum of $|\hat u_k|$ over $|k|\ge N/2$.
For odd $N$ these classes cover all wavenumbers, and $C_N<\pi/2$ gives the first bound with the factor $1+\frac\pi2$.
For even $N$ the Nyquist class remains, in which $\hat w_{\pm N/2}=\frac12(a_0+a_{-1})\mp\frac12\rho$ and $\hat w_{(2m+1)N/2}=a_m$ for $m\neq0,-1$ as in the proof of Theorem~\ref{thm:rea}.
Since $|2m+1|\ge3$ in the definition of $\rho$, we have $|\rho|\le\frac13\sum_{m\neq0,-1}|a_m|$, while every wavenumber of this class has modulus at least $N/2$.
The weighted sum of $|\hat w_k|$ over the Nyquist class is therefore at most $\frac43$ times that of $|\hat u_k|$, and $\frac43<1+\frac\pi2$ completes the first bound for even $N$.
The same argument with the weight one bounds $\sum_k|\hat w_k|$, which dominates $\|w\|_{L^\infty}$ because a continuous function with an absolutely summable Fourier series equals that series.

For the last statement of item~(iv), let $u$ extend analytically to the strip $|\Imag x|<\tau$ with $\|u\|_\tau=\sup_{|\Imag x|<\tau}|u(x)|<\infty$.
For $|y|<\tau$, periodicity and Cauchy's theorem move the integral for $\hat u_k$ to the line $\Imag x=y$, which gives $|\hat u_k|\le\|u\|_\tau e^{ky}$ and, as $y\to-\sgn(k)\tau$, the estimate $|\hat u_k|\le\|u\|_\tau e^{-\tau|k|}$.
With $K=\lceil N/2\rceil$, a lower bound for $|k|$ in these sums, and $(K+j)^{\alpha-1}\le K^{\alpha-1}(1+j)^{\alpha-1}$ for integers $j\ge0$, we obtain
\[
\sum_{|k|\ge N/2}|\hat u_k|\le\frac{2\|u\|_\tau e^{-\tau K}}{1-e^{-\tau}},\qquad\sum_{|k|\ge N/2}|k|^{\alpha-1}|\hat u_k|\le2c_{\alpha,\tau}\|u\|_\tau K^{\alpha-1}e^{-\tau K},
\]
with $c_{\alpha,\tau}=\sum_{j\ge0}(1+j)^{\alpha-1}e^{-\tau j}$.
The unweighted sum, and with it $\|w\|_{L^\infty}$, is therefore of order $e^{-\tau N/2}$, and the weighted sum is of order $N^{\alpha-1}e^{-\tau N/2}$, hence of order $e^{-\tau'N/2}$ for every $\tau'<\tau$.
On the analytic class of Theorem~\ref{thm:rea}, the Cauchy-Schwarz inequality gives the same two orders.
\section{Closed forms and implementation of the rational atoms}\label{app:atoms}

This {section} derives the closed forms of Lemma~\ref{lem:atoms} and describes their implementation.

\subsection{Derivation of the closed forms}\label{app:atoms:derivation}

Throughout, the pole parameter is $a=e^{-\delta+i\phi}$ with depth $\delta>0$ and position $\phi\in\T$, and $z(x)=\bar a\,e^{ix}=e^{\mu(x)}$ with $\mu(x)=-\delta+i(x-\phi)$.
Since $|z|=e^{-\delta}<1$ on $\T$, the mean-free atom has the absolutely and uniformly convergent expansion
\begin{equation}\label{eq:B:series}
\psia(x)=\frac{z}{1-z}=\sum_{n\ge1}z^n=\sum_{n\ge1}e^{-n\delta}e^{in(x-\phi)},
\end{equation}
which justifies termwise differentiation, integration and Fourier multipliers.

\paragraph{Values, slopes and cell averages}
We have $\partial_xz=iz$ and $\partial_x\log(1-z)=-i\psia$, where $\Real(1-z)\ge1-e^{-\delta}>0$ keeps the principal logarithm holomorphic along $1-z(x)$.
The chain rule and the fundamental theorem of calculus therefore give the slope and the exact cell averages
\begin{align}
\psia'(x)&=\frac{i\,z(x)}{\big(1-z(x)\big)^2},\label{eq:B:slope}\\
(\Ph\psia)_j&=\frac1h\int_{x_{j-1/2}}^{\xf{j}}\psia\dd x=\frac{1}{ih}\Big[\log\big(1-z(x_{j-1/2})\big)-\log\big(1-z(\xf{j})\big)\Big].\label{eq:B:cellavg}
\end{align}

\paragraph{Riesz flux and fractional Laplacian}
The Riesz flux operator acts on $z^n=\bar a^ne^{inx}$, $n\ge1$, through the symbol $i\sgn(n)|n|^{\alpha-1}=in^{\alpha-1}$, so with $\Li_s(w)=\sum_{n\ge1}w^nn^{-s}$, absolutely convergent for $|w|<1$, Eq.~\eqref{eq:B:series} yields
\begin{equation}\label{eq:B:riesz}
\Ra\psia=i\sum_{n\ge1}n^{\alpha-1}z^n=i\Li_{1-\alpha}(z),\qquad(-\Delta)^{\alpha/2}\psia=\sum_{n\ge1}n^{\alpha}z^n=\Li_{-\alpha}(z).
\end{equation}
With $w\,\partial_w\Li_s(w)=\Li_{1}(w)$ and $\partial_xz=iz$ this gives $\partial_x\Ra\psia=-(-\Delta)^{\alpha/2}\psia$, as in Lemma~\ref{lem:riesz}, so cell averages of $-(-\Delta)^{\alpha/2}\psia$ are face differences of $\Ra\psia$ over $h$.
The symbol of $\Ra$ is odd and purely imaginary, so $\Ra$ commutes with taking real parts, and the face flux of $g_\theta=2\Real\sum_{j'}\sum_{m}\gamma_{j'm}\psi_{a_{j'm}}$ is
\begin{equation}\label{eq:B:gflux}
-\nu\,\Ra g_\theta(\xf{j})=2\nu\,\Real\sum_{j'=0}^{N-1}\sum_{m=1}^{p}\gamma_{j'm}\,(-i)\,\Li_{1-\alpha}\big(e^{\mu_{j'm}(\xf{j})}\big),
\end{equation}
where $\mu_{j'm}(x)=-\delta_{j'm}+i(x-\phi_{j'm})$ belongs to the atom $m$ of cell $j'$.

\paragraph{Sub-grid flux and operation count}
The sub-grid flux of Eq.~\eqref{eq:nrc_sg} is Eq.~\eqref{eq:B:gflux} minus $-\nu\Ra\Ih(\Ph g_\theta)$ at the face, which applies the face-flux multiplier of $\Ih$ to the cell averages of Eq.~\eqref{eq:B:cellavg}.
With $\hat v_k=N^{-1}\sum_{j}v_je^{-ikjh}$ and the Nyquist mode of $\Ih$ carried by $\sin(Nx/2)$, this multiplier reads
\begin{equation}\label{eq:B:ihflux}
-\Ra(\Ih v)(\xf{j})=\sum_{0<|k|<N/2}\hat v_k\,\frac{h|k|^\alpha}{1-e^{-ikh}}\,e^{ikjh}+\frac h2\Big(\frac N2\Big)^{\alpha}\hat v_{N/2}\,(-1)^j .
\end{equation}
The face values and face slopes of $\Ih v$ have the multipliers $ikh/(1-e^{-ikh})$ and $ik\cdot ikh/(1-e^{-ikh})$, and the Nyquist mode vanishes at the faces but adds $-\frac{\pi N}{4}\hat v_{N/2}(-1)^j$ to the face slope.
These multipliers agree to round-off with a fast Fourier transform of $\Ih v$ on 4096 points.
Each NRC step evaluates all $pN^2$ atom-face pairs twice, and a profile at batch size one attributes about one third of the time to polylogarithm sums, one quarter to other atom quantities and $29\%$ to the network.

\subsection{Evaluation of the polylogarithm}\label{app:atoms:polylog}

We evaluate $\Li_s(e^\mu)$ with the expansion~\cite[Eq.~25.12.12]{olver2026dlmfgen}, valid for $s\notin\{1,2,\dots\}$ and $|\mu|<2\pi$,
\begin{equation}\label{eq:B:dlmf}
\Li_s(e^\mu)=\Gamma(1-s)\,(-\mu)^{1}+\sum_{k=0}^{\infty}\frac{\zeta(s-k)}{k!}\,\mu^k .
\end{equation}
For the flux, $s=1-\alpha\in(-1,0)$ and the term $\Gamma(\alpha)(-\mu)^{-\alpha}$ carries the whole singularity, with the principal logarithm valid because $\Real(-\mu)=\delta>0$.

\paragraph{Convergence and truncation}
Wrapping $x-\phi$ to $[-\pi,\pi)$ leaves $z$ unchanged and gives $|\mu|^2\le\delta^2+\pi^2$, so $|\mu|<2\pi$ for $\delta<\sqrt3\,\pi$, and the head bound $\delta\le3.02h\approx0.296$ gives $|\mu|/(2\pi)\le0.503$.
By the functional equation of the zeta function, $|\zeta(s-k)|/k!$ behaves like $(2\pi)^{-k}$ times a power of $k$, so the terms decay geometrically with ratio $|\mu|/(2\pi)$.
We keep $K=32$ terms, precomputed to 30 digits in mpmath and summed by Horner's rule, and the truncation error against mpmath is at most $1.5\times10^{-9}$ relative for $\alpha\in\{1.2,1.5,1.8\}$.
This maximum occurs near $|x-\phi|=\pi$ for $\alpha=1.8$, and on random points the double-precision implementation has a maximal relative error of $4.0\times10^{-10}$.

\paragraph{Exact gradients}
Since $\partial_\mu w=w$ for $w=e^\mu$, the same identity gives $\partial_\mu\Li_s(e^\mu)=\Li_{1}(e^\mu)$, which reuses the expansion with $s-1=-\alpha$, again not a positive integer.
The custom backward function multiplies the adjoint by the conjugate of this derivative, following the conjugate Wirtinger convention of PyTorch, and $\partial\mu/\partial\delta=-1$, $\partial\mu/\partial\phi=-i$ make the gradients exact up to evaluation error.

\paragraph{Precision}
All arrays are in double precision, except the reference data in single precision and the Horner sums in complex64, whose relative error of up to $2.9\times10^{-7}$ against mpmath on random points dominates the truncation error.
Single-atom Riesz fluxes agree with a 40,000-term series to between $1\times10^{-7}$ and $5\times10^{-7}$.
In situ, we compare the closed-form sub-grid flux of NRC (seed 0) with a fast Fourier transform of $\Dth v$ on $2^{16}$ points, on test states at $t=10$ of eight trajectories per $\alpha$.
The maximal relative difference is $6\times10^{-6}$ to $1.5\times10^{-5}$ over the three orders, and Simpson quadrature gives $\max_j|(\Ph\Dth v)_j-v_j|\le2.9\times10^{-15}$ on the same states.
These deviations lie more than an order of magnitude below the rollout errors of about $3\times10^{-4}$ in Section~\ref{sec:accuracy}, so single precision limits the flux accuracy but not the reported comparisons.
The analytic gradient matches finite differences for double-precision sums, whereas in complex64 rounding dominates the quotients.

\paragraph{Cancellation for poles close to the axis}
Exact in this paper means exact up to the evaluation of the polylogarithm, which loses digits when near-axis poles with compensating amplitudes make the sub-grid flux much smaller than its terms.
By Eq.~\eqref{eq:B:dlmf} one atom has a flux of about $\Gamma(\alpha)\delta^{-\alpha}$ next to its pole, or $\delta^{1-\alpha}$ under the scaling $\gamma\propto\delta$, and the bound $\delta>0.02h$ limits the amplification.
The trained model of seed 0 keeps all poles at $\delta\ge0.287h$ on the test states, and we read the in-situ deviation of about $10^{-5}$ as single-precision rounding with moderate amplification.
We have not separated the rounding from the amplification in this deviation.

\paragraph{Takenaka-Malmquist product form}
The Takenaka-Malmquist system~\cite{qian2011algorithm,qian2011adaptive} multiplies normalized Szeg\H{o} kernels $(1-\bar a_ne^{ix})^{-1}$ by Blaschke products, and an NRC atom is such a kernel up to normalization and the constant term.
Our closed forms need simple poles, and converting the products to partial fractions, whose coefficients contain reciprocal pole differences, amplified rounding errors by $2.7\times10^{7}$ with 32 random poles and failed with 64.
The NRC head therefore outputs partial-fraction amplitudes directly, at the price of a non-unique representation by non-orthogonal atoms, which matters for interpreting single poles but not for the fluxes.

\subsection{Initialization of the head and inference pruning}\label{app:atoms:init}\label{app:atoms:pruning}

The head weights of Eq.~\eqref{eq:nrc_head} are scaled by $0.1$ after the default PyTorch initialization, so the initial decoder is a small perturbation of $\Ih$.
The only nonzero biases are the depth bias $-1.93$, giving $\delta\approx0.40h$ near the minimal front width of $0.38h$ to $0.48h$ in the training data, and the position biases $0.8\,(m-(p+1)/2)$, giving poles at $x_j\pm0.19h$.

Pruning atoms with peak amplitude $|\gamma|/\delta$ below $10^{-3}$ keeps a fraction $0.986$ to $0.998$ of them on validation states, against $0.59$, $0.73$ and $0.91$ above $10^{-2}$ for increasing $\alpha$ (Table~\ref{tab:extra}).
It changes the explicit face flux by at most $8.3\times10^{-5}$ relative and leaves $\errA$ of seed 0 essentially unchanged, $0.0329\%$ against $0.0328\%$.
At batch size one, pruning makes one explicit flux evaluation slower, $6.87$~ms against $5.82$~ms without pruning, and at batch size 24 the timings agree within two percent.
Almost all atoms stay active at thresholds that preserve the accuracy, and the dense batched evaluation does not profit from an irregular sparsity pattern.
A sparsity penalty during training is examined among the ablations of Section~\ref{sec:ablation}, and cheaper atom evaluations are discussed as future work in Section~\ref{sec:conclusions}.
\section{Training and evaluation details}\label{app:training}

This {section} collects the data generation, training settings, baselines and metrics behind Section~\ref{sec:coupled}, on $N=64$ cells with the coarse step $\Delta t=0.02$ unless stated otherwise.

\subsection{Data generation}\label{app:training:data}

\paragraph{Reference solutions}
The reference solver uses $N_f=1024$ Fourier points with the dealiasing mask $\chi_k=\mathbf{1}[|k|\le N_f/3]$ and integrates the linear part exactly through its symbol $-\nu|k|^\alpha$.
Its explicit part is $-\tfrac{ik}{2}\chi_k\widehat{(u^2)}_k+\hat f_k(t)$ with $u=\mathcal F^{-1}(\chi\hat u)$, where $\mathcal F$ denotes the discrete Fourier transform on the fine grid.
ETDRK4~\cite{cox2002exponential} advances this system with the step $\Delta t/16$, coefficients from contour integrals on 64 points of a circle~\cite{kassam2005fourth} and the mask applied after every step.
The coarse data are the exact cell averages of the fine trigonometric polynomial, obtained with the multiplier $(e^{ikh}-1)/(ikh)$ and sampling at every sixteenth point.
At the out-of-distribution orders, cell averages with $N_f=1024$ and $2048$ differ by at most $2.7\times10^{-7}$, the fine spectral tail stays below $1.9\times10^{-6}$ and the Courant number $\max|u|\Delta t/h$ lies between $0.24$ and $0.31$.
Over all orders used, the minimal front width lies between $0.38$ and $0.53$ coarse cells.
Initial data and forcing have zero mean as in Section~\ref{sec:problem}, so the total mass vanishes for all times.
NRC and the classical schemes evaluate the exact cell averages of $f$ at their stage times, whereas \texttt{c5\_fno} receives the exact space and time integral of $f$ over each step.

\paragraph{Coefficients and splits}
Orders outside the training set take $\nu$ from a rule that was fixed before any model result was available.
It interpolates $\log\nu$ piecewise linearly between the training pairs of Section~\ref{sec:setup} and extrapolates it linearly,
\begin{equation}\label{eq:C:nurule}
\log\nu(\alpha)=
\begin{cases}
\log0.3+\dfrac{\log0.12-\log0.3}{0.3}\,(\alpha-1.2), & \alpha\le1.5,\\[6pt]
\log0.12+\dfrac{\log0.05-\log0.12}{0.3}\,(\alpha-1.5), & \alpha>1.5,
\end{cases}
\end{equation}
which gives $\nu=0.407$, $0.221$, $0.0669$ and $0.0373$ at $\alpha=1.1$, $1.3$, $1.7$ and $1.9$.
Table~\ref{tab:splits} lists all splits, whose seeds are disjoint across splits and shared across orders within a split, so that comparisons across $\alpha$ within one split are paired.
The fresh seed-88 factors lie below, inside and above its training range.
Called with seed 66, the generator of the fresh splits reproduces the original split bit for bit.
Generating one training split takes from 76 to 79~s per order on the CPU, a validation split about 30~s and a test split about 83~s.

\begin{table}[!tbp]
\centering
\caption{Data splits of the benchmark, stored as exact cell averages on 64 cells after every coarse step. The size is the number of trajectories per order, and splits that share a data seed contain identical initial data and forcing.}
\label{tab:splits}
\footnotesize
\setlength{\tabcolsep}{4.5pt}
\begin{tabular}{lrrlll}
\toprule
Split & Seed & Size & Orders $\alpha$ & Diffusivity & Horizon \\
\midrule
train & 11 & 64 & 1.2, 1.5, 1.8 & $\nu(\alpha)$ & 25 \\
validation & 22 & 16 & 1.2, 1.5, 1.8 & $\nu(\alpha)$ & 25 \\
test & 33 & 24 & 1.2, 1.5, 1.8 & $\nu(\alpha)$ & 45 \\
order shift & 44 & 24 & 1.1, 1.3, 1.7, 1.9 & $\nu(\alpha)$ from Eq.~\eqref{eq:C:nurule} & 25 \\
forcing shift, $l_i\in\{7,8\}$ & 55 & 24 & 1.5 & $\nu(\alpha)$ & 25 \\
coefficient shift & 66 & 24 & 1.5 & $\nu\times\{0.7,2\}$ & 25 \\
diffusivity-diverse training & 77 & 64 & 1.2, 1.5, 1.8 & $\nu\times\{0.8,1.5\}$ & 25 \\
fresh coefficient shift & 88 & 24 & 1.2, 1.5, 1.8 & $\nu\times\{0.6,1.25,2.5\}$ & 25 \\
\bottomrule
\end{tabular}
\end{table}

\subsection{Classical baselines and their selection}\label{app:training:classical}

The classical baselines are selected from seven reconstructions, two Riemann solvers and three time integrators, always with the exact global fractional operator.
The reconstructions are piecewise constant states, MUSCL~\cite{vanleer1979towards} with the minmod, van Leer, monotonized central and superbee limiters~\cite{vanleer1977towards}, WENO5-Z~\cite{jiang1996efficient,borges2008improved} and the face values of $\Ih v$ with a central flux.
The Riemann solvers are the Rusanov~\cite{rusanov1962calculation} and exact Godunov~\cite{leveque2002finite} fluxes, and the integrators are ETDRK2~\cite{cox2002exponential} with the exact symbol, SSPRK3~\cite{shu1988efficient} and a second-order Heun method.
The validation score of a candidate is the median over the 48 validation trajectories of the time-mean relative $L^2$ error over 1000 steps from $t=5$.
C0 and C0+ are the MUSCL and WENO5-Z entries that minimize this score, with validation errors of $1.50\%$ and $0.407\%$ and the components listed in Section~\ref{sec:setup}.
SSPRK3 applies the exact operator explicitly in flux form, which is stable since $\Delta t\,\nu\,(N/2)^\alpha=0.38$, $0.43$ and $0.51$ at the training orders lie well inside its real stability interval of length about $2.51$.
The Heun method uses the exact space and time integral of the forcing over each step, and the NRC base with $g=0$, the central flux on $\Ih v$ with ETDRK2, has a validation error of $2.10\%$.
C0 with the local operator replaces the exact operator by a surrogate of half-width $r=4$ cells from truncated fractional centered differences~\cite{ortigueira2006riesz,celik2012crank}, with the symbol
\begin{equation}\label{eq:C:s2}
\lambda^{\mathrm{loc}}_k=-\nu h^{-\alpha}\sum_{m=1}^{4}|c_m|\,\big(2-2\cos(kmh)\big),\qquad c_m=\frac{(-1)^m\,\Gamma(\alpha+1)}{\Gamma(\frac\alpha2-m+1)\,\Gamma(\frac\alpha2+m+1)},
\end{equation}
which keeps constants in its kernel and reaches $39.1\%$ on validation.

\subsection{Training of NRC}\label{app:training:nrc}

Table~\ref{tab:hyper} {of the main text} lists the optimization settings of all learned models, and we add only what it and Section~\ref{sec:training} leave open.
The stage-A face flux $F^*_{j+1/2}=\big[\tfrac12\mathcal F^{-1}\big(\chi\,\widehat{(u^2)}\big)-\nu\Ra u\big](\xf{j})$ of Eq.~\eqref{eq:nrc_lossA} is computed from the dealiased fine state.
The normalizers are $\sigma_F=6.82\times10^{-3}$, $1.46\times10^{-2}$ and $2.07\times10^{-2}$ and $\sigma_U=1.20\times10^{-2}$, $2.15\times10^{-2}$ and $2.71\times10^{-2}$ for $\alpha=1.2$, $1.5$ and $1.8$.
For seed 0 the stage-A loss falls from $92.9$ to $4.2\times10^{-3}$ over the 1500 stage-A iterations.
Stage-B windows start at a step $n_0$ between 50 and $1250-H$, and the learning rate is $3\times10^{-4}$ times $0.1+0.45\big(1+\cos(\pi\min(1,i/I))\big)$ at stage-B iteration $i$ of $I=950$.
Checkpoints are scored as the classical candidates of \ref{app:training:classical}, but over a 200-step rollout and with non-finite values or values above 10 set to 10.
No logged run had a step with a non-finite loss or reached the wall-clock cap of 55 minutes, and test data never entered model selection.

The fourth input channel $(\ln(\Delta t\,\nu/h^\alpha)-1.2)/0.5$ is affine in the order and diffusivity channels, since $\ln(\Delta t\,\nu/h^\alpha)=\ln\Delta t+\ln\nu-\alpha\ln h$.
It is not centered either, with values of about $-7.1$, $-7.5$ and $-7.9$ at the three training orders.
Since $\nu$ is a function of $\alpha$ in the training data, only the diffusivity-diverse set, with $\nu\times\{0.8,1,1.5\}$ at every order, varies the two parameters independently.

\subsection{Learned baselines}\label{app:training:baselines}

The face-flux models write $F_{j+1/2}=F^{\mathrm{base}}_{j+1/2}(v)+\sigma^{\mathrm{bb}}_F\,\mathrm{head}([e_j,e_{j+1}])$, where $e$ is the backbone output, so each CNN face correction sees 18 cells.
Their head is a pointwise convolution with 81 parameters, and $\sigma^{\mathrm{bb}}_F$ is the mean over orders of the root-mean-square face-flux error of the C0 base.
All face-flux models are integrated by ETDRK2 with their linear operator treated exponentially, whatever integrator their base scheme uses on its own.
The one-step stage A minimizes $\langle((S_\theta(v^n)-v^{n+1})/\sigma_\delta(\alpha))^2\rangle$ over random orders, trajectories and steps, with $\sigma_\delta$ the root-mean-square one-step defect of C0, while the flux stage A uses the NRC target and $\sigma_F$.
With $S_{\mathrm{C0}}$ one step of C0 and scalars $\sigma$ fixed from the training data, \texttt{c1\_point} sets $v^{n+1}=S_{\mathrm{C0}}(v^n)+\sigma\,\mathrm{CNN}(v^n)$.
Trained on the ratio target of \ref{app:ratio}, \texttt{c1\_mult} sets $v^{n+1}=S_{\mathrm{C0}}(v^n)\odot\big(1+\sigma\,\mathrm{CNN}(v^n)\big)$ instead, a multiplicative correction of the same step.
The surrogate \texttt{c5\_fno} updates $v^{n+1}=v^n+F^n+\mathcal Z(\sigma\,\mathrm{FNO})$, where the network sees $v^n$, $\alpha$, $\nu$ and the exact forcing integral $F^n$, and $\mathcal Z$ removes the mean.

In block (B) of Table~\ref{tab:barsinai} NRC trains by flux matching only, and the face stencil of its last line trains for 2500 one-step and 200 unrolled iterations.
Evaluating on a GPU reproduces the stored CPU values of Table~\ref{tab:ood} to a relative $3\times10^{-5}$.

\paragraph{NRC ablations}
The ablations of Section~\ref{sec:ablation} change one component of NRC at a time under the short curriculum, with \texttt{nrc\_short} as the reference.
The local-surrogate variant uses Eq.~\eqref{eq:C:s2} in the integrator and in the stage-A flux but keeps the sub-grid Riesz flux exact, and the variant without projection sets $\mathcal D v=\Ih v+g_\theta$.
Others remove the sub-grid Riesz flux, zero the three parameter channels, use $p=1$ pole per cell with 25,165 parameters, or add the stage-A penalty $0.02\langle|\gamma|/\delta\rangle/0.6$.
The sub-grid field variant replaces the atoms by 16 nodal values per cell, scaled by $0.06$ and interpolated trigonometrically on the fine grid, with face quantities from fast Fourier transforms and 25,616 parameters.
The receptive-field sweep uses kernels $(3,3,1,1)$ for 5 cells with 9,009 parameters, and dilations $(1,1,2,4)$ and $(2,2,4,8)$ for 33 and 65 cells.

\subsection{Evaluation protocol and metrics}\label{app:training:metrics}

\paragraph{Rollouts and errors}
Besides the 2000-step test rollouts from $t=5$, models are rolled out for 200 steps from $t_0=10$ and $15$ on the test split and for 1000 steps from $t=5$ on the shifted splits.
Earlier single-seed ablations used a reduced protocol with the first 12 of the 24 trajectories per order, the window $t\in(5,25]$ only and a single 200-step start.
Non-finite values of the relative error $e(t_n)$ of Section~\ref{sec:setup} are set to $+\infty$, and $\errA$ and $\errB$ average it over $251\le n\le1250$ and $1251\le n\le2250$.
The error $\mathrm{err}_{200}$ is $e$ after 200 steps from the starts $t_0=5$, $10$ and $15$, which gives 216 values per seed over the three orders.

\paragraph{Observables}
All observables are evaluated from $t=5$ to $t=25$ on the same rollouts as $\errA$.
The spectrum error is $\frac1{32}\sum_{k=1}^{32}|\log_{10}E_V(k)-\log_{10}E_R(k)|$, where $E(k)$ is the time mean of $|\hat V_k/64|^2$, doubled for $1\le k\le31$ to account for the negative wavenumbers.
The energy error is the relative difference of the time and space means of $\frac12V^2$ and $\frac12R^2$, and the front amplitude error is the relative difference of the time means of $J(t)=-\min_j(v_{j+1}-v_j)$.
The front position error is the time mean of the circular distance $\min(|p_V-p_R|,64-|p_V-p_R|)$ in cells, where $p(t)=\arg\min_j(v_{j+1}-v_j)$ locates the descending jump of maximal size.
The mass drift is $\max_n|\sum_jV^n_j-\sum_jR^n_j|\,h$ over the full test rollout, maximized over trajectories, orders and seeds.
A blow-up is a trajectory with a non-finite state in a window, and the count adds the infinite values of $\errB$ to the non-finite values in the first window, which never coincided on the test split.

\paragraph{Aggregation and timing}
The interpolation and extrapolation errors pool the 48 trajectories at $\alpha\in\{1.3,1.7\}$ and $\alpha\in\{1.1,1.9\}$, and seeds and trajectories are otherwise aggregated as in Section~\ref{sec:setup}.
The {GPU timings quoted in Section~\ref{sec:future}} were taken on an otherwise idle machine, with each group of configurations in a fresh process and one process at a time.
From the test states at $\alpha=1.5$ and $t=5$, with batches of 1 and 24 trajectories, we time only the step, without gradient tracking and without collecting outputs.
Each time is the median of five repeats of 50 coarse steps after two warm-up steps, and on the GPU the device is synchronized before the clock starts and stops.
A classical solver with a smaller step takes $\mathrm{round}(0.02/\Delta t)$ substeps per coarse step, so that every repeat covers one unit of simulated time.
{The GPU runs keep two host threads, all timings use PyTorch 2.8.0 and NumPy 2.1.3 on the workstation of \ref{app:training:hardware}, an Intel Core Ultra 9 285K with an NVIDIA GeForce RTX 5090 GPU, and no implementation uses CUDA graphs or kernel fusion.}

\subsection{Hardware and software}\label{app:training:hardware}

All runs of the main comparison used PyTorch on the CPU only, with two threads per process and two training processes running in parallel.
The logs do not record the processor, but the runs were made on the workstation holding the code, an Intel Core Ultra 9 285K with 24 cores shared with other jobs.
The three-seed ablations, the receptive-field sweep and the diffusivity-diverse seeds 0, 1 and 2 were trained on the NVIDIA GeForce RTX 5090 of this workstation, the last with the full curriculum.
Each GPU run builds the model on the CPU with the initial weights of the CPU run of the same seed but is not bit-identical to it.
Re-evaluating the stored checkpoints with the current code reproduces the validation score of NRC seed 0 to a relative difference of $7\times10^{-13}$ and the three NRC test medians exactly.
\section{Additional results}\label{app:additional}

This {section} gives five tables that resolve the pooled results of Section~\ref{sec:coupled}, and each subsection states only what the full table adds.

\subsection{Per-order errors}\label{app:add:peralpha}

\begin{table}[!tbp]
\centering
\caption{Test errors of NRC and of the baselines of Table~\ref{tab:main} for each fractional order, with 24 trajectories per order and the errors $\errA$ and $\errB$ of that table. Single runs report the median with the 10th and 90th percentiles, and three-seed rows report the median over seeds with the seed range. The lower block was trained with four times as many iterations.}
\label{tab:peralpha}
\footnotesize
\setlength{\tabcolsep}{3pt}
\resizebox{\textwidth}{!}{%
\begin{tabular}{lrcccccc}
\toprule
 & & \multicolumn{3}{c}{$\errA$ (\%), $t\in(5,25]$} & \multicolumn{3}{c}{$\errB$ (\%), $t\in(25,45]$} \\
\cmidrule(lr){3-5}\cmidrule(lr){6-8}
Method & Seeds & $\alpha=1.2$ & $\alpha=1.5$ & $\alpha=1.8$ & $\alpha=1.2$ & $\alpha=1.5$ & $\alpha=1.8$ \\
\midrule
\multicolumn{8}{l}{\emph{Classical schemes}} \\
C0 & n/a & 1.22 & 1.55 & 1.92 & 1.24 & 1.48 & 1.68 \\
\hspace{0.5em}{\scriptsize MUSCL-MC, Godunov} & & {\scriptsize(1.00 to 1.51)} & {\scriptsize(1.22 to 1.97)} & {\scriptsize(1.35 to 2.41)} & {\scriptsize(0.943 to 1.50)} & {\scriptsize(1.15 to 1.83)} & {\scriptsize(1.20 to 2.26)} \\
C0+ & n/a & 0.365 & 0.439 & 0.600 & 0.310 & 0.381 & 0.477 \\
\hspace{0.5em}{\scriptsize WENO5-Z, Rusanov} & & {\scriptsize(0.265 to 0.430)} & {\scriptsize(0.326 to 0.503)} & {\scriptsize(0.421 to 0.723)} & {\scriptsize(0.195 to 0.411)} & {\scriptsize(0.297 to 0.487)} & {\scriptsize(0.365 to 0.627)} \\
NRC base, $g=0$ & n/a & 1.55 & 4.16 & 6.93 & 1.64 & 3.26 & 4.59 \\
\hspace{0.5em}{\scriptsize central flux on $\Ih v$} & & {\scriptsize(0.773 to 3.80)} & {\scriptsize(1.74 to 7.19)} & {\scriptsize(2.05 to 16.6)} & {\scriptsize(0.367 to 2.70)} & {\scriptsize(1.48 to 7.27)} & {\scriptsize(1.99 to 19.0)} \\
C0, local operator & n/a & 81.3 & 39.5 & 9.71 & 95.4 & 44.8 & 11.6 \\
\hspace{0.5em}{\scriptsize Eq.~\eqref{eq:C:s2}} & & {\scriptsize(64.2 to 98.7)} & {\scriptsize(27.6 to 47.9)} & {\scriptsize(7.37 to 12.7)} & {\scriptsize(69.7 to 119)} & {\scriptsize(33.3 to 61.3)} & {\scriptsize(7.76 to 18.2)} \\
\midrule
\multicolumn{8}{l}{\emph{Learned baselines}} \\
\texttt{c1\_point} & 1 & 2.82 & 3.76 & 5.92 & 7.72 & 9.94 & 10.2 \\
\hspace{0.5em}{\scriptsize pointwise additive} & & {\scriptsize(1.61 to 4.68)} & {\scriptsize(2.18 to 7.87)} & {\scriptsize(3.47 to 8.73)} & {\scriptsize(4.36 to 13.8)} & {\scriptsize(3.25 to 17.3)} & {\scriptsize(5.52 to 20.0)} \\
\texttt{c1\_mult} & 1 & 7.60 & 11.2 & 18.9 & 26.2 & 19.7 & 32.0 \\
\hspace{0.5em}{\scriptsize pointwise multiplicative} & & {\scriptsize(4.23 to 18.4)} & {\scriptsize(7.83 to 18.9)} & {\scriptsize(13.0 to 24.8)} & {\scriptsize(8.01 to 41.7)} & {\scriptsize(11.6 to 34.1)} & {\scriptsize(17.5 to 45.4)} \\
\texttt{c2\_bb\_s2} & 1 & 10.4 & 9.13 & 2.00 & 12.5 & 11.4 & 2.18 \\
\hspace{0.5em}{\scriptsize flux CNN, local operator} & & {\scriptsize(7.03 to 15.5)} & {\scriptsize(5.30 to 11.9)} & {\scriptsize(1.19 to 2.95)} & {\scriptsize(6.97 to 21.8)} & {\scriptsize(7.45 to 16.8)} & {\scriptsize(1.56 to 5.35)} \\
\texttt{c4\_fno\_s0} & 3 & 0.478 & 0.697 & 1.04 & 0.470 & 0.691 & 1.11 \\
\hspace{0.5em}{\scriptsize flux FNO on C0} & & {\scriptsize[0.474, 0.531]} & {\scriptsize[0.680, 0.762]} & {\scriptsize[0.994, 1.06]} & {\scriptsize[0.457, 0.505]} & {\scriptsize[0.688, 0.719]} & {\scriptsize[0.975, 1.13]} \\
\texttt{c5\_fno} & 1 & 4.89 & 7.67 & 10.1 & 4.92 & 8.35 & 11.2 \\
\hspace{0.5em}{\scriptsize FNO surrogate} & & {\scriptsize(4.02 to 5.73)} & {\scriptsize(5.74 to 9.85)} & {\scriptsize(7.20 to 13.0)} & {\scriptsize(4.28 to 6.17)} & {\scriptsize(6.49 to 12.8)} & {\scriptsize(8.76 to 16.2)} \\
\texttt{bb\_s0} & 1 & 0.338 & 0.452 & 0.614 & 0.340 & 0.485 & 0.742 \\
\hspace{0.5em}{\scriptsize flux CNN on C0} & & {\scriptsize(0.298 to 0.369)} & {\scriptsize(0.374 to 0.504)} & {\scriptsize(0.481 to 0.804)} & {\scriptsize(0.297 to 0.377)} & {\scriptsize(0.377 to 0.565)} & {\scriptsize(0.480 to 1.02)} \\
\texttt{bb\_weno} & 1 & 0.135 & 0.224 & 0.345 & 0.121 & 0.236 & 0.340 \\
\hspace{0.5em}{\scriptsize flux CNN on C0+} & & {\scriptsize(0.106 to 0.159)} & {\scriptsize(0.177 to 0.269)} & {\scriptsize(0.242 to 0.469)} & {\scriptsize(0.0977 to 0.171)} & {\scriptsize(0.195 to 0.311)} & {\scriptsize(0.271 to 0.389)} \\
\texttt{bb\_weno\_fluxA} & 3 & 0.0910 & 0.153 & 0.275 & 0.0899 & 0.149 & 0.301 \\
\hspace{0.5em}{\scriptsize on C0+, NRC flux target} & & {\scriptsize[0.0885, 0.102]} & {\scriptsize[0.153, 0.168]} & {\scriptsize[0.252, 0.286]} & {\scriptsize[0.0870, 0.0956]} & {\scriptsize[0.146, 0.167]} & {\scriptsize[0.269, 0.312]} \\
\texttt{bb\_central\_fluxA} & 1 & 0.132 & 0.228 & 0.297 & 0.126 & 0.195 & 0.272 \\
\hspace{0.5em}{\scriptsize on NRC base, NRC target} & & {\scriptsize(0.0951 to 0.189)} & {\scriptsize(0.154 to 0.299)} & {\scriptsize(0.182 to 0.456)} & {\scriptsize(0.0825 to 0.178)} & {\scriptsize(0.149 to 0.268)} & {\scriptsize(0.194 to 0.403)} \\
\midrule
\multicolumn{8}{l}{\emph{Proposed method}} \\
NRC & 3 & 0.0189 & 0.0304 & 0.0439 & 0.0168 & 0.0281 & 0.0418 \\
\hspace{0.5em}{\scriptsize rational atoms} & & {\scriptsize[0.0172, 0.0204]} & {\scriptsize[0.0226, 0.0349]} & {\scriptsize[0.0299, 0.0491]} & {\scriptsize[0.0167, 0.0201]} & {\scriptsize[0.0214, 0.0350]} & {\scriptsize[0.0279, 0.0493]} \\
\midrule
\multicolumn{8}{l}{\emph{Four times the training iterations, single seed}} \\
\texttt{c5\_fno\_long} & 1 & 0.843 & 1.61 & 2.74 & 0.824 & 1.57 & 2.77 \\
\hspace{0.5em}{\scriptsize 4 times the iterations} & & {\scriptsize(0.662 to 1.06)} & {\scriptsize(1.20 to 1.95)} & {\scriptsize(1.73 to 3.87)} & {\scriptsize(0.635 to 1.40)} & {\scriptsize(1.29 to 2.30)} & {\scriptsize(2.00 to 4.54)} \\
\texttt{bb\_weno\_fluxA\_long} & 1 & 0.0491 & 0.0906 & 0.157 & 0.0488 & 0.0892 & 0.174 \\
\hspace{0.5em}{\scriptsize 4 times the iterations} & & {\scriptsize(0.0414 to 0.0679)} & {\scriptsize(0.0762 to 0.109)} & {\scriptsize(0.125 to 0.205)} & {\scriptsize(0.0373 to 0.0647)} & {\scriptsize(0.0767 to 0.111)} & {\scriptsize(0.117 to 0.255)} \\
\bottomrule
\end{tabular}}
\end{table}
Table~\ref{tab:peralpha} splits the test errors of Table~\ref{tab:main} by order and adds $\errB$, and only the two rows with the local operator have a lower median $\errA$ at $\alpha=1.8$ than at $\alpha=1.2$.
The gap of NRC to C0+ shrinks with the order, whereas its gap in median $\errA$ to \texttt{bb\_weno\_fluxA} widens from 4.8 to 6.3 times over three seeds.
For the single longer run of this baseline, the factor grows from 2.6 to 3.6.

\subsection{Ablations per order}\label{app:add:ablation}

\begin{table}[!tbp]
\centering
\caption{Errors $\errA$ in percent of the GPU-trained three-seed ablations and of the receptive-field sweep for each order, under the short curriculum. Entries are medians over seeds of the per-seed median over 24 trajectories, and daggers mark the extrapolated orders $1.1$ and $1.9$. The spread is the largest ratio between the highest and the lowest seed value over the seven columns of a row. The variants follow Table~\ref{tab:ablation}, the stage A rows evaluate the end of stage A, and no trajectory blew up.}
\label{tab:ablation_full}
\footnotesize
\setlength{\tabcolsep}{3.2pt}
\begin{tabular}{lcccccccc}
\toprule
 & \multicolumn{3}{c}{Test orders $\alpha$} & \multicolumn{4}{c}{Shifted orders $\alpha$} & \\
\cmidrule(lr){2-4}\cmidrule(lr){5-8}
Configuration & $1.2$ & $1.5$ & $1.8$ & $1.1^{\dagger}$ & $1.3$ & $1.7$ & $1.9^{\dagger}$ & Spread \\
\midrule
\multicolumn{9}{l}{\emph{NRC reference and ablations, short curriculum}} \\
NRC reference & 0.0219 & 0.0365 & 0.0511 & 0.0252 & 0.0270 & 0.0413 & 0.0561 & 1.32 \\
Stage A only & 0.0290 & 0.0506 & 0.0721 & 0.0325 & 0.0370 & 0.0555 & 0.0811 & 1.26 \\
(i) local surrogate $s_2$ & 12.0 & 11.2 & 3.03 & 10.4 & 9.17 & 4.94 & 2.23 & 2.07 \\
(ii) no projection $\Ph$ & 0.224 & 0.384 & 0.559 & 0.202 & 0.249 & 0.470 & 0.652 & 1.54 \\
(iii) no sub-grid Riesz flux & 0.316 & 0.502 & 0.638 & 0.254 & 0.349 & 0.594 & 0.727 & 1.28 \\
(iv) black-box face flux & 0.166 & 0.325 & 0.444 & 0.130 & 0.189 & 0.403 & 0.533 & 1.48 \\
(v) sub-grid field head & 0.105 & 0.240 & 0.354 & 0.111 & 0.130 & 0.316 & 0.518 & 1.58 \\
(vi) one pole per cell & 0.0334 & 0.0539 & 0.0876 & 0.0309 & 0.0378 & 0.0749 & 0.0966 & 1.37 \\
(vii) network sees only $v$ & 0.0627 & 0.0743 & 0.123 & 0.0613 & 0.0688 & 0.103 & 0.156 & 1.30 \\
(viii) sparsity penalty & 0.0235 & 0.0348 & 0.0555 & 0.0293 & 0.0284 & 0.0424 & 0.0606 & 2.44 \\
\midrule
\multicolumn{9}{l}{\emph{NRC, receptive field in cells}} \\
\hspace{0.5em}5 & 0.0775 & 0.110 & 0.131 & 0.0723 & 0.0888 & 0.114 & 0.145 & 1.85 \\
\hspace{0.5em}17, reference & 0.0219 & 0.0365 & 0.0511 & 0.0252 & 0.0270 & 0.0413 & 0.0561 & 1.32 \\
\hspace{0.5em}33 & 0.0427 & 0.0665 & 0.0990 & 0.0405 & 0.0502 & 0.0838 & 0.105 & 2.34 \\
\hspace{0.5em}65 & 0.150 & 0.290 & 0.350 & 0.120 & 0.206 & 0.338 & 0.416 & 1.42 \\
\midrule
\multicolumn{9}{l}{\emph{\texttt{c2\_bb\_s2}, flux CNN on C0 with the local operator, receptive field in cells}} \\
\hspace{0.5em}5 & 48.5 & 21.2 & 5.70 & 52.2 & 34.2 & 11.6 & 3.63 & 1.51 \\
\hspace{0.5em}17 & 11.8 & 10.8 & 2.28 & 11.7 & 8.30 & 4.40 & 3.81 & 1.66 \\
\hspace{0.5em}33 & 5.03 & 4.05 & 2.67 & 9.03 & 5.26 & 2.72 & 3.40 & 1.64 \\
\hspace{0.5em}65 & 0.878 & 1.87 & 2.23 & 7.85 & 5.68 & 2.18 & 2.99 & 1.29 \\
\hspace{0.5em}17, stage A state & 12.9 & 11.6 & 2.97 & 12.4 & 9.90 & 4.21 & 4.79 & 1.69 \\
\bottomrule
\end{tabular}
\end{table}
Table~\ref{tab:ablation_full} resolves the three-seed ablations of Section~\ref{sec:ablation} by order, including the four shifted orders.
Per order, the median $\errA$ of the variants (i) to (v) stays at least 4.4 times above the reference, a smaller margin than the pooled factor of {6.5}.
A field of 65 instead of 17 cells lowers the median $\errA$ of \texttt{c2\_bb\_s2} 13.4 times at $\alpha=1.2$, but only 1.5 times at the extrapolated $\alpha=1.1$.
{For NRC, the sweep of Section~\ref{sec:ablation} gives a lower median $\errA$ at the default of 17 cells than at 33 and 65 cells, and we have two hypotheses for this, neither of which we have tested.
Optimization may be harder for the dilated networks, since at 65 cells validation selected the stage-A state in all three seeds and at 33 cells one seed reaches 0.123\% against 0.0604\% and 0.0683\% for the other two.
In addition, the dilations $(2,2,4,8)$ at 65 cells connect each cell only with cells at even offsets, so its poles are placed without the averages of its two neighbors.}

\subsection{Paired comparisons}\label{app:add:paired}

\begin{table}[!tbp]
\centering
\caption{Paired ratios $r=\errA^{\mathrm{NRC}}/\errA^{\mathrm{base}}$ of NRC over each baseline, on the 72 test trajectories and on the 96 trajectories at $\alpha\in\{1.1,1.3,1.7,1.9\}$. The median is taken over seed pairs of the per-pair median, and the 10th and 90th percentiles over all pairs pooled. Blow-ups of the NRC base with $g=0$ count as $r=0$. NRC is better on every trajectory of every pair, except against its diffusivity-diverse variant, where the median fractions with $r<1$ are 0.778 on the test split and 0.510 on the order shift.}
\label{tab:paired}
\footnotesize
\setlength{\tabcolsep}{3.5pt}
\begin{tabular}{lrcccccc}
\toprule
 & & \multicolumn{3}{c}{Test, 72 per pair} & \multicolumn{3}{c}{Order shift, 96 per pair} \\
\cmidrule(lr){3-5}\cmidrule(lr){6-8}
Baseline & Pairs & Median & 10th & 90th & Median & 10th & 90th \\
\midrule
\multicolumn{8}{l}{\emph{Classical schemes}} \\
C0 & 3 & 0.0210 & 0.0121 & 0.0278 & 0.0216 & 0.0133 & 0.0290 \\
C0+ & 3 & 0.0710 & 0.0470 & 0.0910 & 0.0772 & 0.0486 & 0.101 \\
NRC base, $g=0$ & 3 & 0.00874 & 0.00425 & 0.0188 & 0.0122 & 0.00439 & 0.0272 \\
C0, local operator & 3 & 0.000803 & 0.000198 & 0.00493 & 0.000889 & 0.000231 & 0.0119 \\
\midrule
\multicolumn{8}{l}{\emph{Learned baselines}} \\
\texttt{c1\_point} & 3 & 0.00723 & 0.00372 & 0.0143 & 0.00914 & 0.00455 & 0.0154 \\
\texttt{c1\_mult} & 3 & 0.00277 & 0.00118 & 0.00443 & 0.00258 & 0.00121 & 0.00458 \\
\texttt{c2\_bb\_s2} & 3 & 0.00385 & 0.00159 & 0.0250 & 0.00545 & 0.00170 & 0.0183 \\
\texttt{c4\_fno\_s0} & 9 & 0.0414 & 0.0269 & 0.0573 & 0.0458 & 0.0286 & 0.0645 \\
\texttt{c5\_fno} & 3 & 0.00416 & 0.00277 & 0.00542 & 0.00422 & 0.00289 & 0.00558 \\
\texttt{bb\_s0} & 3 & 0.0667 & 0.0422 & 0.0896 & 0.0675 & 0.0439 & 0.0937 \\
\texttt{bb\_weno} & 3 & 0.139 & 0.0888 & 0.176 & 0.134 & 0.0824 & 0.217 \\
\texttt{bb\_weno\_fluxA} & 9 & 0.193 & 0.122 & 0.248 & 0.173 & 0.107 & 0.283 \\
\texttt{bb\_central} & 3 & 0.0884 & 0.00867 & 0.134 & 0.104 & 0.0294 & 0.171 \\
\texttt{bb\_central\_fluxA} & 3 & 0.151 & 0.102 & 0.193 & 0.172 & 0.110 & 0.265 \\
\midrule
\multicolumn{8}{l}{\emph{Learned baselines with four times the iterations}} \\
\texttt{c5\_fno\_long} & 3 & 0.0194 & 0.0117 & 0.0280 & 0.0104 & 0.00474 & 0.0160 \\
\texttt{bb\_weno\_fluxA\_long} & 3 & 0.337 & 0.208 & 0.452 & 0.330 & 0.189 & 0.548 \\
\midrule
\multicolumn{8}{l}{\emph{NRC trained at $\nu\times\{0.8,1,1.5\}$}} \\
NRC, $\nu$-diverse & 3 & 0.845 & 0.640 & 1.12 & 0.986 & 0.657 & 1.27 \\
\bottomrule
\end{tabular}
\end{table}
Table~\ref{tab:paired} gives the paired ratios of NRC to every baseline, of which Section~\ref{sec:accuracy} quotes three, and the 90th percentile of the ratio stays at or below 0.548 on both splits.
The exception is the diffusivity-diverse model, a single seed trained {on three diffusivities per order}, against which NRC has median ratios of 0.845 on the test split and 0.986 on the order shift.

\subsection{Pole statistics for every seed}\label{app:add:interp}

\begin{table}[!tbp]
\centering
\caption{Position and depth of the learned poles at the sub-cell fronts of Section~\ref{sec:mechanism}, for three NRC seeds, four activity criteria and the three test orders. The first block gives the fraction of fronts whose nearest active pole lies within $h/4$, with $h/2$ in parentheses. The second block gives the median depth $\delta$ of that pole divided by the front width $\ell$. A snapshot without an active atom counts as a miss.}
\label{tab:interp}
\footnotesize
\setlength{\tabcolsep}{4pt}
\begin{tabular}{lcccccc}
\toprule
 & \multicolumn{3}{c}{Pole within $h/4$ (within $h/2$)} & \multicolumn{3}{c}{Median $\delta/\ell$} \\
\cmidrule(lr){2-4}\cmidrule(lr){5-7}
 & $\alpha=1.2$ & $\alpha=1.5$ & $\alpha=1.8$ & $\alpha=1.2$ & $\alpha=1.5$ & $\alpha=1.8$ \\
\midrule
\multicolumn{7}{l}{\emph{Active if the peak amplitude $|\gamma|/\delta\ge0.02$}} \\
\hspace{0.5em}Seed 0 & 0.884 (1.00) & 0.930 (1.00) & 0.933 (1.00) & 0.848 & 0.735 & 0.824 \\
\hspace{0.5em}Seed 1 & 0.853 (1.00) & 0.894 (1.00) & 0.909 (1.00) & 0.888 & 0.800 & 0.935 \\
\hspace{0.5em}Seed 2 & 0.862 (0.979) & 0.937 (1.00) & 0.964 (1.00) & 0.807 & 0.764 & 0.865 \\
\midrule
\multicolumn{7}{l}{\emph{Active if the peak amplitude $|\gamma|/\delta\ge0.05$}} \\
\hspace{0.5em}Seed 0 & 0.710 (1.00) & 0.806 (1.00) & 0.864 (1.00) & 0.940 & 1.01 & 1.08 \\
\hspace{0.5em}Seed 1 & 0.721 (1.00) & 0.786 (1.00) & 0.844 (1.00) & 0.982 & 1.01 & 1.03 \\
\hspace{0.5em}Seed 2 & 0.465 (0.562) & 0.709 (0.819) & 0.839 (0.928) & 0.896 & 0.892 & 0.912 \\
\midrule
\multicolumn{7}{l}{\emph{Active if the peak amplitude $|\gamma|/\delta\ge0.1$}} \\
\hspace{0.5em}Seed 0 & 0.357 (0.489) & 0.643 (0.942) & 0.698 (1.00) & 0.989 & 1.08 & 1.20 \\
\hspace{0.5em}Seed 1 & 0.0915 (0.312) & 0.233 (0.479) & 0.399 (0.653) & 1.24 & 1.18 & 1.16 \\
\hspace{0.5em}Seed 2 & 0.133 (0.150) & 0.343 (0.401) & 0.526 (0.601) & 0.859 & 0.916 & 0.950 \\
\midrule
\multicolumn{7}{l}{\emph{Active if the depth $\delta<0.75h$}} \\
\hspace{0.5em}Seed 0 & 0.856 (1.00) & 0.851 (1.00) & 0.782 (1.00) & 0.556 & 0.660 & 0.757 \\
\hspace{0.5em}Seed 1 & 0.873 (1.00) & 0.871 (0.999) & 0.838 (0.963) & 0.608 & 0.739 & 0.877 \\
\hspace{0.5em}Seed 2 & 0.873 (1.00) & 0.890 (0.999) & 0.883 (0.991) & 0.584 & 0.707 & 0.829 \\
\bottomrule
\end{tabular}
\end{table}
Table~\ref{tab:interp} adds the amplitude thresholds 0.02 and 0.1 to the pole statistics of Section~\ref{sec:mechanism}, for every seed and order.
The threshold 0.02 leaves 31 to 103 of the 128 atoms active in the median snapshot, so its fractions of 85.3 to 96.4 percent within $h/4$ say little about placement.
At the threshold 0.1, seeds 0 and 2 keep at most 7 atoms per median snapshot, but seed 1 keeps 52 to 55, on average at most 8.1 percent of them within one cell of a sub-cell front.
The amplitude scale of the atoms thus differs between seeds, which is consistent with the seed dependence of the amplitude criteria in Section~\ref{sec:mechanism}.

\subsection{Diagnostics}\label{app:add:diag}

\begin{table}[!tbp]
\centering
\caption{Two diagnostics of NRC, each run for seed 0 only. \textbf{(a)} Errors $\errA$ in percent on the original shifted splits at $\alpha=1.5$ when the network receives the training diffusivity $\nu=0.12$ while the operator keeps the true $\nu$. Entries are medians with the 10th and 90th percentiles, and no trajectory blew up. \textbf{(b)} Inference pruning of every atom whose peak amplitude $|\gamma|/\delta$ does not exceed the threshold, with medians of the active fraction and of the relative change of the face flux. Times are for one flux evaluation on two threads of the shared host.}
\label{tab:extra}
\footnotesize
\setlength{\tabcolsep}{4pt}
\textbf{(a)} Diffusivity channel set to the training value\par\smallskip
\begin{tabular}{llccc}
\toprule
Model & Input $\nu$ & Forcing $l\in\{7,8\}$ & $0.7\nu$ & $2\nu$ \\
\midrule
C0+ & n/a & 0.571 & 0.670 & 0.0799 \\
 & & {\scriptsize(0.409 to 0.784)} & {\scriptsize(0.551 to 0.899)} & {\scriptsize(0.0525 to 0.104)} \\
\midrule
\texttt{bb\_weno}, seed 0 & true & 0.419 & 0.652 & 0.317 \\
 & & {\scriptsize(0.377 to 0.530)} & {\scriptsize(0.517 to 0.902)} & {\scriptsize(0.286 to 0.363)} \\
\texttt{bb\_weno}, seed 0 & $0.12$ & 0.419 & 0.611 & 0.147 \\
 & & {\scriptsize(0.377 to 0.530)} & {\scriptsize(0.494 to 0.862)} & {\scriptsize(0.115 to 0.167)} \\
\midrule
NRC, seed 0 & true & 0.0555 & 1.27 & 0.214 \\
 & & {\scriptsize(0.0503 to 0.0623)} & {\scriptsize(0.733 to 2.24)} & {\scriptsize(0.203 to 0.241)} \\
NRC, seed 0 & $0.12$ & 0.0555 & 0.779 & 0.0414 \\
 & & {\scriptsize(0.0503 to 0.0623)} & {\scriptsize(0.466 to 1.24)} & {\scriptsize(0.0366 to 0.0488)} \\
\midrule
NRC, $\nu$-diverse, seed 0 & true & 0.0538 & 0.100 & 0.0149 \\
 & & {\scriptsize(0.0411 to 0.0804)} & {\scriptsize(0.0788 to 0.141)} & {\scriptsize(0.0131 to 0.0175)} \\
\bottomrule
\end{tabular}
\par\bigskip
\setlength{\tabcolsep}{3pt}
\textbf{(b)} Inference pruning\par\smallskip
\begin{tabular}{lcccccccccc}
\toprule
 & \multicolumn{3}{c}{Active fraction} & \multicolumn{3}{c}{Relative flux change} & \multicolumn{2}{c}{Time (ms)} & $\errA$ \\
\cmidrule(lr){2-4}\cmidrule(lr){5-7}\cmidrule(lr){8-9}
Threshold & $1.2$ & $1.5$ & $1.8$ & $1.2$ & $1.5$ & $1.8$ & $B=1$ & $B=24$ & (\%) \\
\midrule
None & n/a & n/a & n/a & n/a & n/a & n/a & 5.82 & 67.7 & 0.0328 \\
$10^{-2}$ & 0.590 & 0.729 & 0.905 & $3.05{\times}10^{-3}$ & $1.85{\times}10^{-3}$ & $1.37{\times}10^{-3}$ & n/a & n/a & n/a \\
$10^{-3}$ & 0.986 & 0.994 & 0.998 & $8.31{\times}10^{-5}$ & $4.36{\times}10^{-5}$ & $2.75{\times}10^{-5}$ & 6.87 & 67.3 & 0.0329 \\
$10^{-4}$ & 1.00 & 1.00 & 1.00 & $9.43{\times}10^{-7}$ & $5.94{\times}10^{-16}$ & $7.98{\times}10^{-16}$ & 6.61 & 66.8 & n/a \\
$10^{-5}$ & 1.00 & 1.00 & 1.00 & $5.44{\times}10^{-16}$ & $5.94{\times}10^{-16}$ & $7.98{\times}10^{-16}$ & n/a & n/a & n/a \\
\bottomrule
\end{tabular}
\end{table}
Table~\ref{tab:extra} holds the two diagnostics of seed 0 that Section~\ref{sec:ood} and \ref{app:atoms:pruning} quote for NRC.
In part~(a), the median $\errA$ of \texttt{bb\_weno} at $2\nu$ and $\alpha=1.5$ also falls with the training value of the diffusivity channel, from 0.317\% to 0.147\%, but stays above the 0.0799\% of C0+.
\section{Classical supplements}\label{app:classical}

This {section} states, with attribution, classical results that support Section~\ref{sec:theory}, and we claim no novelty for any of them.

\subsection{Stability on a tube and conditional convergence}\label{app:tube}

Remark~\ref{rem:stability} refers to the following threshold argument in the spirit of L\'opez-Marcos and Sanz-Serna~\cite{lopezmarcos1988stability}, whose Lipschitz hypotheses (S) and (M) we have not verified for NRC.

\begin{theorem}[Stability on a tube]\label{thm:tube}
Let $\|\cdot\|$ be a norm on $\R^N$, let $(U^n)_{0\le n\le N_T}$ be a reference sequence such as $U^n=\Ph u(t_n)$, and let the corrected scheme generate $V^{n+1}=\Sh(V^n)+\mathcal M(V^n)$ from a given $V^0$.
Assume that there are constants $\varrho>0$, $C_S\in\R$ and $C_M\ge0$ with $L:=C_S+C_M$ and $1+L\Delta t>0$ such that, for all $n<N_T$ and all $v\in\R^N$ with $\|v-U^n\|\le\varrho$,
\begin{align*}
&\text{(S)}\qquad \|\Sh(v)-\Sh(U^n)\|\le(1+C_S\Delta t)\,\|v-U^n\|,\\
&\text{(M)}\qquad \|\mathcal M(v)-\mathcal M(U^n)\|\le C_M\Delta t\,\|v-U^n\|,\\
&\text{(D)}\qquad \|U^{n+1}-\Sh(U^n)-\mathcal M(U^n)\|\le\Delta t\,\varepsilon_n,
\end{align*}
and put $\varepsilon=\max_n\varepsilon_n$.
If the error budget
\begin{equation}\label{eq:E:budget}
\mathcal B_n=(1+L\Delta t)^n\,\|V^0-U^0\|+\varepsilon\,\Delta t\sum_{k=0}^{n-1}(1+L\Delta t)^k
\end{equation}
satisfies $\max_{n\le N_T}\mathcal B_n<\varrho$, then the corrected trajectory stays in the tube and $\|V^n-U^n\|\le\mathcal B_n$ holds for every $n\le N_T$.
For $L>0$ this gives $\|V^n-U^n\|\le e^{Lt_n}\|V^0-U^0\|+\varepsilon\,(e^{Lt_n}-1)/L$, and for $L=-c_0<0$ it gives $\|V^n-U^n\|\le(1-c_0\Delta t)^n\|V^0-U^0\|+\varepsilon/c_0$ uniformly in the final time.
\end{theorem}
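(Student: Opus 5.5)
The argument is a threshold (bootstrap) induction on $n$ for the combined statement $P(n)$: $\|V^k-U^k\|\le\mathcal B_k$ for all $k\le n$. Since $\mathcal B_k<\varrho$, $P(n)$ also places every $V^k$ with $k\le n$ inside the tube. The base case $n=0$ is immediate, because $\mathcal B_0=\|V^0-U^0\|$.

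For the induction step, assume $P(n)$ with $n<N_T$. Then $\|V^n-U^n\|\le\mathcal B_n<\varrho$, so (S) and (M) may be applied with $v=V^n$. Write the error as
\[
V^{n+1}-U^{n+1}=\big[\Sh(V^n)-\Sh(U^n)\big]+\big[\mathcal M(V^n)-\mathcal M(U^n)\big]-\big[U^{n+1}-\Sh(U^n)-\mathcal M(U^n)\big].
\]
The triangle inequality together with (S), (M) and (D) gives $\|V^{n+1}-U^{n+1}\|\le(1+L\Delta t)\|V^n-U^n\|+\Delta t\,\varepsilon$. The hypothesis $1+L\Delta t>0$ is exactly what allows this inequality to be multiplied through by $1+L\Delta t$, even when $C_S<0$, so $\|V^n-U^n\|\le\mathcal B_n$ gives $\|V^{n+1}-U^{n+1}\|\le(1+L\Delta t)\mathcal B_n+\varepsilon\Delta t=\mathcal B_{n+1}$. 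This identity is the recursion defining Eq.~\eqref{eq:E:budget}. Because $\mathcal B_{n+1}<\varrho$ by assumption, $V^{n+1}$ lies in the tube, and $P(n+1)$ holds. The only delicate point is the order of the argument: membership in the tube must be established before (S) and (M) are used. The maximum condition on $\mathcal B_n$ provides exactly this.

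For the closed forms, take $L>0$ first. Then $(1+L\Delta t)^n\le e^{Lt_n}$, and the geometric sum satisfies $\Delta t\sum_{k<n}(1+L\Delta t)^k=((1+L\Delta t)^n-1)/L\le(e^{Lt_n}-1)/L$. Now take $L=-c_0<0$, so that $0<1-c_0\Delta t<1$. The geometric sum is $\Delta t\,(1-(1-c_0\Delta t)^n)/(c_0\Delta t)\le1/c_0$, which gives the stated bound uniformly in the final time. The case $L=0$, not asked for, would give $\|V^0-U^0\|+\varepsilon t_n$.
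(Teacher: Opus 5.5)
Your proof is correct and matches the paper's argument: it uses the same bootstrap induction, which places $V^n$ in the tube before invoking (S) and (M). It also uses the same triangle-inequality step, where $1+L\Delta t>0$ lets $\|V^n-U^n\|$ be replaced by $\mathcal B_n$, and the same closed forms obtained from $(1+L\Delta t)^n\le e^{Lt_n}$ and the geometric sum.
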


\begin{proof}
Assume by induction that $\|V^k-U^k\|\le\mathcal B_k<\varrho$ for all $k\le n$, which holds for $n=0$ because $\mathcal B_0=\|V^0-U^0\|$.
Then $V^n$ lies in the tube, and (S), (M) and (D) with the triangle inequality give
\begin{equation*}
\|V^{n+1}-U^{n+1}\|\le(1+L\Delta t)\,\|V^n-U^n\|+\Delta t\,\varepsilon\le(1+L\Delta t)\,\mathcal B_n+\Delta t\,\varepsilon=\mathcal B_{n+1},
\end{equation*}
where $1+L\Delta t>0$ allows $\|V^n-U^n\|$ to be replaced by its bound, and $\mathcal B_{n+1}<\varrho$ completes the induction.
The explicit forms follow from $(1+L\Delta t)^n\le e^{Lt_n}$ and the geometric sum, which is at most $1/c_0$ when $L=-c_0<0$.
\end{proof}

\begin{remark}[What the consistency error measures]\label{rem:tube}
Let $U^n=\Ph u(t_n)$ and $\mathcal M=\Delta t\,m_\theta$, and let $\mathcal T_h=\dfh/\Delta t$ be the truncation error per unit time.
Since $U^{n+1}-\Sh(U^n)=\dfh(u(t_n))$ by the definition of the one-step defect, the consistency error in (D) is $\varepsilon_n=\|\mathcal T_h(u(t_n))-m_\theta(\Ph u(t_n))\|$.
If two reference states $u$ and $w$ with $\Ph u=\Ph w$ both enter the maximum, with consistency errors $\varepsilon_u$ and $\varepsilon_w$, then
\begin{equation}\label{eq:E:fibre}
\max(\varepsilon_u,\varepsilon_w)\ge\frac{1}{2\Delta t}\,\|\Ph\Ev u-\Ph\Ev w\|,
\end{equation}
because $\Sh+\mathcal M$ takes one value $q$ at the common cell averages, with $\Delta t\,\varepsilon_u\ge\|\Ph\Ev u-q\|$ and $\Delta t\,\varepsilon_w\ge\|\Ph\Ev w-q\|$.
Non-closability on the reference set thus forces $\varepsilon$ to be at least the fiber oscillation divided by $2\Delta t$, which Theorem~\ref{thm:rea} bounds from above.
Training in $L^2$ with respect to the occupation measure, as in Proposition~\ref{prop:closable}, controls a mean of $\varepsilon_n^2$ but not the maximum along a test trajectory that Theorem~\ref{thm:tube} requires.
\end{remark}

\subsection{An interpolation inequality for the Riesz flux}\label{app:interp}

The following lemma gives the bound $B_3$ of Theorem~\ref{thm:consistency} with an explicit constant.

\begin{lemma}[Interpolation inequality]\label{lem:interp}
Let $1<\alpha<2$ and $\alpha-1<\eta\le1$, and let $u$ be bounded and H\"older continuous of order $\eta$ on $\R$, with the seminorm $[u]_\eta=\sup_{x\neq y}|u(x)-u(y)|/|x-y|^\eta$.
Then the Riesz flux of $u$ is bounded in terms of its size and its seminorm by
\begin{equation}\label{eq:E:interp}
\|\Ra u\|_{L^\infty}\le K_{\alpha,\eta}\,\|u\|_{L^\infty}^{1-\frac{\alpha-1}{\eta}}\,[u]_\eta^{\frac{\alpha-1}{\eta}},\qquad K_{\alpha,\eta}=\kappa_\alpha\Big(\frac{2}{1+\eta-\alpha}+\frac{2}{\alpha-1}\Big).
\end{equation}
Since $\Ra$ annihilates constants, $\|u\|_{L^\infty}$ may be replaced by $\inf_{c\in\R}\|u-c\|_{L^\infty}$, and the bound applies to H\"older functions on $\T$ through their periodic extension.
\end{lemma}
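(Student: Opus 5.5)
The plan is to use the half-line kernel form of Lemma~\ref{lem:riesz}, $\Ra u(x)=\kappa_\alpha\int_0^\infty(u(x+z)-u(x-z))z^{-\alpha}\dd z$. By item~(ii) of that lemma this integral converges absolutely for H\"older $u$ of order $\eta>\alpha-1$, and we take it as the definition of $\Ra u$ on $\R$. The proof is the standard split of this integral at a radius $\rho>0$, followed by optimization over $\rho$, in the same way as the near/far splitting in the proof of Lemma~\ref{lem:jump}.

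\emph{Near part, $z<\rho$.} Use $|u(x+z)-u(x-z)|\le[u]_\eta(2z)^\eta$. This gives
\[
\kappa_\alpha 2^\eta[u]_\eta\frac{\rho^{1+\eta-\alpha}}{1+\eta-\alpha}\le\kappa_\alpha\frac{2[u]_\eta\rho^{1+\eta-\alpha}}{1+\eta-\alpha},
\]
since $\eta\le1$.

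\emph{Far part, $z\ge\rho$.} Use $|u(x+z)-u(x-z)|\le2\|u\|_{L^\infty}$. This gives
\[
\kappa_\alpha\frac{2\|u\|_{L^\infty}\rho^{1-\alpha}}{\alpha-1}.
\]
Both bounds are uniform in $x$.

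\emph{Choice of $\rho$.} If $[u]_\eta=0$, then $u$ is constant, $\Ra u=0$, and there is nothing to prove. Otherwise set $\rho=(\|u\|_{L^\infty}/[u]_\eta)^{1/\eta}$, which balances the two terms. With this choice $[u]_\eta\rho^{1+\eta-\alpha}=\|u\|_{L^\infty}\rho^{1-\alpha}$, and
\[
\|u\|_{L^\infty}\rho^{1-\alpha}=\|u\|_{L^\infty}^{1-\frac{\alpha-1}{\eta}}[u]_\eta^{\frac{\alpha-1}{\eta}}.
\]
Adding the near and far bounds then gives exactly the constant $K_{\alpha,\eta}$ stated in Eq.~\eqref{eq:E:interp}. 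If $\|u\|_{L^\infty}=0$ the statement is trivial.

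\emph{Constants and the periodic case.} The integrand contains only the differences $u(x+z)-u(x-z)$, so $\Ra(u-c)=\Ra u$ for every constant $c$. Since $[u-c]_\eta=[u]_\eta$, we may apply the bound to $u-c$ and take the infimum over $c$. For $u$ H\"older on $\T$, its periodic extension is bounded and H\"older on $\R$ with the same norms, provided the seminorm is taken over all of $\R$ as in the convention fixed at the start of Section~S1. The kernel integral coincides with the definition of $\Ra$ on $\T$, so the same bound applies.

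No step is a real obstacle. The only points needing care are that the near integral converges because $1+\eta-\alpha>0$, and that $2^\eta\le2$ holds because $\eta\le1$; without that, one would need a slightly larger constant.
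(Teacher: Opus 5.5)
Your proof is correct and is essentially the paper's argument: split the kernel integral at $\rho$, bound the near part by the H\"older seminorm and the far part by $2\|u\|_{L^\infty}$, then balance with $\rho=(\|u\|_{L^\infty}/[u]_\eta)^{1/\eta}$. The only difference is cosmetic. The paper splits the form $\kappa_\alpha\int_\R\sgn(z)|z|^{-\alpha}(u(x+z)-u(x))\dd z$, so its near part yields the factor $2$ directly and its far part uses that $u(x)$ integrates to zero against the odd kernel, whereas your symmetric half-line form needs $2^\eta\le2$ instead; both give the same constant $K_{\alpha,\eta}$.
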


\begin{proof}
For $\eta>\alpha-1$ both half-line integrals converge absolutely in the kernel form of Lemma~\ref{lem:riesz},
\begin{equation*}
\Ra u(x)=\kappa_\alpha\int_{\R}\sgn(z)\,|z|^{-\alpha}\big(u(x+z)-u(x)\big)\dd z,
\end{equation*}
which we split at $|z|=\rho>0$.
The H\"older bound gives a near part of at most $2\kappa_\alpha[u]_\eta\rho^{1+\eta-\alpha}/(1+\eta-\alpha)$.
In the far part $u(x)$ integrates to zero against the odd integrable kernel, which leaves at most $2\kappa_\alpha\|u\|_{L^\infty}\rho^{1-\alpha}/(\alpha-1)$.
The choice $\rho=(\|u\|_{L^\infty}/[u]_\eta)^{1/\eta}$ gives Eq.~\eqref{eq:E:interp}, the degenerate cases are trivial, and $u-c$ has the same seminorm and Riesz flux as $u$.
\end{proof}

The splitting is of Landau-Kolmogorov type, and for the periodic odd cusps $\sgn(\sin x)|\sin x|^\eta$ with $\eta\in\{0.9,1\}$ the ratio of the two sides of Eq.~\eqref{eq:E:interp} ranges from $0.29$ to $0.87$ over the three training orders.
For the sub-grid field $w_\theta=g_\theta-\Ih\Ph g_\theta$ of NRC (seed 0) on four test states at $t=10$, the same ratio also lies between $0.29$ and $0.87$, so the bound is attained within a factor of four.

\subsection{Closed-form closures for the linear advection-dispersion equation}\label{app:ade}

For the linear fractional advection-dispersion equation $\partial_tu+\partial_x(cu-\nu\Ra u)=f$ with a constant speed $c\in\R$, all objects of Section~\ref{sec:commutator} are explicit.
In this subsection only, the cell centers are $x_j=jh$ and $N$ is odd, with $I_N=\{-\frac{N-1}{2},\dots,\frac{N-1}{2}\}$ and with $\sigma$ and $\T_N=\mathrm{span}\{e^{i\kappa x}:\kappa\in I_N\}$ as in Section~\ref{sec:decoder}.
We write $V_\kappa(v)=N^{-1}\sum_jv_je^{-i\kappa x_j}$ for the discrete Fourier coefficients and $\omega_k=ick+\nu|k|^\alpha$, so that $\Ev e^{ikx}=e^{-\Delta t\,\omega_k}e^{ikx}$ when $f=0$.
The aliasing identity behind Theorem~\ref{thm:rea} then reads
\begin{equation}\label{eq:E:alias}
V_\kappa(\Ph u)=\sum_{m\in\Z}\sigma(\kappa+mN)\,\hat u_{\kappa+mN},\qquad\kappa\in I_N,
\end{equation}
and the histopolant is $\widehat{(\Ih v)}_\kappa=V_\kappa(v)/\sigma(\kappa)$ for $\kappa\in I_N$, with all other coefficients equal to zero.

\begin{proposition}[Closed-form closures for the linear equation]\label{prop:ade}
Let $N$ be odd and let the notation be as above.
\begin{enumerate}
\item[(i)] For every $G\in L^1(\T)$ and every Fourier multiplier $\mathcal L$ the convolution with $G$ commutes with $\mathcal L$, so the top-hat filtered field $\bar u=G_h*u$ with $G_h=h^{-1}\mathbf{1}_{[-h/2,h/2]}$ satisfies $\partial_t\bar u+\partial_x(c\bar u-\nu\Ra\bar u)=\bar f$ exactly.
The cell averages are the samples $v_j=\bar u(x_j)$, and for $u_{\mathrm{sg}}=u-\Ih\Ph u$ one has $\Ph u_{\mathrm{sg}}=0$ but in general $\Ph\mathcal L u_{\mathrm{sg}}\neq0$.
\item[(ii)] If $f=0$ and the initial data lie in $\T_N$ almost surely, the coarse diagram is closable with the ideal decoder $\Ih$, and the exact coarse step maps $V_\kappa$ to $e^{-\Delta t\,\omega_\kappa}V_\kappa$ for every $\kappa\in I_N$.
\item[(iii)] Let $f=0$ and let $u$ be a real, centered, stationary Gaussian field with spectral density $q_k=\EE|\hat u_k|^2$ such that $\sum_k(1+|k|)^{2s}q_k<\infty$ for some $s>\frac12$.
Let the coarse scheme $\Sh$ be linear and translation invariant, so that $V_\kappa(\Sh v)=s_h(\kappa)V_\kappa(v)$ for all $v$ and all $\kappa\in I_N$.
Then, for every $\kappa\in I_N$ with $\sum_m\sigma(\kappa+mN)^2q_{\kappa+mN}>0$, the $L^2$-optimal closure $m^\star(V)=\EE[\dfh\mid V]$ is the circulant with the symbol
\begin{equation}\label{eq:E:circulant}
r(\kappa)=\frac{\sum_m\sigma(\kappa+mN)^2\,e^{-\Delta t\,\omega_{\kappa+mN}}\,q_{\kappa+mN}}{\sum_m\sigma(\kappa+mN)^2\,q_{\kappa+mN}}-s_h(\kappa).
\end{equation}
The irreducible error of the mode $\kappa$ is then
\begin{equation}\label{eq:E:floor}
\EE\big|V_\kappa(\dfh)-r(\kappa)V_\kappa\big|^2=\sum_m\sigma^2e^{-2\Delta t\,\Real\omega}q-\frac{\big|\sum_m\sigma^2e^{-\Delta t\,\omega}q\big|^2}{\sum_m\sigma^2q},
\end{equation}
where every $\sigma$, $\omega$ and $q$ on the right is evaluated at $\kappa+mN$.
\end{enumerate}
\end{proposition}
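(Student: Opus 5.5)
For item~(i) I will work entirely on the Fourier side. Convolution with $G\in L^1(\T)$ multiplies $\hat u_k$ by $2\pi\hat G_k$, and a Fourier multiplier $\mathcal L$ multiplies $\hat u_k$ by its symbol $\ell(k)$. These two products commute coefficientwise, so $G*\mathcal Lu=\mathcal L(G*u)$ holds on the domain where both sides are defined. I apply this to $\partial_t$, which commutes with convolution in $x$, and to the multiplier $\partial_x(c-\nu\Ra)$. Filtering the equation with $G_h$ then gives the filtered equation exactly.

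With centers $x_j=jh$, the cell averages are the samples $v_j=\bar u(x_j)$ by the definition of $G_h$. The identity $\Ph u_{\mathrm{sg}}=0$ follows from $\Ph\Ih=I$. To show that $\Ph\mathcal Lu_{\mathrm{sg}}\neq0$ in general, I will use a single alias pair, for example $u=e^{i(\kappa+N)x}$. For this $u$, the aliasing identity~\eqref{eq:E:alias} shows that $u_{\mathrm{sg}}$ is a combination of $e^{i(\kappa+N)x}$ and $e^{i\kappa x}$. Applying $\Ph\mathcal L$ weights these two modes by different symbols, so the result is generically nonzero.

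For item~(ii), let $u_0\in\T_N$. Then $\Ph$ is injective on $\T_N$ with inverse $\Ih$, because $\sigma(\kappa)\neq0$ for $\kappa\in I_N$. Each mode evolves independently as $e^{-\Delta t\omega_\kappa}e^{i\kappa x}$, so the solution stays in $\T_N$. Proposition~\ref{prop:closable}(ii) then gives closability with $\mathcal D^\ast=\Ih$. By~\eqref{eq:E:alias} with only the $m=0$ term present, $V_\kappa$ is mapped to $e^{-\Delta t\omega_\kappa}V_\kappa$.

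For item~(iii), I will compute the conditional expectation of jointly Gaussian variables mode by mode. By~\eqref{eq:E:alias} and the exact evolution of each Fourier mode, $V_\kappa(\Ph u)=\sum_m\sigma_m\hat u_{\kappa+mN}$ and $V_\kappa(\Ph\Ev u)=\sum_m\sigma_me^{-\Delta t\omega_{\kappa+mN}}\hat u_{\kappa+mN}$, where $\sigma_m=\sigma(\kappa+mN)$. The condition $s>\frac12$ ensures absolute convergence and that $\dfh\in L^2$.

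Stationarity makes distinct Fourier modes uncorrelated, with $\EE\hat u_k\overline{\hat u_{k'}}=q_k\delta_{kk'}$. For odd $N$, different alias classes $\kappa\in I_N$, together with their conjugate classes $-\kappa$, are therefore uncorrelated and hence independent. The conditional expectation thus decouples over classes. The real structure pairs $\kappa$ with $-\kappa$, but the conjugate variable carries the same information, so I can handle each class separately.

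Within a class, the Gaussian projection formula gives $\EE[Y\mid X]=\frac{\EE Y\bar X}{\EE|X|^2}X$ for jointly complex Gaussian $(X,Y)$. This uses that the pseudo-covariance $\EE YX$ vanishes for $\kappa\neq0$, by stationarity. For $\kappa=0$ all quantities are real, and the same formula holds. Subtracting $s_h(\kappa)V_\kappa$ yields~\eqref{eq:E:circulant}. The irreducible error is $\EE|Y|^2-|\EE Y\bar X|^2/\EE|X|^2$, which is~\eqref{eq:E:floor}; the factor $|e^{-\Delta t\omega}|^2$ becomes $e^{-2\Delta t\Real\omega}$.

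The main obstacle is justifying the decoupling rigorously. I need to show that conditioning on the full vector $V$ equals conditioning each $V_\kappa(\dfh)$ on $V_\kappa$ alone. This requires that the complex Gaussian vector $(V_\kappa,V_\kappa(\dfh))$ is independent of all other classes, including the real and imaginary parts. I plan to check this through the vanishing of all covariances and pseudo-covariances between classes, using stationarity.
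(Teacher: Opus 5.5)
Your proposal is correct and follows the paper's proof essentially step by step. For (i) you use commutation on the Fourier side and a single alias pair; for (ii) you use invariance of $\T_N$ and the cancellation of $\sigma(\kappa)$; for (iii) you decouple over alias classes and use the linear conditional mean of a proper complex Gaussian pair together with the Schur complement.

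Two details should be tightened:
\begin{itemize}
\item In the example of (i), take $\kappa\neq0$. Since $\sigma(N)=0$, the choice $\kappa=0$ gives $\Ph e^{iNx}=0$ and the example collapses.
\item For $\kappa\neq0$, you need $\EE V_\kappa^2=0$ as well as $\EE V_\kappa V_\kappa(\dfh)=0$. Oddness of $N$ enters here, not stationarity alone, because it guarantees that no two indices of one alias class sum to zero.
\end{itemize}
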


\begin{proof}
For item~(i), convolution with $G$ multiplies the Fourier coefficient of index $k$ by $2\pi\hat G_k$, so it commutes with every Fourier multiplier, and applying it to the equation gives the filtered equation.
The samples of $\bar u$ at the centers are the cell averages by the definition of $G_h$.
For an example with $\Ph\mathcal L u_{\mathrm{sg}}\neq0$, let $\ell$ be the symbol of $\mathcal L$, take $u=e^{i(\kappa+N)x}$ with $0\neq\kappa\in I_N$, and note that $\sigma(\kappa+N)\neq0$.
By Eq.~\eqref{eq:E:alias} the histopolant reproduces the alias as $\Ih\Ph u=\frac{\sigma(\kappa+N)}{\sigma(\kappa)}e^{i\kappa x}$, which gives
\begin{equation*}
V_\kappa(\Ph\mathcal L u_{\mathrm{sg}})=\sigma(\kappa+N)\big(\ell(\kappa+N)-\ell(\kappa)\big),
\end{equation*}
and this is nonzero whenever the symbol takes different values on the two aliasing wavenumbers.

For item~(ii), the space $\T_N$ is invariant under $\Ev$, on which $\Ph$ is injective with inverse $\Ih$ by Eq.~\eqref{eq:E:alias}.
The exact step multiplies the coefficient of $e^{i\kappa x}$ by $e^{-\Delta t\,\omega_\kappa}$, and $\Ph$ and $\Ih$ multiply and divide by the same factor $\sigma(\kappa)$, which therefore cancels.

For item~(iii), stationarity implies that the coefficients $\hat u_k$ with $k\ge1$ are independent circular complex Gaussian variables, independent of the real $\hat u_0$, with $\hat u_{-k}=\overline{\hat u_k}$.
By Eq.~\eqref{eq:E:alias}, both $V_\kappa$ and $V_\kappa(\dfh)=\sum_m\sigma(\kappa+mN)e^{-\Delta t\,\omega_{\kappa+mN}}\hat u_{\kappa+mN}-s_h(\kappa)V_\kappa$ are linear in the coefficients of the alias class $\kappa+N\Z$.
For odd $N$ the alias classes of the indices $\kappa\in I_N$ partition $\Z$, and two indices of one class never sum to zero unless $\kappa=0$.
Hence the pairs $(V_\kappa,V_\kappa(\dfh))$ with $\kappa\ge0$ are independent of each other, the pairs with $\kappa<0$ are their complex conjugates, and each pair with $\kappa\neq0$ is a proper complex Gaussian vector.
For such a vector the conditional mean is linear, $\EE[V_\kappa(\dfh)\mid V]=\EE[V_\kappa(\dfh)\overline{V_\kappa}]\,V_\kappa/\EE|V_\kappa|^2$, and the same formula holds for the real pair with $\kappa=0$.
Evaluating the two expectations with $\EE|\hat u_k|^2=q_k$ and the independence within the class gives the symbol of Eq.~\eqref{eq:E:circulant}.
The residual variance of a Gaussian regression is the Schur complement, and since $s_h(\kappa)V_\kappa$ is a function of $V$, it equals the Schur complement of $\sum_m\sigma e^{-\Delta t\,\omega}\hat u$ given $V_\kappa$, which is Eq.~\eqref{eq:E:floor}.
\end{proof}

These statements are the Wiener filter in the cell-average variables, so learning a closure is justified only for nonlinear fluxes or non-Gaussian statistics.
The floor of Eq.~\eqref{eq:E:floor} does not depend on $\Sh$, so a more accurate base scheme leaves a larger share of its smaller defect unlearnable, in agreement with the irreducible error of Proposition~\ref{prop:closable}.
A Monte Carlo regression on samples of such a field reproduces the symbol, for instance $r(4)=0.2036-0.1231i$ against $0.2033-0.1235i$, and the summed floor to a ratio of $0.995$.

\subsection{Ill-posedness of the ratio target}\label{app:ratio}

The baseline \texttt{c1\_mult} of \ref{app:training:baselines} learns the ratio of the exact next cell averages to a base step, a target that is badly conditioned for sign-changing data.

\begin{proposition}[Conditioning of closure targets]\label{prop:hadamard}
Let $b=\Sh(\Ph u)$ be the base step and $y=\Ph\Ev u$ the exact next cell averages, with $V=\Ph u$, and consider the following three targets for a learned correction.
\begin{enumerate}
\item[(i)] The additive target $y-b$ is a Lipschitz function of $y$ with constant one, for every base step.
\item[(ii)] The log-ratio target $\log(y_j/b_j)$ is bounded if and only if $0<c_1\le y_j/b_j\le c_2$ for constants $c_1$ and $c_2$, and on $\{y_j\ge y_{\min}>0\}$ it is Lipschitz in $y_j$ with constant $1/y_{\min}$.
\item[(iii)] The Hadamard ratio $y\oslash b$ is undefined on $\bigcup_j\{b_j=0\}$, and if the pair $(b_j,y_j)$ has a joint density $p\ge p_0>0$ on $[-\lambda,\lambda]\times[y_0-\lambda,y_0+\lambda]$ with $y_0>\lambda>0$, then $\EE|y_j/b_j|=\infty$.
In that case $\EE|y_j/b_j-m(V)|^2=\infty$ for every predictor with $\EE|m(V)|^2<\infty$, which includes all networks with bounded outputs.
If in addition $\Var(y_j\mid V)\ge c_3>0$ on $\{|b_j|<\lambda\}$, the loss is infinite for every Borel predictor $m$.
\end{enumerate}
\end{proposition}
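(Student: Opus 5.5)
The three items are elementary and I would prove them in order, spending most of the effort on the divergence statement of item~(iii).

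For item~(i), note that $b$ does not depend on $y$. The map $y\mapsto y-b$ is therefore a translation, and $|(y-b)-(y'-b)|=|y-y'|$, which gives the Lipschitz constant one for every base step.

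For item~(ii), $|\log(y_j/b_j)|\le\max\{|\log c_1|,|\log c_2|\}$ whenever $c_1\le y_j/b_j\le c_2$. Conversely, boundedness by $M$ forces $e^{-M}\le y_j/b_j\le e^M$, since the logarithm is only defined for a positive ratio. For the Lipschitz claim, $\partial_{y_j}\log(y_j/b_j)=1/y_j\le1/y_{\min}$ on $\{y_j\ge y_{\min}\}$, and the mean value theorem on this interval finishes the argument.

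For item~(iii), the ratio is undefined where $b_j=0$ by definition. For the divergence I restrict the expectation to the rectangle $R=[-\lambda,\lambda]\times[y_0-\lambda,y_0+\lambda]$. On $R$ we have $y_j\ge y_0-\lambda>0$, so
\[
\EE|y_j/b_j|\ge p_0\int_{-\lambda}^{\lambda}\int_{y_0-\lambda}^{y_0+\lambda}\frac{y}{|b|}\dd y\dd b\ge p_0\,2\lambda(y_0-\lambda)\int_{-\lambda}^{\lambda}\frac{\dd b}{|b|}=\infty .
\]
For the squared loss, the inequality $|a|\le1+|a|^2$ gives $\EE|y_j/b_j|^2=\infty$. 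With $X=y_j/b_j$ and $Z=m(V)$, I use $|X|^2\le2|X-Z|^2+2|Z|^2$. If $\EE|X-Z|^2<\infty$, then together with $\EE|Z|^2<\infty$ this would make $\EE|X|^2$ finite, a contradiction. Bounded network outputs satisfy $\EE|Z|^2<\infty$.

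The last claim, for an arbitrary Borel $m$, is the main obstacle, since $m(V)$ may be unbounded and may itself blow up near $b_j=0$; the conditional variance hypothesis handles this. Since $b_j=\Sh(V)_j$ is a function of $V$, conditioning on $V$ fixes $b_j$. For every Borel $m$ this gives
\[
\EE\bigl[|y_j/b_j-m(V)|^2\mid V\bigr]\ge\Var(y_j/b_j\mid V)=\frac{\Var(y_j\mid V)}{b_j^2}\ge\frac{c_3}{b_j^2}\quad\text{on }\{0<|b_j|<\lambda\}.
\]
Taking expectations leaves $\EE[c_3b_j^{-2}\mathbf 1\{|b_j|<\lambda\}]$. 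The density bound makes the marginal density of $b_j$ at least $2\lambda p_0$ on $[-\lambda,\lambda]$, and $\int_{-\lambda}^{\lambda}b^{-2}\dd b=\infty$, so the loss is infinite. The set $\{b_j=0\}$ has probability zero under the density and can be ignored. Two measurability points need a check: conditioning on $V$ is legitimate because $b_j$ is $\sigma(V)$-measurable, and the conditional Pythagorean identity of item~(iv) of \Cref{prop:closable} applies pointwise even where the second moment is infinite, provided it is read as an inequality of extended nonnegative quantities.
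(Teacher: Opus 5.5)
Your proof is correct and follows the paper's argument: the same density lower bound gives $\EE|y_j/b_j|=\infty$, and the same conditional bound $\EE[|X-m(V)|^2\mid V]\ge c_3/b_j^2$ proves the last claim once it is integrated against the marginal density of $b_j$, which is at least $2\lambda p_0$ on $[-\lambda,\lambda]$. The differences are cosmetic. For square-integrable predictors you use $|X|^2\le2|X-Z|^2+2|Z|^2$ where the paper uses Jensen's inequality with $\EE|X-m(V)|\ge\EE|X|-\EE|m(V)|$, and in the last step you do not need item~(iv) of \Cref{prop:closable}, since $b_j$ and $m(V)$ are fixed given $V$ and a conditional variance is the least conditional mean-square deviation from such a value.
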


\begin{proof}
Items~(i) and~(ii) follow from the definitions and from $|\log y-\log y'|\le|y-y'|/y_{\min}$ for $y,y'\ge y_{\min}$.
For item~(iii), the density bound gives $\EE|y_j/b_j|\ge p_0\int_{y_0-\lambda}^{y_0+\lambda}y\dd y\int_{-\lambda}^{\lambda}|b|^{-1}\dd b=\infty$, and Jensen's inequality gives $\EE|X-m(V)|^2\ge(\EE|X|-\EE|m(V)|)^2=\infty$ for $X=y_j/b_j$ when $\EE|m(V)|^2<\infty$.
Under the variance condition, $\EE[|X-m(V)|^2\mid V]\ge c_3/b_j^2$ on $\{|b_j|<\lambda\}$ because $b_j$ is a function of $V$, and this integrates to infinity since $b_j$ has a density of at least $2\lambda p_0$ there.
\end{proof}

In the closable case the unbounded predictor $m=y\oslash b$ has zero loss, so the obstruction concerns conditioning and learnability, not the population optimum.
It requires a base step that changes sign or approaches zero, as ours does in every snapshot, so a multiplicative coupling can suit strictly positive data.

\subsection{Rational approximation of fractional singularities}\label{app:rational}

The following statement restates the sinc construction of Stenger~\cite{stenger1986explicit}, as analyzed by Trefethen, Nakatsukasa and Weideman~\cite{trefethen2021exponential} and by Herremans, Huybrechs and Trefethen~\cite{herremans2023resolution}.

\begin{theorem}[Exponentially clustered poles for $x^\beta$]\label{thm:clustered}
Let $0<\beta<1$, $0<d<\pi$ and $n\ge1$, and define the step $\tau$ and the numbers of terms $M_1$ and $M_2$ by
\begin{equation*}
\tau=\sqrt{\frac{2\pi d}{\beta(1-\beta)n}},\qquad M_1=\Big\lceil\frac{2\pi d}{\beta\tau^2}\Big\rceil,\qquad M_2=\Big\lceil\frac{2\pi d}{(1-\beta)\tau^2}\Big\rceil.
\end{equation*}
The function
\begin{equation}\label{eq:E:stenger}
r(x)=\frac{\sin\pi\beta}{\pi}\,\tau\sum_{j=-M_1}^{M_2}e^{\beta j\tau}\,\frac{x}{e^{j\tau}+x}
\end{equation}
is rational of type $(m,m)$ with $m=M_1+M_2+1\le n+3$, and its poles $-e^{j\tau}$ cluster exponentially at the singularity $x=0$.
Whenever $\sqrt{2\pi d\beta(1-\beta)n}\ge\ln2$, the uniform error of this approximation on the unit interval satisfies
\begin{equation}\label{eq:E:stengerbound}
\sup_{0\le x\le1}\big|x^\beta-r(x)\big|\le C_{\beta,d}\,e^{-\sqrt{2\pi d\beta(1-\beta)n}},\qquad C_{\beta,d}=\frac{4}{\cos(d/2)}+\frac{\sin\pi\beta}{\pi}\Big(\frac1\beta+\frac1{1-\beta}\Big).
\end{equation}
\end{theorem}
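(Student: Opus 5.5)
We start from the Stieltjes-type integral representation
\[
x^\beta=\frac{\sin\pi\beta}{\pi}\int_0^\infty\frac{t^{\beta-1}x}{t+x}\dd t,\qquad x>0,
\]
which follows from the Beta integral and the reflection formula. The substitution $t=e^s$ turns it into
\[
x^\beta=\frac{\sin\pi\beta}{\pi}\int_{-\infty}^{\infty}e^{\beta s}\frac{x}{e^s+x}\dd s .
\]
Comparing with Eq.~\eqref{eq:E:stenger}, $r$ is exactly the trapezoidal rule with step $\tau$, truncated to $-M_1\le j\le M_2$. The type count is then immediate: each term is $x/(e^{j\tau}+x)$, so $r$ has $m=M_1+M_2+1$ simple poles at $-e^{j\tau}$. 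With $\tau^2=2\pi d/(\beta(1-\beta)n)$ we have $2\pi d/(\beta\tau^2)=(1-\beta)n$ and $2\pi d/((1-\beta)\tau^2)=\beta n$. Hence $M_1+M_2\le n+2$ and $m\le n+3$.

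The error splits into a discretization error of the infinite trapezoidal sum and two truncation tails, and these are bounded separately.

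For the discretization error we use the classical strip theorem for the trapezoidal rule on $\R$ \cite{trefethen2014exponentially}. Let $g(s)=e^{\beta s}x/(e^s+x)$. This function is analytic in the strip $|\Imag s|<d$ for $d<\pi$, because $e^s+x=0$ only when $\Imag s\equiv\pi$. The theorem gives an error of at most $2M/(e^{2\pi d/\tau}-1)$, where $M$ is the integral of $|g|$ along the strip edges.

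The key estimate, and the step I expect to be hardest, is to bound $M$ uniformly in $x\in(0,1]$ with constant $4/\cos(d/2)$ once the prefactor is included. On $\Imag s=\pm d$ one has $|e^s+x|\ge(e^{\Real s}+x)\cos(d/2)$, by a half-angle argument on $|e^{i d}e^{a}+x|$. Hence $|g|\le \cos(d/2)^{-1}e^{\beta a}x/(e^a+x)$, and integrating in $a$ returns $\pi x^\beta/\sin\pi\beta\le\pi/\sin\pi\beta$. After multiplying by $\sin(\pi\beta)/\pi$, the discretization error is at most $\frac{2\cdot 2}{\cos(d/2)}\,e^{-2\pi d/\tau}$ times a factor of at most $2$. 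The factor $2$ comes from $1/(1-e^{-2\pi d/\tau})\le 2$, which requires $e^{-2\pi d/\tau}\le1/2$; this is where the hypothesis $\sqrt{2\pi d\beta(1-\beta)n}\ge\ln 2$ enters after substituting $\tau$. I expect the constant bookkeeping to need care so that $2\pi d/\tau$ equals exactly $\sqrt{2\pi d\beta(1-\beta)n}$. With the chosen $\tau$ this holds, since $(2\pi d)^2/\tau^2=2\pi d\,\beta(1-\beta)n$.

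For the tails we use $x\le1$.

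\emph{Left tail} ($j<-M_1$): bound $x/(e^{j\tau}+x)\le1$, so the tail is at most $\tau\sum_{j>M_1}e^{-\beta j\tau}\le e^{-\beta M_1\tau}/\beta$, using $\tau\le(1-e^{-\beta\tau})/\beta\cdot e^{\beta\tau}$ or a direct integral comparison.

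\emph{Right tail} ($j>M_2$): bound $x/(e^{j\tau}+x)\le e^{-j\tau}$, giving at most $e^{-(1-\beta)M_2\tau}/(1-\beta)$.

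By the choice of $M_1$ and $M_2$, we have $\beta M_1\tau\ge 2\pi d/\tau$ and $(1-\beta)M_2\tau\ge2\pi d/\tau$. So both tails are at most $e^{-\sqrt{2\pi d\beta(1-\beta)n}}$ times $\frac{\sin\pi\beta}{\pi}(1/\beta+1/(1-\beta))$.

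Adding the discretization error and the two tails yields Eq.~\eqref{eq:E:stengerbound}. The case $x=0$ is trivial since $r(0)=0$.
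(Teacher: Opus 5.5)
Your proposal is correct and follows the paper's proof essentially step for step: the same Stieltjes representation read as a truncated trapezoidal rule, Theorem~5.1 of Trefethen and Weideman with the lower bound $|e^{s}+x|\ge\cos(d/2)\,(e^{\Real s}+x)$, and the same geometric tail bounds matched to $2\pi d/\tau=\sqrt{2\pi d\beta(1-\beta)n}$. Two bookkeeping fixes: the theorem needs the lower bound on every line $|\Imag s|\le d$, not only on the edges, which holds since $\cos(b/2)\ge\cos(d/2)$ there; and the discretization error is $\frac{2}{\cos(d/2)}\big(e^{2\pi d/\tau}-1\big)^{-1}\le\frac{4}{\cos(d/2)}e^{-2\pi d/\tau}$, so the factor $2$ from $1/(1-e^{-2\pi d/\tau})\le2$ is the second $2$ in your $2\cdot2$, not an extra factor.
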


\begin{proof}
For $x>0$ the substitution $s=e^y$ in a classical integral gives
\begin{equation*}
x^\beta=\frac{\sin\pi\beta}{\pi}\int_0^\infty s^{\beta-1}\,\frac{x}{s+x}\dd s=\frac{\sin\pi\beta}{\pi}\int_{\R}\psi_x(y)\dd y,\qquad \psi_x(y)=\frac{e^{\beta y}x}{e^y+x}.
\end{equation*}
The integrand is analytic in the strip $|\Imag y|<\pi$, and for $y=t+ib$ with $|b|\le d$ a direct expansion shows
\begin{equation*}
|e^y+x|^2-\cos^2(b/2)\,(e^t+x)^2=\sin^2(b/2)\,(e^t-x)^2\ge0,
\end{equation*}
so that $|e^y+x|\ge\cos(d/2)(e^t+x)$ holds on the closed strip of half-width $d$.
Hence $\int_{\R}|\psi_x(t+ib)|\dd t\le\pi x^\beta/(\cos(d/2)\sin\pi\beta)\le\pi/(\cos(d/2)\sin\pi\beta)$ for $0<x\le1$, and $\psi_x$ decays uniformly as $|t|\to\infty$ in the strip.
Theorem~5.1 of Trefethen and Weideman~\cite{trefethen2014exponentially} then bounds the error of the infinite trapezoidal sum with step $\tau$, after multiplication by $\sin(\pi\beta)/\pi$, by
\begin{equation*}
\frac{2}{\cos(d/2)\,\big(e^{2\pi d/\tau}-1\big)}\le\frac{4}{\cos(d/2)}\,e^{-2\pi d/\tau}\qquad\text{when }e^{2\pi d/\tau}\ge2.
\end{equation*}
The terms with $j<-M_1$ are bounded by $\frac{\sin\pi\beta}{\pi}\tau\sum_{j<-M_1}e^{\beta j\tau}\le\frac{\sin\pi\beta}{\pi\beta}e^{-\beta M_1\tau}$, using $x/(e^{j\tau}+x)\le1$ and $\tau/(e^{a\tau}-1)\le1/a$ for $a>0$.
The terms with $j>M_2$ are bounded in the same way by $\frac{\sin\pi\beta}{\pi(1-\beta)}e^{-(1-\beta)M_2\tau}$, using $e^{\beta j\tau}x/(e^{j\tau}+x)\le e^{-(1-\beta)j\tau}$ for $x\le1$.
By the choice of $M_1$ and $M_2$ both exponents are at least $2\pi d/\tau=\sqrt{2\pi d\beta(1-\beta)n}$, which yields Eq.~\eqref{eq:E:stengerbound}, and at $x=0$ both sides vanish.
Finally $M_1+M_2\le2\pi d/(\beta(1-\beta)\tau^2)+2=n+2$, and each term of Eq.~\eqref{eq:E:stenger} is of type $(1,1)$, which gives the type of $r$.
\end{proof}

As $d\uparrow\pi$ the exponent tends to $\pi\sqrt{2\beta(1-\beta)n}$, below the rate $2\pi\sqrt{\beta n}$ of best rational approximation established by Stahl~\cite{stahl2003best} after Newman's result for $|x|$~\cite{newman1964rational}.
The constant with $\cos(d/2)$ only slightly refines the one obtained by the same argument.
Over a grid of 90 parameter combinations the observed error never exceeds $0.167$ times the bound.

\begin{corollary}[Periodic cusps]\label{cor:cusp}
Let $0<\beta<2$, $x_0\in\T$, $0<d<\pi$ and $n\ge1$, and let $\tau$, $M_1$ and $M_2$ be as in Theorem~\ref{thm:clustered} with $\beta/2$ in place of $\beta$.
For $-M_1\le j\le M_2$ we put $c_j=e^{j\tau}$, $\zeta_j=(1+2c_j+2\sqrt{c_j+c_j^2})^{-1}$ and $a_j=\zeta_je^{ix_0}$, which defines one pole for every term of Eq.~\eqref{eq:E:stenger}.
Then there are real coefficients $c_0$ and $\gamma_j$ such that, whenever $\sqrt{2\pi d(\beta/2)(1-\beta/2)n}\ge\ln2$,
\begin{equation}\label{eq:E:cusp}
\sup_{x\in\T}\Big|\,\Big|\sin\frac{x-x_0}{2}\Big|^\beta-c_0-2\Real\sum_{j=-M_1}^{M_2}\frac{\gamma_j}{1-\bar a_je^{ix}}\Big|\le C_{\beta/2,d}\,e^{-\sqrt{2\pi d(\beta/2)(1-\beta/2)n}}.
\end{equation}
All poles lie at the position $x_0$, and their depths satisfy $1-\zeta_j=2\sqrt{c_j}\,(1+O(\sqrt{c_j}))$ as $c_j\to0$, so they cluster exponentially toward the cusp.
\end{corollary}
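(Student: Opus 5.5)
The plan is to reduce the statement to Theorem~\ref{thm:clustered} by the substitution $y=\sin^2\frac{x-x_0}{2}\in[0,1]$, so that $|\sin\frac{x-x_0}{2}|^\beta=y^{\beta/2}$ with the exponent $\beta/2\in(0,1)$. Let $r$ be the rational function of Eq.~\eqref{eq:E:stenger} with $\beta/2$ in place of $\beta$ and the given $\tau$, $M_1$ and $M_2$. Theorem~\ref{thm:clustered} bounds $\sup_{0\le y\le1}|y^{\beta/2}-r(y)|$ by exactly the right-hand side of Eq.~\eqref{eq:E:cusp} under the stated condition. Since $y$ ranges over $[0,1]$ as $x$ ranges over $\T$, the same bound holds for $\sup_{x\in\T}|y(x)^{\beta/2}-r(y(x))|$. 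What remains is purely algebraic: show that each term $y/(c_j+y)$ of $r$ equals a real constant plus $2\Real[\gamma/(1-\bar a_je^{ix})]$ with real $\gamma$.

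For this step write $\theta=x-x_0$ and $y=(1-\cos\theta)/2$. For $\zeta\in(0,1)$ we have $|1-\zeta e^{i\theta}|^2=1+\zeta^2-2\zeta\cos\theta$. Choose $\zeta$ so that $(1+\zeta^2)/(2\zeta)=1+2c$. The root in $(0,1)$ is
\[
\zeta=1+2c-2\sqrt{c+c^2}=\bigl(1+2c+2\sqrt{c+c^2}\bigr)^{-1},
\]
which is the $\zeta_j$ of the statement. With this choice $c+y=|1-\zeta e^{i\theta}|^2/(4\zeta)$. Next use the Poisson-kernel identity
\[
\frac{1-\zeta^2}{|1-\zeta e^{i\theta}|^2}=2\Real\frac{1}{1-\zeta e^{i\theta}}-1 .
\]
It gives
\[
\frac{y}{c+y}=1-\frac{4c\zeta}{1-\zeta^2}\Bigl(2\Real\frac{1}{1-\zeta e^{i\theta}}-1\Bigr).
\]
Since $\zeta e^{i\theta}=\bar a_je^{ix}$ with $a_j=\zeta_je^{ix_0}$, each term has the required form. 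Its coefficient is real: $\gamma_j=-\frac{\sin(\pi\beta/2)}{\pi}\,\tau\,e^{\beta j\tau/2}\,\frac{4c_j\zeta_j}{1-\zeta_j^2}$. Collecting the constants gives $c_0$. So $r(y(x))$ is exactly the expression inside Eq.~\eqref{eq:E:cusp}, and the bound follows.

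Finally, the depth asymptotics come from expanding $\zeta_j=1+2c_j-2\sqrt{c_j}\sqrt{1+c_j}$ for small $c_j$. This yields $1-\zeta_j=2\sqrt{c_j}-2c_j+O(c_j^{3/2})=2\sqrt{c_j}\,(1+O(\sqrt{c_j}))$. Because $c_j=e^{j\tau}$ decays geometrically as $j\to-M_1$, the depths $-\ln\zeta_j\approx2\sqrt{c_j}$ cluster exponentially at $x_0$, and all poles sit at the position $x_0$ by construction. The only step needing care is the choice of $\zeta$ and the Poisson-kernel rewriting, which make the reduction exact rather than approximate. Everything else is inherited from Theorem~\ref{thm:clustered}.
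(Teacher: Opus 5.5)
Your proposal is correct and follows the paper's own proof. In both, the substitution $\varsigma=\sin^2\frac{x-x_0}{2}$ reduces the claim to Theorem~\ref{thm:clustered} with exponent $\beta/2$. Taking $\zeta_j$ as the root in $(0,1)$ of $\zeta^2-2(1+2c_j)\zeta+1=0$ and applying the Poisson-kernel identity then turns each term exactly into a constant plus one atom. Your explicit real $\gamma_j$ and the expansion $1-\zeta_j=2\sqrt{c_j}-2c_j+O(c_j^{3/2})$ only fill in details that the paper leaves implicit.
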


\begin{proof}
With $\varsigma=\sin^2\frac{x-x_0}{2}\in[0,1]$ and $y=x-x_0$, Theorem~\ref{thm:clustered} approximates $\varsigma^{\beta/2}$ by terms $w_j-w_jc_j/(c_j+\varsigma)$ with positive weights $w_j$.
Since $\zeta_j=1+2c_j-2\sqrt{c_j+c_j^2}$ solves $\zeta^2-2(1+2c_j)\zeta+1=0$, the Poisson kernel identity gives $\frac{c_j}{c_j+\varsigma}=\frac{4c_j\zeta_j}{1-\zeta_j^2}\big(2\Real\frac{1}{1-\zeta_je^{iy}}-1\big)$ with $\zeta_je^{iy}=\bar a_je^{ix}$, which yields Eq.~\eqref{eq:E:cusp} and the stated depths.
\end{proof}

Each term of Eq.~\eqref{eq:E:cusp} is a constant plus one atom $\psi_{a_j}$ with the closed-form Riesz flux of Lemma~\ref{lem:atoms}.
For $\beta=0.75$, $\alpha=1.5$ and 64 faces, 130 clustered poles give uniform and interface-flux errors of $1.1\times10^{-7}$ and $1.4\times10^{-7}$, against $8.4\times10^{-4}$ and $2.0\times10^{-4}$ for 8193 Fourier modes.
Clustering is necessary by the following classical result, which Section~\ref{sec:whypoles} observes on the benchmark data.

\begin{proposition}[Fixed poles give algebraic rates]\label{prop:fixedpoles}
Let $\varrho>0$, and let $\mathcal R^\varrho_n$ be the real rational functions of type $(n,n)$ whose poles lie at distance at least $\varrho$ from $[-1,1]$.
If a continuous $f$ satisfies $\inf_{r\in\mathcal R^\varrho_n}\|f-r\|_{L^\infty(-1,1)}\le Cn^{-s}$ for all $n\ge1$ with some $0<s<1$, then $f$ is H\"older continuous of every order $s'<s$ on every compact subinterval of $(-1,1)$.
In particular, the cusp $|x-x_0|^\eta$ with $x_0\in(-1,1)$ and $0<\eta<1$ cannot be approximated in $\mathcal R^\varrho_n$ at any rate $n^{-s}$ with $s>\eta$.
\end{proposition}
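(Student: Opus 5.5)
The plan is to derive H\"older continuity from the rational approximation rate by a Bernstein-type inverse argument. The tool is a Markov--Bernstein inequality for rational functions whose poles stay a fixed distance from the interval, in the form of Borwein, Erd\'elyi and Zhang~\cite{borwein1994chebyshev}. For $r\in\mathcal R^\varrho_n$ and a compact subinterval $[a,b]\subset(-1,1)$ we would use
\[
\|r'\|_{L^\infty(a,b)}\le C_{\varrho,a,b}\,n\,\|r\|_{L^\infty(-1,1)}.
\]
The factor is linear in $n$ because every pole contributes a bounded Blaschke-type term when its distance to $[-1,1]$ is at least $\varrho$. The interior restriction removes the endpoint factor $n^2$. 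If the cited form only gives a constant growing with the pole count, we would instead apply the classical polynomial Bernstein inequality to $r$ times the denominator. That denominator is a polynomial of degree at most $n$ that is nonvanishing on a $\varrho$-neighbourhood. We expect this step to be the main obstacle, namely getting the constant linear in $n$ uniformly over all pole configurations. We would settle it by quoting the cited inequality directly.

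Next comes a dyadic telescoping. Let $r_k$ be a near-best approximant in $\mathcal R^\varrho_{2^k}$ with $\|f-r_k\|\le 2C2^{-ks}$. Then $r_{k+1}-r_k$ is real rational of type at most $(2^{k+2},2^{k+2})$, and its poles are still at distance at least $\varrho$, so it lies in $\mathcal R^\varrho_{2^{k+2}}$. Its sup norm is at most $4C2^{-ks}$. By the Bernstein step, $\|(r_{k+1}-r_k)'\|_{L^\infty(a,b)}\le C'2^{k(1-s)}$, and likewise $\|r_0'\|\le C'$.

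To estimate an increment, take $x,y\in[a,b]$ with $|x-y|=t<1$ and choose $K$ with $2^{-K}\approx t$. We write $f=r_K+(f-r_K)$. The mean value theorem with the telescoped derivative bound gives
\[
|r_K(x)-r_K(y)|\le t\sum_{k<K}C'2^{k(1-s)}\lesssim t\,2^{K(1-s)}\approx t^s,
\]
where the geometric sum is dominated by its last term because $s<1$. The remainder satisfies $2\|f-r_K\|\lesssim2^{-Ks}\approx t^s$. Hence $f$ is H\"older of order $s$ on $[a,b]$, and so of every order $s'<s$ (the $s'<s$ slack also absorbs any logarithmic loss if the Bernstein constant carries one).

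For the cusp statement, suppose $|x-x_0|^\eta$ were approximable at rate $n^{-s}$ with $s>\eta$, and reduce to $s<1$ by decreasing $s$ while keeping $s>\eta$. The first part then makes the cusp H\"older of some order $s'\in(\eta,s)$ on a compact interval around $x_0$. This contradicts the fact that $|x-x_0|^\eta/|x-x_0|^{s'}$ is unbounded as $x\to x_0$.
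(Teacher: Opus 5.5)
Your proposal is correct and is essentially the paper's proof sketch: the Borwein--Erd\'elyi--Zhang inequality $|r'(x)|\le C_\varrho n\|r\|_{L^\infty(-1,1)}/\sqrt{1-x^2}$ applied to differences of near-best approximants from $\mathcal R^\varrho_{2^k}$, with the telescoping sum split at $2^{-K}\approx|x-y|$. Your bookkeeping in fact gives H\"older order $s$ itself, not only every $s'<s$. Rely on the cited inequality rather than your fallback: writing $r=p/q$ and applying the polynomial Bernstein inequality to $p$ loses the factor $\|q\|_{L^\infty(-1,1)}/|q(x)|$, which can grow exponentially in $n$ (e.g.\ when all poles lie at $1+\varrho$).
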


\begin{proof}[Proof sketch]
The Bernstein-type inequality of Borwein, Erd\'elyi and Zhang~\cite{borwein1994chebyshev} gives $|r'(x)|\le C_\varrho n\|r\|_{L^\infty(-1,1)}/\sqrt{1-x^2}$ on $\mathcal R^\varrho_n$, and the classical inverse argument applies it to near-best approximants from $\mathcal R^\varrho_{2^k}$.
Their differences are of size $O(2^{-ks})$, and splitting the telescoping sum at $2^{-K}\approx|x-y|$ gives the H\"older bound, which a cusp of order $\eta$ violates for every order above $\eta$.
\end{proof}

\begin{remark}[Relevance on the torus]\label{rem:analytic}
For the unforced periodic problem with $1<\alpha<2$, Kiselev, Nazarov and Shterenberg~\cite{kiselev2008blowup} state that solutions are real analytic in space for $t>0$, and for the forced benchmark analyticity is plausible but not proved.
Such solutions have no real cusps, so the results above act only through complex singularities near the real axis, where the adaptive pole depth of Propositions~\ref{prop:gap} and~\ref{prop:poles} plays the role of $c_j$.
They therefore motivate adaptive poles, while the rate at which the learned poles approximate the solution remains an open question.
\end{remark}

\end{document}